\begin{document}
\newcounter{theoremcounter}
\numberwithin{theoremcounter}{section}
\newaliascnt{theoremauto}{theoremcounter}
\newcommand{\theoremautoautorefname}{Theorem}
\newcommand{\atheoremautorefname}{Theorem}
\newaliascnt{Defauto}{theoremcounter}
\newcommand{\Defautoautorefname}{Definition}
\newaliascnt{exampleauto}{theoremcounter}
\newcommand{\exampleautoautorefname}{Example}
\newaliascnt{lemmaauto}{theoremcounter}
\newcommand{\lemmaautoautorefname}{Lemma}
\newaliascnt{propositionauto}{theoremcounter}
\newcommand{\propositionautoautorefname}{Proposition}
\newaliascnt{corollaryauto}{theoremcounter}
\newcommand{\corollaryautoautorefname}{Corollary}
\newaliascnt{remarkauto}{theoremcounter}
\newcommand{\remarkautoautorefname}{Remark}
\newaliascnt{notationauto}{theoremcounter}
\newcommand{\notationautoautorefname}{Notation}
\newaliascnt{claimauto}{theoremcounter}
\newcommand{\claimautoautorefname}{Claim}
\newaliascnt{warningauto}{theoremcounter}
\newcommand{\warningautoautorefname}{Warning}
\newaliascnt{questionauto}{theoremcounter}
\newcommand{\questionautoautorefname}{Question}
\newaliascnt{discussionauto}{theoremcounter}
\newcommand{\discussionautoautorefname}{Discussion}
\newaliascnt{computationauto}{theoremcounter}
\newcommand{\computationautoautorefname}{Computation}
\newaliascnt{conjectureauto}{theoremcounter}
\newcommand{\conjectureautoautorefname}{Conjecture}
\newaliascnt{convauto}{theoremcounter}
\newcommand{\convautoautorefname}{Convention}
\renewcommand{\sectionautorefname}{Section}
\renewcommand{\subsectionautorefname}{Section}
\renewcommand{\itemautorefname}{Condition}

\newcounter{atheoremcounter}

\newtheorem{theorem}[theoremauto]{Theorem}
\newtheorem{lemma}[lemmaauto]{Lemma}
\newtheorem{proposition}[propositionauto]{Proposition}
\newtheorem{corollary}[corollaryauto]{Corollary}
\newtheorem*{corollary*}{Corollary}
\newtheorem{conjecture}[conjectureauto]{Conjecture}
\newtheorem{atheorem}{Theorem}
\renewcommand{\theatheorem}{\Alph{atheorem}}
\newtheorem{acorollary}[atheorem]{Corollary}
\renewcommand{\theacorollary}{\Alph{acorollary}}
\theoremstyle{definition}
\newtheorem{definition}[Defauto]{Definition}
\newtheorem{notation}[notationauto]{Notation}
\newtheorem{convention}[convauto]{Convention}
\newtheorem{example}[exampleauto]{Example}
\newtheorem{remark}[remarkauto]{Remark}
\newtheorem{claim}[claimauto]{Claim}
\newtheorem{warning}[warningauto]{Warning}
\newtheorem{question}[questionauto]{Question}
\newtheorem{discussion}[discussionauto]{Discussion}
\newtheorem{computation}[computationauto]{Computation}

	\renewcommand{\ll}{\left\langle}
	\newcommand{\rr}{\right\rangle}
	\newcommand{\ls}{\left\{}
	\newcommand{\rs}{\right\}}
	\newcommand{\sm}{\setminus}

\newcommand{\cone}{\ensuremath{{cone}}}
    
	\newcommand{\mcA}{\ensuremath{\mathcal{A}}}
	\newcommand{\mcB}{\ensuremath{\mathcal{B}}}
	\newcommand{\mcC}{\ensuremath{\mathcal{C}}}
	\newcommand{\mcE}{\ensuremath{\mathcal{E}}}
	\newcommand{\mcF}{\ensuremath{\mathcal{F}}}
	\newcommand{\mcG}{\ensuremath{\mathcal{G}}}
	\newcommand{\mcH}{\ensuremath{\mathcal{H}}}
	\newcommand{\mcI}{\ensuremath{\mathcal{I}}}
	\newcommand{\mcJ}{\ensuremath{\mathcal{J}}}
	\newcommand{\mcN}{\ensuremath{\mathcal{N}}}
	\newcommand{\mcO}{\ensuremath{\mathcal{O}}}
	\newcommand{\mcP}{\ensuremath{\mathcal{P}}}
	\newcommand{\mcQ}{\ensuremath{\mathcal{Q}}}
 	\newcommand{\mcR}{\ensuremath{\mathcal{R}}}
	\newcommand{\mcS}{\ensuremath{\mathcal{S}}}
	\newcommand{\mcU}{\ensuremath{\mathcal{U}}}
	\newcommand{\mcV}{\ensuremath{\mathcal{V}}}
	\newcommand{\mcW}{\ensuremath{\mathcal{W}}}
	\newcommand{\mcZ}{\ensuremath{\mathcal{Z}}}
	\newcommand{\mbQ}{\ensuremath{\mathbb{Q}}}
	\newcommand{\mbZ}{\ensuremath{\mathbb{Z}}}
    \newcommand{\mbN}{\ensuremath{\mathbb{N}}}

  \newcommand{\bn}{\ensuremath{{\bf n}}}
 \renewcommand{\bm}{{\bf m}}

	\newcommand{\on}[1]{\operatorname{#1}}
	\newcommand{\set}[1]{\ensuremath{ \left\lbrace #1 \right\rbrace}}	
	\newcommand{\grep}[2]{\ensuremath{\left\langle #1 \, \middle| \, #2\right\rangle}}
	\newcommand{\real}[1]{\ensuremath{\left\lVert #1\right\rVert}}
	\newcommand{\overbar}[1]{\mkern 2mu\overline{\mkern-2mu#1\mkern-2mu}\mkern 2mu}
	
	\newcommand{\Aut}{\ensuremath{\operatorname{Aut}}}
	\newcommand{\AutO}{\ensuremath{\operatorname{Aut}^0}}
	\newcommand{\Out}{\ensuremath{\operatorname{Out}}}
	\newcommand{\Outo}[1][A_G]{\ensuremath{\operatorname{Out}^0(#1)}}
	\newcommand{\Stab}{\operatorname{Stab}}
	\newcommand{\Sym}{\operatorname{Sym}}
	\newcommand{\St}{\operatorname{St}}
	\newcommand{\Stsymp}{\operatorname{St}^\omega}
	
	\newcommand{\op}{\operatorname{op}}
	\newcommand{\st}{\operatorname{star}}
	\newcommand{\lk}{\operatorname{lk}}
	\newcommand{\im}{\operatorname{im}}
	\newcommand{\rk}{\operatorname{rk}}
	\newcommand{\corank}{\operatorname{crk}}
	\newcommand{\spn}{\operatorname{span}}
	\newcommand{\Ind}{\operatorname{Ind}}

    \newcommand{\fb}{\operatorname{FB}}
    \newcommand{\hfb}{\operatorname{hFB}}
    \newcommand{\fin}{\operatorname{FI}}
    \newcommand{\FI}{\operatorname{FI}}

    \newcommand{\FIc}{\mathrm{FI}(c)}
    
       \newcommand{\C}{\mathbb{C}}
        \newcommand{\Q}{\mathbb{Q}}
         \newcommand{\Z}{\mathbb{Z}}
    \newcommand{\ch}{\operatorname{Ch}}
    \newcommand{\topo}{\operatorname{Top}}
    \newcommand{\sets}{\operatorname{Set}}
    \newcommand{\sMod}{\operatorname{sMod}}
    \newcommand{\Mod}{\operatorname{Mod}}
    \newcommand{\lmod}{\operatorname{mod}}
    \newcommand{\Alg}{\operatorname{Alg}}
    \newcommand{\vs}{\operatorname{Vec}}
	
	\newcommand{\GL}[2]{\ensuremath{\operatorname{GL}_{#1}(#2)}}
	\newcommand{\SL}[2]{\ensuremath{\operatorname{SL}_{#1}(#2)}}
	\newcommand{\Sp}[2]{\ensuremath{\operatorname{Sp}_{#1}(#2)}}
    \newcommand{\N}{\mathbb{N}}
    \newcommand{\R}{\mathbb{R}}
	
	\newcommand{\class}{\operatorname{cl}}
	\newcommand{\Hom}{\operatorname{Hom}}
    \newcommand{\Emb}{\operatorname{Emb}}
	\newcommand{\buildingssymp}{\operatorname{T}^{\omega}}
    \newcommand{\buildingssln}{\operatorname{T}}
    \newcommand{\building}{\Delta}
	\newcommand{\Rn}{R^{2n}}

    \newcommand{\hcl}{\operatorname{cl(R)}}
    \newcommand{\dual}[1]{#1^{'}}
    \newcommand{\signp}[1][n]{S^B_{#1}} 
    \newcommand{\sat}[1]{\operatorname{Sat}(#1)}
    \newcommand{\len}{\operatorname{len}}
    \newcommand{\classconst}{\kappa}
    \newcommand{\ohur}[1][]{\mathrm{OHur}^{c}_{G#1}}
    \newcommand{\hur}[1][]{\mathrm{Hur}^{c}_{G#1}}
    \newcommand{\chain}[1]{C_{*}(#1 ; k)}
    \newcommand{\oconf}[1][n]{\mathrm{OConf}_{#1}}
    \newcommand{\conf}[1][n]{\mathrm{Conf}_{#1}}

    \newcommand{\bunit}{\mathbbm{1}}
    \newcommand{\bK}{\mathbb{K}}
    \newcommand{\bM}{\mathbf{M}}
    \newcommand{\bN}{\mathbf{N}}
   \newcommand{\bD}{\mathbf{D}}
        \newcommand{\bQ}{\mathbf{Q}}
    
    \newcommand{\bR}{\mathbf{R}}

    \newcommand{\delc}{\widetilde{\Delta}_{\mathrm{inj,c}}}
    \newcommand\hocolim{\operatorname*{hocolim}}
    \newcommand\holim{\operatorname*{holim}}
    \newcommand\hocofib{\operatorname*{hocofib}}
    \newcommand\colim{\operatorname*{colim}}
    \newcommand\rep{\operatorname*{Rep}}
    \newcommand\relrep{\operatorname*{RelRep}}
    \newcommand{\csig}[1][p]{(c\times\Sigma)_{#1}}
\newcommand{\PBr}{\mathrm{PBr}}
\newcommand{\Br}{\mathrm{Br}}
    
    \newcommand{\Res}{\mathrm{Res}}

\title{Representation stability for ordered Hurwitz spaces}

 \author{Zachary Himes}
 \email{himesz@umich.edu}  
\address{University of Michigan \\
 	Department of Mathematics, 530 Church St, Ann Arbor MI, 48109 \\USA}  

 \author{Jeremy Miller}\thanks{Jeremy Miller was supported by NSF grants DMS-2504473 and DMS-2202943 and a Simons Foundation Travel Support for Mathematicians grant.}
 \email{jeremykmiller@purdue.edu}  
\address{Purdue University \\
 	 West Lafayette IN, 47907 \\USA}  

     \author{Jennifer C. H. Wilson}
\email{jchw@umich.edu}
\address{University of Michigan \\ Department of Mathematics \\
 	 530 Church St\\
 	 Ann Arbor MI, 48109 \\USA}
\thanks{Jennifer Wilson was supported in part by NSF CAREER grant DMS-2142709}
	
\begin{abstract} 
In this paper, we study the topology of ordered Hurwitz space. These are moduli spaces of branched covers with a choice of ordering on the branched points. Answering a question of Ellenberg, we prove that the homology of ordered Hurwitz spaces exhibit representation stability. 
\end{abstract}

\tolerance=1000
    
\maketitle

\vspace{-.25in}
\tableofcontents
\vspace{-.25in}
\section{Introduction}

Hurwitz spaces are moduli spaces of branched covers of curves. Homological stability properties of Hurwitz spaces can often be used to answer questions concerning arithmetic statistics in the context of function fields. For example, they have been used to study Cohen--Lenstra heuristics for class groups \cite{MR3488737, landesman2025cohenlenstramomentsfunctionfields, landesman2025homologicalstabilityhurwitzspaces}, Malle's conjecture on Galois groups \cite{ellenberg2023foxneuwirthfukscellsquantumshuffle}, and the minimalist conjecture for Selmer ranks of families of abelian varieties \cite{ellenberg2025homologicalstabilitygeneralizedhurwitz}.

In this paper, we study a cover of Hurwitz space called \emph{ordered Hurwitz space}. These spaces parameterize branched covers with a choice of order on the set of branched points. Motivated by questions concerning interactions between factorization statistics and the Cohen-Lenstra heuristics, Ellenberg \cite[Problem 5]{MWProblemSession} asked if ordered Hurwitz spaces exhibt representation stability in the sense of Church--Farb \cite{CF}. The goal of this paper is to verify Ellenberg's conjecture.

The homotopy type of the Hurwitz spaces we consider have the following description. Let $\Br_n$ denote the braid group with $n$ strands, $\PBr_n$ denote the subgroup of pure braids, and let $\alpha_{j} \in \Br_n$ be the braid that passes the $j$th strand over the $(j+1)$st. Let $G$ be a group and $c \subseteq G$ be a conjugacy class. Let $\Br_n$ act on $c^n$ via the formula:  $$
    \alpha_j \cdot (c_1,\ldots, c_j, c_{j+1},\ldots, c_n)=(c_1,\ldots, c_{j-1}, c_{j}c_{j+1} c_{j}^{-1}, c_{j}, c_{j+2},\ldots, c_n).
$$
Define $$\hur[,n] \colonequals c^n /\!\!/ \Br_n$$ and $$\ohur[,{\bn}]\colonequals c^n /\!\!/ \PBr_n.$$ Here $/\!\!/$ denotes the so-called Borel construction or homotopy quotient. Note that $\hur[,n]$ has the homotopy type of the moduli space of branched covers of $\C$ where the monodromy around each branched point is in $c$ and with a choice of trivialization of the branched cover in a fixed boundary direction \cite{MR3488737,EVW2}. A point in the space $\ohur[,n]$ additionally has the data of an ordering on the branch points.

Ellenberg--Venkatesh--Westerland \cite[Theorem 6.1]{MR3488737} proved a homological stability result for $H_i(\hur[,n];\Q)$ under the the assumption that the group $G$ and the conjugacy class $c$ satisfies a hypothesis called the \emph{non-splitting property}
(see \cref{defNonSplit}).  Combined with subsequent work of Davis--Schlank \cite{davis2023hilbertpolynomialquandlescolorings}, Ellenberg--Venkatesh--Westerland's stability result implies that the dimensions of the vector spaces $H_i(\hur[,n] ;\Q)$ do not depend on $n$ for $n$ sufficiently large compared with $i$. See \cite{MR4224644,Bloop,BMhur, landesman2024alternatecomputationstablehomology} for other results on the topology of Hurwitz spaces.

We instead investigate the behavior of the groups $H_i(\ohur[,{\bn}] ;\Q)$ as $n$ tends to infinity. The space $\ohur[,{\bn}]$ has a natural action of the symmetric group $\Sigma_n$ by permuting the order of the branched points.  We study stability properties of the groups $H_i(\ohur[,{\bn}] ;\Q)$ as $\Sigma_n$-representations instead of simply as vector spaces.

The symmetric groups $\Sigma_n$ come with families of representations $V(\lambda)_n$ for each partition $\lambda = (\lambda_1, \dots, \lambda_{\ell})$ of a non-negative integer. These are zero for $n < |\lambda| + \lambda_1$ and are irreducible $\Sigma_n$-representations otherwise (see \cref{DefVLn}). Following Church--Farb \cite[Definitions 2.3 and 2.7]{CF}, we say that a sequence $\{M_n\}_{n\in \N}$ of $\Q[\Sigma_n]$-modules has \emph{uniform multiplicity stability}  with stable range $N_0$ (\cref{DefnUniformMultStability}) if the multiplicity of $V(\lambda)_n$ in $M_n$ does not depend on $n$ for all $n \geq N_0$ and all partitions $\lambda$. The ``uniformity'' here is the condition that the stable range does not depend on the partition $\lambda$. An advantage of this uniformity is that it implies that, in the stable range,  the representation $M_n$ determines the representation $M_{n+1}$.

Our main theorem is the following.

\begin{theorem}\label{thm: mult stability for ohur}
    Let $G$ be a finite group and $c$ a conjugacy class satisfying the non-splitting property. There exist constants $\alpha$ and $\beta$ depending only on $G$ and $c$ such that $\{H_{i}(\ohur[,{\bn}]; \Q) \}_n,$ viewed as a sequence of $\Sigma_n$-representations,  has uniform multiplicity stability with stable range $\alpha i+\beta$.
\end{theorem}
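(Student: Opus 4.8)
The plan is to compare the ordered space $\ohur[,\bn]$ with the unordered space $\hur[,n]$, reduce the statement to a twisted homological stability statement for $\hur[,n]$ with a range \emph{uniform} in the coefficient system, and then extract such uniformity from the homological stability machinery of Ellenberg--Venkatesh--Westerland, whose key input is the non-splitting property (\cref{defNonSplit}).

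First I would record the comparison. Since $\hur[,n]\simeq\ohur[,\bn]/\!\!/\Sigma_n$, the associated Cartan--Leray spectral sequence degenerates over $\Q$ (as $\Sigma_n$ is finite), giving, for any $\Sigma_n$-representation $W$ regarded as a local system on $\hur[,n]$ via $\Br_n\twoheadrightarrow\Sigma_n$,
\[
  H_i(\hur[,n];W)\;\cong\;\bigl(H_i(\ohur[,\bn];\Q)\otimes W\bigr)^{\Sigma_n}.
\]
Since irreducible $\Q[\Sigma_n]$-modules are self-dual, taking $W=V(\lambda)_n$ shows that the multiplicity of $V(\lambda)_n$ in $H_i(\ohur[,\bn];\Q)$ equals $\dim_\Q H_i(\hur[,n];\mathcal V_\lambda)$, the twisted homology with coefficients in the local system $\mathcal V_\lambda$ attached to $V(\lambda)_n$. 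Taking instead $W=\Q[\Emb([k],[n])]$ (with $\Emb$ denoting injections) and applying Shapiro's lemma identifies $H_i(\hur[,n];W)$ with $H_i(Y_{k,n};\Q)$, where $Y_{k,n}\colonequals c^n/\!\!/\Br_n^{(k)}$ is the intermediate ``$k$ branch points labelled'' cover corresponding to the preimage $\Br_n^{(k)}$ of $\Sigma_{n-k}\le\Sigma_n$. For $n$ large relative to $k$, $\Q[\Emb([k],[n])]$ decomposes into the $V(\lambda)_n$ with $|\lambda|\le k$ with multiplicities independent of $n$; hence the theorem is equivalent to the assertion that $\dim_\Q H_i(Y_{k,n};\Q)$ is eventually constant in $n$, with a range that is linear in $i$ and \emph{independent of $k$}.

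To obtain this I would package $\bigoplus_{k,n} H_*(Y_{k,n};\Q)$ as a module over the Hurwitz algebra $\bigoplus_n H_*(\hur[,n];\Q)$ --- stacking branched covers makes $\coprod_n\hur[,n]$ an $E_1$-algebra and the labelled spaces a module over it --- or, equivalently, recognize $\{H_i(\ohur[,\bn];\Q)\}_n$ as an $\FI$-module (see the final paragraph for the necessary bookkeeping). Ellenberg--Venkatesh--Westerland's proof of homological stability proceeds by showing that a semisimplicial resolution built from the $\Br_n$-action on $c^n$ is highly connected, with connectivity growing linearly in $n$, the non-splitting property being exactly what powers this connectivity estimate; Davis--Schlank then pin down the ($k=0$) stable dimensions. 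Carrying the $k$ labelled points along in that argument --- or, in the language of $\mathcal E_1$-cells, bounding the $\mathcal E_1$-homology of the Hurwitz module --- bounds the generation and relation degrees of the module above linearly in $i$. A module finitely presented in degrees $\le\alpha i+\beta$ forces every underlying sequence $\{H_i(Y_{k,n};\Q)\}_n$ to stabilize with range $\alpha i+\beta$ independent of $k$, which by the previous paragraph is exactly uniform multiplicity stability with stable range $\alpha i+\beta$.

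The main obstacle is precisely the uniformity in $\lambda$, equivalently in $k$: twisted homological stability for $\hur[,n]$ with a \emph{fixed} polynomial coefficient system is comparatively routine, but its range degrades with the degree of that system. Overcoming this requires proving not just finite generation but finite \emph{presentation} of the Hurwitz module, with \emph{both} bounds linear in $i$, and it is the extraction of linear bounds on relations (not merely on generators) from the non-splitting connectivity estimate that constitutes the technical core of the argument. A secondary subtlety is that adding a branch point requires a choice of element of $c$, so the ordered Hurwitz spaces form a module not over $\FI$ itself but over the larger category of injections whose complement is labelled by $c$; one must check that finite generation and presentation over this category still descend to the stated conclusion about the underlying $\Sigma_n$-representations.
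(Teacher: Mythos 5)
Your high-level plan---using the transfer isomorphism to reduce uniform multiplicity stability to a uniform twisted stability statement, packaging the homology as a module over the Hurwitz ring, invoking Davis--Schlank for eventual constancy, and isolating uniformity in $\lambda$ as the real difficulty---tracks the paper's strategy closely, and your recognition that the relevant category is ``injections with $c$-labelled complement'' is exactly the paper's $\FIc$.

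The genuine gap is in the step where you claim that finite \emph{presentation} of the Hurwitz module in degrees $\leq \alpha i + \beta$ forces the stable range. Bounding generation and relation degrees amounts only to vanishing of the degree-$0$ and degree-$1$ derived indecomposables of $\ohur$ acting on $\pi_0(\ohur)$, but the cell-attachment and spectral-sequence argument adapted from Miller--Patzt--Petersen--Randal-Williams and Galatius--Kupers--Randal-Williams needs a relative CW approximation of $\pi_0(\ohur)$ whose cells in \emph{every} internal degree $d'$ sit in ranks bounded by a fixed linear function of $d'$; that requires vanishing of $H_d\left(Q^{\ohur}_{\mathbb{L}}(\pi_0(\ohur))\right)$ in a linear range for \emph{all} $d$, which is exactly the paper's \cref{OhurVSpi0}. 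There is no regularity theorem for $\FIc$-modules that converts a generation-and-relation bound into a stable range (this is known for $\FI$ but is not automatic here, and the paper does not prove it), so your sentence ``a module finitely presented in degrees $\leq \alpha i + \beta$ forces \dots stabiliz[ation] with range $\alpha i + \beta$'' is unjustified as stated. The content you have not engaged with is precisely the all-degree vanishing: one must identify the derived indecomposables with an ordered variant of the Ellenberg--Venkatesh--Westerland Koszul-like complex (via the $\delc$-resolution $R_\blacktriangle$ and its discrete model $L_\blacktriangle$), and then prove that complex is highly acyclic using the non-splitting hypothesis, Shusterman's theorem comparing $\PBr_n$- with $\Br_n$-orbits on $c^n$, and a connectivity estimate for a semi-simplicial set of disjoint injections.
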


We note that this result is closely related to work of Ellenberg--Shusterman \cite{ElShusterman} and Landesman--Levy \cite{landlevyApp}. Uniform multiplity stability implies polynomial growth of dimension.

\begin{corollary} \label{HurPolyGrowth}
    Let $G$ be a finite group and $c$ a conjugacy class satisfying the non-splitting property. There exist constants $\alpha$ and $\beta$ depending only on $G$ and $c$ and integer-valued polynomials $p_{i,G,c}$ of degree $\leq \alpha i+\beta$ such that $$\dim_{\Q} H_{i}(\ohur[,{\bn}]; \Q) =p_{i,G,c}(n)  \text{ for all }n\geq \alpha i+\beta.$$

\end{corollary}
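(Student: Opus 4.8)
The plan is to combine \cref{thm: mult stability for ohur} with the classical fact that $\dim_\Q V(\lambda)_n$ is, for $n$ large, a polynomial in $n$ of degree $|\lambda|$. First I would unpack what the hypothesis provides. For each fixed $n$ the space $\ohur[,{\bn}]$ is homotopy equivalent to a finite CW complex (since $c$ is finite and $\PBr_n$, hence each of its finite-index subgroups, admits a finite classifying space), so $H_i(\ohur[,{\bn}];\Q)$ is finite dimensional and has a well-defined $\Sigma_n$-irreducible decomposition $\bigoplus_\lambda V(\lambda)_n^{\oplus m_\lambda(n)}$ with finite multiplicities. By \cref{thm: mult stability for ohur}, for $n\ge\alpha i+\beta$ each $m_\lambda(n)$ equals a constant $m_\lambda$. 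Moreover, only $\lambda$ with $|\lambda|\le\alpha i+\beta$ can occur: if $|\lambda|+\lambda_1>\alpha i+\beta$ then $V(\lambda)_n=0$ for $n=\alpha i+\beta$ but $V(\lambda)_n\ne0$ for $n=|\lambda|+\lambda_1$, and as both indices are $\ge\alpha i+\beta$, constancy of $m_\lambda(n)$ forces $m_\lambda=0$. There are finitely many partitions of size $\le\alpha i+\beta$, so listing the ones that actually occur as $\lambda^{(1)},\dots,\lambda^{(r)}$ gives
\[
H_i(\ohur[,{\bn}];\Q)\;\cong\;\bigoplus_{j=1}^{r} V(\lambda^{(j)})_n^{\oplus m_{\lambda^{(j)}}}\qquad\text{for all }n\ge\alpha i+\beta,\ \text{ with each }|\lambda^{(j)}|\le\alpha i+\beta.
\]

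Next I would invoke the dimension count. Since $V(\lambda)_n$ is the Specht module of the partition $(n-|\lambda|,\lambda_1,\lambda_2,\dots)$, the hook length formula writes $\dim_\Q V(\lambda)_n$ as $n!$ divided by a product of hook lengths, and carrying out the division shows that for $n\ge|\lambda|+\lambda_1$ the function $n\mapsto\dim_\Q V(\lambda)_n$ agrees with an integer-valued polynomial $P_\lambda$ of degree exactly $|\lambda|$ with positive leading coefficient (this is classical; see, e.g., \cite{CF}). Applying $\dim_\Q$ to the displayed decomposition, for $n\ge\alpha i+\beta$ we get
\[
\dim_\Q H_i(\ohur[,{\bn}];\Q)\;=\;\sum_{j=1}^{r} m_{\lambda^{(j)}}\dim_\Q V(\lambda^{(j)})_n\;=\;\sum_{j=1}^{r} m_{\lambda^{(j)}}\,P_{\lambda^{(j)}}(n).
\]
Call the right-hand side $p_{i,G,c}(n)$. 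A non-negative integer combination of integer-valued polynomials is again integer valued; and since the coefficients $m_{\lambda^{(j)}}$ are non-negative and the $P_{\lambda^{(j)}}$ have positive leading coefficients, no top-degree terms cancel, so $\deg p_{i,G,c}=\max_j|\lambda^{(j)}|\le\alpha i+\beta$ (take $p_{i,G,c}=0$ in the degenerate case $H_i(\ohur[,{\bn}];\Q)=0$ for $n$ large). This is precisely \cref{HurPolyGrowth}.

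There is no genuinely hard step here; the only ingredient beyond \cref{thm: mult stability for ohur} is the eventual polynomiality and degree of $\dim_\Q V(\lambda)_n$, which is standard representation theory of the symmetric groups. The one point worth care is the reduction to finitely many partitions of size at most $\alpha i+\beta$: this relies on \emph{uniform} multiplicity stability — the fact that $\alpha i+\beta$ is a single stable range valid for all $\lambda$ at once — together with the vanishing $V(\lambda)_n=0$ for $n<|\lambda|+\lambda_1$.
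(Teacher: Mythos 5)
Your proposal is correct and follows essentially the same route the paper takes: invoke \cref{thm: mult stability for ohur} (equivalently \cref{thm: uniform mult stab}) to get uniform multiplicity stability, observe that only finitely many partitions (those with $|\lambda|+\lambda_1 \leq \alpha i + \beta$) can contribute, and then apply eventual polynomiality of $n \mapsto \dim_\Q V(\lambda)_n$ of degree $|\lambda|$. The paper isolates the purely representation-theoretic part as \cref{PropPoly} and proves the polynomiality via MacDonald's formula rather than the hook length formula, but this is a cosmetic difference; both give the required degree bound $\leq \alpha i + \beta$.
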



Since the family of irreducible trivial representations is the sequence $V(\lambda)_n$ for $\lambda=\varnothing$, uniform multiplicity stability also implies stability for coinvariants. In this way \cref{thm: mult stability for ohur} recovers Ellenberg--Venkatesh--Westerland's homological stability result for $\hur[,n]$. 

A categorical approach to the theory of representation stability was introduced by Church--Ellenberg--Farb \cite{CEF} using so-called $\FI$-\emph{modules}. The sequence of representations $\{H_i(\ohur[,{\bn}])\}_n$ do not appear to naturally form an $\FI$-module. We describe an analogue of the category $\FI$ adapted to Hurwitz spaces which we call $\FIc$; see \cref{sec: uni rep stab} for relevant definitions. Our stability results give the following.

\begin{corollary}
  \label{OhurFIcIntro}  Suppose that $(G, c)$ has the non-splitting property. There exists constants $a,b$ such that $H_i(\ohur;\Q)$ is generated as an $\FIc$-module degrees $ \leq ai+b$.
\end{corollary}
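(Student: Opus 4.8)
The plan is to deduce \cref{OhurFIcIntro} from \cref{thm: mult stability for ohur} by means of the $\FIc$-module formalism of \cref{sec: uni rep stab}; the only substantive point is the passage from uniform multiplicity stability to a bound on the $\FIc$-generation degree.

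First I would check the two hypotheses needed to run the $\FIc$-analogue of the Church--Ellenberg--Farb dictionary. The sequence $M$ with $M_n \colonequals H_i(\ohur[,{\bn}];\Q)$ is an $\FIc$-module essentially by construction (this is part of the setup in \cref{sec: uni rep stab}), and it is of finite type: since $G$ is finite, $c$ is finite, so $\ohur[,{\bn}] = c^n /\!\!/ \PBr_n$ is a finite disjoint union of classifying spaces of finite-index subgroups of $\PBr_n$, and $\PBr_n$ admits a finite classifying space (the ordered configuration space of $n$ points in $\C$), hence so do its finite-index subgroups; therefore $\dim_{\Q} M_n < \infty$ for all $n$.

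The key step is the general statement that a finite-type $\FIc$-module that is uniformly multiplicity stable with stable range $N_0$ is generated in degrees $\leq N_0$, which I would prove exactly as in the $\FI$ case. Let $M' \subseteq M$ be the sub-$\FIc$-module generated by $M_n$ for $n \leq N_0$. Then $M'$ is generated in degrees $\leq N_0$, is again of finite type, and $M'_n = M_n$ for all $n \leq N_0$. Being generated in bounded degree over $\Q$, the module $M'$ is itself uniformly multiplicity stable with stable range bounded in terms of $N_0$ --- this is the $\FIc$-version of the fact that finitely generated $\FI$-modules are uniformly representation stable, which I would take from \cref{sec: uni rep stab}. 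Comparing the multiplicity functions of $M'$ and $M$: they agree for all $n \leq N_0$ and each is eventually constant, so they agree for all $n$, forcing $M' = M$. Applying this with $N_0 = \alpha i + \beta$ yields the corollary, e.g.\ with $a = \alpha$ and $b = \beta$.

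The hard part is the $\FIc$-side input invoked above: one needs either Noetherianity of $\FIc$ over $\Q$ (to keep $M'$ finite type and well-behaved) or at least that a finitely generated $\FIc$-module is uniformly multiplicity stable with an explicitly bounded stable range. If instead the proof of \cref{thm: mult stability for ohur} already proceeds through an $\FIc$-homology (central stability) vanishing estimate, or an $E_2$-cell bound for Hurwitz spaces, that directly controls the generation degree of $H_i(\ohur;\Q)$, then \cref{OhurFIcIntro} is literally that intermediate bound and all that remains is to match constants. Either way, no new topological input is needed beyond what already enters \cref{thm: mult stability for ohur}.
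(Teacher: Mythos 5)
Your primary route reverses the paper's logical order: you try to deduce the $\FIc$-generation bound from uniform multiplicity stability (\cref{thm: mult stability for ohur}), whereas the paper derives both results independently from the twisted stability estimate \cref{lem: uniform rep stab for OHur}. As written, your route has a genuine gap: it invokes $\FIc$-representation theory that the paper does not establish and that is not elementary. Concretely, you need (i) that a sub-$\FIc$-module generated in degrees $\leq N_0$ is itself uniformly multiplicity stable with stable range controlled by $N_0$ --- you cite \cref{sec: uni rep stab} for this, but no such result appears there --- and (ii) enough injectivity/monotonicity of the structure maps of the submodule $M'$ to upgrade ``$M'_n=M_n$ for $n\le N_0$ and both are eventually constant'' to ``$M'=M$'': a priori the stable multiplicities of the proper submodule $M'$ could be strictly smaller, so eventual constancy on both sides does not by itself close the comparison. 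Both inputs would require something like Noetherianity of $\FIc$ over $\Q$ or CEF-style generation/relation-degree bounds, and neither is available here. The Davis--Schlank finiteness the paper does use (\cref{prop: Hilbert for R-mods}) concerns $\N$-graded $H_0(\hur)$-modules, not $\fb$-graded $\FIc$-modules, and $\FIc$ is not Ramos's $\FI_c$ unless the pure-braid action on $c^n$ is trivial, so one cannot simply import the known $\FI_c$-Noetherianity results.

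Your closing paragraph correctly anticipates the paper's actual argument. \cref{lem: uniform rep stab for OHur} gives, uniformly in $\lambda$, that $\widetilde{U}$-multiplication is eventually an isomorphism on $H_0\left(\Sigma_n; H_i(\ohur)_{\bn}\otimes_{\bK} V(\lambda)_n\right)$. The corollary is then deduced from the short formal statement recorded just after \cref{OhurFIc}: if such $\widetilde{U}$-surjectivity holds for all $n\ge k$ and all $\lambda$, then the $\FIc$-module is generated in degrees $\le k+N$. The proof is a one-step Schur's-lemma argument: a nonzero cokernel $\mathbf{C}_{\bn}$ of the generation map would contain some $V_{\mu\langle n\rangle}$, but the image of $\widetilde{U}_*$ sits inside the generated complement $\mathbf{V}_{\bn}$, so $\widetilde{U}$ would fail to surject on the $V(\mu)$-isotype, contradicting the hypothesis. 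This bypasses Noetherianity and any shift theorem entirely. So your proposal is recoverable, but only along your fallback route; the main argument you sketch would need substantial new $\FIc$-foundations to be made rigorous.
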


\vspace{.2in}

{\bf \noindent Proof overview.}
We adapt the tools used in Miller--Patzt--Petersen--Randal-Williams \cite{miller2024uniformtwistedhomologicalstability} to prove uniform twisted homological stability to apply to ordered Hurwitz spaces. A key input in this theory is vanishing for certain derived indecomposables. In this case, we need to consider the derived indecomposables of $\ohur[]$ acting on $\pi_0\left(\ohur[]\right)$. Using ideas from Randal-Williams \cite{MR4767884}, we show that these derived indecomposables are equivalent to a combinatorial chain complex that is
an ordered variant of the ``Koszul-like complex'' of Ellenberg--Venkatesh--Westerland \cite{MR3488737}. We use results and techniques of Ellenberg--Venkatesh--Westerland \cite{MR3488737}  and Shusterman \cite{MR4666043} to show the homology of this ordered Koszul complex vanishes in a range.

\vspace{.2in}

{\bf \noindent Acknowledgments.}
We thank Andrea Bianchi, Jordan Ellenberg, Anh Trong Nam Hoang, Aaron Landesman, Ishan Levy, Peter Patzt, Oscar Randal-Williams, Mark Shusterman, and Craig Westerland for helpful conversations. Additionally, we thank Peter Patzt and Benson Farb for organizing the conference \emph{Midwest Representation Stability Research Meeting 2019} \cite{MWProblemSession} where we learned of this problem. Part of this work was done at the American Institute for Mathematics. The authors are grateful to AIM for their support.

\section{Notation and categorical setup}

\label{SubsectionCategorySetup}

In this section, we describe some categorical preliminaries that will let us formulate our stability results. As is common in the modern homological and representation stability literature, we work in categories of representations of groupoids (see e.g. \cite{ SSIntroTCA, RWW,MR4125676,e2cellsI,MPPpoly}). 
\begin{convention}
We fix a commutative ring $\bK$. When we take homology, we will use $\bK$-coefficients. We will often suppress $\bK$ from the notation.

\end{convention}

\begin{notation}
    We will use the notation $\N$ to denote the category of non-negative integers $n\in \N$ and identity maps, viewed as a symmetric monoidal category by addition of integers.
\end{notation}
\begin{definition}
    
     Let $\fb$ denote the category of finite sets and bijections, viewed as a symmetric monoidal category, with monoidal product given by the disjoint union of sets. Given $S\in \fb$, let $\Sigma_{S}$ denote $\Hom_{\fb}(S, S)$, the permutation group of $S$. We let $\bn$ denote the set $\{1, 2, \ldots, n\}$ and ${\bf 0}$ the empty set $\varnothing$. 
     When $S=\bn$, we will  write $\Sigma_{n}$ for $\Sigma_{S}$.
\end{definition}
We often will  conflate $\fb$ and its skeleton, the full subcategory with objects $\{ \bn \; | \; n \geq 0 \}$.

\begin{definition}   
Let $\mathcal{G}$ and $\mathcal{C}$ be categories. Let $\mathcal{C}^{\mathcal{G}}$ denote the category of covariant functors from $\mathcal{C}$ to $\mathcal{G}$. 
Given objects $X$ in  $\mathcal{C}$ and $S$ an  in $\mathcal{G}$, we sometimes will write $X_{S}$ for $X(S)$.

\end{definition}

\begin{definition}
Let $\bK$ be a ring. Let $\Mod_{\bK}$ denote the category of $\bK$-modules. Let $\sMod_{\bK}$ denote the category of simplicial $\bK$-modules. Let $\ch_{\bK}$ denote the category of chain complexes of $\bK$-modules. Let $\sets$ denote the category of sets. Let $\topo$ denote the category of compactly generated weak Hausdorff topological spaces.
\end{definition}

We will refer to a functor $X \colon \mathcal{C} \to \topo$ as a $\mathcal{C}$-space, etc. We will often view a $\mathcal{C}$-set as a $\mathcal{C}$-space or a $\mathcal{C}$-module as a $\mathcal{C}$-chain complex (concentrated in homological degree $0$), etc.

  

 Let $(\mathcal{G}, \odot)$ be a symmetric monoidal category that is equivalent to a small category. Let $(\mathcal{C},  \otriangleup )$ be a  closed symmetric monoidal category with all small limits and colimits. The functor category $\mathcal{C}^{\mathcal{G}}$ inherits a symmetric monoidal structure $\otimes_{\mathcal{G}}$  called \emph{Day convolution},  due to Day \cite{DayConvolutionI,DayConvolutionII}. It is characterized by a natural isomorphism
 $$ \Hom_ {\mathcal{C}^{\mathcal{G}}}((X \otimes_{\mathcal{G}}Y)(-), Z(-)) \; \cong \; \Hom_ {\mathcal{C}^{(\mathcal{G} \times \mathcal{G}) }}(X(-)  \otriangleup Y(-), Z(- \odot -)) .
 $$
  For example, in $\Mod_{\bK}^{\mathcal{\fb}}$ it is given by the formula
$$(X\otimes_{\mathrm{FB}} Y)_S \colonequals \colim_{f\colon A\sqcup B\xrightarrow{\cong} S} X_A\otimes_{\bK} Y_B \cong \bigoplus_{i+j=|S|}\Ind^{\Sigma_{|S|}}_{\Sigma_{i}\times \Sigma_{j}} X_i \otimes_{\bK} Y_j.$$
The action of an FB morphism $\phi \colon S \to T$ is as follows. Given an isomorphism $f \colon A\sqcup B\xrightarrow{\cong} S$, the action of $\phi$ on $(X\otimes_{\mathcal{G}} Y)(S)$ is induced by the map $$\left( (\phi \circ f|_A) \otimes_{\bK} (\phi \circ f|_B)\right) \colon X_A\otimes_{\bK} Y_B \longrightarrow X_{\phi(f(A))} \otimes_{\bK} Y_{\phi(f(B))}.$$
In $\topo^{\N}$ it is given by the formula 
$$(X\otimes_{\N} Y)_n\colonequals \bigsqcup_{i+j=n} X_i\times  Y_j.$$

Using the symmetric monoidal structure provided by the Day convolution product, we can define monoid objects and modules over such objects in  these functor categories.
\begin{notation}
 Let $(\mathcal{G}, \odot)$ be a symmetric monoidal category that is equivalent to a small category. Let $(\mathcal{C},  \otriangleup  )$ be a  closed symmetric monoidal category with all small limits and colimits. Let $\bunit$ denote the unit object of $\mathcal{C}^{\mathcal{G}}$.

\end{notation}

\begin{example} In $\Mod_{\bK}^{\mathcal{\fb}}$, $\bunit(\bn) =  \left\{ \begin{array}{ll} \bK, & n=0 \\ 0, & n>0. \end{array}\right.$

In $\topo^{\N}$, $\bunit(n)$ is a point for $n=0$ and is the empty set for $n>0$.
\end{example}


Observe that if $X$ is a (semi)-simplicial $\mathcal{C}$-space, then its geometric realization is a  $\mathcal{C}$-space. If $Y$ is a  $\mathcal{C}$-space, its singular chains are a  $\mathcal{C}$-chain complex, its $i$th homology is a  $\mathcal{C}$-module, etc. If you apply a monoidal functor to a monoid object in one of these categories, you obtain a new monoid object.

  We endow $\sMod_{\bK}$ with the standard model structure (see Quillen \cite[Section 2.4]{QuillenHomotopicalAlgebra}) and $\topo$ with the 
  standard (Quillen) model structure (see for example Quillen \cite[Section 2.3] {QuillenHomotopicalAlgebra}  or May--Ponto \cite[Section 17.2]{MayPonto}).

For a category $\mathcal{G}$ that is equivalent to a small category, the  categories $\sMod^{\mathcal{G}}_{\bK}$ and $\topo^{\mathcal{G}}$ then inherit the projective model structure (Hirschhorn \cite[Theorem 11.6.1]{Hirschhorn}), wherein a morphism $X \to Y$ is a weak homotopy equivalence (respectively, a fibration) if and only if $X_S \to Y_S$ is for every $S \in \mathcal{G}$.

We now define a simplicial analogue of spheres. This will be relevant later when constructing generalizations of CW complex structures on spaces to more abstract contexts. 

\begin{definition} Let $\bK$ be a ring. Let $\mathcal{G}$ be a groupoid. Fix $T \in \mathcal{G}$ and $d \geq 0$. 

Let $S_{\bK}^{T,d}$ in $\sMod_{\bK}^{\mathcal{G}}$  be the functor defined as follows. For $U \in \mathcal{G}$ and $p \geq 0$, let 
$$ S_{\bK}^{T,d}(U)_p= \left\{ \begin{array}{ll} \bK[\Hom_{\mathcal{G}}(T,U)]	& \text{ if	}p=d, \\ 
 0		&	\text{otherwise.} \end{array} \right.$$ 

Face maps and degeneracies are (necessarily) trivial.

\end{definition}

We now recall the definition of the two-sided bar construction. See, for example, May \cite[Section 10]{MayGeometryIterated}.  

\begin{definition}
    Let $\mathcal{G}$ be a symmetric monoidal groupoid, and let $\mathcal{C}$ be a symmetric monoidal category that has all small colimits. 
    Let $\bR$ be a monoid object in $\mathcal{C}^{\mathcal{G}}$ with respect to Day convolution $\otimes_{\mathcal{G}}$. Let $\bM$ be a left $\bR$-module and $\bN$ a right $\bR$-module. Define $B_{\bullet}(\bN, \bR, \bM)$ to be the following simplicial object in $\mathcal{C}^{\mathcal{G}}$. Let 
    $$B_{p}(\bN,\bR, \bM) \colonequals \bN \otimes_{\mathcal{G}} \bR^{\otimes_{\mathcal{G}}\, p} \otimes_{\mathcal{G}} \bM.$$
    The face map 
    $$d_0 \colon (\bN \otimes_{\mathcal{G}} \bR) \otimes_{\mathcal{G}} \left(\bR^{\otimes_{\mathcal{G}}\, (p-1)} \otimes_{\mathcal{G}} \bM\right) \longrightarrow \bN \otimes_{\mathcal{G}} \left(\bR^{\otimes_{\mathcal{G}}\, (p-1)} \otimes_{\mathcal{G}} \bM\right)$$ 
    is induced by the right module structure $\bN \otimes_{\mathcal{G}} \bR \to \bN $.  The face map 
        $$d_p \colon \left(\bN \otimes_{\mathcal{G}} \bR^{\otimes_{\mathcal{G}} \, (p-1)}\right) \otimes_{\mathcal{G}}  (\bR \otimes_{\mathcal{G}} \bM )
        \longrightarrow 
      \left(\bN \otimes_{\mathcal{G}} \bR^{\otimes_{\mathcal{G}} \, (p-1)}\right) \otimes_{\mathcal{G}} \bM$$ 
    is induced by the left module structure $\bR \otimes_{\mathcal{G}} \bM \to \bM $. For $0<i<p$ the face maps 
     $$ d_i \colon \left(\bN \otimes_{\mathcal{G}} \bR^{\otimes_{\mathcal{G}} \, (i-1)} \right) \otimes_{\mathcal{G}}({\bR}\otimes_{\mathcal{G}}{\bR})\otimes_{\mathcal{G}} \left( \bR^{\otimes_{\mathcal{G}} \, (p-i-1)} \otimes_{\mathcal{G}} \bM \right)  $$ $$      \longrightarrow 
     \left(\bN \otimes_{\mathcal{G}} \bR^{\otimes_{\mathcal{G}} \, (i-1)} \right) \otimes_{\mathcal{G}} \bR \otimes_{\mathcal{G}} 
     \left( \bR^{\otimes_{\mathcal{G}} \, (p-i-1)} \otimes_{\mathcal{G}} \bM \right)$$   
 are induced by the monoidal structure $\bR \otimes_{\mathcal{G}} \bR \to \bR$. The degeneracy maps 

$$
s_i \colon  \;  \bN \otimes_{\mathcal{G}} \bR^{\otimes_{\mathcal{G}} \, p} \otimes_{\mathcal{G}} \bM  
  \xrightarrow{\cong}
\left(\bN \otimes_{\mathcal{G}} \bR^{\otimes_{\mathcal{G}} \, (i-1)}   \right) \otimes_{\mathcal{G}} {\bunit}\otimes_{\mathcal{G}} 
\left(\bR^{\otimes_{\mathcal{G}} \, (p-i)}  \otimes_{\mathcal{G}} \bM \right)
 \longrightarrow 
 $$ $$ 
\left( \bN \otimes_{\mathcal{G}} \bR^{\otimes_{\mathcal{G}} \, (i-1)}  \right ) \otimes_{\mathcal{G}} {\bR}\otimes_{\mathcal{G}} \left( \bR^{\otimes_{\mathcal{G}} \, (p-i)}  \otimes_{\mathcal{G}} \bM \right)
$$
are induced by the unit $\bunit \to \bR$ of $\bR$. 
 


\end{definition}

Let $\mathcal{C}$ be a symmetric monoidal model category. Let $\bR$ be a monoid object in $\mathcal{C}$ and suppose that there is an augmentation map $\epsilon\colon \bR\to \bunit$ from $\bR$ to the unit object $\bunit$ in  $\mathcal{C}$. Let $\bR\text{-mod}$ denote the category of left $\bR$-modules. There is an indecomposables functor 
$$Q^{\bR}\colon \bR\text{-mod}\to \mathcal{C}, $$
(for more details, see \cite[Section 12.2, Section 9.4]{e2cellsI}). The functor $Q^{\bR}$ is a left Quillen functor and so it admits a left derived functor called the \textit{derived indecomposables functor}  $$Q^{\bR}_{\mathbb{L}}\colon \bR\text{-mod}\to \mathcal{C}.$$ The derived indecomposables functor is homotopy invariant in the sense that if $f\colon \bM\to \bM'$ is a map of left $\bR$-modules and a weak homotopy equivalence, then the induced map $Q^{\bR}_{\mathbb{L}}(f)\colon Q^{\bR}_{\mathbb{L}}(\bM)\to Q^{\bR}_{\mathbb{L}}(\bM')$ is also a weak homotopy equivalence. Under certain mild conditions on $\bR$ and $\bM$, we have the following model for  $Q^{\bR}_{\mathbb{L}}(\bM)$.
\begin{lemma}[{\cite[Lemma 9.18]{e2cellsI}}]\label{lem: model for der-ind}
    Let  $\mathcal{C}$ be a symmetric monoidal model category. Let $\bR$ be a monoid object in $\mathcal{C}$ and let $\bM$ be a left $\bR$-module. If $\bR$ and $\bM$ are cofibrant in $\mathcal{C}$, then there is a weak homotopy equivalence
    $$Q^{\bR}_{\mathbb{L}}(\bM)\simeq \Vert B_{\bullet}(\bunit, \bR, \bM)\Vert.$$
\end{lemma}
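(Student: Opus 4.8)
The plan is to produce an explicit cofibrant replacement of $\bM$ in the category $\bR\text{-mod}$ of left $\bR$-modules, namely (a suitable realization of) the two-sided bar resolution $B_{\bullet}(\bR, \bR, \bM)$, and then to observe that applying the indecomposables functor $Q^{\bR}$ to this replacement yields precisely $\Vert B_{\bullet}(\bunit, \bR, \bM)\Vert$. Here $Q^{\bR}$ is identified with $\bunit \otimes_{\bR}(-)$, where $\bunit$ carries the right $\bR$-module structure given by the augmentation $\epsilon\colon \bR\to\bunit$; it is left adjoint to the ``trivial module'' functor $\mathcal{C}\to\bR\text{-mod}$, which visibly preserves fibrations and trivial fibrations (these being detected on underlying objects of $\mathcal{C}$), so $Q^{\bR}$ is left Quillen and $Q^{\bR}_{\mathbb{L}}(\bM) \simeq Q^{\bR}(\bM^{\mathrm{cof}})$ for any cofibrant replacement $\bM^{\mathrm{cof}}\xrightarrow{\sim}\bM$ in $\bR\text{-mod}$.

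The first step is purely formal. Since $Q^{\bR}$ is a left adjoint it commutes with all colimits, in particular with geometric realization of (semi)simplicial objects. Moreover, for any object $X$ of $\mathcal{C}$ there is a natural isomorphism $Q^{\bR}(\bR\otimes X)\cong X$ (the ``untwisting'' $\bunit\otimes_{\bR}(\bR\otimes X)\cong \bunit\otimes X\cong X$). Regarding $B_{\bullet}(\bR,\bR,\bM)$ as a simplicial $\bR$-module via the left $\bR$-action on the outermost tensor factor, applying $Q^{\bR}$ levelwise therefore sends $B_p(\bR,\bR,\bM)=\bR\otimes(\bR^{\otimes p}\otimes\bM)$ to $\bR^{\otimes p}\otimes\bM=B_p(\bunit,\bR,\bM)$, and a routine check shows this identification is compatible with all faces and degeneracies (the outer multiplication $\bR\otimes\bR\to\bR$ becomes the augmentation $\bR\to\bunit$). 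Hence $Q^{\bR}\bigl(\Vert B_{\bullet}(\bR,\bR,\bM)\Vert\bigr)\cong\Vert B_{\bullet}(\bunit,\bR,\bM)\Vert$.

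The second step is to show $\Vert B_{\bullet}(\bR,\bR,\bM)\Vert$ is a cofibrant left $\bR$-module carrying a weak equivalence to $\bM$. For the weak equivalence: the unit map $\bunit\to\bR$ provides an extra degeneracy on the underlying simplicial object, exhibiting the augmented object $B_{\bullet}(\bR,\bR,\bM)\to\bM$ as having a simplicial contraction; realizing this contraction shows $\Vert B_{\bullet}(\bR,\bR,\bM)\Vert\to\bM$ is a homotopy equivalence. For cofibrancy: each $B_p(\bR,\bR,\bM)=\bR\otimes(\bR^{\otimes p}\otimes\bM)$ is a free $\bR$-module on the object $\bR^{\otimes p}\otimes\bM$, which is cofibrant in $\mathcal{C}$ since $\bR$ and $\bM$ are and the monoidal product of cofibrant objects is cofibrant (pushout-product axiom); a free module on a cofibrant object is a cofibrant $\bR$-module, and the (fat) realization of a semisimplicial object all of whose terms are cofibrant $\bR$-modules is again a cofibrant $\bR$-module and computes the homotopy colimit. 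Combining the two steps, $Q^{\bR}_{\mathbb{L}}(\bM)\simeq Q^{\bR}\bigl(\Vert B_{\bullet}(\bR,\bR,\bM)\Vert\bigr)\cong\Vert B_{\bullet}(\bunit,\bR,\bM)\Vert$.

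I expect the main obstacle to be the homotopical bookkeeping in the second step: making precise which flavor of geometric realization $\Vert\cdot\Vert$ denotes and exactly which cofibrancy hypotheses are needed for (i) the bar resolution to be a genuine cofibrant object of $\bR\text{-mod}$, (ii) the extra-degeneracy contraction to pass to realizations, and (iii) the realization to model the homotopy colimit. With the fat realization the hypothesis that $\bR$ and $\bM$ are cofibrant is exactly what makes every bar term cofibrant, which is the point; the remaining verifications are standard properties of the two-sided bar construction, and one may either carry them out directly (following May's treatment and the projective/Reedy model structure recollections above) or invoke the general machinery cited in \cite{e2cellsI}.
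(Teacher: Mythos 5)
The paper does not give a proof of this lemma; it is quoted directly from Galatius--Kupers--Randal-Williams \cite[Lemma 9.18]{e2cellsI}, so there is no in-paper argument to compare against. Your reconstruction is the standard bar-resolution argument that underlies the cited result: identify $Q^{\bR}$ with $\bunit\otimes_{\bR}(-)$ as a left Quillen functor, resolve $\bM$ by the levelwise-free $\bR$-module $\Vert B_{\bullet}(\bR,\bR,\bM)\Vert$, and observe that $Q^{\bR}$ applied levelwise gives $B_{\bullet}(\bunit,\bR,\bM)$ and commutes with thick realization. The homotopical points you correctly flag as needing care---that $\Vert B_{\bullet}(\bR,\bR,\bM)\Vert$ is genuinely cofibrant in $\bR$-modules, that the extra-degeneracy contraction passes to the \emph{thick} realization, and that this realization models the homotopy colimit---are exactly the content the citation is deferring to \cite{e2cellsI}; they rest on $\Delta_{\text{inj}}^{\op}$ being an inverse Reedy category (so levelwise cofibrancy gives Reedy cofibrancy, where one also needs the unit of $\mathcal{C}$ cofibrant to conclude $\bR^{\otimes p}\otimes\bM$ is cofibrant) and on the extra-degeneracy criterion for thick realizations of augmented semi-simplicial objects. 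Your outline is correct in substance.
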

An example of a cofibrant space  in $\topo$ is any space weakly homotopy equivalent to a CW-complex. If $\bK$ is a field, then any object in $\ch_{\bK}$ or $\sMod_{\bK}$ is cofibrant. If $\mathcal{G}$ is a groupoid, then an object $X\in \mathcal{\topo}^{\mathcal{G}}$ is cofibrant if for each object $S$ in $\mathcal{G}$, $ X_{S}$ is $G$-homotopy equivalent to a $G$-CW complex with $G=\Hom_{\mathcal{G}}(S, S)$. If $\bK$ is characteristic zero and $\Hom_{\mathcal{G}}(S, S)$ is finite for all objects $S$, then all objects of $\ch_{\bK}^\mathcal{G}$ and $\sMod_{\bK}^\mathcal{G}$ are cofibrant.

We conclude this section with a brief discussion of homotopy colimits.

\begin{definition} 
    Let $\Delta_{\text{inj}}$ denote the category of finite linearly ordered sets $$[p]=\{0< 1<\dots < p\},$$ with $p\in\N$, and morphisms the set of all order-preserving maps. Composition is given by composition of functions.
\end{definition}
We remark that with our notation conventions, the underlying set of $[n-1]$ is isomorphic to  $\bn$.  
    
\begin{definition}
    Let $\mathcal{C}$ be a topologically enriched category.
    A \emph{co-$\mathcal{C}$-space} is an enriched covariant functor $X_{\sqbullet} \colon \mathcal{C}^{\text{op}}\to \text{Top}$.
    Given $A\in \mathcal{C}$, let $X_{A}$ denote the value of the functor $X_{\sqbullet}$ at $A$.
\end{definition}

\begin{definition}    
    An \emph{augmented co-$\mathcal{C}$-space} is a triple $(X_{\sqbullet}, X_{-1}, \epsilon_{\sqbullet})$, with $X_{-1}\in \topo$, $X_{\sqbullet}$ a co-$\mathcal{C}$-space, and $\epsilon_{\sqbullet}$ an enriched natural transformation from $X_{\sqbullet}$ to the constant co-$\mathcal{C}$-space at $X_{-1}$. 
     The natural transformation $\epsilon_{\sqbullet}$ is called an \emph{augmentation}. We denote the value of an augmentation at an object $A\in \mathcal{C}$ by $\epsilon_{A}\colon X_{A}\to X_{-1}$.
\end{definition}

The following definition gives an explicit model for certain homotopy colimits. 

\begin{definition} \label{DefnModelHocolim}
    Let $\mathcal{C}$ be a small topologically enriched category. Let $X_{\sqbullet} \colon \mathcal{C}^{\text{op}}\to \text{Top}$ be a co-$\mathcal{C}$-space. Let $B_{\bullet}(*, \mathcal{C}, X_{\sqbullet})$ be the semi-simplicial space with $p$-simplices
    $$\bigsqcup_{A_{0},\ldots, A_{p}\in \text{ob}(\mathcal{C})}\mathcal{C}(A_{0}, A_{1})\times\mathcal{C}(A_{1}, A_{2})\times\cdots \times\mathcal{C}(A_{p-1}, A_{p})\times X_{A_{p}}.$$ The face map $d_{i}$ is induced by dropping the map in $\mathcal{C}(A_{0}, A_{1})$ for $i=0$, by composing the maps in $\mathcal{C}(A_{i-1}, A_{i})$ and $\mathcal{C}(A_{i}, A_{i+1})$ for $1\leq i\leq p-1$, and by evaluation $\mathcal{C}(A_{p-1}, A_{p})\times X_{A_{p}}\to X_{A_{p-1}}$ for $i=p$. 
    The functorial model for the homotopy colimit $\hocolim\limits_{\sqbullet\in \mathcal{C}^{\text{op}}}X_{\sqbullet}$ we use in this paper is
    the thick geometric realization $$\hocolim\limits_{\sqbullet\in \mathcal{C}^{\text{op}}}X_{\sqbullet} \colonequals ||B_{\bullet}(*, \mathcal{C}, X_{\sqbullet}) ||. $$
\end{definition}

Note that when $\mathcal{C}$ is $\Delta_{\text{inj}}$, a co-$\Delta_{\text{inj}}$-space $X_{\bullet}$ is  a semi-simplicial space and the  homotopy colimit $ \hocolim\limits_{\bullet \in \Delta_{\text{inj}}^{\op}}X_{\bullet}$ is weakly homotopy equivalent to the thick geometric realization $||X_{\bullet}||$ (see Dugger \cite[Example 9.15]{dug}).

Given an augmentation $\epsilon_{\sqbullet}\colon X_{\sqbullet}\to X_{-1}$, there is an
induced augmented co-$\Delta_{\text{inj}}$-space $$(B(*, \mathcal{C}, X_{\sqbullet}), X_{-1}, B(\epsilon)_{\bullet}),$$ with the 
augmentation
$$B(\epsilon)_{\bullet}\colon B_{\bullet}(*, \mathcal{C}, X_{\sqbullet})\to X_{-1}$$ coming from $\epsilon_{\sqbullet}$. As a result, an augmentation $\epsilon_{\sqbullet}\colon X_{\sqbullet}\to X_{-1}$ induces a map
 $$\hocolim\limits_{\sqbullet\in \mathcal{C}^{\op}}\epsilon_{\sqbullet}\colonequals ||B(\epsilon)_{\bullet}||\colon \hocolim\limits_{\sqbullet\in \mathcal{C}^{\op}}X_{\sqbullet}\to X_{-1}.$$ 

We note that the semi-simplicial object $B_{\bullet}(*, \mathcal{C}, X_{\sqbullet})$ naturallly has the structure of a simplicial object, where
the degeneracy $s_i$ is given by insertion of the identity morphism in $\mathcal{C}(A_i, A_i)$. However, we will not use this additional structure; we only consider the {\em thick} geometric realization.

\section{Models of Hurwitz space}

In this section, we describe a model for  Hurwitz spaces and ordered Hurwitz spaces. We use variants of the configuration mapping space models of Hurwitz space from Ellenberg--Venkatesh--Westerland \cite{EVW2}. We add extra parameters to produce strictly associative monoid structures on these spaces.


\begin{definition}
    Given topological spaces $X$ and $Y$, let $\Emb(X, Y)$ denote the set of continuous injective maps $f\colon X\hookrightarrow Y$, topologized with the compact-open topology.  We define an $\fb$-space $\oconf[]$, and we write $\oconf[S]$ for its value on a finite set $S$.  For $S \neq \varnothing$, let 
    $$\oconf[S]\colonequals\left\{(\lambda, \xi)\in \mathbb{R}_{\geq 0}\times \Emb\big(S, (0,\infty)\times (0,1)\big) \; \middle| \;\im(\xi)\subset  (0,\lambda)\times (0,1)\right\}.$$
    Let $\oconf[\varnothing]$ be a single point, written as $(0, \varnothing)$.
    \end{definition}

   The parameter $\lambda$ is analogous to the length parameter in a Moore loop space.   We view an element of $\oconf$ as a length parameter $\lambda$ and an $n$-element subset of $(0,\lambda) \times (0,1) \subseteq \R^2$ which is labeled bijectively by elements of $S$, as illustrated in Figure \ref{NewFigurePointOConf}.

   \begin{figure}[ht!]
    \centering
    \includegraphics[scale=.5]{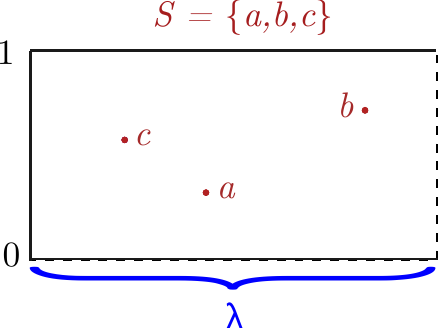}
    \caption{A point in $\oconf[\{a,b,c\}]$.}
    \label{NewFigurePointOConf}
\end{figure}

  We defined $\oconf[\varnothing]$ to be a point instead of a contractible space for technical convenience.

    Given a bijection of finite sets $h\colon S\to T$, we have an induced map $h_{*}\colon \oconf[S]\to \oconf[T] $ by precomposing with $h^{-1}$, 
    \begin{align*}
        h_{*}\colon \oconf[S]&\longrightarrow \oconf[T]\\
        (\lambda, \xi)&\longmapsto (\lambda, \xi\circ h^{-1}).
    \end{align*}
 See Figure \ref{o-FB-Action}.
   \begin{figure}[ht!]
    \centering
    \includegraphics[scale=.5]{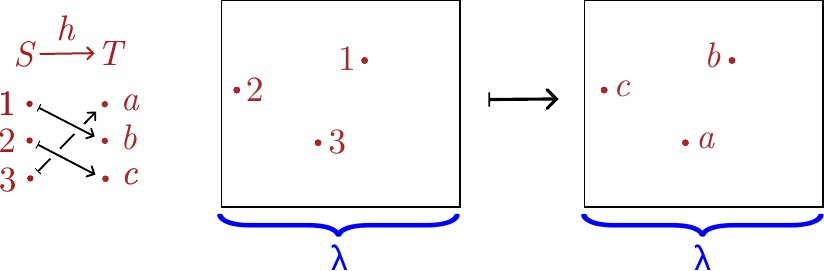}
    \caption{The action of a bijection $h\colon S \to T$.}
    \label{o-FB-Action}
\end{figure}

 \begin{definition}
    Let $\conf[]$ be the $\N$-graded space with $\conf[n]=\oconf[\bn]/\Sigma_{n}$ for all $n \geq 0$.
\end{definition}
  We view an element of $\conf$ as a length parameter $\lambda$ and an $n$-element subset of $(0,\lambda) \times (0,1) \subseteq \R^2$, as in Figure \ref{FigurePointConf}. By abuse of notation, we may denote the equivalence class of an injection $\xi \colon \bn \to \R^2$ by its image. 

\begin{figure}[ht!]
    \centering
      \includegraphics[scale=0.5]{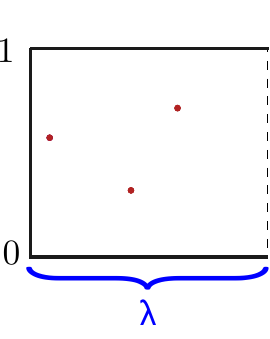}
    \caption{A point in $\conf[3]$.}
    \label{FigurePointConf}
\end{figure}

The space $\oconf[]$ is a monoid object in $\topo^{\fb}$, with the unit given by $(0, \varnothing)$. The product is defined by the formula

\begin{align*} 
\oconf[S_{0}] \times \oconf[S_{1}] &\longrightarrow 
 \oconf[S_{0}\sqcup S_{1}] \\ 
(\lambda_{0}, \xi_{0})\cdot (\lambda_{1}, \xi_{1}) & \colonequals (\lambda_{0}+ \lambda_{1}, \xi)
 \end{align*} 

    
   $$ \text{where } \xi(s)=\begin{cases} \xi_{0}(s) & \text{for } s\in S_{0},\\
    \xi_{1}(s) + (\lambda_{0}, 0) & \text{for } s\in S_{1}.
    \end{cases}$$    
See Figure \ref{FigureConfProduct}.
\begin{figure}[ht!]
    \centering
      \includegraphics[scale=0.5]{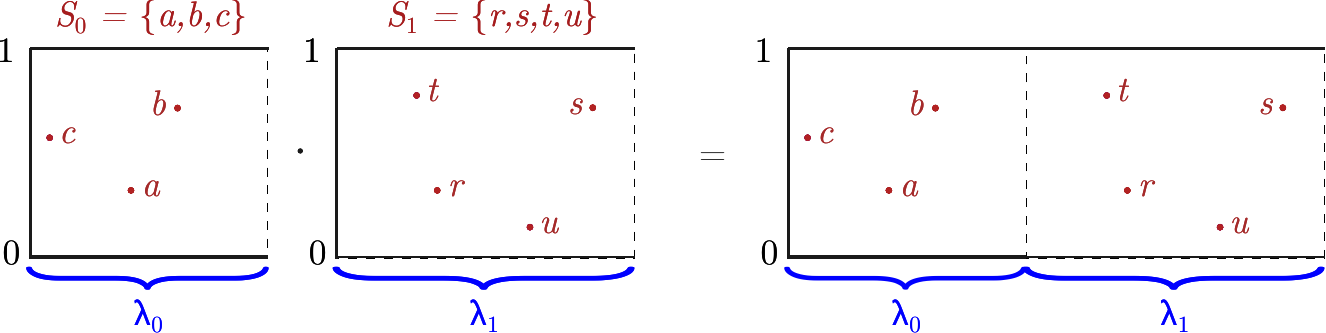}
    \caption{The product $(\lambda_{0}, \xi_{0})\cdot (\lambda_{1}, \xi_{1})$ is defined by concatenation of configurations.}
    \label{FigureConfProduct}
\end{figure}

   A monoid structure on $\conf[]$ is defined analogously.

\begin{definition}
    Let $\sigma\in \conf[1]$ denote the element $(1, \left\{\left(\frac12, \frac12)\right\}\right)$.   For $p\geq 1$,  let $\sigma^{p}=\sigma\cdot\ldots\cdot \sigma$ ($p$ times) in  $\conf[p]$.
\end{definition}

Our goal is to define an $\fb$-space that we will call ordered Hurwitz space, and denoted $\ohur[]$. For a finite set $S$, we will first describe $\ohur[,S]$ as a set and then describe its topology. Our model for Hurwitz space will be a Moore  path version of the configuration mapping space model from Ellenberg--Venkatesh--Westerland \cite[Definition 2.2]{EVW2}.

\begin{definition}
Let $G$ be a finite group and $c \subseteq G$ be a conjugation-invariant subset.    Let $\ohur[]$ denote the $\fb$-space whose value $\ohur[,S]$ on a finite set $S$ has underlying set the set of tuples $((\lambda, \xi), f)$ defined as follows. 
   For $S\neq \varnothing$,
    \begin{enumerate}
        \item $(\lambda, \xi)\in\oconf[S]$,
        \item $f\colon \Big([0, \lambda]\times [0,1]\Big)\setminus \im(\xi) \to BG$ is a continuous map satisfying the following conditions. It sends the subset $${\textstyle \boldsymbol{\bigsqcup}_{\lambda}}\colonequals (\{0, \lambda\} \times [0,1])\cup ([0,\lambda]\times \{0\})$$ to the basepoint of $BG$. For each $s \in S$ the map $f$ sends a small loop encircling $\xi(s)$ (and no other points of $\xi(S)$) to a loop in $BG$ representing an element of $c\subseteq G$. 
        We remark that we will write ${\textstyle \boldsymbol{\bigsqcup}}$ for ${\textstyle \boldsymbol{\bigsqcup}_{\lambda}}$ when $\lambda$ is clear from context. 
    \end{enumerate}
  For $S= \varnothing$, we set $\ohur[,0]$ to be a single point $((0, \varnothing), f_{0})$, where   
    $f_{0}$ is the map $f_0 \colon \{0\}\times [0,1] \to BG$ sending everything to the basepoint in $BG$.
       \end{definition}

A point in $\ohur[,\{r,s,t\}]$ is illustrated in Figure \ref{FigurePointOHur}.
\begin{figure}[ht!]
    \centering
      \includegraphics[scale=0.65]{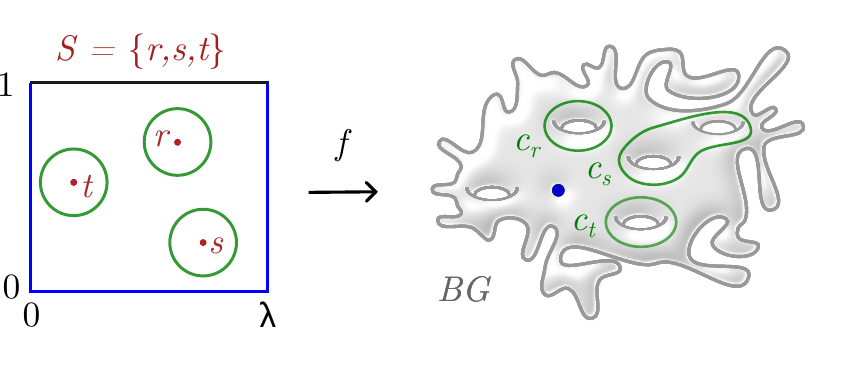}
    \caption{An element of $\ohur[,\{r,s,t\}]$. Each of the (unbased homotopy class of) loops $c_i$ corresponds to a conjugacy class of $G$ contained in $c$. The subset $\boldsymbol{\bigsqcup}$, and its image the basepoint of $BG$, are shown in blue.}
    \label{FigurePointOHur}
\end{figure}

 The space $\ohur[, S]$ is topologized as in Ellenberg--Venkatesh--Westerland 
 \cite[Definition 2.2]{EVW2}, as follows.  For this paragraph we let $\boldsymbol{\Box}$  denote the square $[0,1] \times [0,1]$ in $\mathbb{R}^2$. For $\lambda \in \R_{>0}$, the matrix $$ M_{\lambda}\colonequals \begin{bmatrix} \lambda & 0 \\ 0 & 1 \end{bmatrix}$$ scales $\boldsymbol{\Box}$ by a factor of $\lambda$ in the horizontal direction. Given a finite set $S \neq \varnothing$, fix an embedding $\xi_0 \colon S \to \boldsymbol{\Box}$.
 Let $\mathrm{Diff}^+(\boldsymbol{\Box}\, , \partial \,\boldsymbol{\Box})$ denote the group of diffeomorphisms of $\boldsymbol{\Box}$ that fix the boundary pointwise, and let  $\mathrm{Diff}^+(\boldsymbol{\Box}, \xi_0)$ denote the subgroup of $\mathrm{Diff}^+(\boldsymbol{\Box}\, , \partial \, \boldsymbol{\Box})$ that fixes the subset $\xi_0(S) \subseteq \boldsymbol{\Box}$ pointwise. 
Then the map 
\begin{align*}
   \R_{>0} \times \mathrm{Diff}^+(\boldsymbol{\Box}, \partial \, \boldsymbol{\Box})\times \mathrm{Map}(\boldsymbol{\Box} \setminus \xi_0(S), BG) & \longrightarrow \mathrm{OHur}^G_{G, S}  \\ (\lambda, \phi, f) & \longmapsto ((\lambda, \,  M_{\lambda} \circ \phi \circ \xi_0), \, f\circ \phi^{-1} \circ M_{\lambda}^{-1}) 
\end{align*}
is invariant under the action of $\mathrm{Diff}^+(\boldsymbol{\Box}\, , \xi_0)$ by $$\rho \colon (\lambda, \phi, f) \longmapsto (\lambda, \,  \phi \circ \rho, \, f \circ \rho).  $$ 
As in Ellenberg--Venkatesh--Westerland 
 \cite[Equation 2.3.1]{EVW2}, the map factors through a bijection of sets
\begin{align*}
   \R_{>0} \times \mathrm{Diff}^+(\boldsymbol{\Box} \,,   \partial\,  \boldsymbol{\Box})\times_{  \mathrm{Diff}^+(\boldsymbol{\Box}\, , \xi_0)} \mathrm{Map}(\boldsymbol{\Box} \, \setminus \xi_0(S), BG) & \xrightarrow{\cong} \mathrm{OHur}^G_{G,S} .
\end{align*}

The topology on the left-hand side (determined by the compact-open topologies) therefore defines a topology on
$\mathrm{OHur}^G_{G,S}$.  For a conjugation-invariant subset $c \subseteq G$, we topologize  $\mathrm{OHur}^c_{G, S} \subseteq \mathrm{OHur}^G_{G,S}$ with the subspace topology.

    We note that, as in Ellenberg--Venkatesh--Westerland 
 \cite[Proposition 2.3.1]{EVW2},  the map
    \begin{align*}
        \phi\colon \ohur[, S]&\longrightarrow \oconf[S]\\
        ((\lambda, \xi), f) &\longmapsto (\lambda, \xi)
    \end{align*}
    is a fiber bundle, whose fiber over 
    $(\lambda, \xi)$ 
    is the space of maps
    $$f\colon\left ([0, \lambda]\times [0,1]\setminus \im(\xi), \boldsymbol{\bigsqcup}\right) \to (BG, \text{pt})$$
    sending a small loop around each point of $\xi$ to a loop in the conjugation-invariant subset $c$.

    \begin{definition} 
    Let $G$ be a finite group and $c \subseteq G$ be a conjugation-invariant subset.  Given a bijection of finite sets $h\colon S\to T$, we have an induced map
    $h_{*}\colon \ohur[, S]\to \ohur[, T]$
    via the induced map on configuration spaces, 
    $$h_{*}((\lambda, \xi), f)\colonequals ((\lambda, \xi \circ h^{-1}), f).$$
Let $\ohur\in \topo^{\fb}$ denote the $\fb$-space with $\ohur(S)\colonequals \ohur[, S]$. Let $\hur$ denote the $\N$-graded space with $\hur[,n]=\ohur({\bn} )/\Sigma_{n}$ for all $n \in \N$.
 \end{definition}

Given $a, b\in G$, we will write $a^{b}\in G$ for the conjugate $ba b^{-1}$.

\begin{definition}
Let $G$ be a finite group and $c \subseteq G$ be a conjugation-invariant subset. Let $\Br_n$ denote the braid group on $n$ strands and $\PBr_n$ denote the pure braid group on $n$ strands. Define the $j$-th Artin generator of $\Br_n$ 
to be the braid that moves the $j$-th strand one time across the $(j+1)$-st strand and leaves all other strands unchanged.
 The braid group $\Br_n$ acts on $c^n$ on the left as follows. The action of the $j$-th Artin generator is
\begin{equation}\label{eq: brd grp action}
  (c_1,\ldots, c_j, c_{j+1},\ldots, c_n) \longmapsto (c_1,\ldots, c_{j-1},  c_{j+1}^{c_{j}}, c_j, c_{j+2},\ldots, c_n).
\end{equation}

\end{definition}
It follows from Artin's presentation for $\Br_n$ that this action is well-defined.

By Ellenberg--Venkatesh--Westerland \cite[Section 5.7 \& 5.8]{EVW2}, the space $\hur[,n]$ is homotopy equivalent to the homotopy quotient $(c^{n})_{h \Br_{n}}$ of $c^{n}$ by the action (\ref{eq: brd grp action}) of the braid group  $\Br_{n}$. See Figure \ref{ActionArtinGenerator}.
    \begin{figure}[ht!]
    \centering
    \includegraphics[scale=.7]{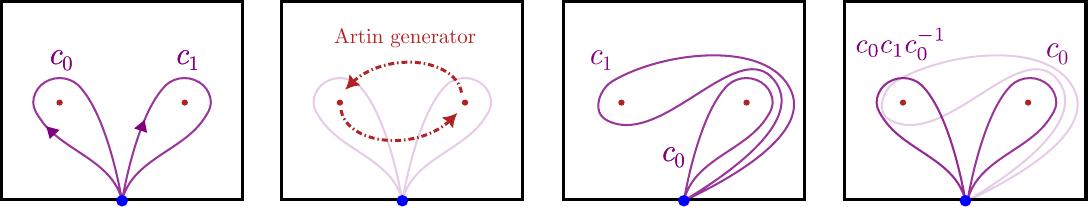}
    \caption{The action of an Artin generator of $\Br_n$ on $c^n$ from the perspective of configuration mapping spaces. Compare to Formula (\ref{eq: brd grp action}).}
    \label{ActionArtinGenerator}
\end{figure}
Similarly, there is a homotopy equivalence $\ohur[,\bn] \simeq (c^n)_{h \PBr_{n}}$.
 
    The object $\ohur\in \topo^{\fb}$ has the structure of a monoid object in $\topo^{\fb}$, where $((0, \varnothing), f_{0})$ is the unit, and 
    the product is given by the formula,
   \begin{align*}
       \ohur \times \ohur & \longrightarrow \ohur \\ 
   ((\lambda_{1}, \xi_{1}), f_{1})\cdot ((\lambda_{2}, \xi_{2}), f_{2}) &\colonequals ((\lambda_{1}, \xi_{1})\cdot (\lambda_{2}, \xi_{2}), f_{3}),
   \end{align*}
    where 
    $$f_{3}(x, y)=\begin{cases} f_{1}(x, y) & \text{if } 0\leq x \leq \lambda_{1},\\
    f_{2}(x-\lambda_{1}, y) & \text{if } \lambda_{1}\leq x \leq \lambda_{1}+\lambda_{2}.
    \end{cases}$$
    The monoid structure on $\ohur$ gives $\hur$ the structure of a monoid object in $\topo^{\N}$. 

It follows from the homotopy equivalence $\hur[,n] \simeq (c^{n})_{h \Br_{n}}$ that there is an isomorphism of $\bK$-modules $H_{0}(\ohur[, n]; \bK) \cong \bK[c^{n}\slash \Br_{n}]$, where $\bK[c^{n}\slash \Br_{n}]$ denotes the free $\bK$-module on the set of orbits $c^{n}\slash \Br_{n}$. Similarly, $H_{0}(\ohur[, \bn];\bK) \cong \bK[c^{n}\slash \PBr_{n}]$.

    \begin{definition} \label{Defni}
        For each $w\in c$, fix a map $$f_{w}\colon \Big([0,1]\times [0,1] \Big) \setminus \left\{\left(\textstyle\frac12, \frac12\right)\right\}\to BG$$ sending $\boldsymbol{\bigsqcup}$ to the basepoint of $BG$, and which maps a generator of the fundamental group of the boundary of $[0,1] \times [0,1]$  to $w$ in $\pi_1(BG)$, as shown in Figure \ref{FigureMapfw}. Let $i(w)\in \hur[,1]$ denote $\left(\left(1, \left(\frac12, \frac12\right)\right), f_{w}\right)$.     

        \begin{figure}[ht!]
    \centering
      \includegraphics[scale=0.65]{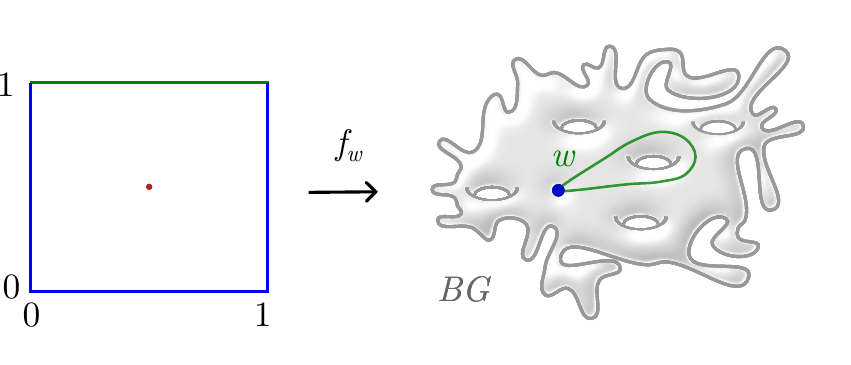}
    \caption{The point $i(w) = \left(\left(1, \left(\frac12, \frac12\right)\right), f_{w}\right) \in  \hur[,1]$.}
    \label{FigureMapfw}
\end{figure}
         Given $v=(v_{0},\ldots, v_{p})\in c^{p+1}$, let $i(v)\in \hur[, p+1]$ denote the element
        $$i(v)\colonequals i(v_{0})\cdot i(v_{1})\cdot\ldots \cdot i(v_{p})$$
        as shown in Figure \ref{Figureiv}. We denote by $D_p$ the domain of the map associated to $i(v)$, 
    $$D_p \colonequals \Big( [0, p+1] \times [0,1] \Big) \setminus \left\{{ \textstyle \left(\frac12, \frac12\right), \left(1+\frac12, \frac12\right),  \dots ,\left(p+\frac{1}{2}, \frac12\right)} \right\},$$ and we call this map $$g_v \colon D_p \to BG.$$ 

The map $g_v$ together with the configuration $\sigma^{p+1}$ gives an element of $\hur[,p+1]$. The homotopy equivalence $$ \hur[,p+1] \simeq c^{p+1} /\!\!/ \Br_{p+1}  $$ established by Ellenberg--Venkatesh--Westerland \cite{EVW2} gives a bijection $$\pi_0\left(\hur[,p+1] \right) \cong c^{p+1} / \Br_{p+1}. $$ 
Note that the element corresponding to $i(v)$ in $\pi_0(\hur[,p+1])$ agrees with the image of $v$ under the map $$c^{p+1} \to c^{p+1}/\Br_{p+1} \cong \pi_0(\hur[,p+1]).$$

        \begin{figure}[ht!]
    \centering
      \includegraphics[scale=0.5]{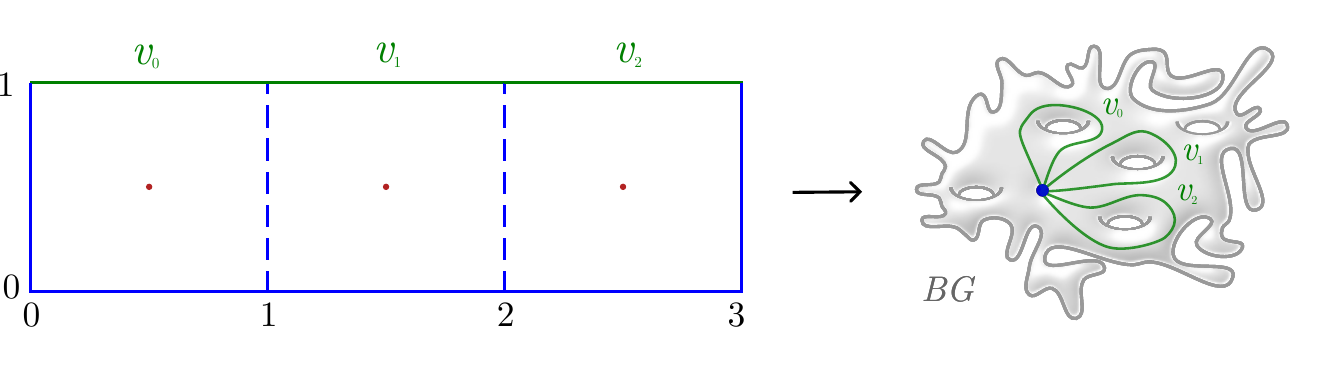}
    \caption{The element $i(v)\in \hur[, 3]$ for $v=(v_0, v_1, v_2)$.}
    \label{Figureiv}
\end{figure}
        
        Given an object $S\in \fb$ and an element $s\in S$, 
        let $\xi_{s}\colon \{s\}\to (0, \infty)\times (0, 1)$ denote the map sending $s$ to $\left(\textstyle\frac12, \frac12\right)$.
        Let $i(w, s)\in \ohur(\{s\})$ denote the element 
        $((1, \xi_{s}), f_{w})$ shown in Figure \ref{FigureOHuriws}. 
                    \begin{figure}[ht!]
   
    \centering
      \includegraphics[scale=0.5]{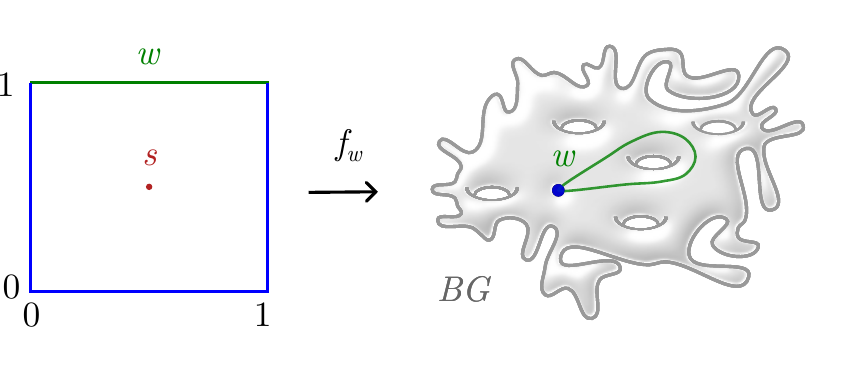}
    \caption{The element $i(w, s)=((1, \xi_{s}), f_{w})\in \ohur(\{s\})$ .}
    \label{FigureOHuriws}
\end{figure}
        
       Recall that $[p]$ denotes the ordered set $\{0,1,\dots, p\}$ with the usual ordering. Given an injection $\alpha\colon [p]\hookrightarrow S$ and $v=(v_{0},\ldots, v_{p})\in c^{p+1}$, let $i(v, \alpha)\in\ohur(\im(\alpha))$ denote the element
        $$i(v, \alpha)\colonequals i(v_{0}, \alpha(0))\cdot i(v_{1}, \alpha(1))\cdot\ldots \cdot i(v_{p}, \alpha(p))$$
        shown in Figure \ref{FigureOHuriva}.

                \begin{figure}[ht!]
    \centering
      \includegraphics[scale=0.5]{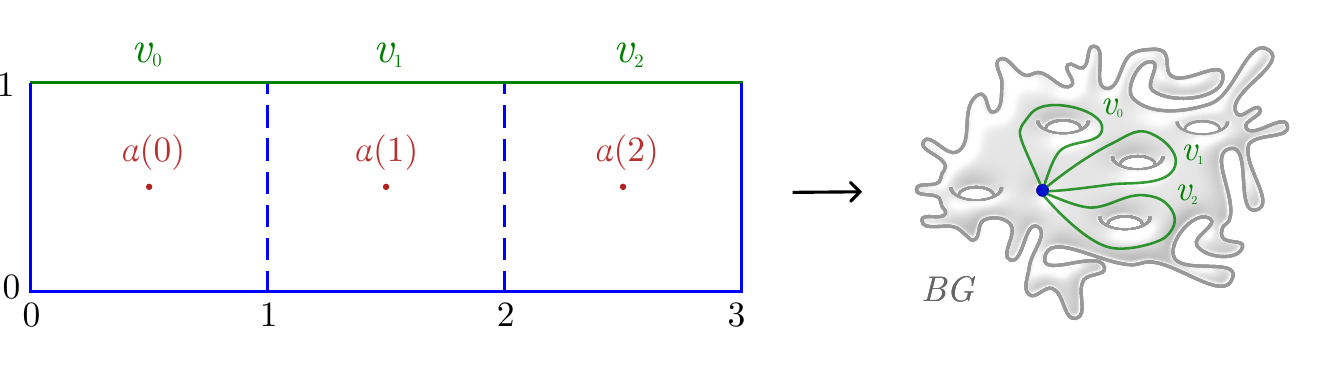}
    \caption{The element $i(v, \alpha)\in\ohur[, \im(\alpha)]$ for $v=(v_0, v_1, v_2)$.}
    \label{FigureOHuriva}
\end{figure}
    \end{definition}

\section{Variants of the semi-simplicial category} 

In this section, we first recall a topological version of the semi-simplicial category introduced by Randal-Williams \cite[Section 2]{MR4767884} to construct resolutions  of modules over configuration spaces. Then we introduce a variant of this category which we will use in \cref{ResHurSec} to construct resolutions of modules over Hurwitz spaces.

We first recall Randal-Williams' construction \cite[Definition 2.2]{MR4767884}  of a topological category equivalent to $\Delta_{\text{inj}}$.

\begin{definition}
    A \textit{Moore path} on a space $X$ is a pair $(T, \mu)$, where $T\in [0,\infty)$ and $\mu\colon [0,\infty)\to X$ is a map such that $\mu(t)=\mu(T)$ for all $t\geq T$. If $(T_{0}, \mu_{0})$ and $(T_{1}, \mu_{1})$ are Moore paths in $X$ and $\mu_{1}(0)=\mu_{0}(T_{0})$, the \textit{concatenation} $(T_{0}, \mu_{0})*(T_{1}, \mu_{1})$ of $(T_{0}, \mu_{0})$ and $(T_{1}, \mu_{1})$ is defined as the Moore path $(T_{0}+T_{1}, \mu_{2})$, where  $$\mu_{2}(t)=\begin{cases} \mu_{0}(t) & \text{if } 0\leq t \leq T_{0},\\
    \mu_{1}(t-T_{0}) & \text{if } t \geq T_{0}.
    \end{cases}$$
    We denote the space of Moore paths on $X$ by $\text{Moore}(X)$.
\end{definition}
We will often suppress notation and write a Moore path as $\mu$ instead of $(T, \mu)$.
\begin{definition}\label{def:tildedel}
    For $[q], [p] \in \Delta_{\text{inj}}$ let $U([q], [p])$ denote the space of pairs $(a, \mu)$, with  $a \in \conf[p-q]$ and a Moore path $\mu$ in $\conf[p]$ from $\sigma^{p+1}$ to $\sigma^{q+1}\cdot a$. See Figure \ref{FigureUqpComposition}.
    
    There is a composition law
    \begin{align*}
       U([r], [q])\times U([q], [p])&\to U([r], [p])\\
       ((a', \gamma),(a, \mu))&\mapsto (a'\cdot a, \mu* ( \gamma\cdot a))
    \end{align*}
    that gives $U$ the structure of a topologically enriched category. 
    \end{definition}
    The composition law is illustrated in Figure \ref{FigureUqpComposition}. 
        
          \begin{figure}[ht!]
    \centering
      \includegraphics[scale=0.18]{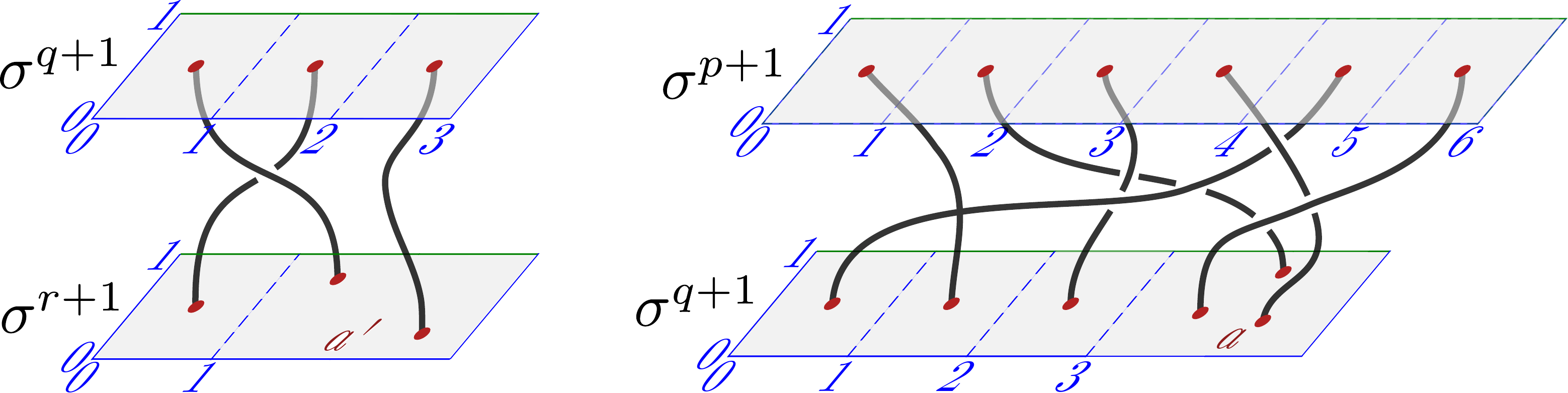} \hfill
\includegraphics[scale=0.18]{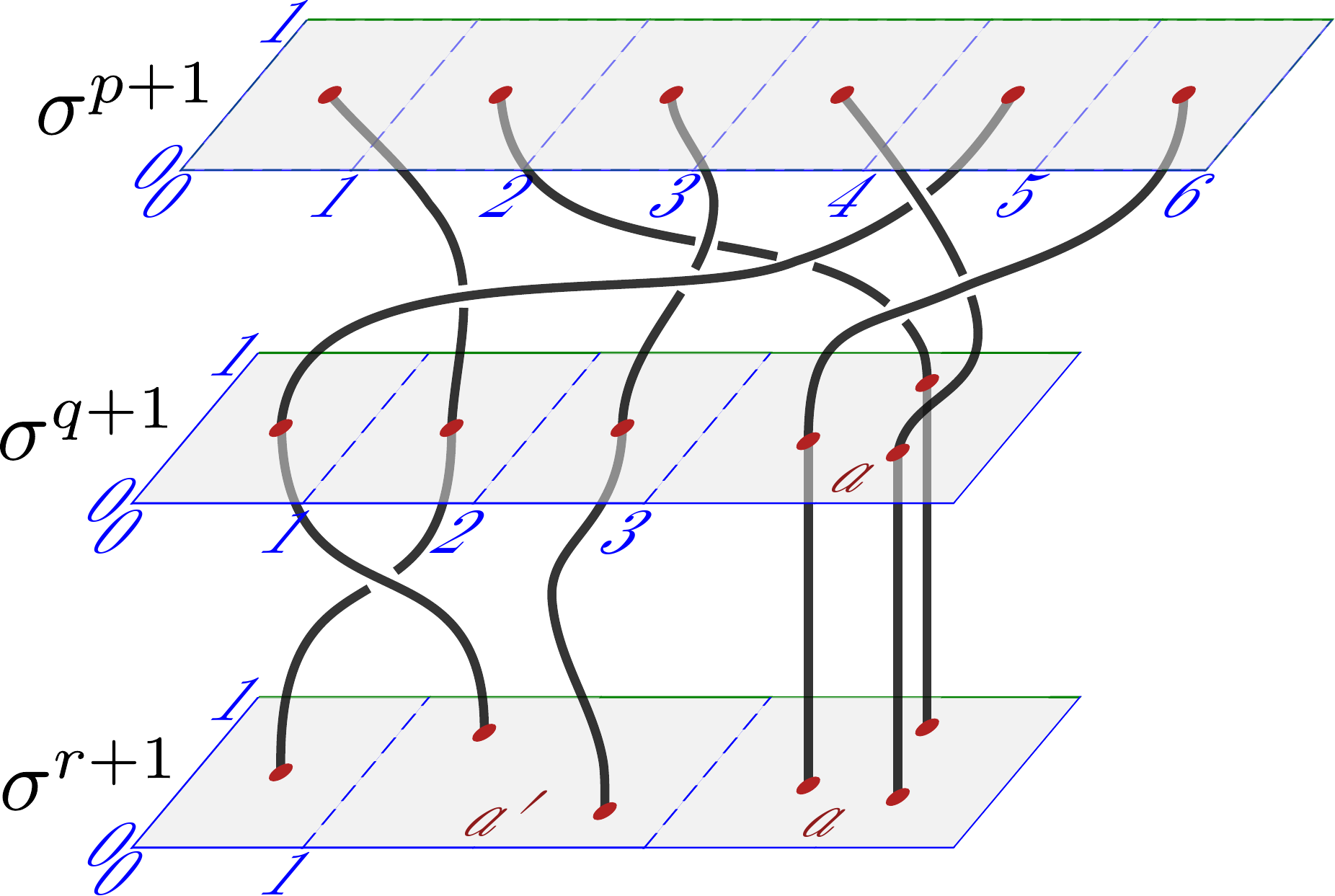}
    \caption{Morphisms in  $U([0], [2])$ and $U([2], [5])$, and their composite.}
    \label{FigureUqpComposition}
\end{figure}


Consider a morphism $(a,\mu) \in U([q],[p])$. Observe that the Moore path $\mu$ determines the element $a$ but we keep it in the notation.

\begin{definition} \label{DefnLiftMonotone} Given a monotone injection $\iota\colon [q]\to [p]\in \Delta_{\text{inj}}$, we will specify a corresponding path component of $U([q], [p])$,  containing the pair given by the point $\sigma^{p-q} \in \conf[p-q]$ and a Moore loop satisfying the following condition. As a braid, the $(q+1)$ strands  starting at positions numbered $\iota([q])$ do not cross one another and end at positions $0, 1, \dots, q$.   These strands pass in front of the complementary $p-q$ strands, where `in front' refers to the orientation of $\R^2$ depicted in our figures. For $k=0, 1, \dots, q$, the strand starting at position $\iota(k)$ ends at position $k$.

For each monotone injection $\iota\colon [q]\to [p]\in \Delta_{\text{inj}}$, we let $\mu_{\iota}$ denote the Moore path of some fixed choice of representative of the form $(\sigma^{p-q}, \mu_{\iota})$ of the corresponding path component of $U([q], [p])$. 
\end{definition}

For example, Figure \ref{DeltaTwiddleExample} shows two morphisms of $U([2], [5])$ in the path component associated one particular monotone map. 
              \begin{figure}[ht!]
    \centering
      \includegraphics[scale=0.23]{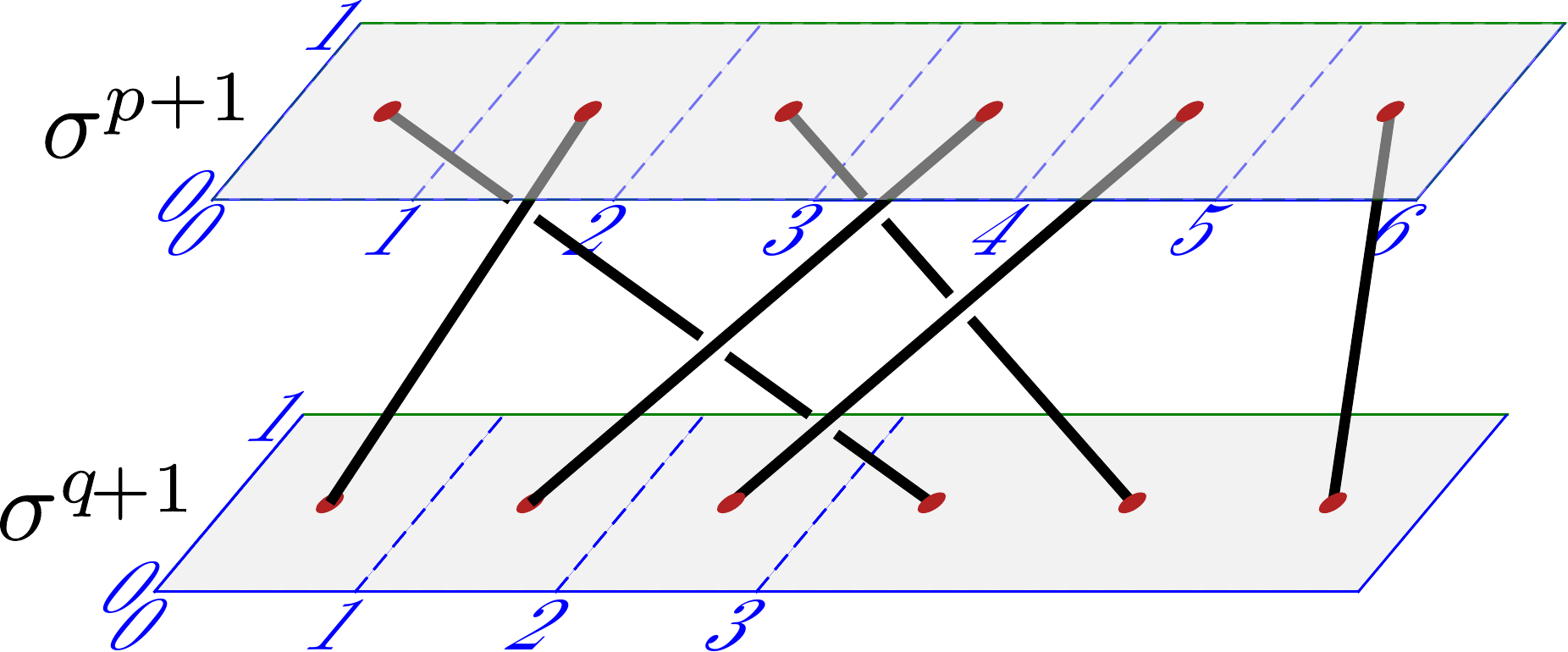} \hfill
\includegraphics[scale=0.23]{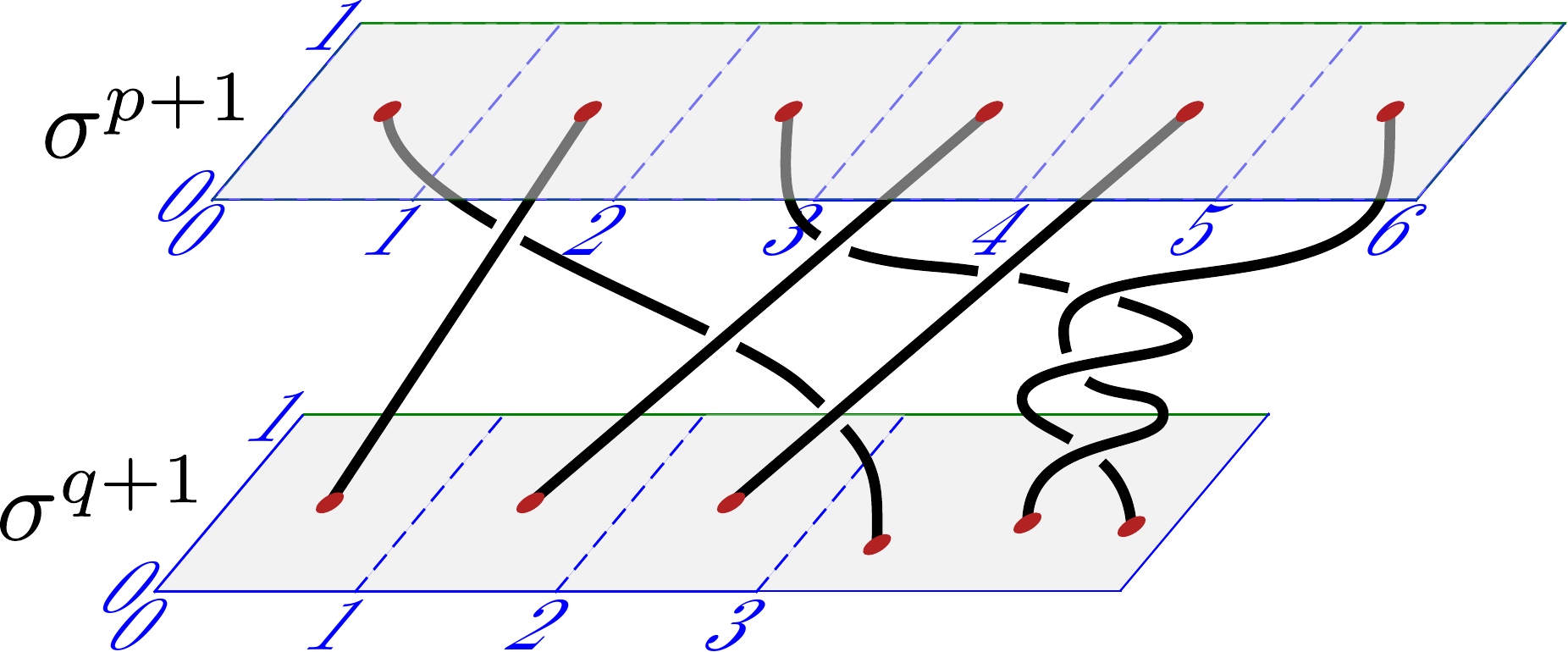}
    \caption{Two morphisms in the path component associated to the monotone injection
    \\
    $\iota \colon [2]  \longrightarrow [5]$
    \\
    $ \quad 0  \longmapsto 1 $
    \\
    $ \quad  1  \longmapsto 3 $
    \\
     $  \quad  2  \longmapsto 4 $ 
}
    \label{DeltaTwiddleExample}
\end{figure}

    Observe that the collection of such morphisms (for all $q, p,$ and $\iota$) is closed under composition.

    \begin{definition} \label{DefnDeltaTwiddleInj}
    Let $\widetilde{\Delta}_{\text{inj}}\subseteq U$ be the subcategory of $U$ with the same objects as $U$ and with morphisms $\widetilde{\Delta}_{\text{inj}}([q], [p])$ consisting of all path components of $U([q], [p])$ corresponding to the monotone injections ${\Delta}_{\text{inj}}([q], [p])$ in the sense of Definition \ref{DefnLiftMonotone}. 
    \end{definition} 
    We observe that the map   $$\widetilde{\Delta}_{\text{inj}} \to  {\Delta}_{\text{inj}}$$ defined (for each $p,q$) by the maps 
$$\widetilde{\Delta}_{\text{inj}}([q], [p])  \to \pi_0 (\widetilde{\Delta}_{\text{inj}}([q], [p])) \xrightarrow{\cong}  {\Delta}_{\text{inj}}([q], [p])$$ is functorial. 
    The morphisms of $\widetilde{\Delta}_{\text{inj}}$ are generated by the path components of $U$ corresponding to the face maps $d_i \colon [p-1] \to [p]$, as illustrated in Figure \ref{DeltaTwiddleExample-FaceMaps}.
              \begin{figure}[ht!]
    \centering
\includegraphics[scale=0.2]{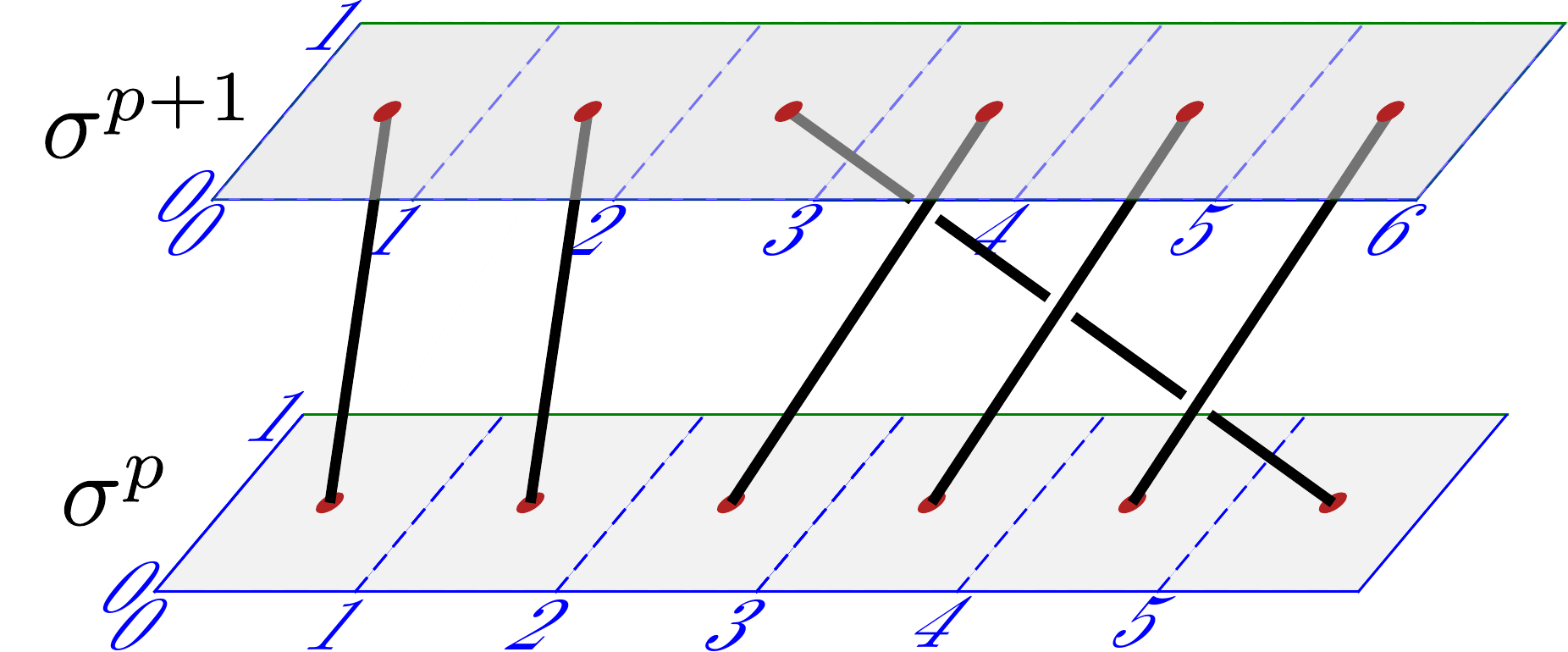}
    \caption{A morphism associated to the face map $d_2 \colon [4]  \to [5]$.}
    \label{DeltaTwiddleExample-FaceMaps}
\end{figure}  

The category $\widetilde{\Delta}_{\text{inj}}$ is a topological version of ${\Delta}_{\text{inj}}$. The functor 
$\widetilde{\Delta}_{\text{inj}} \to  {\Delta}_{\text{inj}}$
is a homotopy equivalence (see Krannich \cite[Lemma 2.11]{MR4019896} and Randal-Williams \cite[Page 3]{MR4767884}), that is, the path
components of $\widetilde{\Delta}_{\text{inj}}([q], [p])$ are contractible and in bijection with ${\Delta}_{\text{inj}}([q], [p])$.

We note that Randal-Williams \cite{MR4767884} studies right modules instead of left ones. For this reason, some of our definitions differ slightly from the definitions given there, but these distinctions do not make a material difference.  

Randal-Williams' category was defined to construct resolutions of modules over configuration spaces. Now we will describe a category adapted to construct resolutions of modules over Hurwitz spaces.

\begin{definition}
    Given $w\in c^{q+1}$ and $v\in c^{p+1}$, let $U_{c}(w, v)$ denote the space of pairs $(a, \mu)$, where $a\in \hur[, p-q]$ and a Moore path $\mu$ in $\hur[, p+1]$ from $i(v)$ to $  i(w)\cdot a$. We define a composition law on $U_{c}$ via the same formula as that on $U$ to give $U_{c}$ the structure of a topologically enriched category.
\end{definition}

A morphism in $U_c$ is shown in Figure \ref{FigureUcMorphismExample}. 

         \begin{figure}[ht!]
    \centering
      \includegraphics[scale=0.25]{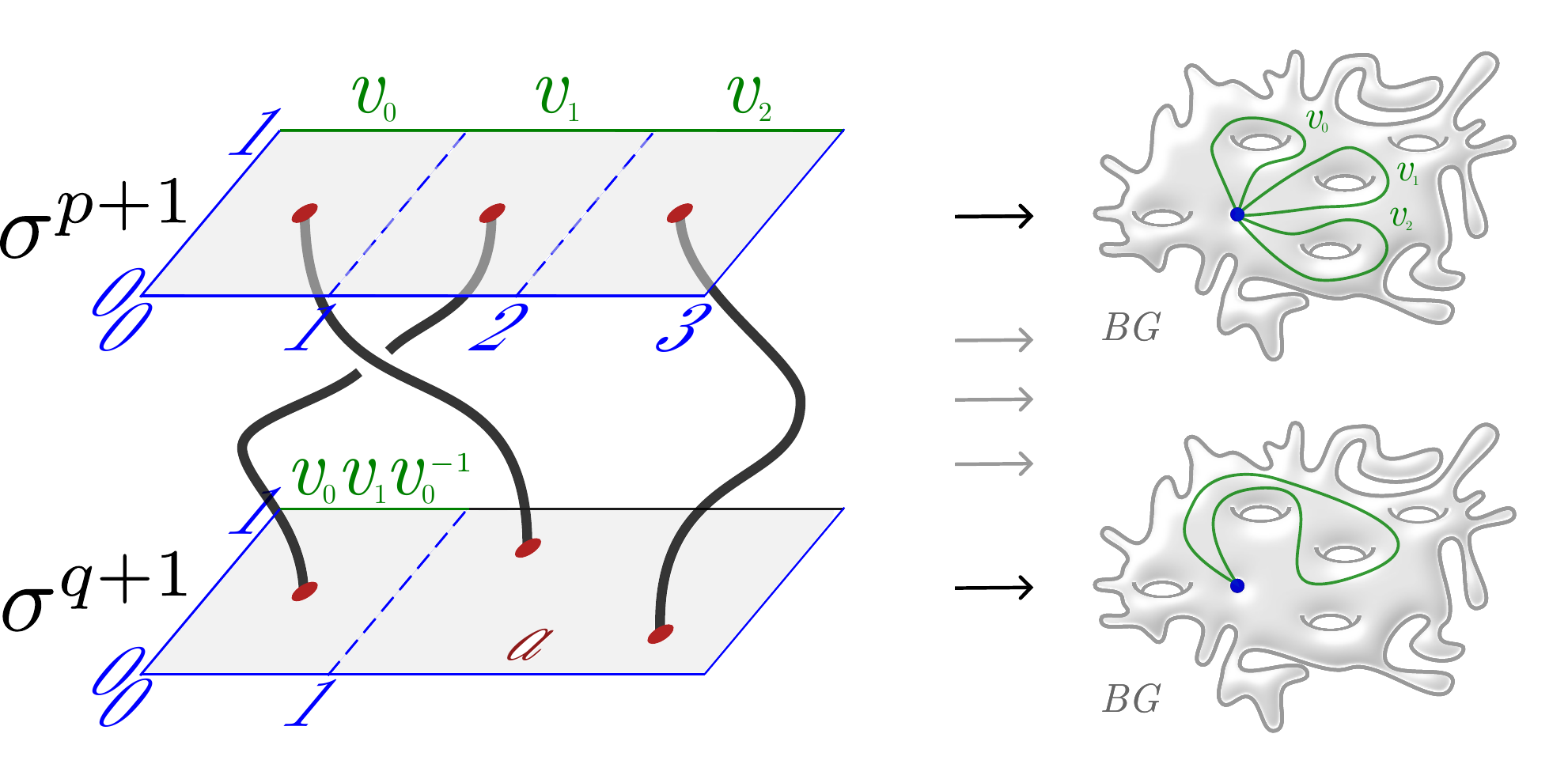} 
    \caption{A morphism in $U_c(w,v)$ where $v= (v_0, v_1, v_2)$ and $w = (v_0v_1v_0^{-1})$.}
    \label{FigureUcMorphismExample}
\end{figure}

\begin{definition}  Let $w\in c^{q+1}$ and $v\in c^{p+1}$. There is a forgetful map $U_c(w,v) \to U([q],[p])$. Define $\delc(w, v)\subseteq U_{c}(w, v)$ to be the pullback of the diagram of spaces
\[\begin{tikzcd}
	{\delc(w, v)} & {U_c(w,v)} \\
	{\widetilde\Delta_{\text{inj}}([q], [p])} & {U([q], [p]).}
	\arrow[from=1-1, to=1-2]
	\arrow[from=1-1, to=2-1]
	\arrow[from=1-2, to=2-2]
	\arrow[from=2-1, to=2-2]
\end{tikzcd}\]

Let $\delc\subseteq U_{c}$ be the subcategory of $U_{c}$ with the same objects as $U_{c}$ and the morphisms $\delc(w, v)$ as defined above. An example is shown in Figure \ref{UcMorphismDeltaExample}. 

         \begin{figure}[ht!]
    \centering
      \includegraphics[scale=0.25]{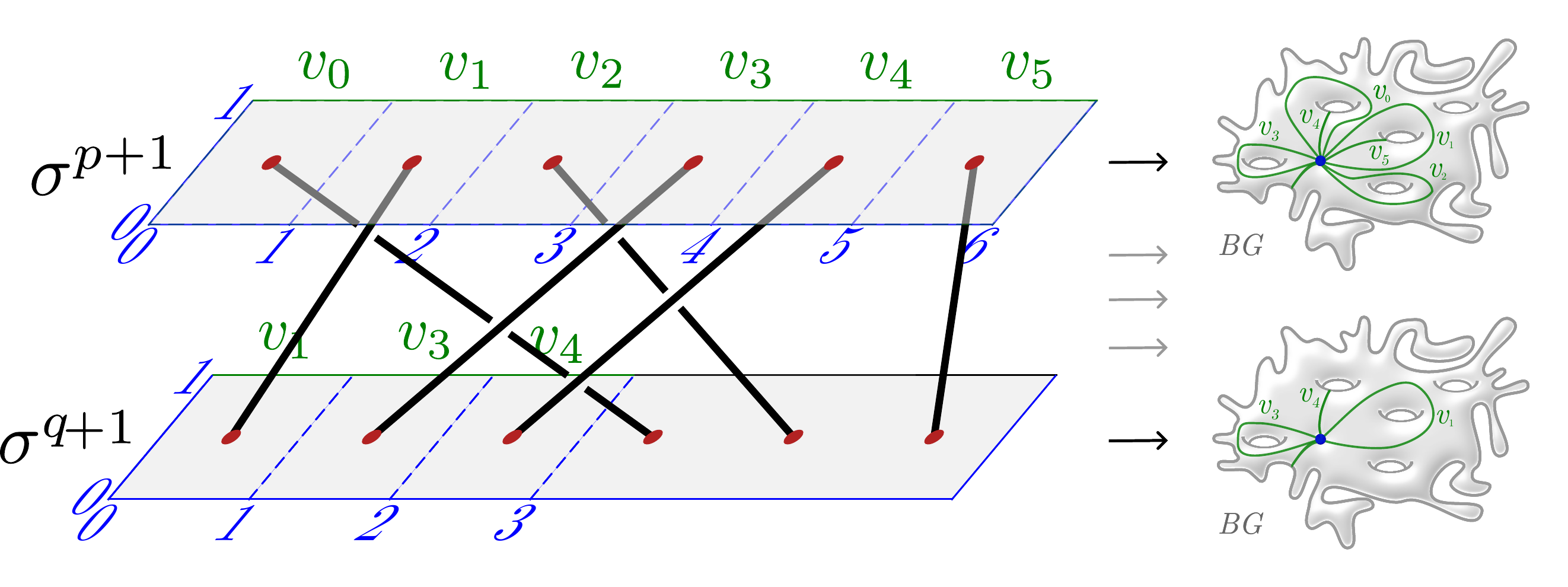} 
    \caption{A morphism in $\delc(w, v)$ where $v$ is $(v_0, v_1, v_2, v_3, v_4, v_5)$ and $w$ is $(v_1, v_3, v_4)$.}
    \label{UcMorphismDeltaExample}
\end{figure}

    
\end{definition}
We now define a Hurwitz analog of ${\Delta}_{\text{inj}}$. 

\begin{definition} Define a category ${\Delta}_{\text{inj},c}$ as follows. The objects are the objects of $\delc$. For $w\in c^{q+1}$ and $v\in c^{p+1}$, the morphisms ${\Delta}_{\text{inj},c}(w,v)$ are the maps
$$ \{ f \colon [q] \to [p] \in {\Delta}_{\text{inj}} \; | \;  v_{f(k)} = w_{k}\text{ for all $k$} \}.$$
\end{definition}
 \begin{definition}\label{map: forgetful map to conf}
      Let $\phi\colon \hur[,n]\to\conf[n]$ denote the map that forgets the data of the map to $BG$ on the complement of the configuration.
   
    \end{definition}
    The map $\phi$ is a fibration.

\begin{definition}\label{functor to del}
    Let $$\phi_{\widetilde{\Delta}}\colon \delc\to \widetilde{\Delta}_{\text{inj}}$$ denote the enriched covariant functor of topological categories sending an object $v\in c^{p+1}$ in $\delc$ to $[p]$. Given $w\in c^{q+1}$, the map 
    $$\phi_{\widetilde{\Delta}} \colon \delc(w, v)\to \widetilde{\Delta}_{\text{inj}}([q], [p])$$
    is induced from the map $\phi\colon\hur\to \conf[]$.
\end{definition}

We will show that there is an enriched functor
$$ \Upsilon \colon  \widetilde{\Delta}_{\mathrm{inj},c}  \to {\Delta}_{\mathrm{inj},c} $$ 
which is the identity on objects, and defined on morphisms $\widetilde{\Delta}_{\mathrm{inj},c}(w,v)$ so as to make the following diagram of topological spaces commute. 

\begin{equation} \label{MorphismDiagram}
\begin{tikzcd}
	& {\widetilde{\Delta}_{\mathrm{inj}, c}(w,v)} & {U_c(w,v)}      \\
	& {\widetilde{\Delta}_{\mathrm{inj}}([q], [p])} & {U([q], [p])} \\
	{{\Delta}_{\mathrm{inj}, c}(w,v)} & {{\Delta}_{\mathrm{inj}}([q], [p])}
        \arrow["{\ulcorner}"{anchor=center, pos=0.2}, phantom, from=1-2, to=2-3]
	\arrow[hook, from=1-2, to=1-3]
	\arrow["\phi_{\widetilde{\Delta}}", from=1-2, to=2-2]
	\arrow["{\Upsilon(w,v)}"', bend right=15, two heads, from=1-2, to=3-1]
	\arrow[from=1-3, to=2-3]
	\arrow[hook, from=2-2, to=2-3]
	\arrow[two heads, "\substack{\text{iso on } \\ \pi_0}"', from=2-2, to=3-2]
	\arrow[two heads, from=2-3, to=3-2]
	\arrow[hook, from=3-1, to=3-2]
    \end{tikzcd} 
\end{equation} 
Proposition \ref{lem: delc is disc} below shows that, for all $(w,v)$ the set of morphisms
${\Delta}_{\text{inj},c}(w,v)$ is equal to the image of $\delc(w,v)$ in $\Delta_{\mathrm{inj}}([q], [p])$ in Diagram (\ref{MorphismDiagram}). This shows that $\Upsilon(w,v)$ is well-defined and surjective.  Proposition \ref{lem: delc is disc} further shows that the fibres of $\Upsilon(w,v)$ are contractible, hence $\Upsilon(w,v)$ is a homotopy equivalence for all $(w,v)$.



\begin{proposition}\label{lem: delc is disc} There is an enriched functor
$$ \Upsilon \colon  \widetilde{\Delta}_{\mathrm{inj},c}  \to {\Delta}_{\mathrm{inj},c} $$ 
such that for each $w\in c^{q+1}$ and $v\in c^{p+1}$,  the continuous map  $$ \Upsilon(w,v) \colon \widetilde{\Delta}_{\mathrm{inj},c}(w,v) \to {\Delta}_{\mathrm{inj},c}(w,v)$$ makes Diagram (\ref{MorphismDiagram}) commute. 
The functor $ \Upsilon$ is the identity map on objects, Moreover, for all objects $w,v$, the map $ \Upsilon(w,v)$ is a homotopy equivalence. 
\end{proposition}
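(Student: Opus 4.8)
The plan is to reduce the statement to two facts about each monotone injection $\iota\colon[q]\to[p]\in\Delta_{\text{inj}}$, and then deduce the proposition formally. First I would define $\Upsilon$: it is the identity on objects, and on a morphism space (for $w\in c^{q+1}$, $v\in c^{p+1}$) it is the composite
$$\Upsilon(w,v)\colon\delc(w,v) \xrightarrow{\phi_{\widetilde{\Delta}}} \widetilde{\Delta}_{\text{inj}}([q],[p]) \longrightarrow \Delta_{\text{inj}}([q],[p])$$
of the functor $\phi_{\widetilde{\Delta}}$ of Definition~\ref{functor to del} with the canonical functor $\widetilde{\Delta}_{\text{inj}}\to\Delta_{\text{inj}}$; this is continuous, it is a functor $\delc\to\Delta_{\text{inj},c}$ once we know its image lands in the subcategory $\Delta_{\text{inj},c}$, and it makes Diagram~(\ref{MorphismDiagram}) commute by construction. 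Since $\Delta_{\text{inj},c}(w,v)$ is discrete, it then remains to prove \textbf{(I)} that the image of $\Upsilon(w,v)$ equals $\Delta_{\text{inj},c}(w,v)$, and \textbf{(II)} that for each $\iota\in\Delta_{\text{inj},c}(w,v)$ the preimage $\delc(w,v)_\iota\colonequals\Upsilon(w,v)^{-1}(\iota)$ — the subspace of those $(a,\mu)$ whose configuration path $\phi\mu$ lies in the $\iota$-component of $U([q],[p])$ — is contractible. Granting these, $\delc(w,v)=\coprod_{\iota\in\Delta_{\text{inj},c}(w,v)}\delc(w,v)_\iota$ is a coproduct of contractible spaces indexed by $\Delta_{\text{inj},c}(w,v)$, so $\Upsilon(w,v)$ is a homotopy equivalence.

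The common geometric input is that $\phi\colon\hur[,n]\to\conf[n]$ is a fibration (Definition~\ref{map: forgetful map to conf}) whose fibre over a configuration is the mapping space into $BG$ described earlier; collapsing the contractible set $\boldsymbol{\bigsqcup}$ identifies this fibre with a subspace of $(\Omega BG)^{n}$, so, as $\Omega BG\simeq G$ for $G$ finite, it is weakly equivalent to the discrete set $c^{n}$ with all path components weakly contractible, and the braid action~(\ref{eq: brd grp action}) on $c^{n}$ records how monodromy labels transform along lifts of configuration paths. For \textbf{(I)}: an element of $\delc(w,v)_\iota$ furnishes a Moore path $\mu$ in $\hur[,p+1]$ from $i(v)$ to $i(w)\cdot a$ lifting a configuration path which, by Definition~\ref{DefnLiftMonotone}, realizes the braid in which the $q+1$ strands based at the positions $\iota([q])$ do not cross one another, end in order at positions $0,\dots,q$, and pass \emph{in front of} the remaining $p-q$ strands. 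Since a strand that passes in front keeps its label under~(\ref{eq: brd grp action}), the endpoint of $\mu$ has label $v_{\iota(k)}$ at position $k$ for $k=0,\dots,q$; but that endpoint is $i(w)\cdot a=i(w_0)\cdots i(w_q)\cdot a$, whose label at position $k$ is $w_k$. Hence $v_{\iota(k)}=w_k$, i.e.\ $\iota\in\Delta_{\text{inj},c}(w,v)$. Conversely, if $v_{\iota(k)}=w_k$ for all $k$, I would lift the chosen representative $\mu_\iota$ (a Moore loop at $\sigma^{q+1}\cdot\sigma^{p-q}=\sigma^{p+1}$) to a path in $\hur[,p+1]$ starting at $i(v)$; its endpoint has labels $(w_0,\dots,w_q,\widetilde v_{q+1},\dots,\widetilde v_p)$, the $\widetilde v_j$ being conjugates of the $v_j$ and hence again in $c$, so it lies in the same weakly contractible component of $\phi^{-1}(\sigma^{p+1})$ as the point $i(w)\cdot i(\widetilde v_{q+1},\dots,\widetilde v_p)$ in the image of $a\mapsto i(w)\cdot a$; concatenating the lift of $\mu_\iota$ with a path in that component produces an element of $\delc(w,v)_\iota$.

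For \textbf{(II)}, fix $\iota\in\Delta_{\text{inj},c}(w,v)$ and let $W_\iota\subseteq U([q],[p])$ be the $\iota$-component, which is contractible. The forgetful map exhibits $\delc(w,v)_\iota$ as the pullback of $U_c(w,v)\to U([q],[p])$ along $W_\iota\hookrightarrow U([q],[p])$; as $\phi$ is a fibration, so is $U_c(w,v)\to U([q],[p])$ (path lifting for $\phi$, using that $a$ is recovered from $\mu(T)$ through the embedding $a\mapsto i(w)\cdot a$), hence $\delc(w,v)_\iota\to W_\iota$ is a fibration over a contractible base and it is enough to find one contractible fibre. The fibre over $(\sigma^{p-q},\mu_\iota)$ is the space of lifts of $\mu_\iota$ in $\hur[,p+1]$ starting at $i(v)$ whose endpoint lies in the image of $a\mapsto i(w)\cdot a$; this space of lifts is contractible, its endpoint map to the relevant weakly contractible component of $\phi^{-1}(\sigma^{p+1})$ is a fibration with contractible total space and base (hence with contractible fibres), and the image of $a\mapsto i(w)\cdot a$ meets that component in a weakly contractible subspace (a fibre of the restriction of $\phi$ to the back $p-q$ punctures); pulling the endpoint fibration back over this subspace yields a contractible space, which is the fibre in question.

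The step I expect to be the main obstacle is \textbf{(I)}: one must check carefully that the ``in front'' condition of Definition~\ref{DefnLiftMonotone} forces the Hurwitz labels at the front $q+1$ positions of the endpoint to be \emph{equal to} — not merely conjugate to — $v_{\iota(0)},\dots,v_{\iota(q)}$, and that the mapping-space fibres of $\phi$ are understood precisely enough that ``having the right monodromy labels'' and ``lying in the image of $a\mapsto i(w)\cdot a$'' agree up to a contractible ambiguity. Once this ``discrete up to weakly contractible fibres'' picture is in hand, part \textbf{(II)} is routine fibration bookkeeping, closely parallel to the argument (via Krannich's lemma) that $\widetilde{\Delta}_{\text{inj}}\to\Delta_{\text{inj}}$ is a homotopy equivalence.
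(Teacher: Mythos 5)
Your proposal is correct and follows essentially the same two-step strategy as the paper: first prove that the image of $\delc(w,v)$ under $\phi_{\widetilde{\Delta}}$ composed with $\widetilde{\Delta}_{\text{inj}} \to \Delta_{\text{inj}}$ is exactly $\Delta_{\text{inj},c}(w,v)$ (using the ``in front'' convention of Definition~\ref{DefnLiftMonotone} to see that the front $q+1$ strands carry their $c$-labels unchanged, and lifting $\mu_\iota$ along the covering/fibration $\phi$ for the converse), and then prove that the fibers of $\Upsilon(w,v)$ are contractible via the fibration structure of $\phi_{\widetilde{\Delta}}$ and the identification of the relevant fibers with mapping spaces $\mathrm{Map}((C,H),(BG,\mathrm{pt}))$, which are homotopy-discrete with components recorded by tuples in $G^{n}$. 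The only organizational difference is that you impose the initial condition $\text{res}_1(f) = g_v$ first and then analyze the endpoint restriction, whereas the paper considers the product $\text{res}_1 \times \text{res}_2$ as a single fibration before taking preimages; this does not materially change the argument.
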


\begin{proof}
Our first step is to argue the set of morphisms
$${\Delta}_{\text{inj},c}(w,v) := \{ f \colon [q] \to [p] \in {\Delta}_{\text{inj}} \; | \;  v_{f(i)} = w_{i}\text{ for all $i$} \}$$ is equal to the image of $\delc(w,v)$ in $\Delta_{\mathrm{inj}}([q], [p])$ in Diagram (\ref{MorphismDiagram}). This implies that $\Upsilon(w,v)$ is well-defined and surjective. Our second step is to argue that the fibers of the map $\Upsilon(w,v)$ are contractible. 

\,

\noindent {\bf Step 1: $\Upsilon(w,v)$ is well-defined and surjective}

Consider a morphism $(a,\mu) \in \widetilde{\Delta}_{\text{inj},c}(w,v)$. By definition, $a \in \hur[,p-q]$ and $\mu$ is  a Moore path in $\hur[, p+1]$ from $i(v)$ to $  i(w)\cdot a$. 

Let $f:[p] \to [q]$ be the map in $\Delta_{\mathrm{inj}}([q], [p])$ corresponding to the path component containing $(a', \gamma)\colonequals \phi_{\widetilde{\Delta}}(a, \mu))\in \widetilde{\Delta}([q], [p])$. 
We will verify that $f$ satisfies $v_{f(k)} = w_k$ for all $k$. To be concrete, we illustrate the argument in the case of the particular map 
\begin{align*} 
f: \{0,1,2\} & \longrightarrow \{0,1,2,3,4,5\} \\ 
0 & \longmapsto 1 \\ 
1 & \longmapsto 3 \\ 
2 & \longmapsto 4 
\end{align*} 
A Moore path $\gamma$ of configuration spaces corresponding to this sample map $f$ is shown in Figure \ref{DeltaTwiddleExample-SampleBraid}.
       \begin{figure}[ht!]
    \centering
      \includegraphics[scale=0.19]{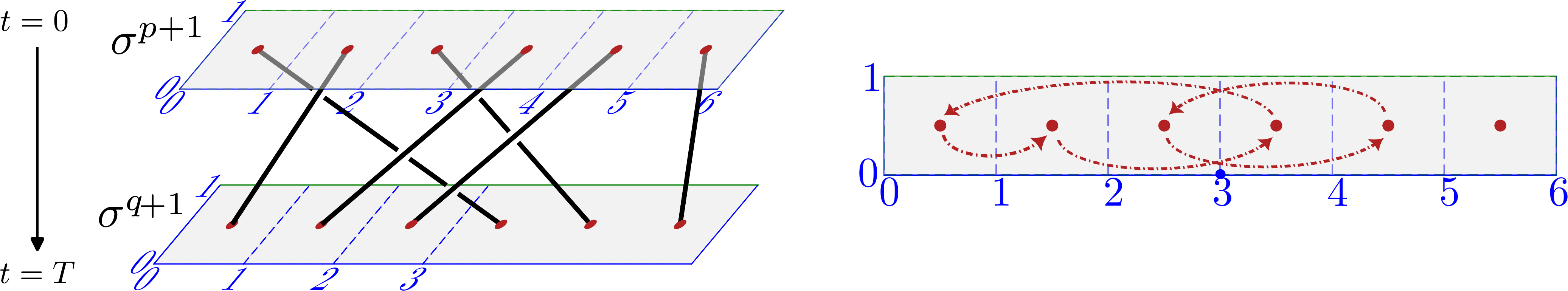} 
    \caption{Two perspectives on (the path-homotopy class of) a Moore path $\gamma$ with Moore parameter $T$ lifting our sample injective map $f: [2] \to [5]$. The righthand image shows the motion of the configuration during the reverse path $\overline{\gamma}$ as the particles in positions $0,1,2$ move to positions $f(0), f(1), f(2)$, respectively.}
    \label{DeltaTwiddleExample-SampleBraid}
\end{figure} 


Our construction---\cref{DefnDeltaTwiddleInj}---of $\widetilde{\Delta}_{\mathrm{inj}}([q], [p])$, determines (up to path isotopy) the reverse Moore path $\bar{\gamma}$ from $\sigma^{q+1}\cdot a'$ to $\sigma^{p+1}$. For each $k=0, 1, \dots, q$, the $k$-th strand passes from the $k$-th point of $\sigma^{q+1}$ to the $f(k)$-th point of $\sigma^{p+1}$. The key feature, which we now illustrate, is that because (up to path isotopy) these $(q+1)$ strands pass \emph{in front} of the complementary $(p-q)$ strands and \emph{do not cross} one another, the label $w_k$ of the $k$-th strand of $\mu$ at time $t=T$ must agree with its label $v_{f(k)}$ at time $t=0$.


Recall that, by our convention established in \cref{Defni}, the tuple  $v \in c^{p+1}$  is defined by $\mu(0) \in \hur[p+1]$ as follows. We have chosen a distinguished set of generators of the fundamental group of the complement $D_{p}$ of the configuration $\sigma^{p+1}$; representatives of these generators are shown in the case of our running example in Figure \ref{DeltaTwiddleExample-DistinguishedPi1Generators}. 
       \begin{figure}[ht!]
    \centering
      \includegraphics[scale=0.3]{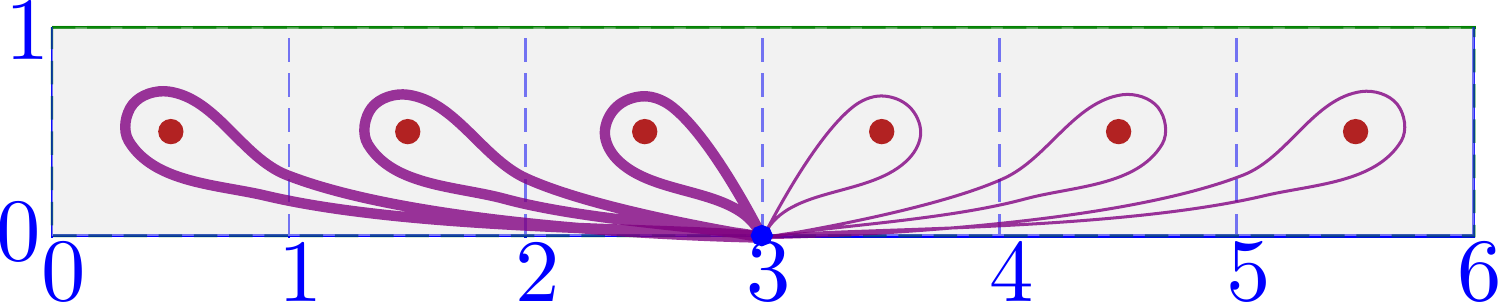} 
    \caption{Our choice of basepoint and distinguished generators of the fundamental group of $D_{5}$. The generators for $D_{2}\subseteq D_{5}$ are bolded.}
    \label{DeltaTwiddleExample-DistinguishedPi1Generators}
\end{figure} 
The entries of the tuple $v \in c^{p+1}$ correspond to the images of these distinguished generators in $G=\pi_1(BG, *)$ as determined by $\mu(0) \in \hur[, p+1]$. Similarly, the tuple $w$ encodes the images of the distinguished generators for the fundamental group of $D_{q}$ under the map specified by $\mu(T) \in \hur[, p+1]$.  

 Figure \ref{DeltaTwiddleExample-BraidInducedMap} shows the map induced on the fundamental group of $D_p$ in the case of our specific example. Observe that because the particles $k=1, 2, \dots, q+1$ only cross in front of other particles---the side closer to the basepoint---the braid $\overline{\gamma}$ maps the $k$-th distinguished generator of the fundamental group of $D_p$ to the $f(k)$-th distinguished generator. 
 Consequently, the path $\mu$ satisfies $v_{f(k)} = w_k$ for $k \in [q]$. 
       \begin{figure}[ht!]
    \centering
      \includegraphics[scale=0.25]{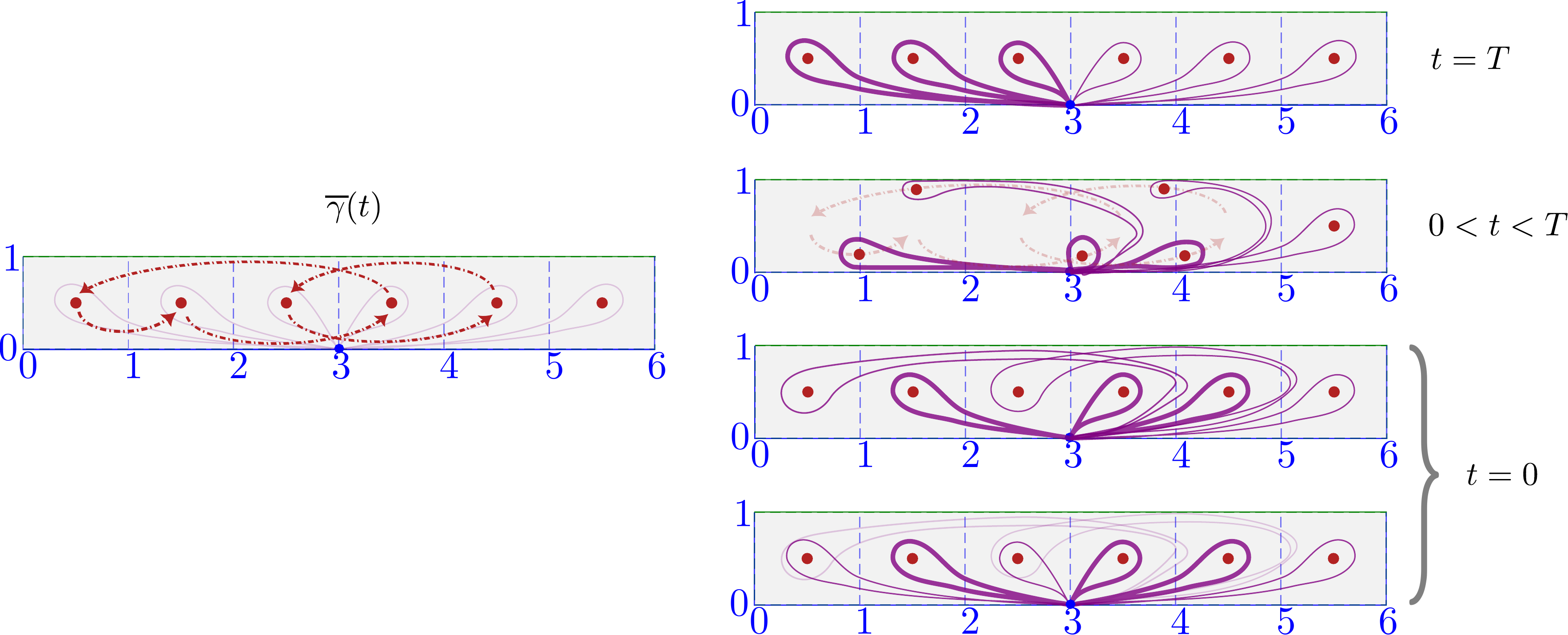} 
    \caption{The map induced by $\overline{\gamma}$ on the fundamental group of the complement of the configurations. The generators corresponding to the configuration $\sigma^{q+1} \subseteq \sigma^{p+1}$ are bolded.}
    \label{DeltaTwiddleExample-BraidInducedMap}
\end{figure} 
We have shown that the morphism $f$ is an element of ${\Delta}_{\mathrm{inj, c}}(w,v) \subseteq \Delta_{\mathrm{inj}}([q],[p])$ as claimed. 

Conversely, let $g \in \Delta_{\mathrm{inj, c}}(w, v)$ be any morphism. Let $\eta$ be the Moore path of a morphism in $\widetilde{\Delta}_{\mathrm{inj}}([q],[p])$ corresponding to $g$. Lift $\eta$ to a path in $\hur[, p+1]$ starting at the element $i(v)$ of \cref{Defni}.  Then necessarily $w_k  = v_{g(k)}$ for all $k$, and this lift is (up to homotopy) a morphism of $\delc(w,v)$ that maps to $g$. We have therefore verified that the image of $\delc(w,v)$ in $\Delta_{\mathrm{inj}}([q], [p])$ in Diagram (\ref{MorphismDiagram}) is precisely $\Delta_{\mathrm{inj, c}}(w, v)$. 

\,

\noindent {\bf Step 2: The fibers of $\Upsilon$ are contractible}

To complete the proof that $\Upsilon(w,v)$ is a homotopy equivalence, it remains to show its fibers are contractible. Note that, since the map $U_{c}(w, v)\to U([q], [p])$ is a fibration and the pullback along a fibration is also a fibration, the map
$$\phi_{\widetilde{\Delta}}\colon \delc(w, v)\to \widetilde{\Delta}_{\text{inj}}([q], [p])$$
is a fibration. Since $\phi_{\widetilde{\Delta}}$ is a fibration,  the homotopy fiber over any  element in $\widetilde{\Delta}_{\text{inj}}([q], [p])$ is weakly homotopy equivalent to the literal fiber over that point. As a result, since $\widetilde{\Delta}_{\text{inj}}([q], [p])$ is homotopy discrete, to show that $\delc(w, v)$ is homotopy discrete,   it suffices to show that for any $(b, \gamma)\in \widetilde{\Delta}_{\text{inj}}([q], [p])$ in the image of $\phi_{\widetilde{\Delta}}$, the fiber  over $(b, \gamma)$ is contractible. 

Fix $(b, \gamma)\in \widetilde{\Delta}_{\text{inj}}([q], [p])$ in the image of $\phi_{\widetilde{\Delta}}$ and let $F \subset \delc(w, v)$ be the fiber. A point $(a,\mu)$ in $F$ is a Moore path $\mu$ in $ \hur[,p+1]$ and an element $a \in \hur[,p-q]$ such that the underlying configuration and Moore path of configurations are $(b,\gamma)$. Recall that $a$ is determined by $\mu$. Let $T \in [0,\infty)$ be the number associated to $\gamma$ that governs when $\gamma$ is required to be constant.

 Let  $$C = \bigcup_t \Big( \left( [0,\lambda(t) ] \times [0,1] \right) \setminus \xi(t) \Big) \times \{t\}  \subset \R^3 \qquad \text{for } t \in [0, \infty)$$ 
     and $H \subseteq C $ be  
     $$ H = \bigcup_{t} \left({\textstyle \boldsymbol{\bigsqcup}_{\lambda(t)}} \times \{t\} \right) \subseteq \R^3 \qquad \text{for } t \in [0, \infty).$$ 
     
Observe that $F$ is homeomorphic to the subspace of $$\mathrm{Map}\left( ( C,H), (BG, \text{pt})\right)$$ of maps $f$ satisfying the following conditions: 
\begin{enumerate}
    \item \label{f=v}  $f_0 = g_v$,
    \item \label{f=w} $f_t$ restricted to $\{t\} \times [0,q+1] \times [0,1]$ is $g_w$ for all $t \geq T$.
    \end{enumerate}
Consider the restriction maps $$\text{res}_1: \mathrm{Map}\left( ( C,H), (BG, \text{pt})\right) \to \mathrm{Map}\left(  \left( D_p,{\textstyle \boldsymbol{\bigsqcup}_{p+1}}\right), (BG, \text{pt})\right), $$
$$\text{res}_2: \mathrm{Map}\left( ( C,H), (BG, \text{pt})\right) \to \mathrm{Map}\left( \left( D_q,{\textstyle \boldsymbol{\bigsqcup}_{q+1}}\right), (BG, \text{pt})\right) $$ where the first map evaluates at $0$ and the second map evaluates at $T$.

Condition (\ref{f=v}) is the statement $\text{res}_{1}(f)=g_v$ and Condition (\ref{f=w}) is the statement that $\text{res}_{2}(f)=g_w$. The maps $\text{res}_{1}$, $\text{res}_{2}$, and $\text{res}_{1} \times \text{res}_{2}$ are adjoint to CW inclusions. Since CW inclusions are cofibrations and the mapping space functor takes cofibrations to fibrations, we conclude that $\text{res}_{1}$, $\text{res}_{2}$, and $\text{res}_{1} \times \text{res}_{2}$ are fibrations. Observe that 
\begin{align*}
 &  \mathrm{Map}\left( (C,H), (BG, \text{pt})\right) \\ 
  \simeq \; & \mathrm{Map} \left(\left(\bigvee_{p+1} S^1, \text{pt} \right), (BG, \text{pt})\right)  \\
  \cong \; &(\Omega BG)^{p+1} \\
   \simeq \; & G^{p+1}.
\end{align*}
Similarly, $$\mathrm{Map}\left(  \left( D_p,{\textstyle \boldsymbol{\bigsqcup}_{p+1}}\right), (BG, \text{pt})\right) \simeq G^{p+1}$$ and $$\mathrm{Map}\left(  \left( D_q,{\textstyle \boldsymbol{\bigsqcup}_{q+1}}\right), (BG, \text{pt})\right) \simeq G^{q+1}.$$

Suppose $\text{res}_{1}(f) \simeq g_v$. By the discussion in {\bf Step 1}, it follows that $\text{res}_{2}(f) \simeq g_w$. This can be seen by considering how the map to $BG$ changes along the path $\gamma$; the $q+1$ strands connecting to $\sigma^{q+1}$ all  go in front of the remaining $p-q$ strands. See Figure \ref{Res1Res2Proof}.
         \begin{figure}[ht!]
    \centering
      \includegraphics[scale=0.25]{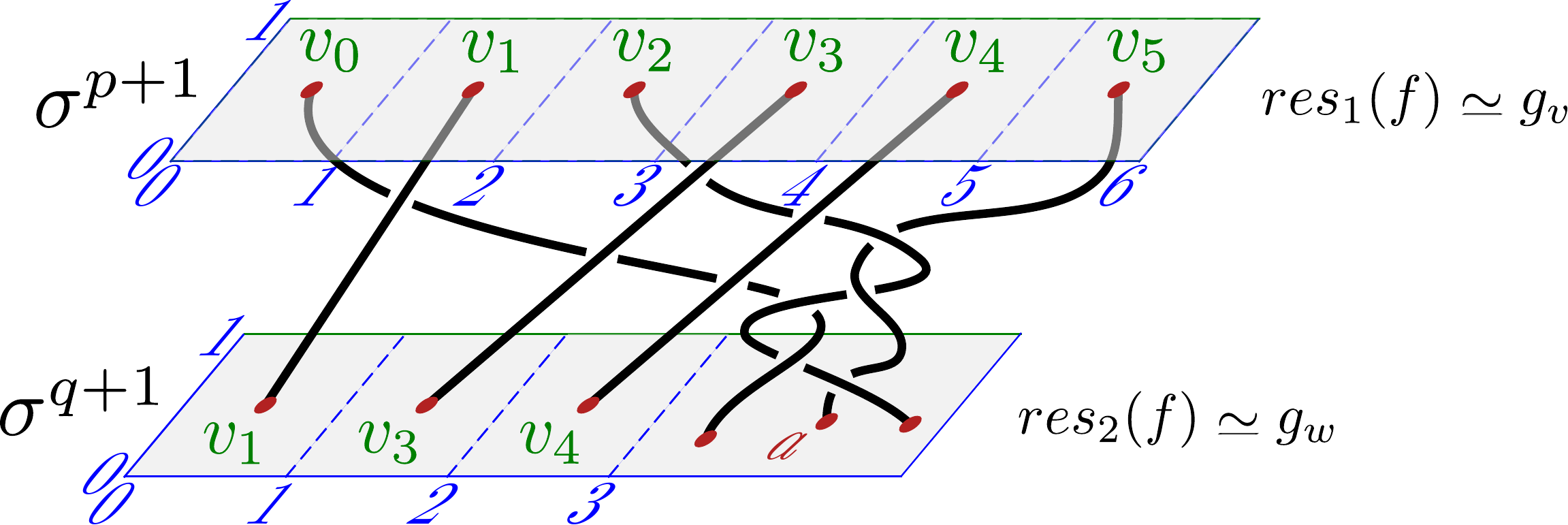} 
    \caption{}
    \label{Res1Res2Proof}
\end{figure}  

The map $\text{res}_{1}$ is a homotopy equivalence since it is adjoint to a homotopy equivalence. Hence its fibers are contractible. By the above paragraph, the image of $$\text{res}_{2}:\text{res}_{1}^{-1}(g_v)  \to \mathrm{Map}\left(  \left( D_q,{\textstyle \boldsymbol{\bigsqcup}_{q+1}}\right), (BG, \text{pt})\right)$$ is the path component corresponding to $w \in G^{q+1}$. This connected component is contractible. Thus, the preimage of $g_w$ under $$\text{res}_{2}:\text{res}_{1}^{-1}(g_v)  \to \mathrm{Map}\left(  \left( D_q,{\textstyle \boldsymbol{\bigsqcup}_{q+1}}\right), (BG, \text{pt})\right)$$ is contractible. Thus $$F \cong (\text{res}_{1} \times \text{res}_{2})^{-1}(g_v,g_w)$$ is contractible.
\end{proof}


\begin{lemma}\label{lem: delc is homotopy discrete}
    If $\delc(w, v)$ is non-empty, then it is contractible. In addition, for each $v\in c^{p+1}$, the forgetful map $\phi\colon\hur[,q+1]\to \conf[q+1]$ induces a homotopy equivalence $$\bigsqcup_{w\in c^{q+1}} \delc(w, v)\to \widetilde{\Delta}_{\text{inj}}([q], [p]).$$ 
\end{lemma}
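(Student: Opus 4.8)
The plan is to deduce both statements from \cref{lem: delc is disc} (Proposition \ref{lem: delc is disc}) together with basic facts about the categories $\widetilde\Delta_{\text{inj}}$ and $\delc$. For the first statement: if $\delc(w,v)$ is nonempty, then by the pullback square defining $\delc(w,v)$ and the commuting Diagram (\ref{MorphismDiagram}), the image of $\delc(w,v)$ in $\Delta_{\text{inj}}([q],[p])$ is the single morphism set ${\Delta}_{\text{inj},c}(w,v)$, which (being nonempty) consists of exactly one element $f$, since a monotone injection $[q]\to[p]$ is determined by its image and the condition $v_{f(k)}=w_k$ pins down $f$ once we know $\delc(w,v)\neq\varnothing$. (More carefully: nonemptiness of $\delc(w,v)$ means ${\Delta}_{\text{inj},c}(w,v)\neq\varnothing$; this set need not be a singleton in general, but $\widetilde{\Delta}_{\text{inj}}([q],[p])$ is homotopy discrete, so $\delc(w,v)$, being the pullback of the forgetful fibration $U_c(w,v)\to U([q],[p])$ along the inclusion $\widetilde\Delta_{\text{inj}}([q],[p])\hookrightarrow U([q],[p])$, has contractible path components by Step 2 of the proof of \cref{lem: delc is disc}, and $\Upsilon(w,v)\colon\widetilde\Delta_{\text{inj},c}(w,v)\to {\Delta}_{\text{inj},c}(w,v)$ is a homotopy equivalence onto the discrete set ${\Delta}_{\text{inj},c}(w,v)$.) The subtlety is that what we really want is: $\delc(w,v)$ nonempty $\Rightarrow$ contractible, not merely homotopy discrete; so I will argue that for a \emph{fixed} pair $(w,v)$ there is at most one morphism. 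This is where the extra rigidity comes in: the configuration-space path $\gamma=\phi_{\widetilde\Delta}(a,\mu)$ lands in a single path component of $\widetilde\Delta_{\text{inj}}([q],[p])$, i.e. corresponds to a single monotone injection $f$; and by Step 1 of \cref{lem: delc is disc}, $f$ is forced to satisfy $v_{f(k)}=w_k$. Conversely two different monotone injections $f\neq f'$ with $v_{f(k)}=w_k=v_{f'(k)}$ could both occur — so in general $\delc(w,v)$ is homotopy discrete with possibly several components. I should therefore restate this step: the first assertion of the lemma should be read as "each path component of $\delc(w,v)$ is contractible," which is exactly what Step 2 of \cref{lem: delc is disc} gives (the fibers of $\phi_{\widetilde\Delta}$ over points of the homotopy-discrete space $\widetilde\Delta_{\text{inj}}([q],[p])$ are contractible), and then $\delc(w,v)$ is homotopy equivalent to the discrete set ${\Delta}_{\text{inj},c}(w,v)$; if that set is a single point (which it is precisely when nonempty and... ) — I will just invoke the cleanest true statement, namely that $\delc(w,v)$ is homotopy discrete and each component is contractible, which is what the rest of the paper uses.

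For the second statement, I would take the disjoint union over $w\in c^{q+1}$ of the forgetful maps $\delc(w,v)\to\widetilde\Delta_{\text{inj}}([q],[p])$, which is precisely the map $\phi_{\widetilde\Delta}$ of \cref{functor to del} assembled over all $w$, and show it is a homotopy equivalence. The key point is that the composite
\[
\bigsqcup_{w\in c^{q+1}}\delc(w,v)\;\longrightarrow\;\widetilde\Delta_{\text{inj}}([q],[p])
\]
is, on path components, the map $\bigsqcup_w {\Delta}_{\text{inj},c}(w,v)\to {\Delta}_{\text{inj}}([q],[p])$ sending $f\in{\Delta}_{\text{inj},c}(w,v)$ to its underlying monotone injection. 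By Step 1 of \cref{lem: delc is disc}, for a \emph{fixed} monotone injection $f\colon[q]\to[p]$ there is exactly one $w\in c^{q+1}$ with $f\in{\Delta}_{\text{inj},c}(w,v)$, namely $w_k\colonequals v_{f(k)}$; hence the map on $\pi_0$ is a bijection. Since the target is homotopy discrete and every component of the source is contractible (by the first part), the map is a homotopy equivalence. I will phrase this as: fix $f\in{\Delta}_{\text{inj}}([q],[p])$, lift the corresponding morphism of $\widetilde\Delta_{\text{inj}}$ to a Moore path in $\hur[,p+1]$ starting at $i(v)$ (as in Step 1 of the proof of \cref{lem: delc is disc}), read off its endpoint to determine the unique $w$ with $w_k=v_{f(k)}$, and observe that the fiber of $\phi_{\widetilde\Delta}$ over that morphism inside $\delc(w,v)$ is contractible while it is empty inside $\delc(w',v)$ for $w'\neq w$.

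The main obstacle, and the only real content beyond bookkeeping, is the uniqueness-of-$w$ claim: that the underlying monotone injection of a morphism of $\delc$ determines the source object $w$ given the target $v$. This is not formal — it relies on the geometric fact established in Step 1 of the proof of \cref{lem: delc is disc}, that the $(q+1)$ distinguished strands in a morphism of $\widetilde\Delta_{\text{inj}}$ pass \emph{in front of} the complementary strands and \emph{do not cross} each other, so that the induced map on $\pi_1$ of the punctured rectangle sends the $k$-th distinguished generator to the $f(k)$-th distinguished generator, forcing $w_k = v_{f(k)}$. So the proof of this lemma will be short, citing \cref{lem: delc is disc} for the contractibility of fibers and for this rigidity statement, and then assembling the $\pi_0$-bijection and homotopy equivalence from those inputs.
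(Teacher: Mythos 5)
Your proof matches the paper's: for the second statement, both reduce to the observation that the forgetful map $\bigsqcup_{w\in c^{q+1}} \Delta_{\text{inj},c}(w,v) \to \Delta_{\text{inj}}([q],[p])$ is a bijection (since $w$ is determined by $f$ via $w_k = v_{f(k)}$), after using \cref{lem: delc is disc} and Krannich's lemma to replace both sides by their discrete sets of path components. Your caution about the first assertion is well-founded: as literally stated it fails whenever $\Delta_{\text{inj},c}(w,v)$ has more than one element (e.g.\ when all entries of $v$ coincide), and what the paper's argument actually establishes --- and what is used downstream in \cref{lem:res of OHur} --- is only that $\delc(w,v)$ is homotopy discrete with each path component contractible, i.e.\ homotopy equivalent to the finite set $\Delta_{\text{inj},c}(w,v)$.
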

\begin{proof}
    We have a commutative diagram
    \[\begin{tikzcd}
	{\bigsqcup_{w\in c^{q+1}}\delc(w, v)} & {\widetilde{\Delta}_{\text{inj}}([q], [p])} \\
	{\bigsqcup_{w\in c^{q+1}} \Delta_{\text{inj,c}}(w, v)} & {\Delta_{\text{inj}}([q], [p]).}
	\arrow[from=1-1, to=1-2]
	\arrow[from=1-1, to=2-1, "j_{1}"]
	\arrow[from=1-2, to=2-2, "j_{2}"]
	\arrow[from=2-1, to=2-2, "\psi"]
\end{tikzcd}\]
The maps $j_{1}, j_{2}$ are homotopy equivalence by \cref{lem: delc is disc} and the argument in \cite[Lemma 2.11]{MR4019896} respectively. Therefore, to show that the top horizontal map is a homotopy equivalence, it suffices to show that the map $\psi$ is a bijection. 

By definition,
$$ \Delta_{\text{inj,c}}(w,v)=\left\{f \colon [q] \to [p]  \; \middle| \;  f\text{ is order preserving and } v_{f(i)}=w_{i}   \right\} $$ 
Hence,
$$ \bigsqcup_{w\in c^{q+1}} \Delta_{\text{inj,c}}(w,v) =
 \left\{(w,f) \; \middle| \; f\in \Delta_{\text{inj}}([q], [p]), \; w \in c^{q+1}, \text{ and } v_{f(i)}=w_{i}   \right\}$$ 
Note that $w$ is determined by $f$ so the forgetful map induces a bijection between this disjoint union and the set
$$ \left\{f \; \middle| \;  f \colon [q] \to [p]  , \;  f \text{ is order preserving } \right\}
=\Delta_{\text{inj}}([q],[p]) $$ 
as claimed.
\end{proof}

\section{A functorial resolution of modules over ordered Hurwitz space} \label{ResHurSec}
In this section, we  adapt ideas from Randal-Williams \cite[Section 5]{MR4767884} (see also Krannich \cite[Section 2.2]{MR4019896}) to construct a resolution $R_{\blacktriangle}(\bM)$ of a module $\bM$ over  ordered Hurwitz space. We show that the homotopy type of this resolution computes derived $\ohur$-module indecomposables. In fact, the resolution may be viewed as a Koszul resolution. We first recall the construction from Randal-Williams 
\cite{MR4767884} for resolving modules over configuration spaces.

\begin{definition} \label{DefnRConfModule}
    For a left $\conf[]$-module $\bM$, let $R_{p}(\bM)$ denote the object in $\topo^{\N}$ with $R_{p}(\bM)(n)$ consisting of the space of pairs $(b, \gamma)$, where $b$ is a point in $\bM(n-p-1)$ and $\gamma$ is a Moore path in $\bM(n)$ ending at $\sigma^{p+1} \cdot b$. An example is shown in Figure \ref{FigureR(N)PointExample}. 
    
    Consider $R_{\bullet}(\bM)$ as an enriched covariant functor $\widetilde{\Delta}_{\text{inj}}^{\text{op}}\to \text{Top}$ via the maps
    \begin{align*}
        \widetilde{\Delta}_{\text{inj}}([q], [p])\times R_{p}(\bM)&\to R_{q}(\bM)\\
        ((e, \mu), (b,\gamma))&\mapsto (e\cdot b, \gamma*(\mu\cdot b)).
    \end{align*}
   as illustrated in Figure \ref{FigureR(N)ActionExample}.

     The maps $\epsilon_{p}\colon R_{p}(\bM)\to \bM$ given by evaluating the Moore path at $0$ assemble to give an augmentation $\epsilon_{\bullet}\colon R_{\bullet}(\bM)\to \bM$.

\end{definition}
         \begin{figure}[ht!]
    \centering
      \includegraphics[scale=0.25]{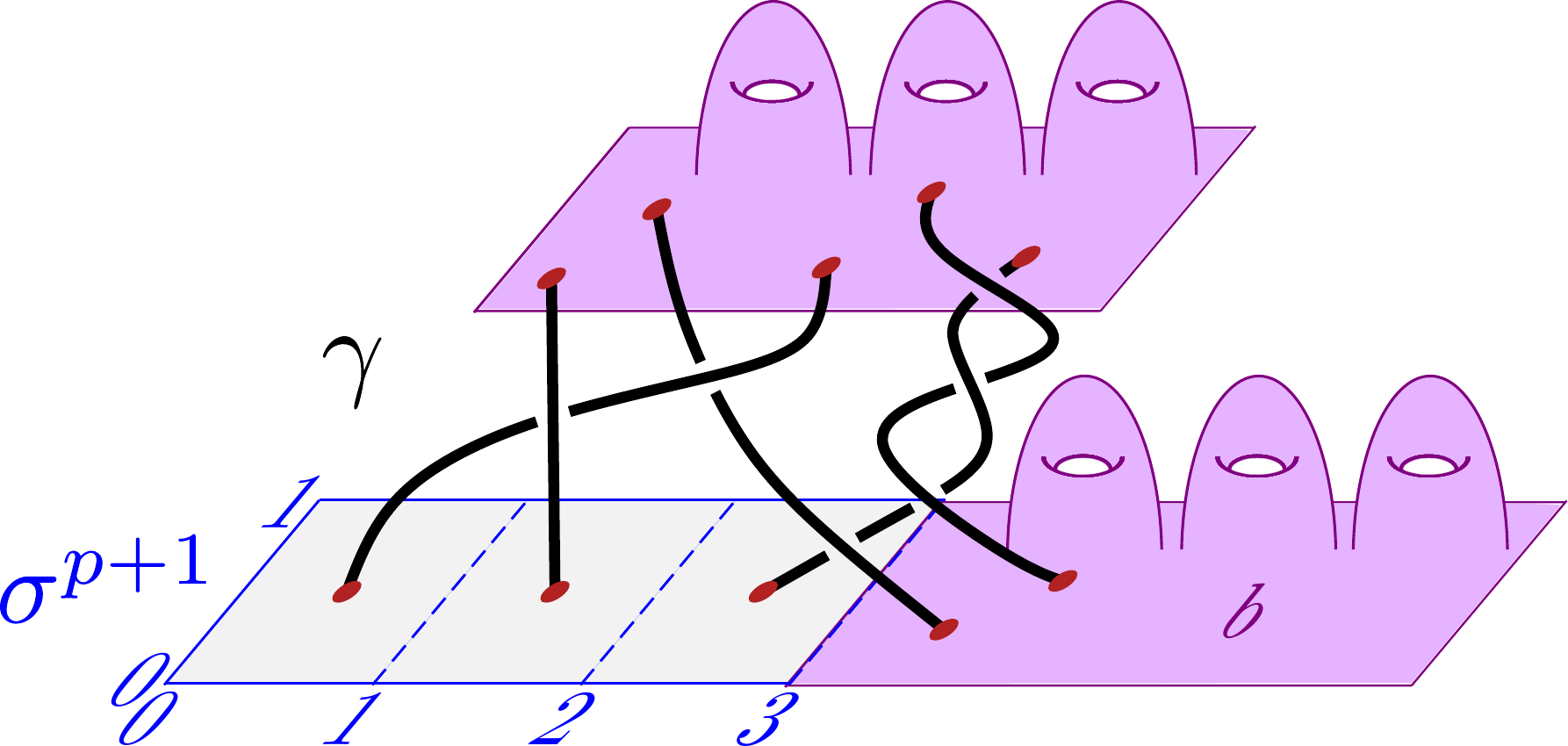} 
    \caption{An element $(b,\gamma)$ of $R_{p}(\bM)(n)$.}
    \label{FigureR(N)PointExample}
\end{figure}

         \begin{figure}[ht!]
    \centering
      \includegraphics[scale=0.25]{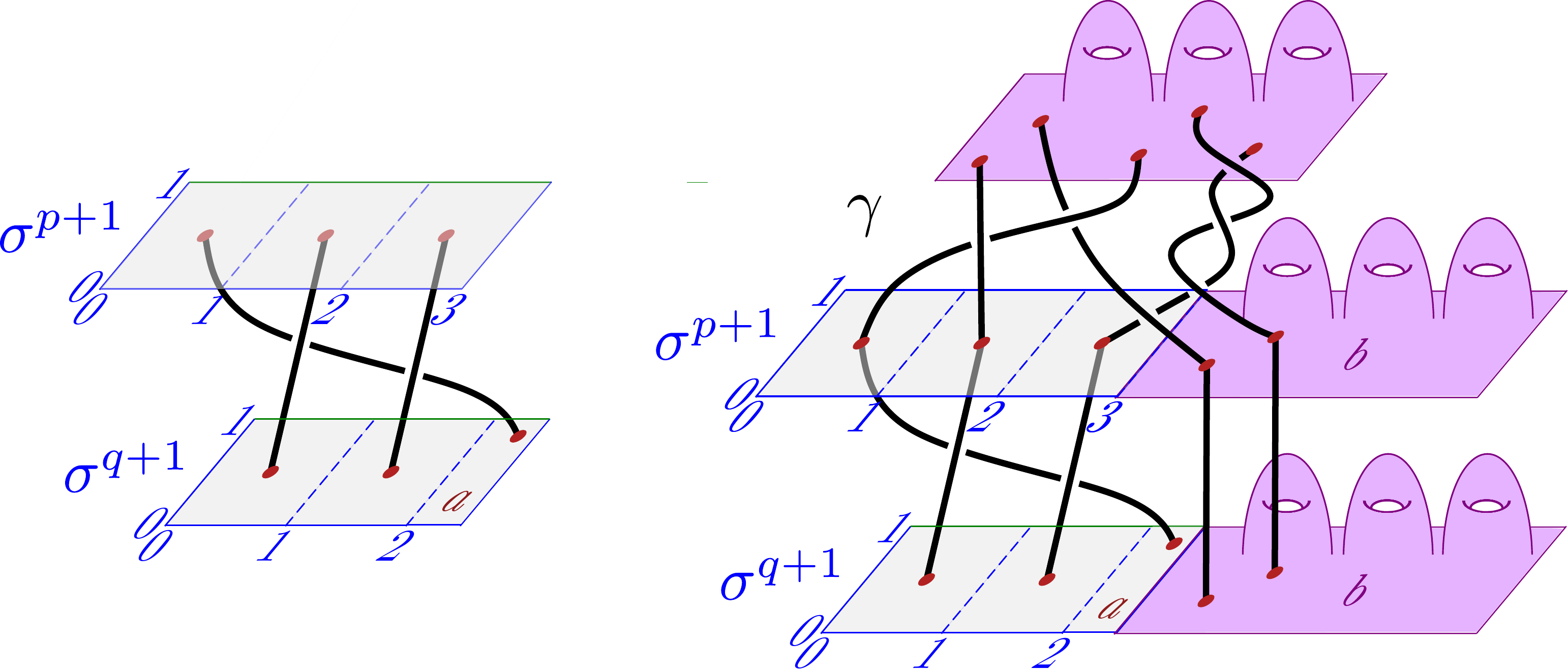} 
    \caption{An element $(a,\mu) \in  \widetilde{\Delta}_{\text{inj}}([q], [p])$ and its action on the element $(b,\gamma) \in R_{p}(\bM)(n)$ of Figure \ref{FigureR(N)PointExample}.}
    \label{FigureR(N)ActionExample}
\end{figure}

The following construction can be thought of as a Hurwitz version of cenetral stability homology as in \cite{MR3703435, MR4125676, MR4671766}.

\begin{definition}
    Let $\bM$ be an $\fb$-space. Given $p\geq -1$ and $v \in c^{p+1}$,  let $L_{v}(\bM)\colon \fb\to \topo$ be the functor with
    $$L_{v}(\bM)(S)\colonequals \bigsqcup_{h\colon [p]\hookrightarrow S}\bM(S\setminus\im(h)).$$
    We denote an element of $L_{v}(\bM)(S)$ by $(h, b)$ with $h\colon [p]\hookrightarrow S$ and $b\in \bM(S\setminus\im(h))$. Given a map $f\colon S_{0}\to S_{1}$ in $\fb$, the induced map $$L_{v}(\bM)(f)\colon L_{v}(\bM)(S_{0})\to L_{v}(\bM)(S_{1})$$ sends $(h, b)$ to $(f\circ h, (f|_{S_{0}\setminus \im(h)})_{*}(b))$.
\end{definition}

If $\bM$ is a module over $\pi_0(\ohur)$, then the FB-spaces $L_v(M)$ assemble to form an augmented co-$\Delta_{\text{inj,c}}$-FB-space, as follows. 

\begin{definition}  \label{LDelStructure}
    Let $\bM$ be a $\pi_0(\ohur)$-module and $S$ a finite set.  Let $g \in \Delta_{\text{inj,c}}(w,v)$, with $v\in c^{p+1}$ and $w\in c^{q+1}$, and let $(h, b)\in L_{v}(\bM)(S)$. Let $$\{r_{0}<\dots < r_{p-q-1}\}$$ denote $[p]\setminus  g([q])$. For each $k=0,\ldots, p-q-1$, let $u_{k}\in c$ denote the following element. Let $j_k=\min\{ j \; | \; g(j) >i_k\}$. Then we let

       $$u_{k}= \left(\prod_{j =j_k}^q v_{g(j)}\right)^{-1}v_{i_{k}}\left(\prod_{j =j_k}^q v_{g(j)}\right).$$
    Let $u(v, w, g)\in c^{p-q}$ denote the element $(u_{0},\ldots, u_{p-q-1})$. See Figure \ref{UcMorphismDeltaExample-UTuple}.

             \begin{figure}[ht!]
    \centering
      \includegraphics[scale=0.25]{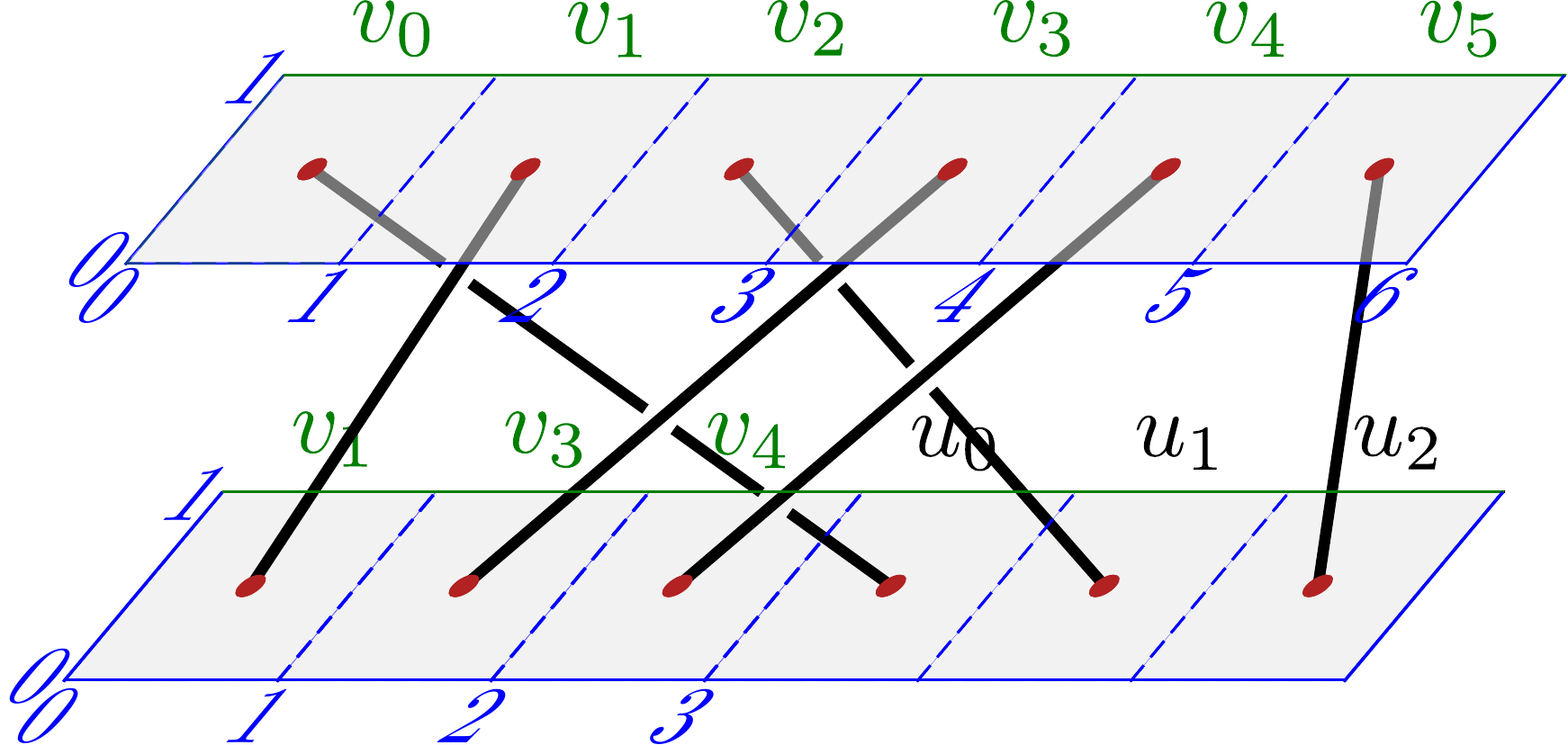} 
    \caption{For $g \in \Delta_{\text{inj,c}}(w,v)$, define a braid $\beta$ from $\sigma^{p+1}$ to $\sigma^{p+1}$ such that (read from bottom to top) the strands originating in positions $1, 2, \dots, q+1$ pass in front of the complementary $p-q$ strands, do not cross one another, and end at positions $g([q])$. The complementary strands pass behind, do not cross one another, and end at positions $[p] \setminus g([q])$. The elements $u_k \in c$ are defined so that the tuple $(v_{g(0)}, v_{g(1)}, \dots, v_{g(q+1)}, u_0, \dots, u_{p-q-1})$ is the image of $(v_0, v_1, \dots, v_{p})$ under the action of this braid. This relationship is illustrated in the case that $g$ is the monotone injection from Figure \ref{DeltaTwiddleExample}.}
    \label{UcMorphismDeltaExample-UTuple}
\end{figure}

Let $r$ be the order-preserving bijection 
\begin{align*}
    r \colon [p-q-1] &\longrightarrow [p]\setminus g([q])\\
    j& \longmapsto r_j
\end{align*}
Recall that $i(u(v,w,g), h\circ r)$ is an element of $\ohur\big((h\circ r)({[p-q-1]})\big)$ as defined in Definition \ref{Defni}, and recall that $b \in \bM(S \setminus h([p]))$. Note that 
$$(h\circ r)([p-q-1]) \sqcup \Big( S \setminus  h([p]) \Big) \; = \; \Big( S \setminus  (h \circ g)([q]) \Big) $$
and hence $i(u(v,w,g), h\circ r) \cdot b$ is in $\bM(S\setminus (h \circ g)([q]))$. Define 
 \begin{align*}
        \Delta_{\text{inj,c}}(w, v)\times L_{v}(\bM)(S)&\to L_{w}(\bM)(S)\\
        (g, (h, b))&\mapsto (h\circ g, i(u(v, w, g), h\circ r)\cdot b).
    \end{align*}

The maps
    $\delc(w, v)\times L_{v}(\bM)(S)\to L_{w}(\bM)(S)$ assemble to give a map of $\fb$-spaces $\delc(w, v)\times L_{v}(\bM)\to L_{w}(\bM)$.

\end{definition}

Observe that if $\bM$ is a module $\ohur$ that does not descend to a module over $\pi_0(\ohur)$, then the FB-spaces $L_v(M)$ do not assemble to form an augmented co-$\Delta_{\text{inj,c}}$-FB-space. To rectify this issue, we will add additional Moore loop space parameters, and work with $\delc$ instead of $\Delta_{\mathrm{inj}, c}$. 
For $\ohur$-modules we use the following construction instead of $L$. 

We will formally describe a relationship between $L_{\blacktriangle}$ and $R_{\blacktriangle}$ in Section \ref{DiscreteModelsOHur}: when $\bM \colon \fb \to \sets$ is a module over $\pi_0(\ohur)$, we will see that these constructions give level-wise weakly homotopy equivalent resolutions.

\begin{definition}\label{def: canonical res for OHur-mod}
    For a left $\ohur$-module
    $\bM$ and $v\in c^{p+1}$, let $R_{v}(\bM)(S)$ denote the following pullback of spaces
    \[
    \begin{tikzcd}
        R_{v}(\bM)(S)\arrow[r]\arrow[d]\arrow[dr, phantom, "\ulcorner", very near start]&\text{Moore}(\bM(S))\arrow[d,"ev"]\\
        L_{v}(\bM)(S)\arrow[r,"i(v{,} -)\cdot -"]& \bM(S)
    \end{tikzcd}
    \]
    where $ev\colon \text{Moore}(\bM(S))\to \bM(S)$ is the map that sends a Moore path $\mu$ in $\bM(S)$ to the endpoint of $\mu$, and $i(v,-)\cdot-\colon L_{v}(\bM)(S)\to \bM(S)$ is the map
    \begin{align*}
        i(v,-)\cdot -\colon L_{v}(\bM)(S)&\to \bM(S).\\
        (h, b)&\mapsto i(v, h)\cdot b
    \end{align*}    
    Given a map $f\colon S_{0}\to S_{1}$ in $\fb$, we have a map 
    \begin{align*}
        R_{v}(\bM)(S_{0})(f)\colon R_{v}(\bM)(S_{0})&\to R_{v}(\bM)(S_{1})\\
        ((h, b), \gamma)&\mapsto ((f|_{\im(h)}\circ h, (f|_{S_{0}\setminus \im(h)})_{*}(b)), f_{*}(\gamma)),
    \end{align*}
    i.e. the one induced by postcomposing in $L_{v}(\bM)(S)$ and $\text{Moore}(\bM(S))$ with $f$.
    We consider $R_{\blacktriangle}(\bM)$ as an enriched covariant functor $\delc^{\text{op}} \to\topo^{\fb}$ as follows. 

    Given $(e, \mu)\in \delc(w, v)$, let $[(e, \mu)]\colon [q]\to [p]$ denote the map in $\Delta_{\text{inj,c}}(w, v)$ corresponding to the 
    path component of $(e, \mu)$ in $\delc(w, v)$.
    Given $((h, b), \gamma)\in R_{v}(\bM)(S)$, since $\ohur(S)\to \hur(|S|)$ is a covering space, we can lift the Moore path $\mu$ in $\hur(|S|)$ uniquely to a Moore path $\widetilde{\mu}$ in $\ohur(S)$ starting at $i(v, h)$.  We can express the endpoint of $\widetilde{\mu}$ uniquely as $i(w, h\circ [(e, \mu)])\cdot \tilde{e}$, 
    where 
    $\tilde{e}\in \ohur(\im(h)\setminus\im(g))$. 
    We have a map 
    \begin{align*}
       \delc(w, v)\times R_{v}(\bM)(S)&\to R_{w}(\bM)(S)\\
       ((e, \mu), ((h, b), \gamma))&\mapsto ((h\circ [(e, \mu)], \tilde{e}\cdot b), \gamma *(\widetilde{\mu}\cdot b)).
    \end{align*}

    The maps
    $\delc(w, v)\times R_{v}(\bM)(S)\to R_{w}(\bM)(S)$ assemble to give a map of $\fb$-spaces $\delc(w, v)\times R_{v}(\bM)\to R_{w}(\bM)$.
    The maps $$\epsilon_{v}\colon R_{v}(\bM)\to \bM$$ given by sending $((h, b), \gamma)\in R_{v}(\bM)(S)$  to $\gamma(0)$  assemble to give an augmentation map $\epsilon_{\blacktriangle}\colon R_{\blacktriangle}(\bM)\to \bM$.
\end{definition}
\begin{lemma}\label{lem: canon res map is a fibration}
    The map $\epsilon_{v}\colon R_{v}(\bM)(S)\to \bM(S)$ is a Serre fibration for all $v\in \delc$ and $S\in \fb$.
\end{lemma}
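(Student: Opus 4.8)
Fix an object $v\in c^{p+1}$ of $\delc$ and a finite set $S$, and abbreviate $X\colonequals \bM(S)$. The plan is to exhibit $\epsilon_{v}\colon R_{v}(\bM)(S)\to X$ as a composite of Serre fibrations. First I would repackage the defining pullback square of $R_{v}(\bM)(S)$. By definition a point of $R_{v}(\bM)(S)$ is a pair $((h,b),\gamma)$ with $(h,b)\in L_{v}(\bM)(S)$ and $\gamma$ a Moore path in $X$ whose endpoint is $i(v,h)\cdot b$, and $\epsilon_{v}$ records the initial point $\gamma(0)$. Equivalently, $R_{v}(\bM)(S)$ is the pullback of the double-evaluation map
$$ (ev_{0},\,ev)\colon \text{Moore}(X)\longrightarrow X\times X, \qquad (T,\mu)\longmapsto \big(\mu(0),\,\mu(T)\big) $$
(here $ev$ is the endpoint map of \cref{def: canonical res for OHur-mod} and $ev_{0}$ evaluates at the start) along the map $\mathrm{id}_{X}\times\big(i(v,-)\cdot -\big)\colon X\times L_{v}(\bM)(S)\to X\times X$; under this identification $\epsilon_{v}$ is the composite of the pullback projection $R_{v}(\bM)(S)\to X\times L_{v}(\bM)(S)$ with the first-factor projection $\mathrm{pr}_{1}\colon X\times L_{v}(\bM)(S)\to X$.

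The one input that is not purely formal is that $(ev_{0},ev)\colon \text{Moore}(X)\to X\times X$ is a Hurewicz (hence Serre) fibration. I would prove this directly: given a homotopy $H=(H_{0},H_{1})\colon Z\times[0,1]\to X\times X$ together with a lift $z\mapsto (T(z),\mu_{z})$ of $H(-,0)$ to $\text{Moore}(X)$, define the lift at parameter $s$ to be the Moore path of length $T(z)+2s$ obtained by concatenating the reversed segment $t\mapsto H_{0}(z,s-t)$ for $t\in[0,s]$, then $\mu_{z}$, then the segment $t\mapsto H_{1}(z,t)$ for $t\in[0,s]$. Continuity and strict associativity of Moore concatenation make this continuous in $(z,s)$, it agrees with the given lift at $s=0$, and it covers $H$. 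For the present lemma only the case $Z=D^{n}$ is needed, so one may instead invoke the Serre version, or simply cite this as a standard property of Moore path spaces.

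Finally I would assemble these facts: pullbacks of Serre fibrations are Serre fibrations, so $R_{v}(\bM)(S)\to X\times L_{v}(\bM)(S)$ is a Serre fibration; the product projection $\mathrm{pr}_{1}\colon X\times L_{v}(\bM)(S)\to X$ is a Serre fibration; and composites of Serre fibrations are Serre fibrations. Since the resulting composite is exactly $\epsilon_{v}$, the lemma follows. I expect the genuine content to be confined to the Moore-path fibration statement and, more subtly, the bookkeeping in the first paragraph, namely verifying that after the repackaging $\epsilon_{v}$ really is $\mathrm{pr}_{1}$ precomposed with the pullback projection and not some other map. The finiteness of $S$ (so that $L_{v}(\bM)(S)$ is a finite disjoint union indexed by embeddings $[p]\hookrightarrow S$) plays no role, and continuity of $i(v,-)\cdot -$ is immediate from the $\ohur$-module structure of $\bM$ together with the $\fb$-structure.
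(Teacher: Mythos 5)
Your proof is correct, and it takes a genuinely different route from the paper's. The paper proves the lemma by a direct homotopy-lifting construction: writing $f_{0}(x)=((h(x),b(x)),\gamma(x))$, it keeps the discrete data $(h(x),b(x))$ fixed and sets $\widetilde{\gamma}(x,t)$ equal to the concatenation of the backwards segment $s\mapsto f(x,t-s)$ with $\gamma(x)$; since the endpoint of $\widetilde{\gamma}(x,t)$ is unchanged, the resulting point still lies in $R_{v}(\bM)(S)$, and this is the whole proof. You instead repackage $R_{v}(\bM)(S)$ as a pullback of the double-evaluation $(ev_{0},ev)\colon\text{Moore}(X)\to X\times X$ along $\mathrm{id}_{X}\times(i(v,-)\cdot-)$, identify $\epsilon_{v}$ with the composite of the pullback projection and $\mathrm{pr}_{1}$, and then invoke standard closure properties of Serre fibrations together with the classical fact that the Moore double-evaluation map is a (Hurewicz, hence Serre) fibration. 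Your repackaging is valid: a point of the new pullback has the form $(x,(h,b),\gamma)$ with $x=\gamma(0)$ forced, so projecting away $x$ gives a homeomorphism to $R_{v}(\bM)(S)$, and under it $\mathrm{pr}_{1}$ applied to the pullback projection does recover $\gamma(0)=\epsilon_{v}$. The underlying device is the same Moore-concatenation trick, but the paper uses a one-sided prepend while you prove the stronger two-sided double-evaluation fibration statement. Your version is more modular and makes clear that the only real input is a standard property of Moore path spaces; the paper's version is shorter and avoids verifying the repackaging. Either is acceptable.
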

\begin{proof}
    Suppose we have the following commutative diagram
    \begin{center}
    \begin{tikzcd}
        D^{k}\times\{0\}\arrow[r, "f_{0}"]\arrow[d] & R_{v}(\bM)(S)\arrow[d, "\epsilon_{v}"]\\
        D^{k}\times[0,1]\arrow[r, "f"]& \bM(S).
    \end{tikzcd}
    \end{center}
    with $v\in c^{p+1}$.
    We want to construct a lift $\widetilde{f}\colon D^{k}\times[0,1]\to R_{v}(\bM)(S)$ of $f$ such that $\widetilde{f}(x,0)=f_{0}(x)$ for all $x\in D^{k}$ and $\epsilon_{v}\circ \widetilde{f}=f$. Let $f_{0}(x)=((h(x), b(x)), \gamma(x))$. Recall $h(x) \in \Hom_{\mathrm{FI}}([p], S)$, $b(x) \in \bM(S \setminus \im(h(x)))$, and $\gamma(x)$ is a Moore path in $\bM_S$ ending at $i(v, h(x))\cdot b(x) \in \bM_{S}$.

    For each $x\in D^{k}$ and $t\in [0,1]$, let $\widetilde{\gamma}(x, t)\in\text{Moore}(\bM(S))$ be the Moore path with
    $$\widetilde{\gamma}(x, t)(s)=\begin{cases} f(x,t-s) & \text{if } s<t,\\
    \gamma(x)(s-t) & \text{if } s\geq t.
    \end{cases}$$
    The function $\widetilde{\gamma}(x, t)$ is continuous at $s=t$, since $f(x, 0)=\epsilon_{v}\circ f_{0}(x)=\gamma(x)(0)$.
    Let $\widetilde{f}(x,t)=((h(x), b(x)), \widetilde{\gamma}(x, t))$. Since the endpoint of $\widetilde{\gamma}(x,t)$ is $i(v, h(x))\cdot b(x)$ for all $(x,t)\in D^{k}\times[0,1]$, $\widetilde{f}(x,t)$ is in $R_{v}(\bM)(S)$. We have that $\widetilde{f}(x,0)=f_{0}(x)$ for all $x\in D^{k}$ since $\widetilde{\gamma}(x, 0)=\gamma(x)$ and  $\epsilon_{v}\circ \widetilde{f}=f$ since $\epsilon_{v}\circ \widetilde{f}(x,t)=\widetilde{\gamma
    }(x,t)(0)=f(x, t-0)=f(x, t)$.
\end{proof}
Now we will relate $ \hocolim\limits_{\blacktriangle\in \delc^{\op}}R_{\blacktriangle}(\ohur)$ to $\ohur$. Recall that we describe in Definition \ref{DefnModelHocolim} an explicit model for the homotopy colimit. 
\begin{definition}
    Let $\ohur[, +]$ denote the sub-$\fb$-space of $\ohur$ which is empty when $S=\varnothing$ and agrees with $\ohur$ otherwise.
\end{definition}
\begin{lemma}\label{lem:res of OHur}
The map $$\hocolim\limits_{\blacktriangle\in \delc^{\op}}\epsilon_{\blacktriangle}\colon \hocolim\limits_{\blacktriangle\in \delc^{\op}}R_{\blacktriangle}(\ohur)\to \ohur$$ is a weak homotopy equivalence onto $\ohur[, +]$.
\end{lemma}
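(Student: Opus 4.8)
The plan is to verify the equivalence levelwise over $\fb$, to use the fibrancy established in \cref{lem: canon res map is a fibration} to reduce it to a contractibility statement about homotopy fibres, and then to obtain that contractibility by adapting the acyclicity of the canonical resolution of configuration-space modules from Randal-Williams \cite{MR4767884} (compare Krannich \cite{MR4019896}).

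\textbf{Step 1 (reduction to a point and to contractible fibres).} Fix $S \in \fb$. If $S = \varnothing$ there is no injection $[p] \hookrightarrow \varnothing$ for $p \geq 0$, so $R_v(\ohur)(\varnothing) = \varnothing$ for every object $v$ of $\delc$, whence $\bigl(\hocolim_{\blacktriangle \in \delc^{\op}} R_\blacktriangle(\ohur)\bigr)(\varnothing) = \varnothing = \ohur[,+](\varnothing)$ and there is nothing to prove. So assume $S \neq \varnothing$. By \cref{DefnModelHocolim} we must show that the augmentation $|| B_\bullet(*, \delc, R_\blacktriangle(\ohur)(S)) || \to \ohur(S)$ is a weak homotopy equivalence. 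By \cref{lem: canon res map is a fibration}, each $\epsilon_v \colon R_v(\ohur)(S) \to \ohur(S)$ is a Serre fibration, so $B_\bullet(*, \delc, R_\blacktriangle(\ohur)(S))$ maps to the constant semi-simplicial space at $\ohur(S)$ by a levelwise fibration. Since thick geometric realization of a levelwise fibration of semi-simplicial spaces over a constant base is a quasifibration whose fibre is the realization of the fibres (a standard property of thick realization; see \cite{MR4767884}), the homotopy fibre of the augmentation over a point $z \in \ohur(S)$ is weakly equivalent to $\hocolim_{\blacktriangle \in \delc^{\op}} \epsilon_\blacktriangle^{-1}(z)$, the homotopy colimit of the co-$\delc$-space of (literal, hence homotopy) fibres. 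It therefore suffices to show that this homotopy colimit is weakly contractible for every $z \in \ohur(S)$; this includes non-emptiness, which also gives surjectivity onto $\pi_0(\ohur(S)) = \pi_0(\ohur[,+](S))$, and non-emptiness is in any case clear since $z$ can always be isotoped so as to bring one strand to the far-left standard position, with monodromy in $c$ because $c$ is conjugation-invariant.

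\textbf{Step 2 (identifying the fibre co-$\delc$-space).} By construction $R_v(\ohur)(S)$ is the Moore mapping-path-space of the multiplication map $i(v, -) \cdot - \colon L_v(\ohur)(S) \to \ohur(S)$, so $\epsilon_v^{-1}(z)$ is a model for $\operatorname{hofib}_z\bigl(i(v,-)\cdot -\bigr)$, compatibly with the $\delc$-structure. Using \cref{lem: delc is disc} and \cref{lem: delc is homotopy discrete} to replace $\delc$ by the discrete category ${\Delta}_{\mathrm{inj},c}$, and using that $\ohur(S) \simeq (c^n)_{h\PBr_n}$ is aspherical — so each such homotopy fibre is homotopy discrete, being a coset space of pure-braid stabilisers — one identifies $\hocolim_\blacktriangle \epsilon_\blacktriangle^{-1}(z)$ up to weak equivalence with the realization of a semi-simplicial set: a $p$-simplex records an ordered $(p+1)$-tuple of strands of $z$, together with the homotopy class of a pure braid carrying those strands, in order, to the first $p+1$ standard positions with prescribed monodromies. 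This is an ordered, monodromy-decorated, ``braided'' refinement of the complex of injective words attached to the single point $z$.

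\textbf{Step 3 (contractibility of the fibre — the main obstacle).} It remains to show that this complex is weakly contractible. This is the ordered-Hurwitz counterpart of the acyclicity of the canonical configuration-space resolution \cite[Section 5]{MR4767884} (compare \cite[Section 2.2]{MR4019896}), and I would prove it by adapting that argument. The key is that the Moore-path parameters built into $\delc$ and $R_\blacktriangle$ — in particular the normalisation of \cref{DefnLiftMonotone}, that the selected strands pass \emph{in front} of, and do not cross, the others — rigidify the data enough that the manipulations witnessing contractibility (absorbing a marked strand into, or extracting one from, the unmarked configuration, while tracking the resulting monodromies via the conjugation formulas of \cref{Defni}) can be carried out continuously and compatibly with the face maps and with the $\fb$-action in $S$. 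I expect this to be the principal difficulty: one must check that this braided, decorated enrichment of the complex of injective words is genuinely contractible, not merely highly connected as is the undecorated complex, and the combinatorial and homotopical bookkeeping required is exactly what the topological category $\delc$ — as opposed to the naive discrete ${\Delta}_{\mathrm{inj},c}$ — is designed to control.
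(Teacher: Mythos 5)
Your Steps 1 and 2 correctly set up the reduction: use \cref{lem: canon res map is a fibration} to replace the homotopy fibre of $\hocolim_{\blacktriangle\in\delc^{\op}}\epsilon_\blacktriangle$ over a point $a\in\ohur[,S]$ with the homotopy colimit of the literal fibres $F(a)_\blacktriangle$, handle $S=\varnothing$ separately, and aim for contractibility. This matches the paper. But Step 3, which is where the entire content of the lemma lives, is not a proof: you acknowledge that contractibility of the ``braided, monodromy-decorated complex of injective words'' is the principal difficulty, propose to adapt the Randal-Williams/Damiolini argument, and then assert that you ``expect'' the bookkeeping to work out. That expectation is not substantiated, and it is genuinely unclear how to redo the Damiolini/Hatcher--Vogtmann tethered-complex combinatorics directly in the presence of monodromy labels. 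Your identification in Step 2 of the fibre as a semi-simplicial set via homotopy discreteness of the mapping-path-space fibres is also asserted rather than proved; it relies on a $\pi_1$-injectivity argument that you do not supply.

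The paper avoids this difficulty entirely by taking a different route at exactly the point where you stall. Rather than re-proving contractibility for the decorated complex, the paper constructs the forgetful map $\phi$ from the Hurwitz fibre object $F(a)_\blacktriangle$ (over $\delc$) to the configuration-space fibre object $F(a')_\bullet$ (over $\widetilde\Delta_{\mathrm{inj}}$, with $a'=\phi\circ\zeta(a)$), and shows that the induced map of two-sided bar constructions
\[
\phi_{B_\bullet}\colon B_\bullet(*,\delc,F(a)_\blacktriangle)\longrightarrow B_\bullet(*,\widetilde\Delta_{\mathrm{inj}},F(a')_\bullet)
\]
is a level-wise weak equivalence. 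The level-wise statement on the fibre spaces $\bigsqcup_{v} F(a)_v \to F(a')_p$ is proved by a concrete mapping-space fibration argument (the two restriction maps $\mathrm{res}_1$, $\mathrm{res}_2$, and the homotopy equivalences $\mathrm{Map}((D_p,\boldsymbol{\bigsqcup}),(BG,\mathrm{pt}))\simeq G^{p+1}$), not by braid-group stabiliser combinatorics; the level-wise statement on the morphism spaces comes from \cref{lem: delc is homotopy discrete}. Contractibility of $\hocolim_{\bullet\in\widetilde\Delta_{\mathrm{inj}}^{\op}}F(a')_\bullet$ is then simply \emph{cited} from Randal-Williams, where the Damiolini/Hatcher--Vogtmann result is applied in the undecorated configuration-space setting. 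In short: you propose to port the hard contractibility proof into the Hurwitz setting; the paper instead ports the \emph{problem} down to the configuration setting and uses the existing result there. The comparison strategy is both logically cleaner and avoids the unverified combinatorics on which your Step 3 depends.
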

\begin{proof}
    This result is a Hurwitz space analogue of Krannich \cite[Example 2.18]{MR4019896} and Randal-Williams \cite[Lemma 3.2]{MR4767884}.

    For $v\in c^{p+1}$, the space $R_{v}(\ohur)(\varnothing)$ consists of a triple $(g, b, \gamma)$, where $g\colon [p]\hookrightarrow  \varnothing$, $b\in \ohur(\varnothing\setminus\im(g))$, and $\gamma$ a Moore path  to $i(v, g)\cdot  b$ in $(\ohur)(\varnothing)$. The set $[p]$ is nonempty, so the set of injections $g$ is empty, and hence the space $R_{v}(\ohur)(\varnothing)$ is empty. Therefore, the fiber of $\hocolim\limits_{\blacktriangle\in \delc^{\op}}\epsilon_{\blacktriangle}$ over $S=\varnothing$ is empty.


    We want to show that the homotopy fiber of $\hocolim\limits_{\blacktriangle\in \delc^{\op}}\epsilon_{\blacktriangle}$ over any $a\in \ohur(S)$, with $S\neq \varnothing$, is weakly contractible. First, we will describe the (literal) fiber of $\hocolim\limits_{\blacktriangle\in \delc^{\op}}\epsilon_{\blacktriangle}$ over $a$ and then we will relate it to  the homotopy fiber of $\hocolim\limits_{\blacktriangle\in \delc^{\op}}\epsilon_{\blacktriangle}$ over $a$. The fiber over $a$ is equal to the geometric realization of  the sub-semi-simplicial space $$BF(a)_{\bullet}\subseteq B_{\bullet}(*, \delc, R_{\blacktriangle}(\bM))$$ given by taking the  levelwise fiber  over $a$ of the map $$B(\epsilon)_{\bullet}\colon B_{\bullet}(*, \delc, R_{\blacktriangle}(\ohur))\to \ohur,$$ that is,  
    $$BF(a)_{p}\colonequals\{ x\in B_{p}(*, \delc, R_{\blacktriangle}(\bM))\colon B(\epsilon)_{p}(x)=a\}.$$ 
    We can describe $BF(a)_{\bullet}$ more explicitly as follows.
    Let $F(a)_{\blacktriangle}\colon \delc^{\text{op}}\to \text{Top}$ be the enriched covariant functor which assigns to $v\in \delc$ the (literal) fiber of the map $\epsilon_{v}\colon R_{v}(\ohur)(S)\to \ohur(S)$ at $a$.  The $\delc^{\op}$ structure maps on  $F(a)_{\blacktriangle}$ are induced by those on $R_{\blacktriangle}(\ohur)(S)$.
      For $v\in c^{p+1}$, 
    as a set $F(a)_{v}$ consists  of all tuples $((g, b), \gamma)$ with 
    $g\colon [p]\hookrightarrow  S$ an injective map, $b\in \ohur(S\setminus\im(g))$, and $\gamma$ a Moore path  from $a$ to $i(v, g)\cdot b$. Observe that we therefore have $$BF(a)_{\bullet}=B_{\bullet}(*, \delc, F(a)_{\blacktriangle}).$$
    
   The map $\epsilon_{v} \colon R_{v}(\ohur)(S)\to \ohur[,S]$ is a Serre fibration for every $v\in \delc$ by \cref{lem: canon res map is a fibration}.
Since $B_{p}(*, \delc^{\text{op}}, R_{\blacktriangle}(\ohur)(S))$ is equal to 
    $$\bigsqcup_{\substack{v_{0},\ldots, v_{p} \\ \in \text{ob}(\delc^{\text{op}})}}\delc^{\text{op}}(v_{0}, v_{1})\times\cdots \times\delc^{\text{op}}(v_{p-1}, v_{p})\times  R_{v_{p}}(\ohur)(S),$$ the map $$B(\epsilon)_{p}\colon B_{p}(*, \delc^{\text{op}}, R_{\blacktriangle}(\ohur)(S))\to \ohur[,S]$$ is therefore a Serre fibration for each $p\geq 0$. As a result, the augmented semi-simplicial space $B(\epsilon)_{\bullet}$ satisfies the hypotheses of 
   Ebert--Randal-Williams \cite[Lemma 2.14]{MR3995026}, and so the homotopy fiber of $\hocolim\limits_{\blacktriangle\in \delc^{\op}}\epsilon_{\blacktriangle}$ over $a$ is weakly homotopy equivalent to $\hocolim\limits_{\blacktriangle\in \delc^{\op}}F(a)_{\blacktriangle}$.

    
 Now let $R_{\bullet}(\conf[])$ be as in Definition \ref{DefnRConfModule}, that is, we view $\conf[]$ as a module over itself, and \emph{not} as $\ohur$-module. Consider the augmentation $$\epsilon'\colon R_{\bullet}(\conf[])\to \conf[].$$ 
 Recall the forgetful map $$\phi\colon\hur[,|S|] \to \conf[|S|]$$ from \cref{map: forgetful map to conf} given by forgetting the map to $BG$. Let $$\zeta\colon \ohur[, S]\to \hur[,|S|]$$ be the map given by forgetting the order of the points.
 Let $a'=\phi\circ \zeta(a)$ and let $F(a')_{\bullet}\subset R_{\bullet}(\conf[])$ be the $\widetilde{\Delta}_{\text{inj}}$-space with $p$-simplices given by the literal fibers over $d'$ of the maps $\epsilon'_{p}\colon R_{p}(\conf[])\to \conf[]$. This means that elements of $F(a')_{p}$ consist of a configuration $b\in \conf[n-p-1]$ and a Moore path from $a'$ to $\sigma^{p+1}\cdot b$. The $\widetilde{\Delta}^{\op}_{\text{inj}}$ structure maps on  $F(a')_{\bullet}$ are induced by those on $R_{\bullet}(\conf)(|S|)$. For every $v\in c^{p+1}$, the function $\phi$ 
 induces a map 
 $$\phi_{F_v}\colon F(a)_{v}\to F(a')_{p}$$ that is natural in the sense that for any $w\in\delc$, the following diagram commutes

    \begin{center}
    \begin{tikzcd}
        \delc(w, v)\times F(a)_{v}\arrow[r]\arrow[d,swap, "\phi_{\widetilde{\Delta}}\times \phi_{F_v}"] & F(a)_{w}\arrow[d, "\phi_{F_w}"]\\
        \widetilde{\Delta}_{\text{inj}}([q], [p])\times F(a')_{p}\arrow[r]& F'(a')_{q}
    \end{tikzcd}
    \end{center}
    Therefore, 
    these maps assemble to give a map of semi-simplicial spaces
    $$\phi_{B_{\bullet}}\colon B_{\bullet}(*, \delc, F(a)_{\blacktriangle})\to B_{\bullet}(*, \widetilde{\Delta}_{\text{inj}}, F(a')_{\bullet})$$
    and an induced map 
    \begin{align}
        \Vert \phi_{B_{\bullet}}\Vert \colon\hocolim\limits_{\blacktriangle\in \delc^{\op}}F(a)_{\blacktriangle}\to \hocolim\limits_{\bullet\in \widetilde{\Delta}_{\text{inj}}^{\op}}F(a')_{\bullet}
    \end{align}
    Using Damiolini \cite[Theorem 2.48]{damiolini2013braid} (see also Hatcher--Vogtmann \cite[Proposition 3.2]{HV-tethers} for a published reference), Randal-Williams shows that $\hocolim\limits_{\bullet\in \widetilde{\Delta}_{\text{inj}}^{\op}}F(a')_{\bullet}$ is weakly contractible in the proof of \cite[Lemma 3.2]{MR4767884}. 
    As a result, to show that $\hocolim\limits_{\blacktriangle\in \delc^{\op}}F(a)_{\blacktriangle}$ is weakly contractible, it suffices to show that the map 
   $ \Vert \phi_{B_{\bullet}}\Vert $ 
    is a weak homotopy equivalence. 
A map of semi-simplicial spaces that induces a weak homotopy equivalence between spaces of $p$-simplices for all $p$ induces a weak homotopy equivalences on the geometric realization; see, for example, Ebert--Randal-Williams \cite[Theorem 2.2]{MR3995026}. Thus, it suffices to show that $\phi_{B_{\bullet}}$ is a level-wise weak homotopy equivalence.

    First, we will show that the the forgetful map $$\phi_F \colonequals \bigsqcup_{v\in c^{p+1}}\phi_{F_v} \colon\bigsqcup_{v\in c^{p+1}}F(a)_{v}\to F(a')_{p}$$ is a weak homotopy equivalence. Fix a point $(b, \gamma)$ in $F(a')_{p}$. It suffices to show that the homotopy fiber of $\phi_F$ over $(b, \gamma)$ is contractible. Since $\phi_F$ is a fibration, the homotopy fiber over $(b, \gamma)$ is the same as the literal fiber over $(b, \gamma)$. We can write $\gamma\colon [0,\infty) \to\conf[|S|]$ as $\gamma(t)=(\lambda(t), \xi(t))$,
    with the conditions that $\gamma(0)=a'$ and there is some $T_{0}\geq 0$ such that $\gamma(t)=\gamma(T_{0})=\sigma^{p+1}\cdot b$ for all $t\geq T_{0}$. 
    
    Our current goal is to show $\phi_F^{-1}(b, \gamma)$ is weakly contractible (compare with
    Step 2  of \cref{lem: delc is disc}).   
    Qualitatively, a point in $\phi_F^{-1}(b, \gamma)$ is a continuous choice of labeling by elements of $S$ on the configurations $\xi(t)$ and choice of map $f(t)$ to $BG$ on the complement of $\xi(t)$. This choice of labeling on $\xi(0)$ is required to be the labeling on $a$. By continuity, this determines the labeling on $\xi(t)$ for all $t$.

    Let $$g  \colon ([0,\lambda(0) ] \times [0,1])  \setminus \xi(0)  \to BG$$ be the map associated to $a$. Recall that  
    $$g_v \colon  D_p    \to BG$$
    is the map associated to $i(v)$ as defined in Definition \ref{Defni}. The maps $$f(t) \colon ([0,\lambda(t) ] \times [0,1])  \setminus \xi(t)  \to BG$$ are required to satisfy three conditions: 
    \begin{enumerate}
     \item \label{ConditionInitialCondition} $f(0)=g$,
      \item \label{ConditionTerminalCondition} For all $t \geq T_0$, the restriction of $f(t)$ to $D_p$ is $g_v$ for some $v$.  
     \item \label{HorseshoeCondition} For all $t$, $f(t)$ maps the subspace ${\textstyle \boldsymbol{\bigsqcup}_{\lambda(t)}}$ to the basepoint of $BG$. 
      \end{enumerate}
     Let 
     $$C = \bigcup_t \Big( \left( [0,\lambda(t) ] \times [0,1] \right) \setminus \xi(t) \Big) \times \{t\}  \subset \R^3 \qquad \text{for } t \in [0, \infty)$$ 
     and $H \subseteq C $ be  
     $$ H = \bigcup_{t} \left({\textstyle \boldsymbol{\bigsqcup}_{\lambda(t)}} \times \{t\} \right) \subseteq \R^3 \qquad \text{for } t \in [0, \infty).$$ 

We view the family of maps $f(t)$ as a function $f \colon C \to BG$. The space of such functions satisfying Condition (\ref{HorseshoeCondition}) is $\mathrm{Map((C,H), (BG, \text{pt}))}$. There is a restriction map defined by setting $t=0$, 
$$\text{res}_{1} \colon \mathrm{Map}((C,H), (BG, \text{pt})) \to \mathrm{Map}((([0,\lambda(0) ] \times [0,1]) \setminus \xi(0)),{\textstyle \boldsymbol{\bigsqcup}_{\lambda(0)}}), (BG, \text{pt})).$$
The fiber of this map over $g$ are the functions that additionally satisfy Condition (\ref{ConditionInitialCondition}).

To understand Condition (\ref{ConditionTerminalCondition}), we will consider an additional restriction map, which is defined by setting $t=T_0$ and restricting to $D_p$, 
$$ \text{res}_{2} \colon \mathrm{Map}((C,H), (BG, \text{pt})) \to \mathrm{Map}\left(\left( D_p,{\textstyle \boldsymbol{\bigsqcup}_{p+1}}\right), (BG, \text{pt})\right).$$ 
Condition (\ref{ConditionTerminalCondition}) is the statement that $f$ is in the preimage of $\{g_v \; | \; v \in c^{p+1} \}$. 
Note that the pair $( D_p,{\textstyle \boldsymbol{\bigsqcup}_{p+1}})$ is homotopy equivalent to the pair $(\bigvee_{p+1} S^1, \text{pt})$. As in Step 2  of \cref{lem: delc is disc}, we conclude that $$\mathrm{Map}\left( \left( D_p,{\textstyle \boldsymbol{\bigsqcup}_{p+1}}\right), (BG, \text{pt})\right) \simeq G^{p+1}.$$

\noindent Hence the mapping space $\mathrm{Map}\left( \left( D_p,{\textstyle \boldsymbol{\bigsqcup}_{p+1}}\right), (BG, \text{pt})\right) $ is homotopy discrete. Let $$ \mathrm{Map}_{c}\left( \left( D_p,{\textstyle \boldsymbol{\bigsqcup}_{p+1}}\right), (BG, \text{pt})\right) $$ be the union of the components containing the maps $\{g_v \; | \; v \in c^{p+1}\}$. The homotopy equivalence described above restricts to a homotopy equivalence 
\begin{align*}
 &  \mathrm{Map}_c\left( \left( D_p,{\textstyle \boldsymbol{\bigsqcup}_{p+1}}\right), (BG, \text{pt})\right) \simeq c^{p+1}.
\end{align*}

To summarize, we see that the set $\phi_F^{-1}(b, \gamma)$ is homeomorphic to the preimage $$(\text{res}_{1} \times \text{res}_{2})^{-1} \left( \{g\} \times \{g_v \; | \; v \in c^{p+1} \}\right).$$ The map $\text{res}_{1}$ is a fibration since it is obtained by applying the mapping space functor to a cofibration. Additionally, $\text{res}_{1}$ is a homotopy equivalence because the inclusion
$$ \left(([0,\lambda(0) ] \times [0,1]) \setminus \xi(0)),\; {\textstyle \boldsymbol{\bigsqcup}_{\lambda(0)}}\right) \quad \hookrightarrow \quad (C,H)$$ is a homotopy equivalence. Therefore $\text{res}_{1}^{-1}(g)$ is contractible.

Note that $\text{res}_{2}$ maps $\text{res}_{1}^{-1}(g)$ to  $\mathrm{Map}_{c}\left( \left( D_p,{\textstyle \boldsymbol{\bigsqcup}_{p+1}}\right), (BG, \text{pt})\right)$ since $g$ maps a small loop around each point in the configuration to a conjugacy class in $c$. The space $(\text{res}_{1} \times \text{res}_{2})^{-1} \left( \{g\} \times \{g_v \; | \; v \in c^{p+1} \}\right)$ is the pullback under the map 
\begin{align*}
    c^{p+1} & \longrightarrow \mathrm{Map}_{c}\left( \left( D_p,{\textstyle \boldsymbol{\bigsqcup}_{p+1}}\right), (BG, \text{pt})\right) \\ 
    v & \longmapsto g_v
\end{align*}
of the map $$\text{res}_{2}|_{\text{res}_{1}^{-1}(g)} \colon \text{res}_{1}^{-1}(g) \longrightarrow \mathrm{Map}_{c}\left( \left( D_p,{\textstyle \boldsymbol{\bigsqcup}_{p+1}}\right), (BG, \text{pt})\right).$$
Since $\text{res}_{2}|_{\text{res}_{1}^{-1}(g)}$ is a fibration,  $c^{p+1} \to \mathrm{Map}_{c}\left( \left( D_p,{\textstyle \boldsymbol{\bigsqcup}_{p+1}}\right), (BG, \text{pt})\right)$ is a homotopy equivalence, and $\text{res}_{1}^{-1}(g)$ is contractible,  $(\text{res}_{1} \times \text{res}_{2})^{-1} \left( \{g\} \times \{g_v \; | \; v \in c^{p+1} \}\right)$ is weakly contractible. 
We have therefore proven that $\phi_F^{-1}(b, \gamma)$ is weakly contractible as desired.

    By \cref{lem: delc is homotopy discrete}, for every $v\in c^{p+1}$,
     the map  $$\bigsqcup_{w\in c^{q+1}}  \phi_{\widetilde{\Delta}}\colon \bigsqcup_{w\in c^{q+1}} \delc(w, v)\to \widetilde{\Delta}_{\text{inj}}([q], [p])$$ is a weak homotopy equivalence. Therefore,
    $\bigsqcup_{w\in c^{q+1}}  \phi_{\widetilde{\Delta}}$ induces a weak homotopy equivalence from  
    $$\bigsqcup_{v_{0},\ldots, v_{r}\in \delc}\delc(v_{0}, v_{1}, )\times\cdots \times \delc(v_{r-1}, v_{r}) $$
    to 
    $$\bigsqcup_{p_{0},\ldots, p_{r}\in \widetilde{\Delta}_{\text{inj}}}\widetilde{\Delta}_{\text{inj}}(p_{0}, p_{1})\times\cdots \times\widetilde{\Delta}_{\text{inj}}(p_{r-1}, p_{r}),$$ where $p_{i}=\phi_{\widetilde{\Delta}}(v_{i})$.
    As a result, the map
    $$\phi_{B_r} \colon B_{r}(*, \delc, F(a)_{\blacktriangle})\to B_{r}(*, \widetilde{\Delta}_{\text{inj}}, F(a')_{\bullet})$$
    is a weak homotopy equivalence for all $r\geq 0$.    
\end{proof}
\begin{proposition}\label{prop: relating canonical res and derived indecomposables}
    Suppose that $\bM\in \topo^{\fb}$ is cofibrant and a left $\ohur$-module. There is a weak homotopy equivalence of $\fb$-spaces between the homotopy cofiber of $$\hocolim\limits_{\blacktriangle\in\delc}\epsilon_{\blacktriangle}\colon \hocolim\limits_{\blacktriangle\in\delc} R_{\blacktriangle}(\bM)\to \bM$$ and $Q^{\ohur}_{\mathbb{L}}(\bM)$.
\end{proposition}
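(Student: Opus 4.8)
The plan is to combine the explicit bar-construction model for derived indecomposables (\cref{lem: model for der-ind}) with the resolution result \cref{lem:res of OHur}, bootstrapping from the universal case $\bM=\ohur$ to a general cofibrant module by means of the bar resolution of $\bM$ as an $\ohur$-module.

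First I would note that $\ohur$ is cofibrant in $\topo^{\fb}$: the group $\Sigma_S$ acts freely on the underlying ordered configurations of $\ohur[,S]$, so $\ohur[,S]$ is $\Sigma_S$-equivariantly homotopy equivalent to a free $\Sigma_S$-CW complex. Hence \cref{lem: model for der-ind} applies and $Q^{\ohur}_{\mathbb{L}}(\bM)\simeq\Vert B_\bullet(\bunit,\ohur,\bM)\Vert$. Writing $\ohur$ as the extension of its augmentation ideal $\ohur[,+]$ by the unit $\bunit$, for every left $\ohur$-module $\bM$ there is a levelwise cofiber sequence of semi-simplicial $\fb$-spaces
\[ B_\bullet(\ohur[,+],\ohur,\bM)\longrightarrow B_\bullet(\ohur,\ohur,\bM)\longrightarrow B_\bullet(\bunit,\ohur,\bM). \]
Thick geometric realization preserves this cofiber sequence; the middle term realizes to $\bM$ because the augmented bar resolution $B_\bullet(\ohur,\ohur,\bM)\to\bM$ admits an extra degeneracy, and the right-hand term realizes to $Q^{\ohur}_{\mathbb{L}}(\bM)$ by the previous sentence. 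So it is enough to construct a natural weak equivalence $\hocolim_{\blacktriangle\in\delc^{\op}}R_\blacktriangle(\bM)\xrightarrow{\ \sim\ }\Vert B_\bullet(\ohur[,+],\ohur,\bM)\Vert$ over $\bM$ (i.e. compatible with the augmentations).

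To build this equivalence I would argue that both $\bM\mapsto\hocolim_{\blacktriangle}R_\blacktriangle(\bM)$ and $\bM\mapsto\Vert B_\bullet(\ohur[,+],\ohur,\bM)\Vert$ are homotopy-invariant functors of the $\ohur$-module $\bM$ which commute with geometric realization of simplicial $\ohur$-modules. For the first this uses \cref{lem: canon res map is a fibration} (each $\epsilon_v$ is a fibration, so $R_v(\bM)$ is a homotopy pullback and is levelwise weakly equivalent to $L_v(\bM)=\bigsqcup_{h\colon[p]\hookrightarrow-}\bM(-\setminus\im h)$, which manifestly commutes with colimits in $\bM$) together with homotopy-invariance of $\hocolim$ and the comparison $\delc\simeq\Delta_{\mathrm{inj},c}$ of \cref{lem: delc is disc}. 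Feeding in the bar resolution $\Vert B_\bullet(\ohur,\ohur,\bM)\Vert\xrightarrow{\sim}\bM$ then reduces the claim to the free modules $\bM=\ohur\otimes_{\fb}\bN$ with $\bN$ cofibrant. For such $\bM$ both constructions turn $-\otimes_{\fb}\bN$ into $(-)\otimes_{\fb}\bN$: on the right-hand side this is immediate since $B_\bullet(\ohur[,+],\ohur,\ohur\otimes\bN)=B_\bullet(\ohur[,+],\ohur,\ohur)\otimes\bN$, and on the left-hand side one checks that $R_\blacktriangle(\ohur\otimes\bN)\simeq R_\blacktriangle(\ohur)\otimes\bN$ as augmented $\delc^{\op}$-objects because the $\delc$-action only involves the $\ohur$-coordinate (cleanest via the identification $L_v(\ohur\otimes\bN)=L_v(\ohur)\otimes\bN$). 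This reduces us to $\bN=\bunit$, i.e. $\bM=\ohur$, which is exactly \cref{lem:res of OHur}: it supplies $\hocolim_\blacktriangle R_\blacktriangle(\ohur)\xrightarrow{\sim}\ohur[,+]$ over $\ohur$, matching $\Vert B_\bullet(\ohur[,+],\ohur,\ohur)\Vert\simeq\ohur[,+]$ (the latter because $B_\bullet(\ohur[,+],\ohur,\ohur)$ is a resolution of the right $\ohur$-module $\ohur[,+]$).

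The main obstacle is the reduction in the previous paragraph. Two points require genuine care: verifying that $\hocolim_{\blacktriangle\in\delc^{\op}}R_\blacktriangle(-)$ really does commute with geometric realization — this is where the fibrancy of the $\epsilon_v$ (\cref{lem: canon res map is a fibration}) and the homotopy-discreteness statements \cref{lem: delc is disc} and \cref{lem: delc is homotopy discrete} are used, just as in the configuration-space arguments of Randal-Williams \cite[Section 5]{MR4767884} and Krannich \cite[Section 2.2]{MR4019896} that the construction of $R_\blacktriangle$ is modeled on — and tracking all augmentations coherently through the bar resolution and the equivalences $R_\blacktriangle(\ohur\otimes\bN)\simeq R_\blacktriangle(\ohur)\otimes\bN$, so that the resulting zig-zag is indeed a map over $\bM$.
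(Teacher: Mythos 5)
Your high-level plan is the same as the paper's: identify $Q^{\ohur}_{\mathbb{L}}(\bM)$ with $\Vert B_{\bullet}(\bunit,\ohur,\bM)\Vert$ via \cref{lem: model for der-ind}, use the cofiber sequence built from $\ohur[,+]\to\ohur\to\bunit$ to identify that realization with the cofiber of $\Vert B_{\bullet}(\ohur[,+],\ohur,\bM)\Vert\to\bM$, and then invoke \cref{lem:res of OHur}. Where you diverge is in establishing the equivalence $\hocolim_{\blacktriangle}R_{\blacktriangle}(\bM)\simeq\Vert B_{\bullet}(\ohur[,+],\ohur,\bM)\Vert$: the paper does this by a direct bar-construction calculation, constructing an explicit map $\psi_v\colon R_v(\ohur)\otimes_{\ohur}\bM\to R_v(\bM)$, unwinding
$$B_q(R_v(\ohur),\ohur,\bM)(S)\simeq\coprod_{h\colon[p]\hookrightarrow S}B_q(\ohur,\ohur,\bM)(S\setminus\im h),$$
and applying the extra degeneracy to conclude $\Vert B_{\bullet}(R_v(\ohur),\ohur,\bM)\Vert\simeq R_v(\bM)$ for every $v$; this is exactly what your ``reduce to free $\ohur$-modules'' step is trying to capture, but done in one explicit stroke rather than through general nonsense.

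The formal version you propose is plausible but has two genuine soft spots. First, you never actually exhibit a natural comparison map (or zig-zag, over $\bM$) between $\hocolim_{\blacktriangle}R_{\blacktriangle}(\bM)$ and $\Vert B_{\bullet}(\ohur[,+],\ohur,\bM)\Vert$; a ``reduce to free modules'' argument needs such a map to restrict. The paper's $\psi_v$ (together with the augmentation of the bar construction and \cref{lem:res of OHur}) is precisely that missing data. Second, the way you justify that $\hocolim_{\blacktriangle}R_{\blacktriangle}(-)$ commutes with realization is not quite right: you cite the comparison $\delc\simeq\Delta_{\mathrm{inj},c}$ from \cref{lem: delc is disc}, but for a general topological $\ohur$-module $\bM$ the $\fb$-spaces $L_{\blacktriangle}(\bM)$ do \emph{not} assemble into a $\Delta_{\mathrm{inj},c}$-diagram --- that only happens for $\pi_0(\ohur)$-modules, as the paper flags immediately after \cref{LDelStructure}. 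What you actually need is that the levelwise equivalence $R_v(\bM)\simeq L_v(\bM)$ is natural in $\bM$, that $L_v$ strictly commutes with colimits, and that the homotopy colimit over $\delc^{\op}$ commutes with thick geometric realization because both are homotopy colimits; this is the correct route to ``commutes with realization'' but needs to be spelled out, and \cref{lem: delc is disc} is not the relevant input. If you supply the natural map $\psi_v$ and replace the appeal to \cref{lem: delc is disc} with the argument just sketched, your proof becomes a reorganization of the paper's rather than a new one.
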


\begin{proof}
    Compare with \cite[Theorem 3.1]{MR4767884}. The augmented $\delc$-object $$(R_{\blacktriangle}(\ohur), \ohur, \epsilon_{\blacktriangle})$$ is constructed from the left $\ohur$-module structure on $\ohur$, so it has a compatible right $\ohur$-module structure via the following maps. Let $v \in c^{p+1}$. 
    \begin{align*}
         \text{Ind}^{\Sigma_{S\sqcup W}}_{\Sigma_{S}\times \Sigma_{W}} R_{v}(\ohur)(S) \times \ohur(W) &\to R_{v}(\ohur)(S\sqcup W)\\
        \big(((h, b), \gamma), a\big)
        &
        \mapsto ((\tilde{h}, b\cdot a), \gamma\cdot a),
    \end{align*}
    where $\tilde{h}\colon [p]\hookrightarrow S\sqcup W$ is the composition of $h\colon [p]\hookrightarrow S$ with the inclusion $S\hookrightarrow S\sqcup W$. 
    Contracting Moore paths in $R_{v}(\ohur)(S)$ gives a deformation retraction from $R_{v}(\ohur)(S)$ to the subspace where the Moore path is trivial, and this subspace is isomorphic to $L_{v}(\bM)(S)$
    as an $\fb$-space and as a right $\ohur$-module.

    Using the same formula as the one describing the right $\ohur$-module structure on $R_{v}(\ohur)$, we have maps
    \begin{align*}
        \psi_{v}' \colon \text{Ind}^{\Sigma_{S\sqcup W}}_{\Sigma_{S}\times \Sigma_{W}} R_{v}(\ohur)(S)\times \bM(W) &\to R_{v}(\bM)(S\sqcup W)\\
        \big(((h, b), \gamma), a\big)
        &
        \mapsto ((\tilde{h}, b\cdot a), \gamma\cdot a),
    \end{align*}
    For any $d\in \ohur$, we have that 
    $$\psi_{v}'((h, b), \gamma)\cdot d, a)=\psi_{v}'((h, b), \gamma), d\cdot  a).$$
    Therefore, $\psi_{v}'$ factors through a map 
    $$\psi_{v} \colon R_{v}(\ohur)\otimes_{\ohur} \bM \to R_{v}(\bM)$$
where  $R_{v}(\ohur)\otimes_{\ohur} \bM$ is the coequalizer of the maps described above
$$ R_{v}(\ohur)\otimes_{\fb} {\ohur} \otimes_{\fb} \bM \rightrightarrows R_{v}(\ohur)\otimes_{\fb} \bM.  $$

    We will show that the composition
    $$\Vert B_{\bullet}(R_{v}(\ohur), \ohur, \bM)\Vert\to R_{v}(\ohur)\otimes_{\ohur} \bM\xrightarrow{\psi_{v}} R_{v}(\bM)$$
    of the augmentation map and the map $\psi_{v}$ is a weak homotopy equivalence.
    First, observe that 
    $$R_{v}(\ohur)(S)\simeq L_{v}(\ohur)(S)=\bigsqcup_{h\colon [p]\hookrightarrow S}\ohur(S\setminus\im(h)).$$
    Similarly,  $R_{v}(\bM)(S)\simeq \bigsqcup\limits_{h\colon [p]\hookrightarrow S}\bM(S\setminus\im(h))$ for all $S$. An extra degeneracy argument shows that the augmentation map
    $\Vert B_{\bullet}(\ohur, \ohur, \bM)\Vert\to \bM$ is a homotopy equivalence. Combining these observations, 
   \begin{align*}
   & B_{q}(R_{v}(\ohur), \ohur, \bM)(S) \\
   & =\bigsqcup_{S=A_0 \sqcup A_1 \sqcup \dots \sqcup A_{q+1}}   R_{v}(\ohur)(A_0) \times \ohur(A_1) \times \dots \times  \ohur(A_q) \times \bM(A_{q+1}) 
    \\ & \simeq \bigsqcup_{S= A_0  \sqcup \dots \sqcup A_{q+1}}  \left( \bigsqcup_{h \colon [p] \hookrightarrow A_0} \ohur(A_0\setminus \mathrm{im}(h)) \right) \times \ohur(A_1) \times \dots \times  \ohur(A_q) \times \bM(A_{q+1}) 
       \\
   & \cong \bigsqcup_{h \colon [p] \hookrightarrow S} \left( \bigsqcup_{S\setminus \mathrm{im}(h) = W
   _0  \sqcup \dots \sqcup W_{q+1}}   \ohur(W_0)  \times \ohur(W_1) \times \dots \times  \ohur(W_q) \times \bM(W_{q+1}) \right)\\
      & = \bigsqcup_{h \colon [p] \hookrightarrow S} B_q(   \ohur, \ohur,  \bM)(S \setminus \mathrm{im}(h)).
    \end{align*}

Since these equivalences are compatible semi-simplicial structure, we infer
       \begin{align*}
      &   \Vert B_{\bullet}(R_{v}(\ohur), \ohur, \bM)(S)\Vert  \\
& \simeq  \bigsqcup_{h \colon [p] \hookrightarrow S} \left\Vert B_\bullet\left(   \ohur, \ohur,  \bM)(S \setminus \mathrm{im}(h)\right) \right\Vert\\
&  \simeq  \bigsqcup_{h \colon [p] \hookrightarrow S} \bM(S \setminus \mathrm{im}(h)) \\
&  \simeq  R_v(\bM)(S).
    \end{align*}

    Using \cref{lem:res of OHur} and the fact that we can commute the bar construction with the homotopy colimit defining $\hocolim\limits_{\blacktriangle\in \delc}R_{\blacktriangle}(\bM)$, we have a weak homotopy equivalence
$$\Vert B_{\bullet}(\ohur[,+], \ohur, \bM)\Vert \xleftarrow{\sim} \Vert B_{\bullet}(\hocolim\limits_{\blacktriangle\in \delc} R_{\blacktriangle}(\ohur), \ohur, \bM)\Vert.$$
Using the above paragraph, we obtain another weak homotopy equivalence $$\Vert B_{\bullet}(\hocolim\limits_{\blacktriangle\in \delc} R_{\blacktriangle}(\ohur), \ohur, \bM)\Vert \xrightarrow{\sim} \hocolim\limits_{\blacktriangle\in \delc} R_{\blacktriangle}(\bM).$$ These homotopy equivalences are compatible with the maps to $\bM$.
This identifies the homotopy cofiber of $\hocolim\limits_{\blacktriangle\in \delc} \epsilon_{\blacktriangle}\colon \hocolim\limits_{\blacktriangle\in \delc} R_{\blacktriangle}(\bM)\to \bM$ with the homotopy cofiber of the composition
    $$\Vert B_{\bullet}(\ohur[,+], \ohur, \bM)\Vert\xrightarrow{i} \Vert B_{\bullet}(\ohur, \ohur, \bM)\Vert\xrightarrow{\simeq}\bM.$$ 
    The homotopy cofiber of $i$ is weakly homotopy equivalent to the homotopy cofiber of the composition of  $i$ with the augmentation map $\hocolim\limits_{\blacktriangle\in \delc} \epsilon_{\blacktriangle}\colon \hocolim\limits_{\blacktriangle\in \delc} R_{\blacktriangle}(\bM)\to \bM$ because the map $\Vert B_{\bullet}(\ohur, \ohur, \bM)\Vert\to\bM$ is a weak homotopy equivalence. 

    Observe that $\ohur/\ohur[,+]$ is a point in degree 0. In other words, $$\ohur/\ohur[,+]\simeq \bunit.$$ 
    Hence, the homotopy cofiber of $\hocolim\limits_{\blacktriangle\in \delc} \epsilon_{v}\colon \hocolim\limits_{\blacktriangle\in \delc} R_{\blacktriangle}(\bM)\to \bM$ is weakly homotopy equivalent to $\Vert B_{\bullet}(\bunit, \ohur, \bM)\Vert$. 
    
    To conclude the proof, we show that $Q^{\ohur}_{\mathbb{L}}(\bM)$ is weakly homotopy equivalent to $\Vert B_{\bullet}(\bunit, \ohur, \bM)\Vert$ by checking that the hypotheses of \cref{lem: model for der-ind} hold. 
    The space $\ohur[,\bn]$ is $\Sigma_n$-homotopy equivalent to a $\Sigma_n$-CW complex since $\ohur[,\bn]$ is an $\Sigma_n$-principal bundle over $\hur[,n]$, and $\hur[,n]$ is homotopy equivalent to a CW complex by \cite[Proposition 2.5]{MR3488737}. Thus $\ohur$ is cofibrant in $\topo^{\fb}$. 
    Since $\ohur$ is cofibrant and,  by assumption, $\bM$ is cofibrant, by \cref{lem: model for der-ind}, $Q^{\ohur}_{\mathbb{L}}(\bM)$ is weakly homotopy equivalent to $\Vert B_{\bullet}(\bunit, \ohur, \bM)\Vert$. \end{proof}
\section{A small model for the \texorpdfstring{$\ohur$}{OHur}-module cells of \texorpdfstring{$\pi_0\left(\ohur\right)$}{the connected components of ordered Hurwitz space}}\label{DiscreteModelsOHur}
Throughout this section, let $\bM\in \mathrm{Set}^{\fb}$ be a left $\pi_0(\ohur)$-module. 
The goal of this section is to compare the resolution $R_{\blacktriangle}(\bM)$ with the much smaller resolution $L_{\blacktriangle}(\bM)$.

\begin{lemma}\label{lem:RL}
     Let $\bM\in \mathrm{Set}^{\fb}$ be a left $\pi_0(\ohur)$-module. There is a natural transformation of $\fb$-spaces  
      $$ \psi_{v} \colon R_{v}(\bM)  \longrightarrow L_{v}(\bM) \times [0, \infty).$$
      that induces, for each finite set $S$, a homeomorphism 
     $$  R_{v}(\bM)_S  \xrightarrow{\cong} L_{v}(\bM)_S \times [0, \infty).$$ 
\end{lemma}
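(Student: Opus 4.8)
The plan is to unwind the pullback defining $R_{v}(\bM)(S)$, exploiting the hypothesis that $\bM\in\mathrm{Set}^{\fb}$, so that each $\bM(S)$ is discrete. Fix $v\in c^{p+1}$, and recall that we regard the $\pi_{0}(\ohur)$-module $\bM$ as a left $\ohur$-module via the projection $\ohur\to\pi_{0}(\ohur)$, and as an $\fb$-space via the inclusion $\mathrm{Set}^{\fb}\hookrightarrow\topo^{\fb}$; in particular each $\bM(S)$, and hence each $L_{v}(\bM)(S)=\bigsqcup_{h\colon [p]\hookrightarrow S}\bM(S\setminus\im(h))$ (a finite disjoint union), carries the discrete topology.

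The first step is to identify the Moore path space $\text{Moore}(\bM(S))$. Since $[0,\infty)$ is connected, any continuous map $\mu\colon[0,\infty)\to\bM(S)$ is constant, and the Moore condition $\mu(t)=\mu(T)$ for $t\geq T$ holds automatically. Thus a Moore path is precisely a pair consisting of a parameter $T\in[0,\infty)$ together with the constant value $\mu(0)\in\bM(S)$; moreover, because $[0,\infty)$ is connected and $\bM(S)$ is discrete, the compact--open topology on $\mathrm{Map}([0,\infty),\bM(S))$ is discrete and agrees with that of $\bM(S)$ under $\mu\mapsto\mu(0)$. Hence $(T,\mu)\mapsto(T,\mu(0))$ is a homeomorphism $\text{Moore}(\bM(S))\xrightarrow{\cong}[0,\infty)\times\bM(S)$, under which the endpoint evaluation $ev$ becomes the projection onto $\bM(S)$.

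The second step is to compute the pullback. By \cref{def: canonical res for OHur-mod}, $R_{v}(\bM)(S)$ is the pullback of $ev$ along $i(v,-)\cdot-\colon L_{v}(\bM)(S)\to\bM(S)$; by the previous step this is the pullback of the trivial bundle projection $[0,\infty)\times\bM(S)\to\bM(S)$ along $i(v,-)\cdot-$, which is canonically $L_{v}(\bM)(S)\times[0,\infty)$. Explicitly, I would define
\[
\psi_{v}\colon R_{v}(\bM)(S)\longrightarrow L_{v}(\bM)(S)\times[0,\infty),\qquad \big((h,b),(T,\gamma)\big)\longmapsto\big((h,b),T\big),
\]
with two-sided inverse $\big((h,b),T\big)\mapsto\big((h,b),(T,\mathrm{const}_{i(v,h)\cdot b})\big)$. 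Both maps are continuous --- the inverse because $i(v,-)\cdot-$ is continuous and a point of $\bM(S)$ determines the corresponding constant path out of $[0,\infty)$ continuously --- so $\psi_{v}$ is a homeomorphism landing in the pullback since the constant path at $i(v,h)\cdot b$ indeed has endpoint $i(v,h)\cdot b$.

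The last step is naturality in $\fb$. For a bijection $f\colon S_{0}\to S_{1}$, the structure map $R_{v}(\bM)(f)$ sends $((h,b),\gamma)$ to $((f\circ h,f_{*}(b)),f_{*}(\gamma))$, and $f_{*}$ merely changes the constant value of $\gamma$ while leaving the Moore parameter $T$ fixed; correspondingly $L_{v}(\bM)(f)$ sends $(h,b)$ to $(f\circ h,f_{*}(b))$ and $f$ acts trivially on the constant factor $[0,\infty)$. Hence $\psi_{v}$ intertwines the two $\fb$-actions, so it is a morphism of $\fb$-spaces. There is no serious obstacle here: the only point deserving care is the first step --- identifying the Moore path space of a discrete set --- which rests on connectedness of $[0,\infty)$ and on the compact--open topology collapsing to the discrete topology in that case. (If needed for later use, the same argument shows $\psi_{v}$ is compatible with the co-$\delc$ structures, since by \cref{lem: delc is homotopy discrete} the $\delc$-actions on both sides factor through $\Delta_{\mathrm{inj},c}$; but this is not required for the statement.)
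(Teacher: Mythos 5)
Your proof is correct and takes essentially the same approach as the paper's (very terse) argument: the key observation in both is that a Moore path into a discrete set must be constant, so it is determined by its starting point and its Moore parameter, and the pullback defining $R_{v}(\bM)(S)$ collapses to $L_{v}(\bM)(S)\times[0,\infty)$. You have simply spelled out the point-set details (discreteness of the mapping space, the pullback computation, and the $\fb$-naturality) that the paper leaves to the reader.
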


\begin{proof}
    This is a consequence of the observation that a Moore path in a discrete set is necessarily a constant path. Thus the data of a Moore path in $L_{v}(\bM)$ is a point in $L_{v}(\bM)$ and a Moore parameter $T_0 \in [0,\infty)$. 
\end{proof}
Recall the map $ \Upsilon\colon \widetilde{\Delta}_{\mathrm{inj},c}  \to {\Delta}_{\mathrm{inj},c}$ of \cref{lem: delc is disc}. Recall moreover from \cref{LDelStructure} the action of ${\Delta}_{\mathrm{inj},c}$ on $L_{\blacktriangle}(\bM)$. 
For $v\in c^{p+1}$ and $w \in c^{q+1}$, consider a morphism $(a, \mu) \in \delc(v,w)$. Let $T_0 \in [0,\infty)$ be the Moore parameter associated to the Moore path $\mu$ in $\hur[, p]$. We obtain a map 
\begin{align*} (a, \mu)_* \colon L_v(\bM)_S \times [0, \infty) \longrightarrow L_w(\bM)_S \times [0, \infty)
\end{align*}
where we act by $\Upsilon(a, \mu)$ on $L_v(\bM)_S$ and by addition by $T_0$ on the factor $[0,\infty)$. 
The maps  $ \psi_{v}$ assemble to give an isomorphism of $\delc$-$\fb$-spaces  
$$ \psi_{\blacktriangle} \colon R_{\blacktriangle}(\bM)  \to L_{\blacktriangle}(\bM) \times [0, \infty).$$
We obtain the following corollary to \cref{lem:RL}. 

\begin{corollary} \label{RvsL}
     Let $\bM\in \mathrm{Set}^{\fb}$ be a left $\pi_0(\ohur)$-module. There is 
     a weak homotopy equivalence
    $$\psi \colon \hocolim\limits_{\blacktriangle\in \delc^{\mathrm{op}}} R_{\blacktriangle}(\bM)\xrightarrow{\simeq} \hocolim\limits_{\blacktriangle\in \Delta^{\rm{op}}_{\rm{inj,c}}} L_{\blacktriangle}(\bM)$$ that is compatible with the augmentation maps for $R_{\blacktriangle}(\bM)$ and $L_{\blacktriangle}(\bM)$. 
\end{corollary}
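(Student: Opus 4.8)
The plan is to deduce everything from the isomorphism of co-$\delc$-$\fb$-spaces
$$\psi_{\blacktriangle}\colon R_{\blacktriangle}(\bM)\xrightarrow{\ \cong\ }L_{\blacktriangle}(\bM)\times[0,\infty)$$
recorded in the discussion following \cref{lem:RL}, and then to strip away the two pieces of inessential data it exposes: the Moore-parameter factor $[0,\infty)$, and the passage from $\delc$ to $\Delta_{\mathrm{inj},c}$. Here $L_{\blacktriangle}(\bM)$ is regarded as a co-$\delc$-$\fb$-space by restriction along $\Upsilon\colon\delc\to\Delta_{\mathrm{inj},c}$, and $\delc$ acts on $[0,\infty)$ by adding Moore parameters. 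Throughout I would work levelwise (in each finite set $S$) with the explicit bar-construction model $\hocolim_{\blacktriangle\in\mathcal{C}^{\op}}(-)=\Vert B_{\bullet}(*,\mathcal{C},-)\Vert$ of \cref{DefnModelHocolim}, using repeatedly that a map of semi-simplicial spaces which is a levelwise weak homotopy equivalence induces a weak homotopy equivalence on thick geometric realizations; see Ebert--Randal-Williams \cite[Theorem 2.2]{MR3995026}.

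First I would apply $\hocolim_{\blacktriangle\in\delc^{\op}}$ to $\psi_{\blacktriangle}$ to obtain an isomorphism $\hocolim_{\blacktriangle\in\delc^{\op}}R_{\blacktriangle}(\bM)\cong\hocolim_{\blacktriangle\in\delc^{\op}}\bigl(L_{\blacktriangle}(\bM)\times[0,\infty)\bigr)$. Next, the projection $L_{\blacktriangle}(\bM)\times[0,\infty)\to L_{\blacktriangle}(\bM)$ is a map of co-$\delc$-$\fb$-spaces (the $\delc$-action on the second factor is simply forgotten), and for each object $v$ and each finite set $S$ the map $L_{v}(\bM)_S\times[0,\infty)\to L_{v}(\bM)_S$ is a homotopy equivalence because $[0,\infty)$ is contractible. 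Hence the induced map of bar constructions is a levelwise weak homotopy equivalence --- in each simplicial degree a disjoint union of finite products of these homotopy equivalences with identity maps on morphism spaces --- and so induces a weak homotopy equivalence $\hocolim_{\blacktriangle\in\delc^{\op}}\bigl(L_{\blacktriangle}(\bM)\times[0,\infty)\bigr)\xrightarrow{\ \simeq\ }\hocolim_{\blacktriangle\in\delc^{\op}}L_{\blacktriangle}(\bM)$.

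Then I would compare $\hocolim_{\blacktriangle\in\delc^{\op}}L_{\blacktriangle}(\bM)$ with $\hocolim_{\blacktriangle\in\Delta^{\op}_{\mathrm{inj},c}}L_{\blacktriangle}(\bM)$. Since $\Upsilon$ is the identity on objects, it induces a map of semi-simplicial $\fb$-spaces $B_{\bullet}(*,\delc,L_{\blacktriangle}(\bM))\to B_{\bullet}(*,\Delta_{\mathrm{inj},c},L_{\blacktriangle}(\bM))$ that in simplicial degree $p$ is the disjoint union over tuples $(v_0,\dots,v_p)$ of objects of the maps $\Upsilon(v_0,v_1)\times\cdots\times\Upsilon(v_{p-1},v_p)\times\mathrm{id}_{L_{v_p}(\bM)}$. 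By \cref{lem: delc is disc} each $\Upsilon(v_i,v_{i+1})$ is a homotopy equivalence (in particular, $\delc(v_i,v_{i+1})$ is empty if and only if $\Delta_{\mathrm{inj},c}(v_i,v_{i+1})$ is), so this is again a levelwise weak homotopy equivalence and passes to a weak homotopy equivalence on realizations, giving $\hocolim_{\blacktriangle\in\delc^{\op}}L_{\blacktriangle}(\bM)\simeq\hocolim_{\blacktriangle\in\Delta^{\op}_{\mathrm{inj},c}}L_{\blacktriangle}(\bM)$. Composing the three displayed weak homotopy equivalences yields $\psi$. For compatibility with the augmentations I would trace definitions: $\epsilon_{v}\colon R_{v}(\bM)\to\bM$ sends $((h,b),\gamma)$ to $\gamma(0)$, and since $\bM$ is discrete the Moore path $\gamma$ is constant, so $\gamma(0)$ equals the endpoint $i(v,h)\cdot b$; on the other hand $\psi_{v}$ followed by the projection sends $((h,b),\gamma)$ to $(h,b)\in L_{v}(\bM)$, on which the augmentation of $L_{\blacktriangle}(\bM)$ also takes the value $i(v,h)\cdot b$. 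Thus every map above commutes with the map to $\bM$, hence so does $\psi$.

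I do not expect a serious obstacle: once \cref{lem:RL}, \cref{lem: delc is disc}, and the paragraph constructing $\psi_{\blacktriangle}$ are in hand, the statement is essentially bookkeeping. The one point needing care is to check that the restriction along $\Upsilon$, the projection away from $[0,\infty)$, and the map induced by $\Upsilon$ genuinely respect all the structure --- the co-$\delc$- and co-$\Delta_{\mathrm{inj},c}$-$\fb$-space structure maps, including the Moore-parameter arithmetic built into $\psi_{\blacktriangle}$ --- so that each descends to an honest map of bar constructions; after that, everything reduces to ``levelwise weak equivalence implies realization weak equivalence.''
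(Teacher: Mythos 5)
Your proposal is correct and follows essentially the same route as the paper's own proof: you pass through the same three-stage chain of equivalences (apply the isomorphism $\psi_{\blacktriangle}$, project away the contractible $[0,\infty)$ factor, then use $\Upsilon$ and \cref{lem: delc is disc} to replace $\delc$ by $\Delta_{\mathrm{inj},c}$), with the same appeal to levelwise weak equivalences of bar constructions. The only cosmetic difference is that you spell out the augmentation-compatibility check and the fact that $\Upsilon$ preserves and reflects emptiness of morphism spaces, both of which the paper leaves implicit.
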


\begin{proof}
The isomorphism of of $\delc$-$\fb$-spaces  
$$ \psi_{\blacktriangle} \colon R_{\blacktriangle}(\bM)  \to L_{\blacktriangle}(\bM) \times [0, \infty)$$
implies an isomorphism of $\fb$-spaces
$$\hocolim_{\blacktriangle \in \delc^{\mathrm{op}}} R_{\blacktriangle}(\bM)\xrightarrow{\cong} \hocolim_{\blacktriangle \in \delc^{\mathrm{op}}} (L_{\blacktriangle}(\bM) \times [0,\infty)).$$
The homotopy equivalences $L_v(\bM)_S \times [0,\infty) \to L_v(\bM)_S$ given by forgetting the Moore parameters are compatible with the $\delc$-structure. Hence they induce a weak equivalence of $\fb$-spaces
$$\hocolim_{\blacktriangle \in \delc^{\mathrm{op}}} (L_{\blacktriangle}(\bM) \times [0,\infty)) \xrightarrow{\simeq} \hocolim_{\blacktriangle \in \delc^{\mathrm{op}}} (L_{\blacktriangle}(\bM) ) .$$

\cref{lem: delc is disc} shows that the functor $\delc^{\mathrm{op}} \to \Delta^{\mathrm{op}}_{\mathrm{inj}, c}$ is the identity on objects and a homotopy equivalence on each morphism space. Thus there is a weak homotopy equivalence 
    $$ \hocolim_{\blacktriangle\in \delc^{\mathrm{op}}} (L_{\blacktriangle}(\bM) ) \xrightarrow{\simeq} \hocolim_{\blacktriangle \in \Delta^{\mathrm{op}}_{\mathrm{inj}, c}} (L_{\blacktriangle}(\bM) ).    $$
    These maps are compatible with the augmentations.
\end{proof}
We now introduce a construction that converts a $\delc$-space to a semi-simplicial space.
\begin{definition}
        Let $X_{\blacktriangle}$ be a $\Delta^{\mathrm{op}}_{\mathrm{inj}, c}$-space. Let $ \mathfrak{S}_{\bullet}(X_{\blacktriangle}) $ be the following semi-simplicial space. The $p$-simplices are the space 
        $$ \mathfrak{S}_{p}(X_{\blacktriangle}) = \bigsqcup_{v \in c^{p+1}} X_v.$$
        Given an order-preserving injection $f\colon [q] \to [p]$ and $v \in c^{p+1}$ we can view $f$ as a morphism in $\Delta^{\mathrm{op}}_{\mathrm{inj}, c}(v,w)$ for $w = (v_{f(0)}, v_{f(1)}, \dots, v_{f(q)}) \in c^q$. 
        Hence $f$ induces a map  $\mathfrak{S}_{p}(X_{\blacktriangle}) \to \mathfrak{S}_{q}(X_{\blacktriangle})$. This defines the semi-simplicial space structure on $ \mathfrak{S}_{\bullet}(X_{\blacktriangle}) $.
\end{definition}

\begin{proposition} \label{simplicialization}
    Let $X_{\blacktriangle}$ be a $\Delta^{\mathrm{op}}_{\mathrm{inj}, c}$-space. There is a weak homotopy equivalence 
    $$\hocolim_{\blacktriangle\in \delc^{\mathrm{op}}} X_{\blacktriangle} \xrightarrow{\simeq} \left\Vert \mathfrak{S}_{\bullet}(X_{\blacktriangle})\right\Vert.$$
\end{proposition}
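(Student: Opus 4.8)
The plan is to relate $\hocolim_{\blacktriangle\in\delc^{\mathrm{op}}}X_{\blacktriangle}$ to $\lVert\mathfrak{S}_{\bullet}(X_{\blacktriangle})\rVert$ through two intermediate homotopy colimits: first replace the topological category $\delc$ by the discrete category $\Delta_{\mathrm{inj},c}$; then recognize the homotopy colimit over $\Delta_{\mathrm{inj},c}^{\mathrm{op}}$ as a homotopy colimit over $\Delta_{\mathrm{inj}}^{\mathrm{op}}$ applied to the semi-simplicial space $\mathfrak{S}_{\bullet}(X_{\blacktriangle})$; and finally invoke the standard identification of homotopy colimits over $\Delta_{\mathrm{inj}}^{\mathrm{op}}$ with thick geometric realizations recalled after \cref{DefnModelHocolim}.

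For the first reduction I would use \cref{lem: delc is disc}: the functor $\Upsilon\colon\delc\to\Delta_{\mathrm{inj},c}$ is the identity on objects and a homotopy equivalence on each morphism space. Feeding $\Upsilon$ into the bar-construction model of \cref{DefnModelHocolim} gives a map of semi-simplicial spaces $B_{\bullet}(\ast,\delc,X_{\blacktriangle})\to B_{\bullet}(\ast,\Delta_{\mathrm{inj},c},X_{\blacktriangle})$ which in degree $r$ is a disjoint union, over the common indexing set of tuples $(v_{0},\dots,v_{r})$, of products of the homotopy equivalences $\Upsilon(v_{i-1},v_{i})$ with the identity of $(X_{\blacktriangle})_{v_{r}}$, hence a weak equivalence. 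Passing to thick geometric realizations (Ebert--Randal-Williams \cite[Theorem 2.2]{MR3995026}) yields $\hocolim_{\blacktriangle\in\delc^{\mathrm{op}}}X_{\blacktriangle}\simeq\hocolim_{\blacktriangle\in\Delta_{\mathrm{inj},c}^{\mathrm{op}}}X_{\blacktriangle}$; this is exactly the argument already carried out for $X_{\blacktriangle}=L_{\blacktriangle}(\bM)$ in the proof of \cref{RvsL}.

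The second step is the main point. A morphism in $\Delta_{\mathrm{inj},c}(w,v)$ with $v\in c^{p+1}$ and $w\in c^{q+1}$ is an order-preserving injection $g\colon[q]\to[p]$ with $v_{g(k)}=w_{k}$ for all $k$, so it is determined by its underlying injection together with its target $v$ (the source is forced to be $w=(v_{g(0)},\dots,v_{g(q)})$); equivalently, the forgetful functor $\Delta_{\mathrm{inj},c}\to\Delta_{\mathrm{inj}}$ is a discrete fibration, classified by the semi-simplicial set $[p]\mapsto c^{p+1}$, so $\Delta_{\mathrm{inj},c}$ is its category of elements. Hence a chain of composable morphisms $v_{0}\to\cdots\to v_{r}$ in $\Delta_{\mathrm{inj},c}$ amounts to a chain $[p_{0}]\to\cdots\to[p_{r}]$ in $\Delta_{\mathrm{inj}}$ together with a single object $v_{r}\in c^{p_{r}+1}$. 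Since all the morphism spaces in sight are discrete, discarding the $c$-labels on $v_{0},\dots,v_{r-1}$ and using $\bigsqcup_{v\in c^{p_{r}+1}}(X_{\blacktriangle})_{v}=\mathfrak{S}_{p_{r}}(X_{\blacktriangle})$ defines, in each degree $r$, a homeomorphism $B_{r}(\ast,\Delta_{\mathrm{inj},c},X_{\blacktriangle})\cong B_{r}(\ast,\Delta_{\mathrm{inj}},\mathfrak{S}_{\bullet}(X_{\blacktriangle}))$. I would then verify that these are compatible with the face maps: the face maps $d_{0},\dots,d_{r-1}$ touch only the underlying $\Delta_{\mathrm{inj}}$-chain (dropping the outer morphism or composing two adjacent ones), while $d_{r}$ acts by the $\Delta_{\mathrm{inj},c}^{\mathrm{op}}$-action of the last morphism on $X_{\blacktriangle}$, which is precisely the face map defining $\mathfrak{S}_{\bullet}(X_{\blacktriangle})$. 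This gives an isomorphism of semi-simplicial spaces $B_{\bullet}(\ast,\Delta_{\mathrm{inj},c},X_{\blacktriangle})\cong B_{\bullet}(\ast,\Delta_{\mathrm{inj}},\mathfrak{S}_{\bullet}(X_{\blacktriangle}))$, and therefore $\hocolim_{\blacktriangle\in\Delta_{\mathrm{inj},c}^{\mathrm{op}}}X_{\blacktriangle}\cong\hocolim_{\bullet\in\Delta_{\mathrm{inj}}^{\mathrm{op}}}\mathfrak{S}_{\bullet}(X_{\blacktriangle})$.

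Finally, for any semi-simplicial space $Y_{\bullet}$ the homotopy colimit $\hocolim_{\bullet\in\Delta_{\mathrm{inj}}^{\mathrm{op}}}Y_{\bullet}$ is weakly homotopy equivalent to $\lVert Y_{\bullet}\rVert$, as recalled after \cref{DefnModelHocolim} (see Dugger \cite[Example 9.15]{dug}); applying this with $Y_{\bullet}=\mathfrak{S}_{\bullet}(X_{\blacktriangle})$ and composing the three equivalences produces the claimed weak homotopy equivalence. I expect the only genuine obstacle to be the bookkeeping in the second step --- keeping the various op's straight and checking that the degreewise homeomorphisms of bar constructions assemble into a map of semi-simplicial spaces --- but once one observes that $\Delta_{\mathrm{inj},c}$ is the category of elements of the semi-simplicial set $[p]\mapsto c^{p+1}$, this compatibility is essentially forced.
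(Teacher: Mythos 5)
Your proof is correct and follows essentially the same route as the paper's, with two useful additions. First, the paper's proof actually begins from the model $\lVert B_{\bullet}(*, \Delta_{\mathrm{inj},c}, X_{\blacktriangle})\rVert$, i.e.\ it silently reads the left-hand side of the proposition as $\hocolim_{\blacktriangle\in \Delta_{\mathrm{inj},c}^{\mathrm{op}}} X_{\blacktriangle}$ rather than $\hocolim_{\blacktriangle\in \delc^{\mathrm{op}}} X_{\blacktriangle}$ as written; your preliminary reduction via $\Upsilon$ (which is indeed the same move made in the proof of \cref{RvsL}) supplies the bridge needed to match the statement as literally given. Second, where the paper simply asserts that $B_{\bullet}(*,\Delta_{\mathrm{inj},c},X_{\blacktriangle})$ is naturally isomorphic to $B_{\bullet}(*,\Delta_{\mathrm{inj}},\mathfrak{S}_{\bullet}(X_{\blacktriangle}))$, you justify it by the clean observation that $\Delta_{\mathrm{inj},c}$ is the category of elements of the semi-simplicial set $[p]\mapsto c^{p+1}$, so a chain in $\Delta_{\mathrm{inj},c}$ is a chain in $\Delta_{\mathrm{inj}}$ plus a single $c$-label at the end; your check of the face-map compatibility (with $d_r$ alone hitting the $X$-factor via the $\mathfrak{S}_{\bullet}$ structure maps) is exactly what is needed. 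No gaps.
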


\begin{proof} We defined the homotopy colimit $\hocolim\limits_{\blacktriangle\in \Delta^{\mathrm{op}}_{\mathrm{inj},c}} X_{\blacktriangle}$  to be the geometric realization of the semi-simplicial space $ B_{\bullet}\left(*, \Delta^{\mathrm{op}}_{\mathrm{inj}, c} ,X_{\blacktriangle}\right)$. This semi-simplicial space is naturally isomorphic to $B_{\bullet}\left(*, \Delta^{\mathrm{op}}_{\mathrm{inj}} , \mathfrak{S}_{\bullet}(X_{\blacktriangle})\right)$. The geometric realization $\left\Vert B_{\bullet}\left(*, \Delta^{\mathrm{op}}_{\mathrm{inj}} , \mathfrak{S}_{\bullet}(X_{\blacktriangle})\right) \right\Vert$ is the homotopy colimit 
$\hocolim\limits_{\bullet\in \Delta^{\mathrm{op}}_{\mathrm{inj}}} \mathfrak{S}_{\bullet}(X_{\blacktriangle})$, which is weakly homotopy equivalent to $\Vert \mathfrak{S}_{\bullet}(X_{\blacktriangle}) \Vert$ (see Dugger \cite[Example 9.15]{dug}). 
\end{proof}

Combining \cref{prop: relating canonical res and derived indecomposables}, \cref{RvsL}, and \cref{simplicialization}, we obtain the following corollary. Note the cofibrancy condition of \cref{prop: relating canonical res and derived indecomposables} is automatic for set-valued $\fb$-spaces. 

\begin{corollary} \label{CorIndecomposablesSL} Let $\bM\in \mathrm{Set}^{\fb}$ be a left $\pi_0(\ohur)$-module and let $S$ be a finite set. There is a natural isomorphism of $\fb$-modules $$\widetilde{H}_{i}\Big(\mathrm{Cone} \Big( \Vert \mathfrak{S}_\bullet(L_\blacktriangle(\bM)_S) \Vert  \to {\bN}_S \Big)\Big) \cong H_i\left( Q^{\ohur}_{\mathbb{L}}(\bM)\right)_S.$$
\end{corollary}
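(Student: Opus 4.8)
The plan is to deduce this directly by chaining together the three preceding results, using that every construction involved commutes with evaluation at a fixed finite set $S$. First I would apply \cref{prop: relating canonical res and derived indecomposables}, noting (as remarked just above) that its cofibrancy hypothesis is automatic for $\bM\in\mathrm{Set}^{\fb}$, since each $\bM_S$ is a discrete, hence $\Sigma_S$-CW, space. This identifies $Q^{\ohur}_{\mathbb{L}}(\bM)$, as an $\fb$-space, with the homotopy cofiber of $\hocolim_{\blacktriangle\in\delc}\epsilon_\blacktriangle\colon\hocolim_{\blacktriangle\in\delc} R_\blacktriangle(\bM)\to\bM$. Next I would combine \cref{RvsL} and \cref{simplicialization} to obtain a weak homotopy equivalence of $\fb$-spaces
$$\hocolim_{\blacktriangle\in\delc} R_\blacktriangle(\bM)\;\simeq\;\Vert\mathfrak{S}_\bullet(L_\blacktriangle(\bM))\Vert,$$
which by the assertions in those statements is compatible with the augmentation maps to $\bM$.

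The next step is to pass to homotopy cofibers. Using the standard fact that, for a square of $\fb$-spaces commuting up to homotopy with both vertical maps weak equivalences, the induced map on homotopy cofibers is a weak equivalence --- applied here with the identity $\bM\to\bM$ on the right --- the equivalence above yields a weak homotopy equivalence of $\fb$-spaces between $\hocofib(\hocolim_{\blacktriangle\in\delc}\epsilon_\blacktriangle)$ and the homotopy cofiber of $\Vert\mathfrak{S}_\bullet(L_\blacktriangle(\bM))\Vert\to\bM$. Together with the first step, this identifies $Q^{\ohur}_{\mathbb{L}}(\bM)$ with $\mathrm{Cone}(\Vert\mathfrak{S}_\bullet(L_\blacktriangle(\bM))\Vert\to\bM)$ up to weak homotopy equivalence of $\fb$-spaces.

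Finally I would observe that the (two-sided and one-sided) bar constructions, the thick geometric realizations, the functor $\mathfrak{S}_\bullet$, and the mapping cone are all colimits formed pointwise in $\topo^{\fb}$, so evaluation at $S$ commutes with each of them; in particular the $S$-component of $\mathrm{Cone}(\Vert\mathfrak{S}_\bullet(L_\blacktriangle(\bM))\Vert\to\bM)$ is $\mathrm{Cone}(\Vert\mathfrak{S}_\bullet(L_\blacktriangle(\bM)_S)\Vert\to\bM_S)$. Applying $\widetilde{H}_i(-)$ then gives the claimed isomorphism, and naturality in $S$ is immediate because every map in the chain is a map of $\fb$-spaces (so the isomorphism is one of $\fb$-modules). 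I expect the only real work to be bookkeeping: checking that the equivalences of \cref{RvsL} and \cref{simplicialization} are genuinely compatible, up to coherent homotopy, with the augmentations so that the cofiber-invariance step applies. This compatibility is exactly what those statements assert; it ultimately reduces to the fact that the levelwise homotopy equivalence $\delc^{\mathrm{op}}\to\Delta^{\mathrm{op}}_{\mathrm{inj},c}$ of \cref{lem: delc is disc} is compatible with the augmentation data on $R_\blacktriangle$ and $L_\blacktriangle$.
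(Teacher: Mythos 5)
Your argument is correct and follows exactly the route the paper intends: the paper presents this as an immediate corollary of \cref{prop: relating canonical res and derived indecomposables}, \cref{RvsL}, and \cref{simplicialization} (with the cofibrancy remark you make), and you have simply written out the assembly that the paper leaves implicit, including the compatibility-with-augmentations step needed to pass the weak equivalences through to homotopy cofibers. (Incidentally, the $\mathbf{N}_S$ in the corollary's display is a typo for $\bM_S$, which you correctly read as such.)
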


Observe that the augmented semi-simplicial set $\mathfrak{S}_\bullet(L_\blacktriangle(\bN)_S)$ admits the following description. The set of $p$-simplices for $p \geq -1$ are given by tuples $(h,v,\eta)$ with $h \colon [p] \hookrightarrow S$, $v \in c^{p+1}$, and $\eta \in \bM_{S\setminus\im(h) }$. In particular, for $S=\bn$, the augmented semi-simplicial set has the form:
$$ \bM_{\bn} \leftarrow \Ind_{\Sigma_n-1}^{\Sigma_n} c \times \bM_{\bf n-1} \leftleftarrows \Ind_{\Sigma_n-2}^{\Sigma_n}  c^2 \times  \bM_{\bf n-2} \ldots$$
The face map $d_i \colon [p-1] \to [p]$ acts via the formula
$$(h,(v_0,\ldots,v_p), \eta)\mapsto (h \circ d_i,(v_0,\ldots,\hat v_i,\ldots,v_p), (v_{i})^{v_{i+1}\cdots v_{p}}\cdot  \eta).$$ 
For $p=0$ the augmentation map is given by the same formula.

We now construct a chain complex $\widetilde{\mathcal{K}}_{*}(S)$, constructed to be the augmented cellular chain complex of $\mathfrak{S}_\bullet(L_\blacktriangle(\pi_0(\ohur))_S)$. 

\begin{notation}
Now consider the case $\bM =\pi_0(\ohur)$. Let $S$ be a finite set and $p \geq -1$.
Let $$\widetilde{\mathcal{K}}_{p}(S) = \bK\left[ \bigsqcup_{\substack{f\colon [p] \hookrightarrow S}}  c^{p+1} \times \pi_0\left(\ohur[, {S \setminus f([p]) }] \right)\right]. $$ 
For $p \geq 1$ the differential is induced by the alternating sum of the face maps, 
\begin{align*}
    \partial = \sum_{i=0}^p (-1)^i(d_i)_* \colon \widetilde{\mathcal{K}}_{p}(S) \longrightarrow \widetilde{\mathcal{K}}_{p-1}(S) 
\end{align*}
and for $p=0$ it is induced by the augmentation. 
\end{notation} 

With this notation, \cref{CorIndecomposablesSL} states the following. 
\begin{corollary} \label{CorIndecomposablesK} There is a natural isomorphism of $\fb$-modules 
\begin{align*}  
  {H}_{i-1}\Big( \widetilde{\mathcal{K}}_{*}(S) \Big)    
\cong  H_i\left( Q^{\ohur}_{\mathbb{L}}\left(\pi_0(\ohur)\right)\right)_S.
\end{align*}
\end{corollary}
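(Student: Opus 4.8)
The plan is to deduce this immediately from \cref{CorIndecomposablesSL} applied to the $\pi_0(\ohur)$-module $\bM = \pi_0(\ohur)$, by identifying the reduced homology of the mapping cone appearing there with the homology of $\widetilde{\mathcal{K}}_*(S)$ after a degree shift by one. Since $\pi_0(\ohur)$ is a set-valued $\fb$-space, each $L_v(\pi_0(\ohur))_S$ is discrete, so $\mathfrak{S}_\bullet(L_\blacktriangle(\pi_0(\ohur))_S)$ is an honest semi-simplicial set and $\Vert \mathfrak{S}_\bullet(L_\blacktriangle(\pi_0(\ohur))_S)\Vert$ is a CW complex whose cellular chain complex in degree $p \geq 0$ is $\bK[\mathfrak{S}_p(L_\blacktriangle(\pi_0(\ohur))_S)] = \widetilde{\mathcal{K}}_p(S)$, with the alternating-sum differential $\sum_i (-1)^i (d_i)_*$ described in the notation preceding the statement. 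The augmentation $\mathfrak{S}_\bullet(L_\blacktriangle(\pi_0(\ohur))_S) \to \pi_0(\ohur[,S])$ realizes to a map $g \colon \Vert \mathfrak{S}_\bullet(L_\blacktriangle(\pi_0(\ohur))_S)\Vert \to \pi_0(\ohur[,S])$, whose target is a discrete set with cellular chain complex $\bK[\pi_0(\ohur[,S])] = \widetilde{\mathcal{K}}_{-1}(S)$ in degree $0$, and the induced map on degree-$0$ cellular chains is exactly the differential $\partial \colon \widetilde{\mathcal{K}}_0(S) \to \widetilde{\mathcal{K}}_{-1}(S)$.

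Next I would compute the reduced cellular chain complex of the mapping cone $C_g$, based at the cone point. Its cells are the cone point, the $0$-cells of $\pi_0(\ohur[,S])$, and one $(p+1)$-cell for each $p$-cell of $\Vert \mathfrak{S}_\bullet(L_\blacktriangle(\pi_0(\ohur))_S)\Vert$; hence the reduced cellular chain complex of $C_g$ is the algebraic mapping cone of $g_* \colon C_*^{\mathrm{cell}}\big(\Vert \mathfrak{S}_\bullet(L_\blacktriangle(\pi_0(\ohur))_S)\Vert\big) \to \bK[\pi_0(\ohur[,S])]$, which in degree $n$ equals $\widetilde{\mathcal{K}}_{-1}(S)$ for $n=0$ and $\widetilde{\mathcal{K}}_{n-1}(S)$ for $n \geq 1$, with differentials agreeing with those of $\widetilde{\mathcal{K}}_*(S)$ up to sign. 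Consequently $\widetilde{H}_i(C_g) \cong H_{i-1}(\widetilde{\mathcal{K}}_*(S))$, and these isomorphisms are natural in $S \in \fb$ because both sides are functorial via the evident $\fb$-actions. Combining this with the natural isomorphism $\widetilde{H}_i(C_g) \cong H_i\big(Q^{\ohur}_{\mathbb{L}}(\pi_0(\ohur))\big)_S$ supplied by \cref{CorIndecomposablesSL} (noting that its cofibrancy hypothesis is automatic for set-valued $\fb$-spaces) yields the claimed isomorphism.

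I expect the only delicate point to be the bottom of the complex: checking that the boundary map out of the $1$-cells of $C_g$ is, up to sign, the augmentation/differential $\widetilde{\mathcal{K}}_0(S) \to \widetilde{\mathcal{K}}_{-1}(S)$, and more generally tracking the degree shift by one together with all sign conventions so that one genuinely obtains an isomorphism of $\fb$-modules rather than merely a zig-zag of quasi-isomorphisms. Everything else is formal once \cref{CorIndecomposablesSL} and the description of $\mathfrak{S}_\bullet(L_\blacktriangle(\pi_0(\ohur))_S)$ recorded just above the statement are in hand.
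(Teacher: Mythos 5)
Your proof is correct and matches the paper's intent: the paper presents this corollary as a restatement of \cref{CorIndecomposablesSL} once $\widetilde{\mathcal{K}}_*(S)$ has been identified (by construction) as the augmented cellular chain complex of $\mathfrak{S}_\bullet(L_\blacktriangle(\pi_0(\ohur))_S)$, and your mapping-cone computation is exactly the bookkeeping that makes this restatement precise. You correctly note that the cofibrancy hypothesis of \cref{prop: relating canonical res and derived indecomposables} is automatic here, just as the paper does.
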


By the corollary, to show vanishing of these derived indecomposables, it suffices to show the homology groups  $  {H}_{i}\left( \widetilde{\mathcal{K}}_{*}(S) \right) $ vanish in a range. The complex $\widetilde{\mathcal{K}}_{*}(S)$ is an ordered analogue of the ``Koszul-like complex'' $\mathcal{K}_{*}(H_0(\hur))$ of Ellenberg--Venkatesh--Westerland \cite[Section 4.1]{MR3488737}, and in the following section we adapt their proof of vanishing \cite[Lemma 4.11]{MR3488737}. 

\begin{remark}
In \cite[Theorem 6.2]{MR4224644}, Randal-Williams used ideas related to ``scanning'' to compute the associative Koszul dual of $\hur$ (see also Landesman--Levy \cite[Appendix A]{landesman2025cohenlenstramomentsfunctionfields} as well as Landesman--Levy \cite[Proof of Lemma 3.2.8]{landesman2025homologicalstabilityhurwitzspaces}). From this calculation, Randal-Williams deduced that Ellenberg--Venkatesh--Westerland's Koszul-like complex computes derived $C_*(\hur)$-module indecoposables. It seems plausible that Randal-Williams' strategy can be adapted to give an alternative proof of the main results of this section. 

 Using generalizations of Fox--Neuwirth--Fuks cells, Ellenberg--Tran--Westerland proved a result which is Koszul dual to Randal-Williams' calculation of the Koszul dual of $\hur$  
 \cite[Theorem 1.3]{ellenberg2023foxneuwirthfukscellsquantumshuffle}. Likely one could adapt these techniques to give a third proof of the results of this section.


\end{remark}

\section{Vanishing for the \texorpdfstring{$\ohur$-module}{OHur-module} cells of \texorpdfstring{$\pi_0(\ohur)$}{the connected components of ordered Hurwitz space}}

The goal of this section is to show $H_i\left( Q^{\ohur}_{\mathbb{L}}(\pi_0(\ohur))\right)_{\bn} \cong 0$ for $n$ sufficiently large relative to $i$,  provided $c$ satsifies the following non-splitting property.

\begin{definition} \label{defNonSplit}
    Let $G$ be a finite group and $c$ a conjugacy class of $G$.  We say the pair $(G, c)$ has the \emph{non-splitting property} if $c$ generates $G$ and for every subgroup $H$ of $G$, the intersection $c\cap H$ is either empty or a conjugacy class of $H$.
\end{definition}

We now recall the definition of a quandle.

\begin{definition}
    A \emph{quandle} is a set $X$ with a binary operation $x^{y}$ for $x, y\in X$ such that for every $y\in X$, the function $x\mapsto x^{y}$ is a bijection from $X$ to $X$, $x^{x}=x$ for all $x\in X$, and for every $x, y, z\in X$, we have $(z^{x})^{y}= (z^{y})^{x^{y}}$. 
\end{definition}

An example of a quandle is a conjugacy invariant subset $c$ of $G$ with the binary operation given by conjugation. 

\begin{definition}
    Let $X$ be a finite quandle. The (directed unlabeled) \emph{Schreier graph} of $X$ is the graph whose set of vertices is $X$ and whose set of edges is $\{(x, x^{y})\colon (x, y)\in X\times X\}$. We say that $X$ is \emph{connected} if its Schreier graph is connected.
\end{definition}

The following (well-known) lemma provides examples of connected quandles. 

\begin{lemma} \label{LemNonSplittingConnectedQuandle}
    Let $G$ be a group and $c \in G$ a  conjugation-invariant subset. Suppose that $c$ is a single conjugacy class of $G$ and $c$ generates $G$. Then $c$ is connected when viewed as a quandle with conjugation. In particular this holds whenever $c$ has the nonsplitting property. 
\end{lemma}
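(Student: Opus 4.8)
The plan is to prove the statement directly from the definitions. Recall that the Schreier graph of $c$ (viewed as a quandle under conjugation) has vertex set $c$ and an edge $(x, x^y)$ for each pair $(x,y) \in c \times c$; here $x^y = yxy^{-1}$. So two elements $x, x' \in c$ are connected by an edge precisely when $x' = yxy^{-1}$ for some $y \in c$, and connected in the graph when $x'$ is obtained from $x$ by a finite sequence of conjugations by elements of $c$. Thus I want to show: for any $x, x' \in c$, there exist $y_1, \dots, y_m \in c$ with $x' = (y_m \cdots y_1) x (y_m \cdots y_1)^{-1}$.

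First I would observe that since $c$ is a single conjugacy class of $G$, there is \emph{some} $g \in G$ with $x' = gxg^{-1}$. Since $c$ generates $G$, I can write $g = z_1^{\epsilon_1} z_2^{\epsilon_2} \cdots z_k^{\epsilon_k}$ with each $z_j \in c$ and $\epsilon_j \in \{+1, -1\}$. Conjugation by $g$ is then the composite of conjugations by the $z_j^{\epsilon_j}$, so it suffices to check that conjugation by a single element $z \in c$ and conjugation by $z^{-1}$ both move elements of $c$ along paths in the Schreier graph. Conjugation by $z \in c$ is a single edge by definition. For $z^{-1}$: the map $w \mapsto z w z^{-1}$ is a bijection of the finite set $c$ (it permutes the conjugacy class), so some positive power $(w \mapsto z w z^{-1})^{r}$ is the identity; hence $z^{-1} w z = z^{r-1} w z^{-(r-1)}$, which is conjugation by $z$ applied $r-1$ times — a path of length $r-1$ in the Schreier graph. (Alternatively, and more cleanly: since $x = (z^{-1})(zxz^{-1})(z^{-1})^{-1}$ exhibits the edge $(zxz^{-1}, x)$, the Schreier graph as defined is effectively undirected up to reachability; but the power argument avoids any such subtlety.)

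Chaining these observations: starting from $x$ and applying conjugation by $z_k^{\epsilon_k}$, then $z_{k-1}^{\epsilon_{k-1}}$, and so on down to $z_1^{\epsilon_1}$, each step replaces the current element of $c$ by one joined to it by a finite path in the Schreier graph, and the end result is $gxg^{-1} = x'$. Hence $x$ and $x'$ lie in the same connected component, and since they were arbitrary, the Schreier graph of $c$ is connected, i.e. $c$ is a connected quandle. The final sentence ``in particular this holds whenever $c$ has the nonsplitting property'' is immediate: by \cref{defNonSplit}, the non-splitting property includes the requirements that $c$ generate $G$ and (taking $H = G$) that $c$ be a single conjugacy class of $G$, which are exactly the hypotheses just used.

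The only mild subtlety — the ``main obstacle'', though it is minor — is handling conjugation by inverses $z^{-1}$ of class elements, since $z^{-1}$ need not itself lie in $c$; the finiteness-of-$G$ power trick above resolves this cleanly, and no deeper input is needed.
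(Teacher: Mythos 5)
Your proof is correct and takes essentially the same approach as the paper's. The only cosmetic difference is that the paper directly writes $g = a_1 a_2 \cdots a_\ell$ with $a_i \in c$ (no inverses), implicitly using that in a finite group the subgroup generated by a set equals the submonoid it generates, whereas you allow signs $\epsilon_j = \pm 1$ and then use the finite-order trick to replace conjugation by $z^{-1}$ with repeated conjugation by $z$; both observations hinge on the same finiteness of $c$ (or $G$) and yield the same path in the Schreier graph.
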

\begin{proof}
Let $c_0, c_1 \in c$. Since $c$ is a conjugacy class, there exists some $g \in G$ such that $c_0 = c_1^g$. Since $c$ generates $G$, we can express $g$ as a product $g=a_1a_2 \dots a_{\ell}$ for $a_i \in c$. Conjugation by each of the elements $a_i$ define a sequence of edges in the Shreier graph $(c_0, c_0^{a_1})$, $(c_0^{a_1}, c_0^{a_1 a_2})$, \dots, $(c_0^{a_1 a_2 \dots a_{\ell-1}}, c_1)$ from $c_0$ to $c_1$. 
\end{proof}

Let $X$ be a quandle. The braid group $\Br_n$ acts on $X^n$ by using the same formula from \cref{eq: brd grp action}. The following theorem is a reformulation of a special case of Shusterman \cite[Theorem 2.4]{MR4666043}.
\begin{theorem}[{\cite[Theorem 2.4]{MR4666043}}]\label{MarkTheorem}
    Let $X$ be a connected finite quandle. 
    Then there is a natural number $N_X$, depending on $X$, such that for all $n\geq N_X$ and all $(x_{1},\ldots, x_{n})\in X^{n}$, with $x_{1},\ldots, x_{n}$ generating $X$,
    $$\PBr_{n}\cdot (x_{1},\ldots, x_{n})= \Br_{n}\cdot (x_{1},\ldots, x_{n}).$$
\end{theorem}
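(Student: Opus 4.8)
The plan is to deduce the statement from Shusterman \cite[Theorem 2.4]{MR4666043} after recasting it appropriately. First I would record the following elementary reduction. Since $\PBr_n$ is normal in $\Br_n$ with quotient $\Sigma_n$, the collection of $\PBr_n$-orbits contained in a fixed $\Br_n$-orbit $\Br_n\cdot x$ is a transitive $\Sigma_n$-set, canonically $\Br_n/\big(\PBr_n\cdot \Stab_{\Br_n}(x)\big)$. Hence $\PBr_n\cdot x=\Br_n\cdot x$ if and only if $\Stab_{\Br_n}(x)$ surjects onto $\Sigma_n$ under the projection $\Br_n\to\Sigma_n$. So the theorem is equivalent to the assertion that, for $n\geq N_X$ and every generating tuple $x=(x_1,\dots,x_n)\in X^n$, the stabilizer $\Stab_{\Br_n}(x)$ maps onto $\Sigma_n$. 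It therefore suffices to exhibit, inside this stabilizer, preimages of a generating set of transpositions of $\Sigma_n$.

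Next I would explain where transpositions come from. If a tuple $y\in X^n$ satisfies $y_i=y_{i+1}$, then the Artin generator $\alpha_i$ fixes $y$, because $y_{i+1}^{\,y_i}=y_i^{\,y_i}=y_i$ in a quandle, while $\alpha_i$ maps to the transposition $(i,i+1)$ in $\Sigma_n$. Consequently, whenever $x$ is $\Br_n$-equivalent to such a $y$, say $y=h\cdot x$, the element $h^{-1}\alpha_i h\in\Stab_{\Br_n}(x)$ maps to the transposition $\bar h^{-1}(i,i+1)\bar h$. The task is thus to produce, by Hurwitz moves (the braid action of \cref{eq: brd grp action}) and for enough pairs of positions, an equivalence realizing the corresponding transposition; here connectedness of $X$ is exactly what is needed, since it guarantees that the inner automorphisms coming from the entries of a generating tuple act transitively on $X$, so that one entry can be conjugated into another and slid adjacent to it. The delicate point is ensuring that there is always enough ``room'' among the $n$ strands to perform these moves \emph{no matter which} generating tuple is given, so that the bound $N_X$ depends only on $X$; this uniformity is the heart of \cite[Theorem 2.4]{MR4666043}, and I would invoke it rather than reprove it.

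Finally I would check that the hypotheses match: Shusterman's result is stated for connected finite quandles and generating tuples, which are precisely our hypotheses, and its conclusion, translated through the reduction above, is exactly the displayed equality $\PBr_n\cdot(x_1,\dots,x_n)=\Br_n\cdot(x_1,\dots,x_n)$ for $n\geq N_X$. The main obstacle to a self-contained proof is the quantitative control of $N_X$: the qualitative statement ``for each tuple some $n$ works'' is comparatively soft, whereas making $N_X$ independent of the tuple requires the careful combinatorial analysis of \cite{MR4666043}, in the spirit of the non-splitting arguments of Ellenberg--Venkatesh--Westerland \cite{MR3488737}. In our intended application the theorem is used with $X=c$ a conjugacy class satisfying the non-splitting property of \cref{defNonSplit}, which is a connected quandle by \cref{LemNonSplittingConnectedQuandle}.
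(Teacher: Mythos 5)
Your proposal is correct and matches the paper's approach: the paper gives no independent argument for this statement, treating it as a reformulation of Shusterman's Theorem 2.4 and simply citing it. Your group-theoretic gloss (orbit coincidence is equivalent to the surjection of $\Stab_{\Br_n}(x)$ onto $\Sigma_n$, together with the observation that $\alpha_i$ fixes a tuple with $y_i=y_{i+1}$ by the quandle axiom $x^x=x$) is accurate and supplies the reformulation that the paper leaves implicit, while the quantitative uniformity of $N_X$ is, as you say, exactly what is imported from Shusterman in both treatments.
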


\begin{corollary} \label{MarksConstantNc} Assume $(G,c)$ satisfies the non-splitting property. There exists $N_c$ such that for all $n \geq N_c$, the quotient map $\ohur(\bn) \to \hur(n)$ induces a bijection $\pi_0(\ohur(\bn)) \to \pi_0(\hur(n))$. 
\end{corollary}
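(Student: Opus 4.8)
The plan is to deduce this from Shusterman's theorem (\cref{MarkTheorem}), applied not to $c$ itself but to the sub-quandles $c\cap H$ for subgroups $H\le G$, together with the identifications of $\pi_0$ with braid-orbit sets recorded earlier in the paper.

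First I would reduce to a statement about orbits. The homotopy equivalences $\ohur(\bn)\simeq (c^n)_{h\PBr_n}$ and $\hur(n)\simeq (c^n)_{h\Br_n}$ identify $\pi_0(\ohur(\bn))\cong c^n/\PBr_n$ and $\pi_0(\hur(n))\cong c^n/\Br_n$, and one checks that under these identifications the map induced by the quotient $\ohur(\bn)\to\hur(n)$ is the canonical surjection $c^n/\PBr_n\to c^n/\Br_n$ coming from $\PBr_n\subseteq \Br_n$. This map is always surjective, and it is injective precisely when $\Br_n\cdot(c_1,\dots,c_n)=\PBr_n\cdot(c_1,\dots,c_n)$ for every $(c_1,\dots,c_n)\in c^n$. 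So it suffices to produce $N_c$ for which this last equality holds for all $n\ge N_c$ and all tuples.

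Next I would pass to a connected sub-quandle. Fix $(c_1,\dots,c_n)\in c^n$ and set $H=\langle c_1,\dots,c_n\rangle\le G$. The braid action replaces an entry by a conjugate of it by a product of other entries, so it preserves the subgroup generated by a tuple; hence the whole $\Br_n$-orbit of $(c_1,\dots,c_n)$ lies in $(c\cap H)^n$. As $c_1\in c\cap H$, this set is non-empty, so the non-splitting property forces $c\cap H$ to be a single conjugacy class of $H$; and since $c_1,\dots,c_n\in c\cap H$ generate $H$, the sub-quandle of $c\cap H$ generated by $(c_1,\dots,c_n)$ contains $\{c_1^{\,h}:h\in H\}=c\cap H$, hence equals $c\cap H$. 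By \cref{LemNonSplittingConnectedQuandle}, $c\cap H$ is then a connected finite quandle and $(c_1,\dots,c_n)$ generates it.

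Finally I would invoke \cref{MarkTheorem}. For each subgroup $H\le G$ such that $c\cap H$ is non-empty and generates $H$, let $N_{c\cap H}$ be the constant of \cref{MarkTheorem} for the connected finite quandle $X=c\cap H$; since $G$ is finite there are only finitely many such $H$, so set $N_c\colonequals\max_H N_{c\cap H}$. For $n\ge N_c$ and any $(c_1,\dots,c_n)\in c^n$, writing $H=\langle c_1,\dots,c_n\rangle$, the previous paragraph lets me apply \cref{MarkTheorem} to the generating tuple $(c_1,\dots,c_n)\in(c\cap H)^n$, giving $\PBr_n\cdot(c_1,\dots,c_n)=\Br_n\cdot(c_1,\dots,c_n)$; by the reduction above, the map $\pi_0(\ohur(\bn))\to\pi_0(\hur(n))$ is a bijection for all $n\ge N_c$. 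The one delicate point is that \cref{MarkTheorem} needs a \emph{generating} tuple, which is exactly why one must replace $c$ by $c\cap H$ (this is where non-splitting enters) and take a maximum over the finitely many relevant subgroups; the rest is bookkeeping with the identifications of $\pi_0$ with braid-orbit sets.
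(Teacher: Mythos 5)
Your proof is correct and follows essentially the same route as the paper's: reduce to comparing $c^n/\PBr_n$ with $c^n/\Br_n$, pass to $H=\langle c_1,\dots,c_n\rangle$, use non-splitting to get that $c\cap H$ is a single (hence connected) generated quandle, apply \cref{MarkTheorem} to $c\cap H$, and take the maximum of the resulting constants over the finitely many relevant subgroups. The only differences are cosmetic—you spell out the identification of $\pi_0$ with orbit sets and the verification that $(c_1,\dots,c_n)$ quandle-generates $c\cap H$ (which the paper leaves as ``not difficult to verify'')—so this is a faithful expansion of the paper's argument rather than a genuinely different one.
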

\begin{proof}  
Consider $v=(v_{1},\ldots, v_{n})\in c^{n}$. Let $V=\{v_1, \dots, v_n\}$ be the underlying subset of $c$. Let $H_V\subseteq G$ be the subgroup generated (as a subgroup) by $V$. It is not difficult to verify that the non-splitting assumption on $c$ implies that $V$ generates (as a quandle) the quandle $c\cap H_V$.

The non-splitting property implies that $c\cap H_V$ is a single conjugacy class of $H_V$, hence $c\cap H_V$ is a connected quandle by \cref{LemNonSplittingConnectedQuandle}. For $n \geq N(H_V)$,  \cref{MarkTheorem} implies that the $\PBr_{n}$-orbit of $(v_{1},\ldots, v_{n})$ is the same as its $\Br_{n}$-orbit. Let $$N_c=\max_{V \subseteq c} (H_V).$$ For $n \geq N_c$,  the $\PBr_n$ and $\Br_n$ orbits of $c^n$ agree: $$c^{n}\slash \PBr_{n}=c^{n}\slash \Br_{n}.\mbox{\qedhere}$$
\end{proof}

Fix $n \in \N$ and $p \geq -1$.  Recall the complex $\widetilde{\mathcal{K}}_{*}(\bn)$ has $p$-chains
\begin{align*}
\widetilde{\mathcal{K}}_{p}(\bn) & = \bK\left[ \bigsqcup_{\substack{f\colon [p] \hookrightarrow \bn}}  c^{p+1} \times \pi_0\left(\ohur[, {\bn \setminus f([p]) }] \right)\right] \\ 
& \cong \Ind_{1\times \Sigma_{n-p-1}}^{\Sigma_n}  \bK\left[ c^{p+1}\right] \otimes_{\bK} \bK\left[c^{n-p-1}/\PBr_{n-p-1} \right] \\ 
& \cong \Ind_{\Sigma_{n-p-1}}^{\Sigma_n} \bK\left[ c^{n}/\PBr_{n-p-1} \right].
\end{align*}

\cref{MarksConstantNc} therefore implies the following identification. 

\begin{corollary} Assume $(G,c)$ satisfies the non-splitting property. Let  $N_c$ be as in \cref{MarksConstantNc}.     For $p \leq n-N_c-1$ there are isomorphisms
    $$\widetilde{\mathcal{K}}_{p}(\bn) 
 \cong \Ind_{\Sigma_{n-p-1}}^{\Sigma_n} \bK \left[ c^{n}/\Br_{n-p-1} \right]$$
and for $p=n-N_c$ there is a surjection 
    $$\widetilde{\mathcal{K}}_{p}(\bn)  \twoheadrightarrow \Ind_{\Sigma_{n-p-1}}^{\Sigma_n} \bK \left[ c^{n}/\Br_{n-p-1} \right].$$
\end{corollary}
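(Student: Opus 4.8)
The plan is to obtain both assertions directly from \cref{MarksConstantNc}, combined with the isomorphism $\widetilde{\mathcal{K}}_{p}(\bn) \cong \Ind_{\Sigma_{n-p-1}}^{\Sigma_n} \bK\left[ c^{n}/\PBr_{n-p-1} \right]$ recorded just above. In this description, $\PBr_{n-p-1}$ --- and likewise $\Br_{n-p-1}$ --- acts on $c^{n}$ through its braid action on the last $n-p-1$ coordinates and trivially on the first $p+1$, while $\Sigma_{n-p-1} \leq \Sigma_{n}$ is the subgroup fixing $\{1, \dots, p+1\}$, acting on $c^{n}/\PBr_{n-p-1}$ via the residual action of $\Br_{n-p-1}/\PBr_{n-p-1} \cong \Sigma_{n-p-1}$. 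First I would note that, since $\PBr_{m}$ is normal in $\Br_{m}$, the quotient map $c^{n}/\PBr_{n-p-1} \twoheadrightarrow c^{n}/\Br_{n-p-1}$ is exactly the quotient by this residual $\Sigma_{n-p-1}$-action; in particular it is $\Sigma_{n-p-1}$-equivariant with trivial action on the target. Applying $\Ind_{\Sigma_{n-p-1}}^{\Sigma_{n}}(-)$ to the resulting surjection of $\bK[\Sigma_{n-p-1}]$-modules then yields, for every $n$ and every $p \geq -1$, a surjection of $\Sigma_{n}$-representations
$$\widetilde{\mathcal{K}}_{p}(\bn) \;\cong\; \Ind_{\Sigma_{n-p-1}}^{\Sigma_{n}} \bK\left[ c^{n}/\PBr_{n-p-1} \right] \;\twoheadrightarrow\; \Ind_{\Sigma_{n-p-1}}^{\Sigma_{n}} \bK\left[ c^{n}/\Br_{n-p-1} \right],$$
which in particular settles the case $p = n - N_c$.

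Next I would upgrade this surjection to an isomorphism in the range $p \leq n - N_c - 1$. Here I set $m := n - p - 1 \geq N_c$ and apply \cref{MarksConstantNc} with $m$ in place of $n$: the quotient map $c^{m}/\PBr_{m} \to c^{m}/\Br_{m}$ is a bijection, equivalently the residual $\Sigma_{m}$-action on $c^{m}/\PBr_{m}$ is trivial. Since $\PBr_{m}$ and $\Br_{m}$ act on $c^{n}$ only through the last $m$ coordinates, the induced map $c^{n}/\PBr_{m} \to c^{n}/\Br_{m}$ is then also a bijection, so the surjection of the previous paragraph is an isomorphism; this gives $\widetilde{\mathcal{K}}_{p}(\bn) \cong \Ind_{\Sigma_{n-p-1}}^{\Sigma_{n}} \bK\left[ c^{n}/\Br_{n-p-1} \right]$ as $\Sigma_{n}$-representations, as claimed.

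I do not expect a genuine obstacle here; the argument is essentially \cref{MarksConstantNc} applied after the harmless reindexing $n \mapsto n - p - 1$, together with the elementary fact that passing to the quotient by a normal subgroup, then linearizing and inducing up, preserves surjectivity (and preserves bijectivity when that normal subgroup already acts trivially). The one step deserving a moment of care is verifying that the $\Sigma_{n-p-1}$-action featured in the $\Ind$-presentation of $\widetilde{\mathcal{K}}_{p}(\bn)$ really is the residual braid-group action on the last $n-p-1$ coordinates, rather than some a priori different $\Sigma_{n-p-1}$-action, so that \cref{MarksConstantNc} may be quoted verbatim; this is immediate from the construction of $\widetilde{\mathcal{K}}_{*}(\bn)$ as the augmented cellular chain complex of $\mathfrak{S}_{\bullet}(L_{\blacktriangle}(\pi_0(\ohur))_{\bn})$ together with the definition of the $\Sigma_{n}$-action on $\pi_0(\ohur)$ through $\Br/\PBr$.
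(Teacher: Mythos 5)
Your argument is correct and is exactly the reasoning the paper leaves implicit (the paper states only that Corollary 7.6 "therefore implies" the identification, with no written proof). You correctly identify the one nontrivial point — that the $\Sigma_{n-p-1}$-action appearing in the $\Ind$-presentation of $\widetilde{\mathcal{K}}_{p}(\bn)$ is the residual $\Br_{n-p-1}/\PBr_{n-p-1}$-action on $c^{n-p-1}/\PBr_{n-p-1}$, so that the quotient by $\Sigma_{n-p-1}$ is precisely $c^{n}/\Br_{n-p-1}$ — and then the reindexing $m = n-p-1$ lets you invoke \cref{MarksConstantNc} verbatim. The surjection for $p = n - N_c$ (in fact for all $p$) from normality of $\PBr_m \trianglelefteq \Br_m$ together with exactness of $\Ind$ is likewise correct.
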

The following lemma is a reformulation of \cite[Lemma 3.5]{MR3488737}.
\begin{lemma}[{\cite[Lemma 3.5]{MR3488737}}]\label{lem: stability for pi0 Hur}
    Suppose that $(G, c)$ has the non-splitting property and $|G|$ is a unit in the coefficient field $\bK$. Then there are natural numbers $N, N_{0}$, with $N_{0}\geq N\geq 1$, and an element $U \in H_0(\hur[,N])$ in the center of the ring $\bigoplus_{n\in \N}H_0(\hur[,n])$ such that the multiplication map
    \begin{align*}
        -\cdot U\colon H_0(\hur[,n])&\to H_0(\hur[,n+N])
    \end{align*}
    is an isomorphism for all $n\geq N_{0}$.
\end{lemma}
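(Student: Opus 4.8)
The plan is to follow the proof of \cite[Lemma 3.5]{MR3488737}. Recall that concatenation of configurations makes $\bigoplus_n H_0(\hur[,n])$ into a ring, identified with $\bigoplus_n \bK[c^n/\Br_n]$, and write $[v]$ for the class of a tuple $v\in c^n$. First I would fix an integer $N\geq 1$ with $g^N=e$ for every $g\in G$ (for instance the exponent of $G$; note $N=1$ only when $G$ is trivial, in which case the claim is immediate), and set
\[
U \;=\; \sum_{g\in c}\,[(g,g,\dots,g)] \;\in\; H_0(\hur[,N]) = \bK[c^N/\Br_N],
\]
the sum of the $|c|$ distinct $\Br_N$-fixed points given by the constant tuples of length $N$. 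The first step is to verify that $U$ is central in $\bigoplus_n H_0(\hur[,n])$: for each $g\in c$ the tuple $(g,\dots,g)$ has total monodromy $g^N=e$, which is central in $G$, and sliding this block past an arbitrary tuple $x=(x_1,\dots,x_m)$ using the Artin generators $\alpha_j$ conjugates each $x_i$ by $g^N=e$ and returns the block unchanged, so $[(g,\dots,g)]\cdot[x]=[x]\cdot[(g,\dots,g)]$; summing over $g\in c$ gives centrality of $U$.

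The heart of the argument is to show that $-\cdot U\colon H_0(\hur[,n])\to H_0(\hur[,n+N])$ is an isomorphism for $n$ large. I would filter $H_0(\hur[,n])=\bigoplus_{H\leq G}\bK\big[\{v\in c^n:\langle v\rangle=H\}/\Br_n\big]$, using that $\langle v\rangle$ is a $\Br_n$-invariant. Since $[v]\cdot U=\sum_{g\in c}[(v,g,\dots,g)]$ and $\langle(v,g,\dots,g)\rangle=\langle\langle v\rangle,g\rangle\supseteq\langle v\rangle$, the map $-\cdot U$ is block upper triangular for this filtration (ordered by decreasing order of $H$), and its diagonal block at $H$ sends $[v]$ with $\langle v\rangle=H$ to $\sum_{g\in c\cap H}[(v,g,\dots,g)]$. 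This diagonal block is precisely right multiplication by the analogous element $U_H=\sum_{g\in c\cap H}[(g,\dots,g)]$ for the pair $(H,c\cap H)$, restricted to the span of tuples generating $H$; here non-splitting of $(G,c)$ passes to $(H,c\cap H)$ and makes $c\cap H$ a single conjugacy class of $H$ that generates $H$.

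It then suffices, for each subgroup $H$, to show that $[v]\mapsto\sum_{g\in c\cap H}[(v,g,\dots,g)]$ is an isomorphism on the span of the generating $\Br_n$-orbits of $(c\cap H)^n$ for $n$ large. This is where the Conway--Parker theorem enters, exactly as in the proof of \cite[Lemma 3.5]{MR3488737} (cf.\ also \cite[Theorem 2.4]{MR4666043}): under the non-splitting hypothesis, and with the torsion Schur multiplier harmless because $|G|$ is a unit in $\bK$, for $n$ sufficiently large the generating $\Br_n$-orbits of $(c\cap H)^n$ are classified by their total monodromy, which ranges over an $n$-independent subset of $H$. Since $g^N=e$, each $(v,g,\dots,g)$ has the same total monodromy as $v$, so under this classification all $|c\cap H|$ maps $[v]\mapsto[(v,g,\dots,g)]$ become the canonical ``append $N$ strands'' bijection, whence the diagonal block at $H$ equals $|c\cap H|$ times an isomorphism. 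As $|c\cap H|$ divides $|G|$, it is invertible in $\bK$, so every diagonal block, and therefore $-\cdot U$ itself, is an isomorphism for all $n\geq N_0$, where $N_0\geq N$ is chosen large enough to reach the Conway--Parker range for every subgroup of $G$.

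The main obstacle is this last step: one must cite or reprove the precise Conway--Parker-type classification of generating Hurwitz orbits for large $n$ under non-splitting and check that it is compatible with the stabilization maps $[v]\mapsto[(v,g,\dots,g)]$, so that the diagonal blocks really are scalar multiples of isomorphisms. An alternative packaging that avoids naming the orbit set would be to prove directly that $-\cdot U$ is eventually injective (appending constant blocks cannot merge $\Br_n$-orbits once $n$ is large) and to combine this with the eventual constancy of $\dim_{\bK}H_0(\hur[,n])$, itself a consequence of Conway--Parker (see \cite{davis2023hilbertpolynomialquandlescolorings}), since an eventually injective endomorphism-up-to-shift between eventually equidimensional spaces is eventually an isomorphism; but this route relies on the same underlying input.
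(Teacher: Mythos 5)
The paper offers no proof of this lemma; it is stated as a reformulation of Ellenberg--Venkatesh--Westerland's \cite[Lemma 3.5]{MR3488737} and cited outright, so there is no internal argument in the paper to compare against. What you have written is a reconstruction of the cited proof.

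Your outline has the right shape: construct $U$ as a sum of constant tuples of length $N$ equal to the exponent of $G$, verify centrality from $g^N = e$, filter by the subgroup generated (using that non-splitting passes to subgroups), and invoke a Conway--Parker stabilization on each diagonal block. The centrality and filtration steps are sound. The genuine gap is in the key sentence asserting that for $n$ large the generating $\Br_n$-orbits of $(c\cap H)^n$ are ``classified by their total monodromy, which ranges over an $n$-independent subset of $H$.'' This is false in general. The Conway--Parker-type classification pins down large-$n$ generating orbits by a lifting invariant valued in a central extension $\hat H \to H$, and the fibers of $\hat H \to H$ can have cardinality greater than one; this is a statement about the orbit \emph{sets}, so the hypothesis that $|G|$ is a unit in $\bK$ does not wash it away as you suggest. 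Taken literally, your claim that all $|c\cap H|$ maps $[v]\mapsto[(v,g,\dots,g)]$ ``become the canonical append-$N$-strands bijection'' is therefore unjustified. The repair is as follows: since $g^N = e$ in $H$, the lift satisfies $\hat g^N \in \ker(\hat H \to H)$, which is central in $\hat H$; and since the elements of $c\cap H$ are mutually conjugate, the elements $\hat g^N$ are conjugate \emph{central} elements, hence all equal. Thus the maps $[v]\mapsto[(v,g,\dots,g)]$ for $g\in c\cap H$ really do induce a single common bijection on large-$n$ generating orbits (translation of the lifting invariant by one fixed central element), and the diagonal block at $H$ is $|c\cap H|$ times that bijection. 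It is the invertibility of $|c\cap H|$ (a divisor of $|G|$), not any vanishing of the Schur-multiplier fibers, where the hypothesis on $|G|$ is actually used. You flagged this step yourself as the main obstacle; this is how it closes.
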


Note that the natural map  $ \ohur[,\bn]  \longrightarrow \hur[,n] $ induces a surjection
\begin{align*}
   H_0 \left( \ohur[,\bn] \right) & \longrightarrow  H_0 \left( \hur[,n]  \right)
\end{align*}
since $\hur[,n] \cong \ohur[,\bn] /\Sigma_n$.

\begin{notation} \label{NotationUTwiddle}
Let $\widetilde{U} \in H_0(\ohur[,\bN])$ be a lift of $U \in H_0(\ohur[,N])$.
\end{notation}

The following corollary follows from \cref{MarksConstantNc} and \cref{lem: stability for pi0 Hur}. 

\begin{corollary} \label{CorUIsoOnOHur} Suppose that $(G,c)$ has the non-splitting property and $|G|$ is a unit in the coefficient field $\bK$.     Let $N_c, N_0$, and $N$ be as in \cref{MarksConstantNc} and \cref{lem: stability for pi0 Hur}, and $\widetilde{U}$ be as in Notation \ref{NotationUTwiddle}. Then
    \begin{align*}
       - \cdot \widetilde{U} \colon H_0(\ohur[,S])&\to H_0(\ohur[,S \sqcup \bN])
    \end{align*}
    is an isomorphism for all finite sets $S$ with $|S|\geq \max(N_{0}, N_c)$.
\end{corollary}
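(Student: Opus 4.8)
The plan is to deduce this by comparing the ordered and unordered pictures through the quotient maps $\ohur(\bn)\to\hur(n)$, exploiting that \cref{MarksConstantNc} makes these quotient maps $\pi_0$-isomorphisms once $n\ge N_c$, and then reducing to the stabilization isomorphism of \cref{lem: stability for pi0 Hur} on the unordered side.

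First I would record the relevant identifications. For any finite set $T$ one has $H_0(\ohur[,T])\cong\bK[\pi_0(\ohur(T))]$ and $H_0(\hur[,|T|])\cong\bK[\pi_0(\hur(|T|))]$, and the forgetful map $\ohur(T)\to\hur(|T|)$ (quotienting by $\Sigma_T$) induces on $H_0$ exactly the $\bK$-linearization of $\pi_0(\ohur(T))\to\pi_0(\hur(|T|))$. By \cref{MarksConstantNc} this latter map of path components is a bijection whenever $|T|\ge N_c$, so $H_0(\ohur[,T])\to H_0(\hur[,|T|])$ is an isomorphism for $|T|\ge N_c$. I would apply this both with $T=S$ and with $T=S\sqcup\bN$; since $|S\sqcup\bN|=|S|+N\ge|S|\ge N_c$ (as $N\ge 0$), both comparison maps are isomorphisms.

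Next I would set up the naturality square comparing $-\cdot\widetilde{U}$ with $-\cdot U$. Because the monoid structure on $\hur$ in $\topo^{\N}$ is by definition induced from that on $\ohur$ in $\topo^{\fb}$, and because $\widetilde{U}\in H_0(\ohur[,\bN])$ was chosen (\cref{NotationUTwiddle}) to be a lift of $U\in H_0(\hur[,N])$ under the quotient map, the square
\[
\begin{tikzcd}
H_0(\ohur[,S]) \arrow[r, "-\cdot\widetilde{U}"] \arrow[d] & H_0(\ohur[,S\sqcup\bN]) \arrow[d] \\
H_0(\hur[,|S|]) \arrow[r, "-\cdot U"] & H_0(\hur[,|S|+N])
\end{tikzcd}
\]
with vertical maps induced by the forgetful maps commutes. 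Then for $|S|\ge\max(N_0,N_c)$ both vertical arrows are isomorphisms by the previous paragraph, and the bottom arrow is an isomorphism by \cref{lem: stability for pi0 Hur} (using $|S|\ge N_0$ and that $|G|$ is a unit in $\bK$); a two-out-of-three argument forces $-\cdot\widetilde{U}$ to be an isomorphism.

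I do not expect a genuine obstacle here, since the substantive input has been front-loaded into \cref{MarksConstantNc} and \cref{lem: stability for pi0 Hur}. The two points that need a little care are: (i) verifying that the comparison square actually commutes, which reduces to the statement that the forgetful monoid map sends $\widetilde{U}$ to $U$ — immediate from the definitions — together with compatibility of the multiplication maps on the two sides; and (ii) the bookkeeping of the bound, i.e.\ checking that $\max(N_0,N_c)$ is simultaneously large enough for all three auxiliary isomorphisms (the two vertical comparisons and the bottom stabilization map) to hold.
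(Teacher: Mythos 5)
Your proof is correct and takes essentially the same route as the paper, which simply asserts that the corollary "follows from" \cref{MarksConstantNc} and \cref{lem: stability for pi0 Hur} without elaboration. You have filled in exactly the intended argument: the forgetful map is a $\pi_0$-isomorphism above level $N_c$, the monoid structures and the choice of $\widetilde{U}$ as a lift of $U$ make the comparison square commute, and then $|S|\ge\max(N_0,N_c)$ (together with $N\ge1$, so $|S\sqcup\bN|\ge N_c$ automatically) makes all three of the other arrows in the square isomorphisms, forcing the top one to be as well.
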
 

An element $\alpha \in \ohur[,T]$, defines a map $$- \cdot \alpha\colon \widetilde{\mathcal{K}}_p(S) \to \widetilde{\mathcal{K}}_p(S \sqcup T)$$ induced  by the right multiplication by $\alpha$ on the groups
$$- \cdot \alpha\colon H_0\left(\ohur[,S\setminus \im(f)]\right) \to H_0\left(\ohur[,(S\setminus \im(f)) \sqcup T]\right)$$ for each injective map $f\colon [p] \to S$. Since the differentials of the complex $\widetilde{\mathcal{K}}_*(S)$ are defined via left multiplication on the factors $H_0\left(\ohur[,S\setminus \im(f)]\right)$, the differentials commute with the action of $\alpha$, and the map induced by $\alpha$ on $\widetilde{\mathcal{K}}_*(S)$ is a chain map. 

 We will now prove that the induced map $\alpha$ on  $\widetilde{\mathcal{K}}_*(S)$ is the zero map provided $|T|>0$. The analogous proof of  Ellenberg--Venkatesh--Westerland \cite[Lemma 4.11]{MR3488737} readily adapts to this setting.

\begin{lemma} \label{ChainHomotopy-RightMultVanishes} Let $g \in c$. We conflate $g \in c$ with the corresponding element $g  \in H_0(\ohur[, \bf 1])$.  Then right multiplication by $g$ 
\begin{align*}
    H_0(\ohur[, \bN]) & \longrightarrow H_0(\ohur[, {\bf  n+1}]) \\ 
    s & \longmapsto s \cdot g
\end{align*}
induces the zero map on the homology groups $$ H_*(\widetilde{\mathcal{K}}_*(\bn)) \to H_*(\widetilde{\mathcal{K}}_*({\bf n+1 })).$$  
\end{lemma}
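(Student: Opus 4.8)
The plan is to exhibit an explicit chain homotopy, following the strategy of Ellenberg--Venkatesh--Westerland \cite[Lemma 4.11]{MR3488737}. Recall that $\widetilde{\mathcal{K}}_p(\bn)$ is the free $\bK$-module on tuples $(f, v, \eta)$ with $f \colon [p] \hookrightarrow \bn$, $v = (v_0, \ldots, v_p) \in c^{p+1}$, and $\eta \in \pi_0(\ohur[, \bn \setminus f([p])])$. Right multiplication by $g \in c \cong H_0(\ohur[, \bf 1])$ sends such a generator to $(\tilde{f}, v, \eta \cdot g)$, where $\tilde{f} \colon [p] \hookrightarrow {\bf n+1}$ is $f$ postcomposed with the inclusion $\bn \hookrightarrow {\bf n+1}$ that avoids the new point $n+1$, and $\eta \cdot g \in \pi_0(\ohur[, ({\bn\setminus f([p])}) \sqcup \{n+1\}])$. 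The idea is to define $h \colon \widetilde{\mathcal{K}}_p(\bn) \to \widetilde{\mathcal{K}}_{p+1}({\bf n+1})$ that ``inserts $g$ as a new last factor sitting on the new point $n+1$'': concretely, send $(f,v,\eta)$ to the generator determined by the injection $[p+1] \hookrightarrow {\bf n+1}$ extending $\tilde f$ by $p+1 \mapsto n+1$, the tuple $(v_0, \ldots, v_p, g) \in c^{p+2}$, and the class $\eta \in \pi_0(\ohur[, \bn \setminus f([p])])$ regarded as an element of $\pi_0(\ohur[, ({\bf n+1}) \setminus ([p+1]\text{-image})])$, with an appropriate sign $(-1)^{p+1}$.

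The key computation is then the chain-homotopy identity $\partial h + h \partial = - \cdot g$ on $\widetilde{\mathcal{K}}_*(\bn)$. Using the face-map formula recorded just before Notation \ref{NotationUTwiddle} --- namely $d_i(h,(v_0,\ldots,v_p),\eta) = (h\circ d_i, (v_0,\ldots,\hat v_i,\ldots,v_p), (v_i)^{v_{i+1}\cdots v_p}\cdot \eta)$ --- one checks that in $\partial h(f,v,\eta)$ the term $d_{p+1}$ deletes the freshly inserted $g$ and returns (up to the sign that cancels) exactly $(f, v, \eta)\cdot g = \tilde f$-version with $\eta\cdot g$, i.e. the map $-\cdot g$; the terms $d_i$ for $i \leq p$ match, with opposite sign, the terms of $h\partial(f,v,\eta)$ obtained by first applying $d_i$ and then inserting $g$. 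The only subtlety is that applying $d_i$ for $i \leq p$ conjugates $\eta$ by $v_{i+1}\cdots v_p$, whereas after inserting $g$ the conjugating word becomes $v_{i+1}\cdots v_p \, g$; but since $g$ is precisely the element being multiplied on the right and $\eta$ lives on points disjoint from the new point, these conjugations agree on the relevant classes in $\pi_0(\ohur)$ --- this is exactly the bookkeeping that makes the EVW argument go through, and verifying it carefully is the main (though routine) obstacle. Once the identity $\partial h + h\partial = -\cdot g$ holds, it follows immediately that $-\cdot g$ induces the zero map on homology $H_*(\widetilde{\mathcal{K}}_*(\bn)) \to H_*(\widetilde{\mathcal{K}}_*({\bf n+1}))$.

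I should also double-check the edge case $p = -1$ (the augmentation level): there $\widetilde{\mathcal{K}}_{-1}(\bn) = \bK[\pi_0(\ohur[,\bn])]$ and $h$ sends $\eta$ to the generator on the injection $[0]\hookrightarrow \{n+1\}$ with tuple $(g)$ and class $\eta$; the identity $\partial h = -\cdot g$ at this level reduces to the statement that the augmentation of $(\{n+1\}\mapsto \text{pt}, (g), \eta)$ is $\eta \cdot g$, which is immediate from the face-map/augmentation formula. With the chain homotopy in place across all degrees, the lemma is proved.
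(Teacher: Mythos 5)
Your chain homotopy does not actually satisfy $\partial h + h\partial = - \cdot g$, and the ``subtlety'' you mention near the end is the whole problem, not a routine bookkeeping step. Concretely, look at your $p=-1$ case: the augmentation of $(\{n+1\}\hookrightarrow \text{pt}, (g), \eta)$ is $g\cdot\eta$ (the element $g$ prepended on the left), not $\eta\cdot g$, and these are genuinely different elements of $\pi_0(\ohur)$. For instance, take $G = S_3$, $c$ the transpositions, $\eta$ the class of $(13)$, and $g = (12)$: then $\eta\cdot g$ corresponds to the $\PBr_2$-orbit of $\big((13),(12)\big)$, while $g\cdot\eta$ corresponds (after sliding the new point across) to the $\PBr_2$-orbit of $\big((23),(12)\big)$, and one checks directly these orbits are distinct. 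Similarly, for $i\le p$, the term $d_i h$ in $\partial h$ carries the conjugated element $v_i^{v_{i+1}\cdots v_p g}\cdot\eta$, while the corresponding term of $h\partial$ carries $v_i^{v_{i+1}\cdots v_p}\cdot\eta$; these classes differ by conjugation by $g$ and are not equal in $\pi_0(\ohur)$ in general. The assertion that ``these conjugations agree on the relevant classes'' is false, and without it the cancellation collapses.

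The paper's proof handles this by two changes you would need to make. First, the inserted element is not $g$ but the \emph{conjugate} $g^{(g_0\cdots g_p\,\partial s)^{-1}}$, where $\partial s$ denotes the well-defined product in $G$ of the $c$-labels of the ``tail'' class $s$; this is exactly the auxiliary quantity that makes the $d_i$-faces of $\partial H_g$ match the terms of $H_g\partial$ on the nose. Second, it is inserted at position $0$ (as the new \emph{first} marked point), not at position $p+1$. After this, the chain homotopy is between zero and the map $s \mapsto g^{(g_0\cdots g_p\partial s)^{-1}}\cdot s$, which is still \emph{left}-multiplication by a conjugate of $g$, not right-multiplication by $g$. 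The last step in the paper is to exhibit a braid $b\in\Br_{n-p}$ (depending only on $p$) taking $s\cdot g$ to $g^{(g_0\cdots g_p\partial s)^{-1}}\cdot s$ in $c^{n-p}/\PBr_{n-p}$, whose image $\sigma\in\Sigma_{n+1}$ provides a chain \emph{isomorphism}, so that $\sigma_*\circ H_g$ is a chain homotopy from zero to $-\cdot g$. Your writeup is missing all three ingredients: the conjugation by $(g_0\cdots g_p\partial s)^{-1}$, the definition and use of $\partial s$, and the correcting permutation $\sigma$. Without them the homotopy identity fails.
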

\begin{proof}
  We will use the following notation.  We represent a coset in $\Sigma_n / \Sigma_{n-p-1}$ via a word $(w_0, \dots, w_p)$ of $(p+1)$ distinct letters in $\bn$.  For $s \in \pi_0(\ohur[,\bf n-p-1])$ corresponding to the orbit $$\PBr_{n-p-1}\cdot (c_{p+1}, \dots, c_n) \in c^{n-p-1}/\PBr_{n-p-1},$$ let $\partial s= c_{p+1} \cdots c_n$ denote the (well-defined) product in $G$. 

Then, as in \cite[Proof of Lemma 4.11]{MR3488737}, we define a chain homotopy $H_g: \widetilde{\mathcal{K}}_{p}(\bn) \to \widetilde{\mathcal{K}}_{p}(\bf{n+1})  $ via the map 
\begin{align*}
   H_g \colon  \bigoplus_{\Sigma_n/ \Sigma_{n-p-1}}  \bK\left[ c^{p+1}\right] \otimes_{\bK} \bK\left[ c^{n-p-1}/\PBr_{n-p-1} \right] \longrightarrow&  \bigoplus_{\Sigma_{n+1}/ \Sigma_{n-p}}  \bK\left[ c^{p+1}\right] \otimes_{\bK} \bK\left[ c^{n-p}/\PBr_{n-p} \right]\\
    \big( (w_0, \dots, w_p); (g_{0},\dots, g_p); s\big) \longmapsto& \left( (n+1, w_0, \dots, w_p); \left(g^{(g_0 \cdots g_q  \partial s)^{-1}}, g_{0},\dots, g_p\right); s \right)
\end{align*}
Just as in \cite[Proof of Lemma 4.11]{MR3488737},
     a routine calculation shows that $H_g$ defines a chain homotopy between the zero map and the map
     $$ \big( (w_0, \dots, w_p); (g_{0},\dots, g_p); s\big) \longmapsto \big((w_0, \dots w_p); (g_0, \dots,  g_p); g^{(g_0 \dots g_p \partial s)^{-1}} \cdot s\big) .$$ 
     We claim that (up to post-composition with a chain isomorphism) this agrees with the map 
      \begin{equation} \label{EqnMultByg}
 \big( (w_0, \dots, w_p); (g_{0},\dots, g_p); s\big) \longmapsto \big((w_0, \dots w_p); (g_0, \dots,  g_p);  s \cdot g\big) .
      \end{equation}
     Again as in \cite[Proof of Lemma 4.11]{MR3488737},   there exists a braid  $b \in \Br_{n-p}$ that maps   $s \cdot g$ to $g^{(g_0 \dots g_p \partial s)^{-1}} \cdot s$ in $c^{n-p} / \PBr_{n-p}$. Note  
     that $b$ does not depend on $s,g_0,\ldots,g_p$ or $g$ and only depends on $p$. Let $\sigma$ be the image of $b$ under the map $\Br_{n-p} / \PBr_{n-p} \cong \Sigma_{n-p} \hookrightarrow \Sigma_{n+1}$, so  $s \cdot g$ and $g^{(g_0 \dots g_p \partial s)^{-1}}  \cdot s$ differ by the action of the permutation $\sigma$. Thus $ (\sigma)_* \circ H_g$ gives the desired chain homotopy between the zero map and the map of Formula  (\ref{EqnMultByg}). 
\end{proof}

We will prove our vanishing result by studying the spectral sequences associated to the following double complex. 

\begin{definition} Suppose that $(G,c)$ has the non-splitting property and $|G|$ is a unit in the coefficient field $\bK$. Let $N$ be as in \cref{lem: stability for pi0 Hur}, and $\widetilde{U}$ be as in Notation \ref{NotationUTwiddle}.  Define, for each finite set $S$, a double complex for $p, q \geq -1$,  
$$\mathcal{C}_{p,q}(S) = \bigoplus_{ \left(g \sqcup f\right)\colon \big( (\bN \times [p]) \sqcup [q] \big) \hookrightarrow S} \bK[ c^{q+1}] \otimes_{\bK} H_0(\ohur(S \setminus \im(g \sqcup f))). $$ 
The vertical differential $\partial_V$ and horizontal differential $\partial_H$ are defined as follows. 
The differential $\partial_V \colon  \mathcal{C}_{p,q}(S) \to  \mathcal{C}_{p,q-1}(S)$ is defined so that the natural isomorphism of $\fb$-$\bK$-modules
$$\mathcal{C}_{p,q}(S) \cong \bigoplus_{ g\colon (\bN \times [p]) \hookrightarrow S} \widetilde{\mathcal{K}}_q(S \setminus \im(g))$$
is an isomorphism of chain complexes, 
\begin{equation} \label{Ccols}
\mathcal{C}_{p,*}(S) \cong \bigoplus_{ g\colon (\bN \times [p]) \hookrightarrow S} \widetilde{\mathcal{K}}_*(S \setminus \im(g)).
\end{equation}
The horizontal differential $\partial_H \colon \mathcal{C}_{p,q}(S) \to \mathcal{C}_{p-1,q}(S)$ is defined by the alternating sum of the maps induced by right multiplication by $\widetilde{U}$. 
\end{definition}
The two differentials $\partial_V$ and $\partial_H$ commute because $\partial_V$ is defined by left multiplication and $\partial_H$ is defined by right multiplication on $H_0(\ohur)$. 

We begin by analyzing the rows of the double complex. Observe that
\begin{equation} \label{Crows} \mathcal{C}_{*,q}(S) =
\bigoplus_{f \colon [q] \to S } \bK[c^{q+1}]\otimes_{\bK} \mathcal{C}_{*,-1}( S \setminus \im(f)) . 
\end{equation}
Thus to prove that the homology of the rows vanishes in a range, it suffices to prove that $\mathcal{C}_{*,{-1}}(S)$ is highly acyclic. We do so by relating it to the following simplicial complex. 

\begin{definition}
Let $X_{\bullet}(S, T)$ be the semi-simplicial set whose  $p$-simplices are the set of injective maps $T \times [p] \to S$, and whose face maps are given by precomposition with $d_i \colon [p-1] \to [p]$. 
\end{definition}

The following result is surely well-known, but for completeness we include an argument. 

\begin{lemma} \label{XBulletConnectivity}
    $X_{\bullet}(S, T)$  is $\left( \frac{|S|}{2|T|} - \frac32\right)$-connected. 
\end{lemma}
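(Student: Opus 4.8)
The plan is to induct on $|S|$, allowing $|T|$ to vary, and to reduce high connectivity of $X_{\bullet}(S,T)$ to the same statement for smaller sets, which will appear as links. Write $n=|S|$, $t=|T|$, and set $N=\tfrac{n}{2t}-\tfrac32$. When $n<2t$ the bound $N$ is $<-\tfrac12$, so the only thing to check in the relevant range is non-emptiness: $X_{\bullet}(S,T)$ has a $0$-simplex (an injection $T\hookrightarrow S$) exactly when $n\ge t$, which is exactly when $N\ge -1$, so the base case is immediate. So I would assume $n\ge 2t$ and that the lemma holds for all sets of strictly smaller cardinality.

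The key structural input is the identification of links. A $p$-simplex $\sigma$ of $X_{\bullet}(S,T)$ is an injection $\sigma\colon T\times[p]\hookrightarrow S$; write $S_{\sigma}=\mathrm{im}(\sigma)$, so $|S_{\sigma}|=(p+1)t$. A $q$-simplex $\tau$ of $X_{\bullet}(S,T)$ can be concatenated after $\sigma$ to give a $(p+q+1)$-simplex precisely when $\mathrm{im}(\tau)$ is disjoint from $S_{\sigma}$, i.e. precisely when $\tau$ is a $q$-simplex of $X_{\bullet}(S\setminus S_{\sigma},T)$. Hence the link of $\sigma$ in $X_{\bullet}(S,T)$ is canonically $X_{\bullet}(S\setminus S_{\sigma},T)$, which by the inductive hypothesis is $\bigl(\tfrac{n-(p+1)t}{2t}-\tfrac32\bigr)$-connected, and one checks $\tfrac{n-(p+1)t}{2t}-\tfrac32=\tfrac{n}{2t}-\tfrac{p+1}{2}-\tfrac32\ge N-p-1$ for all $p\ge -1$. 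In other words, the inductive hypothesis already gives that every link of a $p$-simplex of $X_{\bullet}(S,T)$ is $(N-p-1)$-connected; only the connectivity of $X_{\bullet}(S,T)$ itself remains.

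To close the induction I would run the standard surgery argument that deduces high connectivity of a complex from high connectivity of its links (the same mechanism behind connectivity of complexes of injective words, arc complexes, and tethered-object complexes; cf. the methods of \cite{HV-tethers}). Given a map $f\colon S^{k}\to\Vert X_{\bullet}(S,T)\Vert$ with $k\le N$, after a standard simplicial-approximation reduction for thick realizations of semi-simplicial sets one may assume $f$ is simplicial with respect to a finite triangulation $L$ of $S^{k}$. Fix a vertex $w$ of $X_{\bullet}(S,T)$ and call a simplex of $L$ \emph{bad} if its image is not disjoint from $\mathrm{im}(w)$. One removes bad simplices one at a time: for a bad simplex $\tau$ of $L$, one modifies $f$ on the star of $\tau$, the modification taking place inside the link in $X_{\bullet}(S,T)$ of the simplex $\rho$ spanned by $w$ together with the vertices of $f(\tau)$ compatible with $w$. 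Since $\rho$ has at most $\dim\tau+2$ vertices, this link $X_{\bullet}(S\setminus S_{\rho},T)$ is $\bigl(N-\tfrac{\dim\rho+1}{2}\bigr)$-connected with $\dim\rho\le\dim\tau+1$, and $N-\tfrac{\dim\tau+2}{2}\ge k-\dim\tau-1$ because $k\le N$ and $\dim\tau+2\le 2(\dim\tau+1)$ — which is exactly the connectivity needed to null-homotope the sphere $\mathrm{Lk}_{L}(\tau)\to X_{\bullet}(S\setminus S_{\rho},T)$ and perform the surgery. This is precisely where the factor $2$ in $2t$ (equivalently the constant $\tfrac12$ in the bound) is consumed. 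Once no bad simplices remain, $w$ is compatible with every simplex of $f(L)$ and $f$ extends over $D^{k+1}$ by sending the cone point to $w$; hence $\pi_{k}(\Vert X_{\bullet}(S,T)\Vert)=0$ for $k\le N$, completing the induction.

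The hard part will be bookkeeping rather than ideas: making the simplicial-approximation step precise for the thick realization of a semi-simplicial set (there are no degeneracies), and tracking the connectivity arithmetic through the bad-simplex induction carefully enough to see the exact bound. An alternative route, should the geometric surgery prove cumbersome to write out, is to first establish the corresponding homological vanishing by an explicit chain contraction adapted from the injective-words computation and then promote it via the Hurewicz theorem together with a direct low-degree simple-connectivity check; the link structure above is used in the same inductive way.
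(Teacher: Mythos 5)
Your inductive skeleton is essentially the same as the paper's: both arguments induct on $|S|$, both identify links with smaller complexes of the form $X_{\bullet}(S', T)$, and the connectivity arithmetic (including the crucial factor $2$ coming from $\dim\tau + 2 \le 2(\dim\tau +1)$) agrees. The difference is in how the surgery is actually executed, and this is where I think your proposal has a genuine gap rather than merely postponed bookkeeping.

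The paper does not run the bad-simplex surgery on the semi-simplicial set $X_{\bullet}(S,T)$ directly. Instead it introduces the honest simplicial complex $Z(S,T)$ whose vertices are injections $T \hookrightarrow S$ and whose simplices are pairwise-disjoint collections, proves that $Z(S,T)$ is weakly Cohen--Macaulay of the appropriate dimension using Galatius--Randal-Williams' badness criterion (\cref{GRW-Badness}), and then invokes Hatcher--Vogtmann's passage from a wCM simplicial complex to its complex of ordered simplices (\cite[Proposition 2.10]{HV-tethers}) to obtain the connectivity of $\Vert X_{\bullet}(S,T)\Vert$. That last step is a substantive theorem, not bookkeeping. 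Your proposal elides it by declaring that ``the link of $\sigma$ in $X_{\bullet}(S,T)$ is canonically $X_{\bullet}(S\setminus S_{\sigma},T)$'' and then running the usual badness argument. But the complex you describe is only the \emph{concatenation-after} link: the $\tau$ for which $\sigma * \tau$ is a simplex. In a semi-simplicial set, simplices containing a given face do so in arbitrary ordered positions, and during the surgery the replacement of $f$ on $\mathrm{Star}_L(\tau)$ produces simplices in which the blocks coming from $\tau$, from the link sphere, and from the cone vertex $w$ are interleaved according to the vertex ordering of $L$, not concatenated. Handling this carefully is precisely the content of the Hatcher--Vogtmann proposition, and without it the induction does not close. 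Your own closing paragraph flags ``making the simplicial-approximation step precise for the thick realization'' as the bookkeeping, but the real missing step is the ordered/unordered comparison. Either pass to the simplicial complex $Z(S,T)$ as the paper does (which also makes the link identification literally correct), or cite a version of Hatcher--Vogtmann's result directly; one of these is needed to make the argument go through.
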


We will use the following version of a simplicial ``badness argument''; this formulation appears in Galatius--Randal-Williams \cite[Proposition 2.5.]{GRW}. 

\begin{proposition}[{\cite[Proposition 2.5.]{GRW}}]\label{GRW-Badness} Let $X$ be a simplicial complex, and $Y \subset X$ be a full subcomplex.
Let $d$ be an integer with the property that for each $p$-simplex $\sigma \subset X$ having no
vertex in $Y$, the complex $Y \cap \mathrm{Lk}_X (\sigma)$ is $(d - p - 1)$-connected. Then the inclusion
$\vert X\vert\to \vert Y\vert$ is $d$-connected. 
\end{proposition}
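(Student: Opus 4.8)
The plan is to prove the (apparently mis-typed) statement that the inclusion $|Y| \hookrightarrow |X|$ is $d$-connected, i.e. that $\pi_i(|X|,|Y|) = 0$ for $i \le d$ and that $\pi_0(|Y|) \to \pi_0(|X|)$ is onto. First I would reduce to a purely combinatorial statement via relative simplicial approximation: any element of $\pi_i(|X|,|Y|)$ is represented by a simplicial map $f \colon (P, \partial P) \to (X, Y)$, where $P$ is a finite simplicial complex triangulating $D^i$, $\partial P$ is a subcomplex triangulating $S^{i-1}$, and $f(\partial P) \subseteq Y$. It then suffices to homotope $f$, relative to $\partial P$, to a simplicial map whose image lies in $Y$. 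The surjectivity on $\pi_0$, and more generally the low-degree cases, will be covered automatically (taking $\sigma = \varnothing$ as a $(-1)$-simplex with no vertex in $Y$, the hypothesis includes the statement that $Y = Y \cap \mathrm{Lk}_X(\varnothing)$ is itself $d$-connected).

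Next I would run the standard \emph{bad simplex} induction. Call a simplex $\tau$ of $P$ \emph{bad} if none of the vertices of $f(\tau)$ lies in $Y$; faces of bad simplices are bad, and since $Y$ is a \emph{full} subcomplex, $f(P) \subseteq Y$ if and only if $P$ has no bad vertices. Because $f(\partial P) \subseteq Y$, every bad simplex is disjoint from $\partial P$ and hence lies in the interior of the triangulated disk. If $P$ has a bad simplex, let $m$ be the largest dimension of one; the goal of the induction step is to modify $f$ (rel $\partial P$) so as to reduce the number of $m$-dimensional bad simplices, and then to induct downward on $m$, the base case $m < 0$ being $f(P) \subseteq Y$.

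For the induction step, fix a bad $m$-simplex $\tau$, and set $\sigma := f(\tau)$, a simplex of $X$ of some dimension $p \le m$ with no vertex in $Y$. By maximality of $m$, the complex $\mathrm{Lk}_P(\tau)$ contains no bad simplex (a bad vertex $v \in \mathrm{Lk}_P(\tau)$ would make $\tau \ast v$ a bad $(m+1)$-simplex), and it follows that $f(\mathrm{Lk}_P(\tau)) \subseteq Y \cap \mathrm{Lk}_X(\sigma)$: indeed a vertex of $\mathrm{Lk}_P(\tau)$ cannot map into $\sigma$ (that would be a bad vertex), so its image is disjoint from $\sigma$ and spans a simplex with it. Since $\tau$ is interior, $\mathrm{Lk}_P(\tau)$ triangulates $S^{i-m-1}$; by hypothesis $Y \cap \mathrm{Lk}_X(\sigma)$ is $(d-p-1)$-connected, and $i-m-1 \le d-m-1 \le d-p-1$ using $i \le d$ and $p \le m$, so $f|_{\mathrm{Lk}_P(\tau)}$ is nullhomotopic in $Y \cap \mathrm{Lk}_X(\sigma)$. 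Choosing a simplicial nullhomotopy and combining it with $f|_\tau$, I would perform the usual \emph{star replacement}: after subdividing $\mathrm{St}_P(\tau) = \tau \ast \mathrm{Lk}_P(\tau) \cong D^i$, redefine $f$ on it rel its boundary sphere so that the new map still takes values in the join $\sigma \ast (Y \cap \mathrm{Lk}_X(\sigma))$ but so that no $m$-simplex of the subdivision maps onto $\sigma$. Since any bad simplex of the join $\sigma \ast (Y \cap \mathrm{Lk}_X(\sigma))$ must be a face of $\sigma$ (its vertices being the only ones not in $Y$), this replacement introduces no bad simplex of dimension $\ge m$ and destroys $\tau$; iterating over all bad $m$-simplices and then decreasing $m$ finishes the argument.

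The main obstacle is the star replacement of the last paragraph: carefully producing the new simplicial map on the subdivided $\mathrm{St}_P(\tau)$ so that it (a) agrees with $f$ on $\partial(\mathrm{St}_P(\tau))$, (b) lands in $\sigma \ast (Y \cap \mathrm{Lk}_X(\sigma))$, and (c) has no bad simplex of dimension $\ge m$, and then checking that the whole process terminates (there are finitely many bad simplices and each step lowers their count). This is the technical core, and it is exactly the content of the proof of \cite[Proposition 2.5]{GRW} (compare the analogous ``canonical resolution'' connectivity arguments in \cite{RWW}); I would handle the degenerate cases there as well — $p < m$ (where $\sigma$ has no $m$-dimensional face so (c) is automatic) and $m = i$ (where $\mathrm{Lk}_P(\tau) = \varnothing$ and one uses $(d-i-1)$-connectivity, i.e. nonemptiness when $i = d$, of $Y \cap \mathrm{Lk}_X(\sigma)$).
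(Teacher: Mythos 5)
This proposition is not proved in the paper at all: it is quoted verbatim from Galatius--Randal-Williams and used as a black box, so there is no in-paper argument to compare against. Your sketch is the standard ``bad simplex'' induction (simplicial approximation, maximal-dimension bad simplices, nullhomotoping $f|_{\mathrm{Lk}_P(\tau)}$ in $Y\cap\mathrm{Lk}_X(f(\tau))$, then star replacement by the join with the coning), which is exactly the proof given in \cite[Proposition 2.5]{GRW}, and you correctly note that the conclusion as stated in the paper has the inclusion backwards (it should be $|Y|\to|X|$). The only caveat is that you defer the technical core --- the precise star replacement and the bookkeeping showing no new bad simplices of dimension $\geq m$ are created --- to the very reference being proved, so as a self-contained argument it is a sketch rather than a complete proof; since the paper itself only cites the result, this is consistent with how the statement is used here.
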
 

\begin{proof}[Proof of Lemma \ref{XBulletConnectivity}]

Define  $Z(S, T)$ to be a simplicial complex where the vertices are injective maps $f_0 \colon T \hookrightarrow S$, and a collection of vertices $f_0, \dots, f_p$ span a $p$-simplex precisely when their images are disjoint subsets of $S$. 

The realization of the semi-simplicial set   $X_{\bullet}(S, T)$ is the \emph{ordered complex} associated to $Z(S, T)$ (see for example Hatcher--Vogtmann \cite[Section 2.5]{HV-tethers}) so it suffices to show that $ $ is weakly Cohen-Macaulay at level $\left( \frac{|S|}{2|T|} - \frac12\right)$ (e.g. Hatcher--Vogtmann \cite[Proposition 2.10]{HV-tethers}).  In other words, our goal is to show that $Z(S, T)$ is $\left( \frac{|S|}{2|T|} - \frac32\right)$-connected and that the link of any $p$-simplex in $Z(S,T)$ is $\left( \frac{|S|}{2|T|} - p -  \frac52\right)$-connected. We will prove this by adapting the proof of Kupers--Lemann--Malkiewich--Miller--Sroka
\cite[Lemma 3.10 i]{kupers2024scissorsautomorphismgroupshomology}.

    Fix a finite set $S$. We proceed by induction on $|S|$.
    The statement holds vacuously when $|S|<|T|$, so fix a finite set $S$ and suppose that $|S| \geq |T|$. Assume by induction that the result holds for all sets of cardinality smaller than $S$.    Given a nonempty $p$-simplex $(f_0 , \dots, f_p) \in Z(S,T)$, its link is isomorphic to the complex $Z( (S \setminus \bigcup_i f_i(T)), T)$, which by inductive hypothesis has connectivity at least 
    $$\left(\frac{|S|-(p+1)|T|}{2|T|} - \frac32\right)  =  \left(\frac{|S|}{2|T|} -\frac{p}{2} - \frac52\right) \geq   \left(\frac{|S|}{2|T|} - {p} - \frac52\right).$$ It remains to check that $Z(S,T)$ is $\left( \frac{|S|}{2|T|} - \frac32\right)$-connected.  Because $|S| \geq |T|$, there exists at least one vertex $f_0 \colon T \hookrightarrow S$. The inclusion of its link $\mathrm{Lk}_{Z(S,T)}(f_0)$ in  $Z(S,T)$ is nullhomotopic---the vertex $f_0$ is a cone point---so to complete the argument we will use the criteria in Lemma \ref{GRW-Badness} to prove that the inclusion of the link  is $\left( \frac{|S|}{2|T|} - \frac32\right)$-connected. Let $\sigma=(g_0, \dots, g_p)$ be any $p$-simplex of $Z(S,T)$ with  no vertex in $\mathrm{Lk}_{Z(S,T)}(f_0)$. In other words,  $\sigma=(g_0, \dots, g_p)$ is a simplex such that each vertex $g_i$ intersects the image of $f_0$ in $S$ nontrivially.  Then there is an  isomorphism
    $$\mathrm{Lk}_{Z(S,T)}(f_{0})\cap \mathrm{Lk}_{Z(S,T)}(g_0, \dots, g_p) \quad \cong \quad Z\left(S\setminus \Big(\im(f_{0}) \cup \bigcup_{i=0}^{p}\im(g_{i})\Big), T\right).$$
    which by inductive hypothesis has connectivity at least 
    $$\left(\frac{|S|-|T|-(p+1)(|T|-1)}{2|T|} - \frac32\right) \geq \left(\frac{|S|}{2|T|} - p - \frac52\right).$$ The result follows by \cref{GRW-Badness}. \end{proof} 
    

Given a semi-simplicial set $X_{\bullet}$, let $\widetilde{C}_*(X_{\bullet})$ denote the cellular chain complex supported in degrees $* \geq -1$ that computes reduced homology of the geometric realization $\Vert X_{\bullet}\Vert$. Concretely, $\widetilde{C}_{-1}(X_{\bullet}) \cong \bK$ and $\widetilde{C}_p(X_{\bullet})$ is the free $\bK$-module on the set $X_p$ for $p \geq 0$. The differential $\partial : C_p(X_\bullet) \to C_{p-1}(X_\bullet)$ is given by the alternating sum of the maps induced by the facemaps for $p \geq  1$ and is induced by the augmentation for $p=0$.

\begin{lemma} \label{LemmaCvsX}  Suppose that $(G,c)$ has the non-splitting property and $|G|$ is a unit in the coefficient field $\bK$.     Let $N_c, N_0$, and $N$ be as in \cref{MarksConstantNc} and \cref{lem: stability for pi0 Hur}, and $\widetilde{U}$ be as in Notation \ref{NotationUTwiddle}. Fix a finite set $S$. 
     Let $M$ be a fixed number such that $M \geq \max(N_0, N_c)$ and is congruent to $|S|$ modulo $N$. For $* \leq \frac{|S|- \max(N_0,N_c)}{N} - 1$, there is an isomorphism of  chain complexes 
    $$\mathcal{C}_{*,-1}( S) \cong H_0(\hur[,M]) \otimes_{\bK} \widetilde{C}_*(X_{\bullet}(S,\bN)).$$ 
\end{lemma}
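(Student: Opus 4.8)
The plan is to unwind the definition of $\mathcal{C}_{*,-1}(S)$ and then match it, degree by degree throughout the stated range, with $H_0(\hur[,M])\otimes_{\bK}\widetilde{C}_{*}(X_{\bullet}(S,\bN))$. Since $[-1]=\varnothing$ and $\bK[c^{0}]=\bK$, the formula defining the double complex gives
\[
\mathcal{C}_{p,-1}(S)\;=\;\bigoplus_{g\colon \bN\times[p]\hookrightarrow S} H_0\big(\ohur(S\setminus\im(g))\big),
\]
and, by the definition of the horizontal differential, $\partial_H$ is the alternating sum over $i\in\{0,\dots,p\}$ of the maps carrying the $g$-summand to the $(g\circ(\mathrm{id}_{\bN}\times d_i))$-summand by right multiplication by $\widetilde{U}$ placed on the block $g(\bN\times\{i\})$, using that $S\setminus\im\big(g\circ(\mathrm{id}_{\bN}\times d_i)\big)=(S\setminus\im(g))\sqcup g(\bN\times\{i\})$. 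On the other side, $\widetilde{C}_p(X_{\bullet}(S,\bN))$ is the free $\bK$-module on the very same index set of injections $g\colon\bN\times[p]\hookrightarrow S$, with differential the alternating sum of the face-identifications $g\mapsto g\circ(\mathrm{id}_{\bN}\times d_i)$. So the two complexes already have the same combinatorial shape, and what remains is to build compatible isomorphisms on the individual summands.

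The first substantive step is to trade ordered Hurwitz homology for unordered Hurwitz homology. For $p\le \tfrac{|S|-\max(N_0,N_c)}{N}-1$ every summand has $|S\setminus\im(g)|=|S|-N(p+1)\ge\max(N_0,N_c)\ge N_c$, so \cref{MarksConstantNc} gives a canonical isomorphism $H_0(\ohur(S\setminus\im(g)))\cong H_0(\hur[,n_{p}])$ with $n_{p}:=|S|-N(p+1)$ --- canonical because $\pi_0(\ohur(\bn))\to\pi_0(\hur[,n])$ being bijective forces the residual $\Sigma_n$-action to be trivial. Since $\ohur\to\hur$ is a morphism of monoid objects and $\widetilde{U}$ maps to $U$, these isomorphisms carry the summand maps ``right multiplication by $\widetilde{U}$'' to ``right multiplication by $U$''; in particular, after this identification the $p+1$ face terms all become the single map $H_0(\hur[,n_{p}])\to H_0(\hur[,n_{p}+N])$, differing only in which summand they target, exactly as for the simplicial differential of $X_{\bullet}(S,\bN)$. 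Hence, truncated to degrees $\le \tfrac{|S|-\max(N_0,N_c)}{N}-1$, the complex $\mathcal{C}_{*,-1}(S)$ is isomorphic to $\bigoplus_{g} H_0(\hur[,n_{p}])$ with differential the alternating sum of the right-multiplication-by-$U$ maps.

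The second step is to normalise the degree. Every $n_{p}$ that occurs is $\equiv|S|\equiv M\pmod N$ and satisfies $n_{p}\ge\max(N_0,N_c)\ge N_0$, and $M\ge N_0$; so \cref{lem: stability for pi0 Hur} makes each map $\cdot U\colon H_0(\hur[,m])\to H_0(\hur[,m+N])$ in the relevant stretch of the ladder $\cdots\xrightarrow{\cdot U}H_0(\hur[,m])\xrightarrow{\cdot U}H_0(\hur[,m+N])\xrightarrow{\cdot U}\cdots$ an isomorphism. Consequently there is a canonical isomorphism $\beta_m\colon H_0(\hur[,m])\xrightarrow{\cong}H_0(\hur[,M])$ for every such $m$ (the unique composite of the invertible maps $(\cdot U)^{\pm 1}$ between $m$ and $M$), and it satisfies $\beta_{m+N}\circ(\cdot U)=\beta_m$. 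Defining $\Phi_p$ on the $g$-summand to be the composite of the isomorphism of \cref{MarksConstantNc}, $H_0(\ohur(S\setminus\im(g)))\cong H_0(\hur[,n_{p}])$, with $\beta_{n_{p}}$ --- viewed as landing in the $g$-summand of $H_0(\hur[,M])\otimes_{\bK}\widetilde{C}_p(X_{\bullet}(S,\bN))$ --- gives an isomorphism in each degree of the range; the identity $\beta_{n_{p}+N}\circ(\cdot U)=\beta_{n_{p}}$ together with the description of $\partial_H$ above shows $\Phi_{p-1}\circ\partial_H=(\mathrm{id}\otimes\partial)\circ\Phi_p$, so $\Phi$ is the required isomorphism of chain complexes in the stated range.

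I expect the real work to be bookkeeping rather than anything deep. The points that need care are: fixing the identification of each block $g(\bN\times\{i\})$ with $\bN$ (via $g$ and the standard order on $\bN\times\{i\}$), so that ``right multiplication by $\widetilde{U}$'' is unambiguous and compatible with the $\fb$-structure; checking that the isomorphisms from \cref{MarksConstantNc} are natural enough to be simultaneously compatible with $\partial_H$ and with the $U$-multiplications; and verifying that the cutoff $*\le\tfrac{|S|-\max(N_0,N_c)}{N}-1$ is exactly what guarantees $|S\setminus\im(g)|\ge\max(N_0,N_c)$, and hence the applicability of \cref{MarksConstantNc} and \cref{lem: stability for pi0 Hur}, in every degree that occurs.
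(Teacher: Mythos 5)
Your proof is correct and takes essentially the same route as the paper's: unwind the summands of $\mathcal{C}_{p,-1}(S)$, apply \cref{MarksConstantNc} to pass from $H_0(\ohur(S\setminus\im(g)))$ to $H_0(\hur[,|S|-N(p+1)])$, then apply \cref{lem: stability for pi0 Hur} to normalize the degree to $M$, and check compatibility with $\partial_H$. The paper's published proof states the two key isomorphisms and simply asserts the compatibility with differentials; you supply the bookkeeping, in particular the observation that triviality of the $\Sigma_n$-action on $\pi_0(\ohur(\bn))$ in the stable range makes the first identification canonical, and that $\beta_{m+N}\circ(\cdot U)=\beta_m$ delivers the required chain-map identity.
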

\begin{proof}
    The result holds because, in the range $p \leq \frac{|S|- \max(N_0,N_c)}{N} - 1$, the quotient map $\ohur \to \hur$ induces an isomorphism
$$ H_0(\ohur(S \setminus f([p] \times {\bN}))) \cong   H_0(\hur[, |S|-(p+1)N])$$ 
and right multiplication by ${U}$ induces an isomorphism
$$  H_0(\hur[, |S|-(p+1)N])  \cong H_0(\hur[, M]). $$  These isomorphisms are compatible with the differentials. 
\end{proof}

 Let $\mathcal{C}_*(S)$ be the total complex of this double complex. 

\begin{lemma} \label{TotalComplexConnectivity}  Suppose that $(G,c)$ has the non-splitting property and $|G|$ is a unit in the coefficient field $\bK$.     Let $N_c, N_0$, and $N$ be as in \cref{MarksConstantNc} and \cref{lem: stability for pi0 Hur}.  The homology of total complex $H_i(\mathcal{C}_*(S))$ vanishes for $$ i \leq \min\left( \left( \frac{|S|}{2N} - \frac32\right)  \;, \;  \left(\frac{|S|- \max(N_0,N_c)}{N} - 2 \right) \right)-1.$$ 
\end{lemma}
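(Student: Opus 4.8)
The plan is to run the standard double-complex spectral sequence argument in both directions and combine the resulting vanishing ranges. We have a first-quadrant-type double complex $\mathcal{C}_{p,q}(S)$ (indexed by $p,q \geq -1$), and two spectral sequences converging to $H_*(\mathcal{C}_*(S))$, the total complex.

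\textbf{Analyzing the columns.} First I would use the identification \eqref{Ccols}, namely $\mathcal{C}_{p,*}(S) \cong \bigoplus_{g\colon (\bN \times [p]) \hookrightarrow S} \widetilde{\mathcal{K}}_*(S \setminus \im(g))$, together with the key vanishing input: \cref{ChainHomotopy-RightMultVanishes} shows that right multiplication by any element of $c$ (hence by $\widetilde U$, which lives in positive degree and is a product of such) induces the zero map on $H_*(\widetilde{\mathcal{K}}_*(-))$. Therefore, taking homology in the vertical direction first, the $E_1$-page has columns $E^1_{p,q} = \bigoplus_g H_q(\widetilde{\mathcal{K}}_*(S\setminus\im(g)))$, and the horizontal differential $\partial_H$ — being the alternating sum of right-multiplication-by-$\widetilde U$ maps — acts as zero on the $E_1$-page. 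So $E_2 = E_1$ in the horizontal direction, and the spectral sequence shows that $H_i(\mathcal{C}_*(S))$ is a subquotient of $\bigoplus_{p+q=i} E^1_{p,q}$. But the point of running the \emph{other} spectral sequence is precisely that this $E_1$-page computation is not what we want directly; rather, we want to show $H_i$ of the total complex vanishes, and then feed that back to conclude the vertical homology $H_i(\widetilde{\mathcal{K}}_*(S))$ vanishes in a range. Let me re-orient: the logic is that the \emph{row} spectral sequence gives an a priori vanishing range for $H_i(\mathcal{C}_*(S))$, and combined with the \emph{column} spectral sequence this forces $H_i(\widetilde{\mathcal{K}}_*(S))$ to vanish — but that final deduction is the content of the \emph{next} lemma/corollary, not this one. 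For the present lemma we only need to bound $H_i(\mathcal{C}_*(S))$.

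\textbf{Analyzing the rows.} So the real work here is the row spectral sequence. By \eqref{Crows}, $\mathcal{C}_{*,q}(S) \cong \bigoplus_{f\colon [q]\hookrightarrow S} \bK[c^{q+1}] \otimes_{\bK} \mathcal{C}_{*,-1}(S\setminus\im(f))$, so the homology of each row is controlled by the homology of $\mathcal{C}_{*,-1}(-)$. By \cref{LemmaCvsX}, in the range $* \leq \frac{|S'|-\max(N_0,N_c)}{N}-1$ we have $\mathcal{C}_{*,-1}(S') \cong H_0(\hur[,M])\otimes_{\bK}\widetilde C_*(X_\bullet(S',\bN))$, and by \cref{XBulletConnectivity}, $X_\bullet(S',\bN)$ is $\big(\frac{|S'|}{2N}-\frac32\big)$-connected, so its reduced cellular chain complex is acyclic in degrees $\leq \frac{|S'|}{2N}-\frac32$. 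Taking homology in the horizontal direction first, the $E_1$-page $E^1_{p,q}$ thus vanishes for $p \leq \min\big(\frac{|S|-(q+1)N}{2N}-\frac32, \frac{|S|-(q+1)N-\max(N_0,N_c)}{N}-1\big) - (\text{bookkeeping for the shift by }f)$; I would carefully track how passing to $S\setminus\im(f)$ with $|\im(f)| = (q+1)$ affects the bounds — since $N \geq 1$, the cardinality drops by at most $(q+1)N$ if we think of each $[q]$-coordinate conservatively, but in fact $f$ has image of size $q+1$, so $|S\setminus\im(f)| \geq |S| - (q+1)$; being careful here is where the constants in the final bound come from. Then a diagonal/total-degree count: $H_i(\mathcal{C}_*(S))$ is a subquotient of $\bigoplus_{p+q=i} E^\infty_{p,q} \subseteq \bigoplus_{p+q=i}E^1_{p,q}$, and the worst case is when $q$ is as large as possible (so the remaining set is smallest), which after simplification yields vanishing for $i \leq \min\big(\frac{|S|}{2N}-\frac32,\ \frac{|S|-\max(N_0,N_c)}{N}-2\big) - 1$, as claimed.

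\textbf{Main obstacle.} The genuinely delicate part is the bookkeeping in the row spectral sequence: making sure the connectivity bounds from \cref{XBulletConnectivity} and \cref{LemmaCvsX}, applied to the \emph{smaller} sets $S\setminus\im(f)$ appearing in \eqref{Crows}, assemble correctly under the total-degree count, and that one extracts the stated $-1$ and $-2$ shifts without being off by one. I would handle this by writing out the $E_1$-page vanishing as a function of $(p,q)$ explicitly, then observing that along any antidiagonal $p+q = i$ the constraint is tightest at the largest admissible $q$, and simplifying. The spectral sequence comparison itself (subquotient of $E_1$, convergence of a half-plane double complex with entries bounded below) is standard and I would cite it rather than belabor it.
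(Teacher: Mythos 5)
Your row spectral-sequence argument is the approach the paper takes: use Equation \eqref{Crows} to reduce the row homology to $H_*(\mathcal{C}_{*,-1}(S\setminus\im(f)))$, apply \cref{LemmaCvsX} and \cref{XBulletConnectivity} to control that, and then conclude via the associated graded of the total complex. The opening detour about the column spectral sequence and the vanishing of $\partial_H$ on $E_1$ is not needed here (as you yourself note), and your bookkeeping about $|S\setminus\im(f)|$ is correct: the cardinality drops by exactly $q+1$, with no factor of $N$.

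However, one statement in your argument is backwards. You say ``along any antidiagonal $p+q=i$ the constraint is tightest at the largest admissible $q$.'' In fact the constraint is tightest at the \emph{smallest} admissible $q$, namely $q=-1$. To see this, write the $E_1$ vanishing condition as $p \leq f(q)$ where, for the first branch of the minimum, $f(q) = \frac{|S|-q-1}{2N} - \frac32$. With $p = i-q$ fixed on an antidiagonal, the condition becomes $i \leq f(q)+q$, and
\[
\frac{d}{dq}\bigl(f(q)+q\bigr) = 1 - \frac{1}{2N} \geq \tfrac12 > 0,
\]
so $f(q)+q$ is increasing in $q$ (and similarly for the second branch, with slope $1 - \tfrac1N \geq 0$). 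Hence the most restrictive antidiagonal term is at $q=-1$ (equivalently $p = i+1$), which yields exactly the stated $-1$ shift: $i+1 \leq \min\bigl(\tfrac{|S|}{2N}-\tfrac32,\ \tfrac{|S|-\max(N_0,N_c)}{N}-2\bigr)$. Your final formula is correct, but your stated reasoning for where the bound is tight would not produce it.
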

\begin{proof} 
By \cref{LemmaCvsX},
 $$\mathcal{C}_{*,-1}( S) \cong H_0(\hur)_{M} \otimes_{\bK} \widetilde{C}_*(X_{\bullet}(S,\bN))$$  whenever  $* \leq \frac{|S|- \max(N_0,N_c)}{N} - 1$. Thus whenever $i \leq \frac{|S|- \max(N_0,N_c)}{N} - 2$ the homology groups coincide,  
 $$H_i\left(\mathcal{C}_{*,-1}( S) \right) \cong H_0(\hur)_{M} \otimes_{\bK} \widetilde{H}_i(X_{\bullet}(S,\bN)).$$
By \cref{XBulletConnectivity}, the homology of
$\widetilde{H}_i(X_{\bullet}(S,\bN))$ vanishes whenever $ i \leq \left( \frac{|S|}{2N} - \frac32\right)$.

By Equation (\ref{Crows}),  $$H_i \left(\mathcal{C}_{*,q}(S) \right) =
\bigoplus_{f \colon [q] \hookrightarrow S } \bK[c^{q+1}]\otimes_{\bK} H_i\left( \mathcal{C}_{*,-1}( S \setminus \im(f)) \right)  $$
therefore vanishes whenever 
$$ i \leq \min\left( \left( \frac{|S|-q-1}{2N} - \frac32\right)  \;, \;  \left(\frac{(|S|-q-1)- \max(N_0,N_c)}{N} - 2 \right) \right).$$
Thus the homology of the total complex vanishes when 
$$ i \leq \min\left( \left( \frac{|S|}{2N} - \frac32\right)  \;, \;  \left(\frac{|S|- \max(N_0,N_c)}{N} - 2 \right) \right)-1. \mbox{\qedhere}$$ 
\end{proof}

\begin{theorem} \label{OhurVSpi0} Suppose that $(G,c)$ has the non-splitting property and $|G|$ is a unit in the coefficient field $\bK$.      
    There exist constants $\aleph$ and $\beth$ (depending on $c$ as a quandle) such that 
    $$H_i\left( Q^{\ohur}_{\mathbb{L}}(\pi_0(\ohur))\right)_{S} \cong 0 \qquad \text{for $|S| \geq \aleph i +  \beth$}.$$  
\end{theorem}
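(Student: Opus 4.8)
The plan is to deduce the stated vanishing from \cref{TotalComplexConnectivity} together with \cref{ChainHomotopy-RightMultVanishes}, by running the spectral sequence of the double complex $\mathcal{C}_{*,*}(S)$ obtained by filtering by columns, and inducting on homological degree. By \cref{CorIndecomposablesK} it is enough to produce constants $\aleph,\beth$ (depending only on $N$, $N_0$, $N_c$, hence only on $c$ as a quandle) such that $H_q(\widetilde{\mathcal{K}}_*(S))=0$ whenever $|S|\geq \aleph(q+1)+\beth$; the theorem then follows by taking $q=i-1$. I would prove this by induction on $q\geq-1$. There are two base cases: for $q\leq-2$ the complex $\widetilde{\mathcal{K}}_*(S)$ is concentrated in degrees $\geq-1$ and so vanishes there, and for $q=-1$ the last differential $\widetilde{\mathcal{K}}_0(S)\to\widetilde{\mathcal{K}}_{-1}(S)=\bK[\pi_0(\ohur[,S])]$ is surjective whenever $S\neq\varnothing$ (every element of $\pi_0(\ohur[,S])$ is represented by a product $i(v,\alpha)$ with $\alpha$ a bijection, and peeling off its first letter exhibits it in the image of the face map), so $H_{-1}(\widetilde{\mathcal{K}}_*(S))=0$ for $|S|\geq1$.

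For the inductive step, fix $q\geq0$ and a finite set $S$ with $|S|\geq\aleph(q+1)+\beth$, and filter $\mathcal{C}_{*,*}(S)$ by columns. By the column identification $(\ref{Ccols})$ the $E^1$-page is $E^1_{p,q'}=\bigoplus_{g\colon(\bN\times[p])\hookrightarrow S}H_{q'}(\widetilde{\mathcal{K}}_*(S\setminus\im(g)))$, and in particular $E^1_{-1,q}=H_q(\widetilde{\mathcal{K}}_*(S))$ is exactly the group we want to kill. The first key observation is that $d^1=0$: the horizontal differential is an alternating sum of maps induced by right multiplication by $\widetilde{U}$, and writing $\widetilde{U}\in H_0(\ohur[,\bN])$ as a $\bK$-linear combination of products of $N$ degree-one classes $i(g_j,-)$ with $g_j\in c$ exhibits right multiplication by $\widetilde{U}$ as a $\bK$-linear combination of $N$-fold composites of right multiplications by single elements of $c$, each of which induces $0$ on the homology of $\widetilde{\mathcal{K}}_*$ by \cref{ChainHomotopy-RightMultVanishes}. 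Hence $E^2=E^1$, and since for fixed $S$ the double complex is concentrated in finitely many bidegrees, the spectral sequence converges to $H_*(\mathcal{C}_*(S))$.

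Next I would run the edge argument at the column $p=-1$. Differentials out of $E^r_{-1,q}$ land in column $-1-r<-1$ and therefore vanish, so $E^\infty_{-1,q}$ is the quotient of $E^1_{-1,q}=H_q(\widetilde{\mathcal{K}}_*(S))$ by the images of the incoming differentials $d^r\colon E^r_{r-1,q-r+1}\to E^r_{-1,q}$ for $r\geq2$. Each source is a subquotient of $E^1_{r-1,q-r+1}=\bigoplus_{g\colon(\bN\times[r-2])\hookrightarrow S}H_{q-r+1}(\widetilde{\mathcal{K}}_*(S\setminus\im(g)))$ with $|S\setminus\im(g)|=|S|-(r-1)N$ and $q-r+1<q$, so the inductive hypothesis (and the trivial vanishing when $q-r+1\leq-2$) shows it vanishes as soon as $|S|-(r-1)N\geq\aleph(q-r+2)+\beth$; taking $\aleph=2N$ makes this a consequence of $|S|\geq2N(q+1)+\beth$, uniformly in $r\geq2$. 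Hence every incoming differential vanishes and $E^\infty_{-1,q}=H_q(\widetilde{\mathcal{K}}_*(S))$. On the other hand $E^\infty_{-1,q}$ is a subquotient of $H_{q-1}(\mathcal{C}_*(S))$, which vanishes by \cref{TotalComplexConnectivity} once $q-1$ lies below the bound stated there; with $\aleph=2N$ and $\beth$ taken large enough (any $\beth\geq N+2\max(N_0,N_c)$ works, which also covers the bound needed above and the base-case threshold $|S|\geq1$), the hypothesis $|S|\geq2N(q+1)+\beth$ forces $q-1$ into the vanishing range. Therefore $H_q(\widetilde{\mathcal{K}}_*(S))=0$, completing the induction, and taking $q=i-1$ gives the theorem with $\aleph=2N$ and $\beth$ as above.

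I expect the only genuine difficulty to be the linear bookkeeping in the third paragraph: one must choose $\aleph$ (forced to be $\geq N$ to feed the induction into the terms $E^1_{r-1,q-r+1}$, and $\geq2N$ to beat the slope in the connectivity bound of \cref{TotalComplexConnectivity}) and $\beth$ so that a single affine bound $|S|\geq\aleph(q+1)+\beth$ simultaneously controls every source of an incoming differential, lands $q-1$ in the vanishing range of the total complex, and handles the base case. By contrast, the vanishing of $d^1$ is immediate from \cref{ChainHomotopy-RightMultVanishes} together with the observation that $\widetilde{U}$ is built from degree-one classes, the convergence of the spectral sequence is automatic from boundedness, and the acyclicity of the total complex is exactly \cref{TotalComplexConnectivity}.
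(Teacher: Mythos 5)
Your proposal is correct and follows the paper's proof of \cref{OhurVSpi0} essentially verbatim, down to the choice $\aleph=2N$ and the same three ingredients: the column-filtration spectral sequence of $\mathcal{C}_{*,*}(S)$ with induction on degree, vanishing of the incoming $d^1$ via \cref{ChainHomotopy-RightMultVanishes} (you helpfully spell out the needed observation that $\widetilde U$ is a $\bK$-combination of products of $N$ degree-one classes, which the paper leaves implicit), and acyclicity of the total complex via \cref{TotalComplexConnectivity}. One harmless off-by-one: at column $p=r-1$ the indexing set is $\bN\times[r-1]$ which has $Nr$ elements (not $N(r-1)$), so $|S\setminus\im(g)|=|S|-Nr$, and the constraint on $\aleph$ coming from feeding the induction into $E^1_{r-1,q-r+1}$ is actually $\aleph(r-1)\geq Nr$, binding at $r=2$ to give $\aleph\geq 2N$ rather than $\aleph\geq N$ as you state in your commentary---but since you already take $\aleph=2N$ for the slope of \cref{TotalComplexConnectivity}, the conclusion is unaffected.
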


\begin{proof}

Recall by \cref{CorIndecomposablesK} there is an isomorphism of $\fb$-modules 
\begin{align*}  
  {H}_{i-1}\Big( \widetilde{\mathcal{K}}_{*}(S) \Big)    
\cong  H_i\left( Q^{\ohur}_{\mathbb{L}}\left(\pi_0(\ohur)\right)\right)_S.
\end{align*} so it suffices to prove that the homology groups $  {H}_{*}\Big( \widetilde{\mathcal{K}}_{*}(S) \Big) $  vanish in a linear range. Let $\aleph=2N$ and pick $\beth$ so that  $ {H}_{i-2}( \mathcal{C}_*(S)) = 0$ whenever $|S| \geq \aleph i +  \beth$; this is possible by \cref{TotalComplexConnectivity}.

We will proceed by induction on $i$. For $i=-1$, for any set $S$ the group ${H}_{i-1}\Big( \widetilde{\mathcal{K}}_{*}(S) \Big)={H}_{-2}\Big( \widetilde{\mathcal{K}}_{*}(S) \Big)$ vanishes vacuously. Fix $i$ and assume by induction that for all $j<i$ that $${H}_{j-1}\Big( \widetilde{\mathcal{K}}_{*}(S) \Big) \cong 0 \qquad \text{for all $|S| \geq \aleph j +  \beth$} .$$
Fix a finite set $S$ with $|S| \geq \aleph i +  \beth$. Let $(E^r_{p,q}(S), \partial^r)$ be the spectral sequence of the double complex $\mathcal{C}_{*,*}(S)$ such that  $\partial^0$ is the vertical differential $\partial_{V}$. 
We have chosen $\aleph$ and  $\beth$ so that $E^{\infty}_{p,q}(S) \cong 0$ for all $|S| \geq \aleph (p+q+2) +  \beth$. 

Observe that $$E^1_{-1, i-1}(S) \cong {H}_{i-1}\Big( \widetilde{\mathcal{K}}_{*}(S) \Big).$$ By construction the differential $\partial^1 \colon  E^1_{0, i-1}(S) \to E^1_{-1, i-1}(S)$ is induced by multiplication by $\widetilde{U}$ (Notation \ref{NotationUTwiddle}). This induced map is zero on homology by \cref{ChainHomotopy-RightMultVanishes}, hence  $$E^1_{-1, i-1}(S) \cong E^2_{-1, i-1}(S).$$ The induction hypothesis, Equation (\ref{Ccols}), and the fact that $\aleph = 2N$ imply that the the groups $E^1_{p, i-1-p}(S)$ vanish for $p \geq 1$. This observation rules out any nonzero higher differentials $\partial^r$ for $r \geq 2$ with codomain $E^r_{-1, i-1}(S)$. Thus 
$$ H_{i-1}( \widetilde{\mathcal{K}}_*(S)) \cong E^1_{-1, i-1}(S) \cong E^{\infty}_{-1, i-1}(S) \cong 0 $$ where the vanishing follows from our assumption that $|S| \geq \aleph i +  \beth$. 
\end{proof} 

We end with an explicit description of the derived indecomposables for $i=0$ and $i=1$. 

\begin{lemma}\label{lem:deg01indec}
For all finite sets $S$, 
     $$H_{0}(Q^{\ohur}_{\mathbb{L}}\big(\pi_0(\ohur ))\big)_S \cong \begin{cases} \bK, & \text{if } |S|=0,\\
    0, & \text{if } |S|>0,
    \end{cases}$$
  $$H_{1}\big(Q^{\ohur}_{\mathbb{L}}(\pi_0(\ohur ))\big)_S \cong 0. $$ 
    When $S=\varnothing$, 
    $$ H_{i}\left(Q^{\ohur}_{\mathbb{L}}(\pi_{0}(\ohur))\right)_{\varnothing} = \begin{cases} \bK, & \text{if } i=0,\\
    0, & \text{if } i>0.
    \end{cases} $$
\end{lemma}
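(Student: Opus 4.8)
The plan is to compute $Q^{\ohur}_{\mathbb{L}}(\pi_0(\ohur))$ directly from its bar-construction model, rather than through the Koszul-type complex $\widetilde{\mathcal{K}}_*$. Since $\ohur$ is cofibrant in $\topo^{\fb}$ (established in the proof of \cref{prop: relating canonical res and derived indecomposables}) and $\pi_0(\ohur)$ is a discrete $\fb$-space --- hence levelwise a $0$-dimensional $\Sigma_n$-CW complex, hence cofibrant --- \cref{lem: model for der-ind} yields a weak equivalence
\[
Q^{\ohur}_{\mathbb{L}}(\pi_0(\ohur)) \;\simeq\; \Vert B_{\bullet}(\bunit, \ohur, \pi_0(\ohur))\Vert .
\]
For $S = \varnothing$ the last assertion is then immediate: the only way to write $\varnothing$ as a disjoint union is the trivial one, so $B_{p}(\bunit, \ohur, \pi_0(\ohur))(\varnothing)$ is a single point for every $p$, and its realization is contractible.

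For general $S$ I would run the skeletal-filtration spectral sequence of the simplicial space $B_{\bullet}(\bunit, \ohur, \pi_0(\ohur))$, which is proper (its degeneracies are inclusions of open-and-closed subspaces), so that
\[
E^{1}_{p,q} \;=\; H_{q}\!\left(B_{p}(\bunit, \ohur, \pi_0(\ohur))\right)
\;\Longrightarrow\;
H_{p+q}\!\left(Q^{\ohur}_{\mathbb{L}}(\pi_0(\ohur))\right),
\]
with $d^{1}$ the alternating sum of the face maps. Two inputs determine the relevant entries. First, $B_{0}(\bunit, \ohur, \pi_0(\ohur)) = \pi_0(\ohur)$ is $0$-dimensional, so $E^{1}_{0,q} = 0$ for all $q \geq 1$. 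Second, since $H_{0}$ sends disjoint unions to direct sums and finite products to $\bK$-tensor products, $H_{0}$ is strong monoidal for Day convolution, and the bottom row of the $E^{1}$-page is the ordinary bar construction
\[
H_{0}\!\left(B_{\bullet}(\bunit, \ohur, \pi_0(\ohur))\right) \;\cong\; B_{\bullet}\!\left(\bunit,\, H_{0}(\ohur),\, H_{0}(\pi_0(\ohur))\right)
\]
for the monoid $H_{0}(\ohur) = \bK[\pi_0(\ohur)]$ acting on $H_{0}(\pi_0(\ohur)) = \bK[\pi_0(\ohur)]$; the latter is precisely the free rank-one left $H_{0}(\ohur)$-module. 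Therefore the augmented simplicial object $B_{\bullet}(\bunit, H_{0}(\ohur), H_{0}(\ohur)) \to \bunit$ admits an extra degeneracy (using the right $H_{0}(\ohur)$-action on the last tensor factor), so $E^{2}_{p,0} = H_{p}\!\left(B_{\bullet}(\bunit, H_{0}(\ohur), H_{0}(\ohur))\right)$ is $\bunit$ for $p = 0$ and vanishes for $p \geq 1$.

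Assembling: in total degree $0$ the only contribution is $E^{\infty}_{0,0}$, a subquotient of $E^{2}_{0,0} = \bunit$ admitting no nonzero incoming or outgoing differentials, so $H_{0}(Q^{\ohur}_{\mathbb{L}}(\pi_0(\ohur))) \cong \bunit$ --- that is, $\bK$ at $\varnothing$ and $0$ on all nonempty sets. In total degree $1$ the only possible contributions are $E^{\infty}_{0,1}$, a subquotient of $E^{1}_{0,1} = 0$, and $E^{\infty}_{1,0}$, a subquotient of $E^{2}_{1,0} = 0$; hence $H_{1}(Q^{\ohur}_{\mathbb{L}}(\pi_0(\ohur))) = 0$, and the $S = \varnothing$ statement for all $i$ was already handled above. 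The step I expect to require the most care is the identification of the $q = 0$ row with the bar resolution of the free rank-one module --- checking that the face maps on $H_{0}$ are exactly those induced by the monoid product, module action, unit, and augmentation of $\ohur$ --- together with the routine verification that the simplicial space is proper so that the spectral sequence takes the stated form.
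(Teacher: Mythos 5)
Your proof is correct, and it follows a genuinely different route from the paper's. The paper passes through the mapping cone $C\pi := \mathrm{Cone}(\ohur \to \pi_0(\ohur))$ and invokes the Hurewicz-type theorem of Galatius--Kupers--Randal-Williams \cite[Corollary 11.14(ii)]{e2cellsI} to identify $H_i(Q^{\ohur}_{\mathbb{L}}(C\pi))$ with $\bK \otimes_{H_0(\ohur)} \widetilde{H}_i(C\pi)$ in the range $i \leq 2$; the three statements then drop out of the long exact sequence for $\ohur \to \pi_0(\ohur) \to C\pi$. You instead analyze the bar model $\Vert B_{\bullet}(\bunit,\ohur,\pi_0(\ohur))\Vert$ directly via the skeletal-filtration spectral sequence, observing that $E^1_{0,q}=0$ for $q\geq 1$ because $B_0 \cong \pi_0(\ohur)$ is discrete, and that the $q=0$ row is $B_{\bullet}(\bunit,H_0(\ohur),H_0(\ohur))$ (since $H_0$ is strong monoidal and $H_0(\pi_0(\ohur))$ is the free rank-one left module), which collapses by the extra-degeneracy. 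This buys a more self-contained, elementary argument that avoids the GKRW machinery; the trade-off is that the paper's route generalizes more readily when one wants $H_i$-vanishing beyond $i \leq 1$ under stronger connectivity of $C\pi$. Two small remarks: since $\Vert\cdot\Vert$ is the \emph{thick} realization, the properness of the simplicial space is not actually needed for the skeletal spectral sequence to take the form $E^1_{p,q}=H_q(B_p)$, so that aside can be dropped; and the cofibrancy of $\pi_0(\ohur)$ that you cite when invoking \cref{lem: model for der-ind} uses that $\pi_0(\ohur[,\bn])$ is finite, which follows from finiteness of $c$ and hence of $G$ (worth stating, since the lemma itself doesn't hypothesize finiteness of $G$ but the surrounding text does).
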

\begin{proof}
    The proof is the same argument as in the first paragraph of Miller--Patzt--Petersen--Randal-Williams \cite[Lemma 2.8]{miller2024uniformtwistedhomologicalstability}.
    Let $$C\pi\colonequals\mathrm{Cone}\Big(\ohur \xrightarrow{\pi} \pi_0\left(\ohur\right)\Big)\in \ohur\text{-mod}$$ denote the mapping cone, formed in the category of left $\ohur$-modules. The map
    $\pi_*$ induced by $\pi$ on homology is an isomorphism when $i=0$ and it is a surjection when $i=1$. Therefore, \begin{equation} \label{H0H1Vanishes} \widetilde{H}_{i}(C\pi)_S=0 \text{ for all finite sets $S$ and $i \leq 1$.} \end{equation} Note moreover that $\pi_S$ is a homotopy equivalence when $|S| \leq 1$, so $\widetilde{H}_i(C\pi)_S = 0$ for $|S| \leq 1$. 
    
By Galatius--Kupers--Randal-Williams \cite[Corollary 11.14(ii)]{e2cellsI}, 
\begin{equation} \label{OscarOperad} \bK\otimes_{H_0(\ohur)}\widetilde{H}_{i}(C\pi)_S\xrightarrow{\cong} H_{i}\left(Q^{\ohur}_{\mathbb{L}}(C\pi)\right)_S \quad \text{ for $i\leq 2$ and all $S$. }
\end{equation} 
To apply \cite[Corollary 11.14(ii)]{e2cellsI}, consider $\fb$-($\fb$-space) $\mathcal{O}$ with values 
$$ \mathcal{O}(S,T) = \left\{ \begin{array}{ll} 
\varnothing, & |S| \neq 1 \\ 
\ohur[, T], & |S|=1. \end{array} \right. 
$$ 
Recall that the category of $\fb$-spaces is a symmetric monoidal category with Day convolution. The category of $\fb$-($\fb$-spaces) is monoidal with  composition product. Since $\ohur[]$ is a monoid in $\fb$-spaces, $\mathcal{O}$ is a monoid in $\fb$-($\fb$-spaces), equivalently, it is an operad in $\fb$-spaces with Day convolution. This is the operad to which we apply \cite[Corollary 11.14(ii)]{e2cellsI}. 

We deduce from Equations (\ref{H0H1Vanishes}) and (\ref{OscarOperad}) that
$$H_{i}\left(Q^{\ohur}_{\mathbb{L}}(C\pi)\right)_S=0 \quad \text{ for all finite sets $S$ and $i\leq 1$.}$$ 
Observe that, when $S = \varnothing$,  $$H_i\left(Q^{\ohur}_{\mathbb{L}}(C\pi)\right)_{\varnothing} = 0 \quad \text{ for all $i$}$$ since $\widetilde{H}_{i}(C\pi)_{\varnothing} \cong 0$ for all $i$. Applying the long exact sequence on $H_{*}(Q^{\ohur}_{\mathbb{L}}(-))$ to $$\ohur\to \pi_0\left(\ohur\right)\to C\pi$$ yields
    \begin{equation}\label{eq: low deg calc1}
H_{0}\left(Q^{\ohur}_{\mathbb{L}}\left(\pi_0\left(\ohur\right)\right)\right)_S
\cong H_{0}\left(Q^{\ohur}_{\mathbb{L}}\left(\ohur\right)\right)_S
\cong \begin{cases} \bK, & \text{if } |S|=0,\\
    0, & \text{if } |S|>0.
    \end{cases}
    \end{equation}
    \begin{equation}\label{eq: low deg calc2}
        H_{1}\left(Q^{\ohur}_{\mathbb{L}}(\pi_0\left(\ohur\right)\right)_S
        \cong H_{1}\left(Q^{\ohur}_{\mathbb{L}}\left(\ohur\right)\right)_S
        \cong 0 \text{ for all $S$.} 
    \end{equation}
    \begin{equation}\label{eq: low deg calc3}
H_{i}\left(Q^{\ohur}_{\mathbb{L}}\left(\pi_{0}\left(\ohur\right)\right)\right)_{\varnothing}
\cong H_{i}\left(Q^{\ohur}_{\mathbb{L}}(\ohur)\right)_{\varnothing} 
\cong \begin{cases} \bK, & \text{if } i=0,\\
    0, & \text{if } i>0.
    \end{cases}
    \end{equation}
This completes the proof. 
\end{proof}

\section{Uniform twisted homological stability for \texorpdfstring{$H_0\left(\ohur\right)$}{zeroth homology groups of ordered Hurwitz space}}

In this section, we prove a form of twisted homological stability for $H_0\left(\hur\right)$. Most of the section concerns basic results from the representation theory of the symmetric groups $\Sigma_n$ and the theory of $\FI$-modules. 

Throughout this section, we assume that $\bK$ is a field of characteristic zero. All group representations are implicitly assumed to take coefficients in $\bK$.

\begin{definition} Let $\mathcal{G}$ be a groupoid. 
    Let $\underline{\bK}_{\mathcal{G}}\in \Mod_{\bK}^{\mathcal{G}}$ denote the constant functor with value $\bK$ and trivial group actions. When $\mathcal{G}$ is monoidal, the isomorphism 
    $ \bK \otimes_{\bK} \bK \to  \bK$ endows $\underline{\bK}_{\mathcal{G}}$ with the structure of a commutative monoid object in the category of $\mathcal{G}$-modules with Day convolution.  
    When $\mathcal{G}=\fb$, we sometimes write $\underline{\bK}$ for $\underline{\bK}_{\mathcal{G}}$. 
\end{definition}

When $\mathcal{G}$ is the groupoid $\fb$,  a module over  $\underline{\bK}$ is equivalent to a module over the category $\FI$ of finite sets and injective maps; see Church--Ellenberg--Farb \cite{CEF}.  This is the \emph{twisted commutative algebra} perspective on $\FI$-modules (see Sam--Snowden \cite[Section 10.2]{SSIntroTCA}).  


We now specialize to the study of $\Sigma_n$-representations and $\FI$-modules.

\begin{notation}
    Let $\lambda = (\lambda_1, \ldots, \lambda_{\ell})$ be a partition. When convenient, we will view $\lambda$ as a Young diagram. Let $|\lambda| $ denote the sum $\lambda_1 + \cdots + \lambda_{\ell}$. Let $\lambda\langle n\rangle$ denote the \emph{padded partition} $ (n-|\lambda|, \lambda_1, \ldots \lambda_{\ell})$ for $n \geq |\lambda| + \lambda_1$. The notation $\lambda\langle n\rangle$ is undefined for  $n < |\lambda| + \lambda_1$.
\end{notation}
\begin{notation} Over a field $\bK$ of characteristic zero, the irreducible representations of $\Sigma_n$ are in canonical bijection with partitions of $n$. See (for example) Fulton--Harris \cite[Chapter 4]{FultonHarris}.  
    Given a partition $\mu$, let  $V_{\mu}$ denote the corresponding irreducible $\Sigma_{|\mu|}$-representation. 
\end{notation}

\begin{definition} \label{DefVLn}
    Let $V(\lambda)$ denote the FI-module constructed in Church--Ellenberg--Farb \cite[Proposition 3.4.1]{CEF}  
    whose underlying $\fb$-module satisfies
$$ V(\lambda)_{\bn} \cong  \begin{cases} 0, & n < |\lambda| + \lambda_1 \\  V_{\lambda\langle n\rangle}, & n \geq |\lambda| + \lambda_1 .
\end{cases}  $$ 
We let $\varphi_n \colon V(\lambda)_{\bn} \to V(\lambda)_{\bf n+1}$ denote the map induced by the inclusion $\bn \hookrightarrow {\bf n+1}$. Denote the composition $\varphi_{n, n'} :=\varphi_{n'-1} \circ \dots \circ \varphi_n: V(\lambda)_{\bn} \to V(\lambda)_{\bf n'}$. 
\end{definition}
\begin{definition}
    Given $T\in \fin$, let
    $\text{sh}^{T}\colon \fin\to\fin$ be the functor sending $S$ to $S\sqcup T$ and a map $\varphi\colon S\hookrightarrow S'$ to $\varphi\sqcup \text{id}_{T}\colon S\sqcup T\hookrightarrow S'\sqcup T$. Given an $\fin$-module $V$, its \emph{shift} $\Sigma V$ is the $\fin$-module $V\circ \text{sh}^{{\bf 1}}$. We write $\Sigma^{r} V$ for the $r$-fold shift of $V$.
\end{definition}
The shift $\Sigma V$ satisfies
    $ (\Sigma V)_{\bn} \cong \Res^{\Sigma_{n+1}}_{\Sigma_n} V_{\bf n+1}.$
\begin{proposition}\label{prop: coinv stabilize}  Let $\bK$ be a field of characteristic zero.  Fix a partition $\lambda=(\lambda_1, \ldots, \lambda_{\ell})$ and $r \geq 0$. 
\begin{itemize}
\item If $r <  |\lambda|$ then the $\Sigma_{n}$-coinvariants $(\Sigma^r V(\lambda)_{\bn})_{\Sigma_{n}}$ of the $r$-fold shift of $V(\lambda)$ vanish identically.  
\item If $r \geq |\lambda|$ then the induced map 
$$ (\Sigma^r \varphi_n)_* \colon (\Sigma^r V(\lambda)_{\bn})_{\Sigma_{n}} \to  (\Sigma^r V(\lambda)_{\bf n+1})_{\Sigma_{\bf n+1}}$$
is an isomorphism in the (sharp) stable range $n \geq \lambda_1$. 
\end{itemize} 
In particular, the $\FI$-module structure maps $\varphi_n$ induce isomorphisms on the coinvariants $(\Sigma^r V(\lambda)_{\bn})_{\Sigma_{\bn}}$ for all $n \geq r$, a stable range that does not depend on $\lambda$.
\end{proposition}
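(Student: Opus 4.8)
The plan is to reduce the computation of the coinvariants $(\Sigma^r V(\lambda)_{\bn})_{\Sigma_n}$ to a statement about multiplicities of the trivial representation, and then apply a branching rule. First I would unwind the definition: by the identification $(\Sigma^r V(\lambda))_{\bn} \cong \Res^{\Sigma_{n+r}}_{\Sigma_n} V(\lambda)_{\bf n+r}$, taking $\Sigma_n$-coinvariants over a field of characteristic zero is the same as taking $\Sigma_n$-invariants, which computes $\Hom_{\Sigma_n}(\bK, \Res^{\Sigma_{n+r}}_{\Sigma_n} V(\lambda)_{\bf n+r})$. For $n+r \geq |\lambda| + \lambda_1$ this is $\Hom_{\Sigma_n}(\bK, \Res^{\Sigma_{n+r}}_{\Sigma_n} V_{\lambda\langle n+r\rangle})$, and by Frobenius reciprocity its dimension is the multiplicity of $V_{\lambda\langle n+r\rangle}$ in $\Ind_{\Sigma_n}^{\Sigma_{n+r}} \bK = \Ind_{\Sigma_n \times \Sigma_r}^{\Sigma_{n+r}}(\bK \boxtimes \bK[\Sigma_r])$. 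Decomposing $\bK[\Sigma_r]$ into irreducibles and applying the Littlewood--Richardson rule (or just Pieri when convenient), this dimension equals $\sum_{\mu \vdash r} (\dim V_\mu) \cdot c^{\lambda\langle n+r\rangle}_{(n), \mu}$, where $c$ denotes a Littlewood--Richardson coefficient with one part equal to the single-row partition $(n)$.

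Next I would analyze when these Littlewood--Richardson coefficients $c^{\lambda\langle n+r\rangle}_{(n),\mu}$ can be nonzero. Since the first factor is the one-row partition $(n)$, Pieri's rule says $c^{\nu}_{(n),\mu}$ is $1$ if $\nu/\mu$ is a horizontal strip of size $n$ and $0$ otherwise. So we need $\mu \subseteq \lambda\langle n+r\rangle = (n+r-|\lambda|, \lambda_1, \dots, \lambda_\ell)$ with $\lambda\langle n+r\rangle / \mu$ a horizontal strip. A horizontal strip removes at most one box from each column; the Young diagram $\lambda\langle n+r\rangle$ has $\lambda_1 + 1$ rows below-or-including the first only when... more carefully: the number of columns of $\lambda\langle n+r\rangle$ of height $\geq 2$ is exactly $\lambda_1$ (these come from the rows $\lambda_1, \dots, \lambda_\ell$, and the first row has length $n+r-|\lambda| \geq \lambda_1$ in the relevant range). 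For $\lambda\langle n+r\rangle/\mu$ to be a horizontal strip, $\mu$ must contain all but the top box of each such column, forcing $|\mu| \geq$ (number of boxes below the first row of $\lambda\langle n+r\rangle$) $= |\lambda|$. Hence if $r = |\mu| < |\lambda|$ the coefficient vanishes for every $\mu \vdash r$, giving the first bullet. When $r \geq |\lambda|$, I would check that for $n \geq \lambda_1$ the set of valid $(\mu, \text{strip})$ pairs — hence the total multiplicity — stabilizes: concretely, a valid $\mu$ is obtained from $\lambda\langle n+r\rangle$ by deleting a horizontal strip, and once $n \geq \lambda_1$ the first row is long enough that the deleted strip's intersection with columns of height $\geq 2$ is forced and independent of $n$, while the part of the strip in the (long) first row just soaks up the remaining boxes; a clean bijection between valid configurations for $n$ and for $n+1$ then shows the count is constant, matching the stable range $n \geq \lambda_1$.

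Finally I would verify that this counting stabilization is actually realized by the $\FI$-module structure map $\varphi_n$, i.e.\ that $(\Sigma^r\varphi_n)_*$ on coinvariants is the claimed isomorphism, not merely that source and target have equal dimension. For this I would note that $\varphi_n$ is injective with cokernel a known $\FI$-module (or use that $V(\lambda)$ is finitely generated and free-ish in high degree, per Church--Ellenberg--Farb), so the induced map on coinvariants is a map between spaces we have just shown have equal dimension for $n \geq \lambda_1$; a surjectivity check — every invariant vector in degree $n+1$ comes from one in degree $n$ because the stabilizing horizontal-strip configurations are compatible with the inclusion $\bn \hookrightarrow \bf n+1$ — then forces it to be an isomorphism. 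The last sentence of the proposition ($n \geq r$ suffices, uniformly in $\lambda$) follows by combining the two bullets: if $r < |\lambda|$ both sides are $0$ so any range works, and if $r \geq |\lambda| \geq \lambda_1$ then $n \geq r \geq \lambda_1$ puts us in the stable range. The main obstacle I anticipate is the bookkeeping in the horizontal-strip argument — making the bijection between valid configurations in consecutive degrees fully explicit and checking the sharp threshold $n \geq \lambda_1$ rather than something weaker — together with confirming the structure map induces the isomorphism rather than just an abstract dimension count; everything else is standard symmetric-function combinatorics.
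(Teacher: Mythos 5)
Your core computation follows the same route as the paper: Frobenius reciprocity reduces the dimension of $(\Sigma^r V(\lambda)_{\bn})_{\Sigma_n}$ to a Pieri-rule count of horizontal strips of size $n$ removed from $\lambda\langle n+r\rangle$ (this is the paper's Lemma~\ref{ClaimFrobenius}), and your interlacing/horizontal-strip analysis correctly establishes the vanishing for $r < |\lambda|$ and the sharp stable range $n \geq \lambda_1$.

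The one place where your argument is not yet a proof is the upgrade from ``equal dimensions'' to ``$(\Sigma^r\varphi_n)_*$ is an isomorphism.'' You write that $\varphi_n$ is injective with a known cokernel and conclude the induced map on coinvariants is between spaces of equal dimension --- but injectivity of $\varphi_n$ itself does not yield injectivity on coinvariants, since $(-)_{\Sigma_n}$ is right exact, not left exact. Your alternative, a ``surjectivity check'' by tracking which horizontal-strip configurations survive the inclusion $\bn \hookrightarrow \mathbf{n+1}$, is not an argument about the $\bK[\Sigma_n]$-module map: a bijection between combinatorial indexing sets for the two sides does not by itself say anything about what $(\Sigma^r\varphi_n)_*$ does. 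What the paper actually uses (Lemma~\ref{Inj}) is that $V(\lambda)$ embeds in $M(\lambda)$, that Church--Ellenberg--Farb prove $M(\lambda)$ has \emph{injectivity degree zero} --- meaning precisely that the maps on coinvariants of all shifts of $M(\lambda)$ are injective --- and that shifting and taking coinvariants are exact in characteristic zero, so this injectivity passes to the subobject $V(\lambda)$. Combined with your dimension count this gives the isomorphism; without it the last step is open.
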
 

To prove \cref{prop: coinv stabilize}, we first show that the dimensions of the coinvariants stabilize in the stated range, and then verify that the induced maps are injective. We will use the following standard notation.

\begin{notation}
    Given a finite group $H$, let $\langle -, - \rangle_{H}$ denote the standard associated inner product on $\bK\{\text{conjugacy-class-invariant functions $H \to \bK$}\}$; see for example Fulton--Harris \cite[Section 2.2 and Formula (2.11)]{FultonHarris}. 
    For $H$--representations $V, W$, let $\langle V, W\rangle_{H}$ denote the inner product  evaluated on the characters of $V$ and $W$. 
\end{notation}


\begin{lemma}\label{ClaimFrobenius} The coinvariants $(\Sigma^r V(\lambda)_{\bn})_{\Sigma_n}$ vanish unless 
$n+r \geq |\lambda| + \lambda_1$. For $n+r \geq |\lambda| + \lambda_1$ the dimension of $(\Sigma^r V(\lambda)_{\bn})_{\Sigma_n}$ is 
$$ \sum_{ \substack{\text{partitions $\mu$ of $r$ such that}\\ \text{ $\lambda\langle n+r\rangle$ can be built from $\mu$} \\ \text{ by placing one box in each of $n$ distinct columns}}} \dim V_{\mu}$$ 
\end{lemma}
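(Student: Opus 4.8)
The plan is to compute the dimension of the coinvariants $(\Sigma^r V(\lambda)_{\bn})_{\Sigma_n}$ as the multiplicity of the trivial representation, i.e. $\dim (\Sigma^r V(\lambda)_{\bn})_{\Sigma_n} = \langle \Sigma^r V(\lambda)_{\bn}, \mathbf{1}_{\Sigma_n}\rangle_{\Sigma_n}$, and then to rewrite this using Frobenius reciprocity. First I would recall that $\Sigma^r V(\lambda)_{\bn} \cong \Res^{\Sigma_{n+r}}_{\Sigma_n} V(\lambda)_{\mathbf{n+r}}$, and that $V(\lambda)_{\mathbf{n+r}} \cong V_{\lambda\langle n+r\rangle}$ (which vanishes unless $n+r \geq |\lambda| + \lambda_1$, giving the stated vanishing range). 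Then by Frobenius reciprocity,
$$\langle \Res^{\Sigma_{n+r}}_{\Sigma_n} V_{\lambda\langle n+r\rangle}, \mathbf{1}_{\Sigma_n}\rangle_{\Sigma_n} = \langle V_{\lambda\langle n+r\rangle}, \Ind_{\Sigma_n}^{\Sigma_{n+r}} \mathbf{1}_{\Sigma_n}\rangle_{\Sigma_{n+r}}.$$
Since $\Ind_{\Sigma_n}^{\Sigma_{n+r}}\mathbf{1} \cong \Ind_{\Sigma_n \times \Sigma_r}^{\Sigma_{n+r}} \mathbf{1}\boxtimes(\text{regular rep of }\Sigma_r)$ — because $\mathbf{1}_{\Sigma_n} \cong \Res_{\Sigma_n}^{\Sigma_n\times\Sigma_r}(\mathbf{1}\boxtimes \bK[\Sigma_r])$ up to the transitivity of induction — we can decompose $\bK[\Sigma_r] \cong \bigoplus_{\mu \vdash r} (\dim V_\mu) V_\mu$ and obtain
$$\dim (\Sigma^r V(\lambda)_{\bn})_{\Sigma_n} = \sum_{\mu \vdash r} (\dim V_\mu)\, \langle V_{\lambda\langle n+r\rangle}, \Ind_{\Sigma_n\times\Sigma_\mu'}^{\Sigma_{n+r}} \mathbf{1}_{\Sigma_n}\boxtimes V_\mu \rangle_{\Sigma_{n+r}},$$
where I have been sloppy with the subgroup; the cleaner route is to apply the Pieri rule / Littlewood--Richardson rule directly.

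The key step is to identify $\langle V_{\lambda\langle n+r\rangle}, \Ind_{\Sigma_n\times\Sigma_r}^{\Sigma_{n+r}} \mathbf{1}_{\Sigma_n}\boxtimes V_\mu\rangle$ combinatorially. By the Littlewood--Richardson rule (or equivalently via symmetric functions: this inner product is the coefficient of $s_{\lambda\langle n+r\rangle}$ in $h_n \cdot s_\mu$), this multiplicity equals the number of ways to obtain the Young diagram $\lambda\langle n+r\rangle$ from $\mu$ by adding a horizontal strip of size $n$ — that is, by adding $n$ boxes, no two in the same column. This is exactly the condition "$\lambda\langle n+r\rangle$ can be built from $\mu$ by placing one box in each of $n$ distinct columns." Summing over all $\mu \vdash r$ with weight $\dim V_\mu$ gives the claimed formula. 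The main thing to be careful about is the bookkeeping in the symmetric-function translation and making sure the horizontal-strip condition is stated in the same combinatorial language as the lemma; this is routine once the Pieri rule is invoked, so I do not anticipate a genuine obstacle, only the need to cite the Pieri rule (e.g. from Fulton--Harris or Macdonald) correctly and to handle the degenerate case where $\lambda\langle n+r\rangle$ or $\mu$ is empty.
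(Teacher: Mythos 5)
Your proposal is correct and follows essentially the same route as the paper: express the dimension of the coinvariants as an inner product with the trivial representation, apply Frobenius reciprocity, decompose $\bK[\Sigma_r]$ into irreducibles, and invoke Pieri's rule to identify the horizontal-strip condition. The paper performs Frobenius reciprocity in two steps (via the intermediate subgroup $\Sigma_n\times\Sigma_r$) rather than in one step to $\Sigma_{n+r}$ as you do, but this is a cosmetic reordering with identical content.
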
 

\begin{proof}  Recall that $(\Sigma^r V(\lambda)_{\bn})_{\Sigma_n} = \Res^{\Sigma_{n+r}}_{\Sigma_n} V(\lambda)_{\bf n+r}$, and by construction $V(\lambda)_{\bf n+r}$ vanishes unless $n+r \geq |\lambda| + \lambda_1$. 

Assume  $n+r \geq |\lambda| + \lambda_1$. Recall that $\underline{\bK}_{\bn}$ is the 1-dimensional trivial $\Sigma_n$-representation $V_{(n)}$. 
\begin{align*}
&\dim (\Sigma^r V(\lambda)_{\bn})_{\Sigma_n} \\
&= \langle  \underline{\bK}_{\bn}, \; (\Sigma^r V(\lambda))_{\bn} \rangle_{\Sigma_n} &\\ 
& = \langle  \underline{\bK}_{\bn}, \; \Res^{\Sigma_{n+r}}_{\Sigma_n} V(\lambda)_{\bf n+r} \rangle_{\Sigma_n} \\ 
& = \langle  \underline{\bK}_{\bn}, \; \Res^{\Sigma_n \times \Sigma_r}_{\Sigma_n} \Res^{\Sigma_{n+r}}_{\Sigma_n \times \Sigma_r} V(\lambda)_{\bf n+r} \rangle_{\Sigma_n} \\ 
& = \langle \Ind^{\Sigma_n \times \Sigma_r}_{\Sigma_n}   \underline{\bK}_{\bn},  \; \Res^{\Sigma_{n+r}}_{\Sigma_n \times \Sigma_r} V(\lambda)_{\bf n+r} \rangle_{\Sigma_n \times \Sigma_r} \\ & \qquad \qquad  \text{(by Frobenius reciprocity, e.g. Fulton--Harris \cite[Corollary 3.20]{FultonHarris})}  \\ 
& = \langle   \underline{\bK}_{\bn} \boxtimes \bK[\Sigma_r] ,  \; \Res^{\Sigma_{n+r}}_{\Sigma_n \times \Sigma_r} V(\lambda)_{\bf n+r} \rangle_{\Sigma_n \times \Sigma_r}  \\ 
& = \sum_{\text{$\mu$ a partition of $r$}} (\dim V_{\mu}) \; \langle  \underline{\bK}_{\bn} \boxtimes V_{\mu} , \;\Res^{\Sigma_{n+r}}_{\Sigma_n \times \Sigma_r} V(\lambda)_{\bf n+r} \rangle_{\Sigma_n \times \Sigma_r} 
 \\ & \qquad \qquad  \text{(e.g. Dummit--Foote \cite[Section 18.2 Example (3)]{DummitFoote})}  \\ 
& = \sum_{\text{$\mu$ a partition of $r$}} (\dim V_{\mu}) \; \langle   \underline{\bK}_{\bn} \boxtimes V_{\mu} , \;\Res^{\Sigma_{n+r}}_{\Sigma_n \times \Sigma_k} V_{\lambda\langle n+r\rangle} \rangle_{\Sigma_n \times \Sigma_r}  \\ 
& = \sum_{\text{$\mu$ a partition of $r$}} (\dim V_{\mu}) \; \langle  \Ind^{\Sigma_{n+r}}_{\Sigma_n \times \Sigma_r} \underline{\bK}_{\bn} \boxtimes V_{\mu} , \; V_{\lambda\langle n+r\rangle} \rangle_{\Sigma_{n+r}} \qquad 
\\
&\text{(again by Frobenius reciprocity).} 
\end{align*}
The quantity $\langle  \Ind^{\Sigma_{n+r}}_{\Sigma_n \times \Sigma_r} \underline{\bK}_{\bn} \boxtimes V_{\mu} , \; V_{\lambda\langle n+r\rangle} \rangle_{\Sigma_{n+r}} $ is described by Pieri's rule; see for example Stembridge \cite[Section 1.C]{StembridgeNotes}. 
It is equal to $1$ if the Young diagram $\lambda\langle n+r\rangle$ can be built from the Young diagram $\mu$ by adding one box in each of $n$ distinct columns (i.e. adding a \emph{horizontal $n$-strip}), and it is $0$ otherwise. The lemma follows. 
\end{proof} 

\begin{lemma} \label{Inj} Let $\bK$ be a field of characteristic zero.  Fix a partition $\lambda=(\lambda_1, \ldots, \lambda_{\ell})$ and $r \geq 0$. For all $r \geq 0$ and all $n\geq 0$, the induced maps on the coinvariants of the shifted modules
    $$ (\Sigma^r \varphi_n)_* \colon  (\Sigma^r V(\lambda)_{\bn})_{\Sigma_{n}} \to  (\Sigma^r V(\lambda)_{\bf n+1})_{\Sigma_{\bf n+1}}$$
    are injective. 
\end{lemma}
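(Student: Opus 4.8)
The plan is to reduce the assertion---simultaneously for all partitions $\lambda$, all $r \geq 0$, and all $n \geq 0$---to the trivial case $r = 0$, by exploiting the fact that a shift of $V(\lambda)$ is again a direct sum of modules of the same form.

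The key structural fact I would import is that, over a field of characteristic zero, there is an isomorphism of $\FI$-modules
$$ \Sigma V(\lambda) \;\cong\; V(\lambda) \;\oplus\; \bigoplus_{\mu} V(\mu), $$
where $\mu$ ranges over the partitions obtained from $\lambda$ by deleting one corner box. This is standard: on underlying $\Sigma_n$-representations it is the branching rule $\Res^{\Sigma_{n+1}}_{\Sigma_n} V_{\lambda\langle n+1\rangle}$, a sum over single-box removals from $\lambda\langle n+1\rangle$, each of which has the form $\kappa\langle n\rangle$ for a partition $\kappa$ obtained from $\lambda$ either trivially or by deleting a corner box; one then checks compatibility with the $\FI$-structure (see, e.g., Church--Ellenberg--Farb \cite{CEF}). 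Since the shift functor $\Sigma$ is additive, iterating this identity shows that for every $r \geq 0$ there is an isomorphism of $\FI$-modules
$$ \Sigma^r V(\lambda) \;\cong\; \bigoplus_{\kappa} V(\kappa)^{\oplus d_{\kappa}} $$
for a finite set of partitions $\kappa$ and non-negative integers $d_\kappa$.

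Next I would use that the coinvariants functors $(-)_{\Sigma_n}$ and $(-)_{\Sigma_{n+1}}$ are additive and that, by definition, $(\Sigma^r\varphi_n)_*$ is the map induced on coinvariants by the structure map of the $\FI$-module $\Sigma^r V(\lambda)$ attached to the inclusion $\bn \hookrightarrow {\bf n+1}$. Under the displayed decomposition this structure map is the direct sum of the corresponding structure maps of the $V(\kappa)$, and hence
$$ (\Sigma^r\varphi_n)_* \;\cong\; \bigoplus_{\kappa}\Big( (\varphi_n)_* \colon (V(\kappa)_{\bn})_{\Sigma_n} \to (V(\kappa)_{\bf n+1})_{\Sigma_{n+1}} \Big)^{\oplus d_\kappa}. $$
Thus it suffices to treat the case $r = 0$, i.e. to show $(\varphi_n)_*$ is injective for an arbitrary partition $\kappa$. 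That case is immediate: if $\kappa = \varnothing$ then $V(\varnothing) = \underline{\bK}$ and $(\varphi_n)_*$ is the identity of $\bK$; if $\kappa \neq \varnothing$ then $V(\kappa)_{\bn}$ is either zero or the irreducible $\Sigma_n$-representation $V_{\kappa\langle n\rangle}$ associated to a partition $\kappa\langle n\rangle \neq (n)$, which in characteristic zero has no nonzero coinvariants, so $(V(\kappa)_{\bn})_{\Sigma_n} = 0$ and $(\varphi_n)_*$ is injective vacuously.

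I do not expect any real obstacle here. The only non-formal ingredient is the decomposition of $\Sigma V(\lambda)$, which is quoted from the literature; everything else is bookkeeping with additive functors, and the substantive content of \cref{prop: coinv stabilize} lies in the dimension count of \cref{ClaimFrobenius} rather than in this injectivity statement.
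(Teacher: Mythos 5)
Your argument rests on the claimed $\FI$-module isomorphism $\Sigma V(\lambda) \cong V(\lambda) \oplus \bigoplus_\mu V(\mu)$, and this is false. Take $\lambda = (1)$, so the only corner-box removal gives $\mu = \varnothing$. By the paper's convention $V((1))_{\bn} = 0$ for $n < 2$, hence $(\Sigma V((1)))_{\bf 0} = V((1))_{\bf 1} = 0$. But $(V((1)) \oplus V(\varnothing))_{\bf 0} = 0 \oplus \bK = \bK$. So the two underlying $\fb$-modules already disagree at level $0$, and there is no $\FI$-module isomorphism. More generally, whenever $\lambda_1 > \lambda_2$, the box-removal $\mu$ from the first row of $\lambda$ satisfies $|\mu| + \mu_1 = |\lambda| + \lambda_1 - 2$, so $V(\mu)_{\bn} \neq 0$ at $n = |\lambda| + \lambda_1 - 2$; but $(\Sigma V(\lambda))_{\bn} = V(\lambda)_{\bf n+1} = 0$ there since $n+1 < |\lambda| + \lambda_1$. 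The branching rule you invoke does govern the $\fb$-module structure in the eventual range, but the ``delayed start'' of $V(\lambda)$ at degree $|\lambda|+\lambda_1$ prevents the putative summands from being sub-$\FI$-modules. Because your reduction to $r=0$ hinges on decomposing $(\Sigma^r\varphi_n)_*$ as a direct sum of such maps, and there is no such decomposition, the argument has a genuine gap.

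The decomposition you want is true for the free modules $M(\lambda)$ rather than for $V(\lambda)$: one has $\Sigma M(\lambda) \cong M(\lambda) \oplus \bigoplus_\mu M(\mu)$, essentially by sorting injections according to whether they hit the new point. This is why the paper argues as it does: Church--Ellenberg--Farb show that $M(\lambda)$ has injectivity degree zero, i.e.\ the stabilization maps on coinvariants of every shift $\Sigma^r M(\lambda)$ are injective for all $n$, and since $V(\lambda)$ embeds into $M(\lambda)$ while both the shift functor and the coinvariants functor are exact over a field of characteristic zero, the injectivity is inherited by $V(\lambda)$ via the resulting commuting square. To repair your proposal, you should carry out the shift decomposition for $M(\lambda)$ and then transfer the conclusion to $V(\lambda)$ through the inclusion $V(\lambda) \hookrightarrow M(\lambda)$---which is precisely the paper's proof.
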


\begin{proof} By construction (Church--Ellenberg--Farb \cite[Proposition 3.4.1]{CEF}), the $\FI$-module $V(\lambda)$ is a sub-$\FI$-module of a certain $\FI$-module they denote $M(\lambda)$. 
Church--Ellenberg--Farb \cite[Proposition 3.1.7]{CEF} prove that the $\FI$-modules $M(\lambda)$ have \emph{injectivity degree} zero in the sense of  \cite[Definition 3.1.5]{CEF}. This means that the stabilization maps $(\Sigma^r M(\lambda)_\bN)_{\Sigma_n} \to (\Sigma^r M(\lambda)_{\bf n+1})_{\Sigma_{n+1}}$  are injective for all $n$ and $r$. Consider the following diagram.
\[\begin{tikzcd} 
	{(\Sigma^r V(\lambda)_\bN)_{\Sigma_{n}}} & {(\Sigma^r V(\lambda)_{\bf n+1})_{\Sigma_{n+1}}} \\
	{(\Sigma^r M(\lambda)_\bN)_{\Sigma_{n}}} & {(\Sigma^r M(\lambda)_{\bf n+1})_{\Sigma_{n+1}}}
	\arrow["{(\Sigma^r \varphi_n)_*}", from=1-1, to=1-2]
	\arrow[from=1-1, to=2-1]
	\arrow[from=1-2, to=2-2]
	\arrow["{ }", from=2-1, to=2-2]
\end{tikzcd}\]
Since taking shifts is exact and taking coinvariants is exact for finite groups over characteristic zero, the vertical maps in the diagram are injective. Thus the maps $ (\Sigma^r \varphi_n)_*$ are injective as claimed. 
\end{proof}

\begin{proof}[Proof of \cref{prop: coinv stabilize}]
By \cref{Inj}, the stabilization maps are injective so it suffices to show the dimensions of the coinvariants agree.

First suppose $\lambda = \varnothing$. Then $V(\lambda)$ is a copy of the 1-dimensional trivial representation in every degree, and $V(\lambda) = \Sigma^r V(\lambda)$ for all $r$. The coinvariants of $\Sigma^r V(\lambda)$ stabilize for all $n\geq 0$ for all $r$.  The claim holds for $\lambda = \varnothing$.

Now suppose $\lambda \neq \varnothing$. Suppose $r < |\lambda|$. The largest part of $\lambda\langle n+r\rangle $ is $n+r-|\lambda| < n$, and the Young diagram $\lambda\langle n+r\rangle$ does not contain the diagram $(n)$.  Thus by the branching rules---see for example Fulton--Haris \cite[Exercise 4.44]{FultonHarris} for the relevant special case---the restriction of $V(\lambda)_{\bf n+r} \cong V_{\lambda\langle n+r\rangle}$ from $\Sigma_{n+r}$ to $\Sigma_n$ does not contain any copies of the trivial $\Sigma_n$-representation. We conclude that $\Sigma_n$-coinvariants of $\Sigma^r V(\lambda)$ vanish for all $n$. 
In particular, if $r < |\lambda|$, the coinvariants $(\Sigma^r V(\lambda)_{\bn})_{\Sigma_n}$ are stable (and zero) for all $n\geq 0$.   

Now fix  $\lambda \neq \varnothing$ and  fix $r \geq |\lambda|$.  Consider the formula in  \cref{ClaimFrobenius}. We will compute the stable range for the coinvariants $(\Sigma^r V(\lambda)_{\bn})_{\Sigma_n}$.  

For $\mu$ a partition of $r$, the value $\dim V_{\mu}$ does not depend on $n$. We must determine the degree $n$ for which the sum's index set stabilizes.  

Suppose $\mu$ is a partition of $r$ that appears in the index set for some level $n$, i.e., $\lambda\langle n+r\rangle$ is defined and we can build $\lambda\langle n+r\rangle$ from $\mu$ through the addition of $n$ boxes in distinct columns. In this case, we can also build $\lambda\langle n+r+1\rangle$ from $\mu$ by adding an additional box to the top row. Hence $\mu$ also appears in the index set at level $(n+1)$.

Conversely, if $n > \lambda_1$, then to build  $\lambda\langle n+r\rangle$ from a partition $\mu$ by $n$ placing boxes in distinct columns, we must add at least one box to the top row. Deleting that box gives a valid construction of $\lambda\langle n+r-1\rangle$ from $\mu$ through the addition of boxes in distinct columns. This implies that the index set stabilizes once $n \geq \lambda_1$. 

It remains to show that this bound is sharp. The largest part of $\lambda\langle n+r\rangle$ is, by definition, $n+r-|\lambda|$.   Let $n=\lambda_1$. The largest part of $\lambda\langle n+r\rangle$ in this case is $\lambda_1+r-|\lambda|$. Hence this is the smallest $n$ such that 
 that $\mu = (\lambda_1 + r-|\lambda|, \lambda_2, \ldots, \lambda_{\ell})$ appears as a valid index.  Thus the stable range of $n \geq \lambda_1$ is sharp.
 \end{proof}
 \begin{definition}

   Let $\mathcal{G}$ be $\N$ or $\fb$. Let $\mathcal{C}$ be one of the categories $\Mod_{\bK}, \sMod_{\bK}$ or $\ch_{\bK}$. For objects $X, Y \in \mathcal{C}^{\mathcal{G}}$,  
    the \emph{pointwise tensor product} $X\oast  Y\in \mathcal{C}^{\mathcal{G}}$ is defined for each finite set $T$ by the formula
    $$(X\oast Y)(T)\colonequals X(T)\otimes_{\bK} Y(T)$$ and the action of  $\mathrm{End}_{\mathcal{G}}(T)$ on $(X\oast Y)(T)$ is the diagonal action.
\end{definition}
The pointwise tensor product provides $\mathcal{C}^{\mathcal{G}}$ with an additional monoidal structure, with unit given by $\underline{\bK}_{\mathcal{G}}$.
Since $\bK$ is a field of characteristic zero, $-\oast -$ preserves weak homotopy equivalences in each variable. The pointwise tensor product and Day convolution have a natural distributivity law
\begin{equation}\label{eq: distr law}
    (W \oast X)\otimes_{\mathcal{G}} (Y\oast Z)\to (W\otimes_{\mathcal{G}} Y) \oast (X\otimes_{\mathcal{G}} Z).
\end{equation}
This makes $(\mathcal{C}^{\mathcal{G}}, \oast,\otimes_{\mathcal{G}})$ into a \emph{duoidal category} in the sense of Aguiar--Mahajan \cite[Section 6.1]{AA-Duoidal}; they use the term ``2-monoidal category'' to refer to a duoidal category. 

Let 
$\bM\in \sMod^{\fb}_{\bK}$ be a left $(\ohur)_{\bK}$-module, and $V$ an $\fin$-module. We view $V$ as a $\sMod^{\fb}_{\bK}$-module supported in homological degree $0$.    The object $\bM \oast V\in \sMod^{\fb}_{\bK}$ has the structure of a left $(\ohur)_{\bK}$-module by using the identification $(\ohur)_{\bK}\cong (\ohur)_{\bK}\oast \underline{\bK}_{\mathcal{\fb}}$ and applying the distributivity law of Formula (\ref{eq: distr law})
$$  ((\ohur)_{\bK}\oast \underline{\bK})\otimes_{\mathcal{\fb}} (\bM\oast V)\to ((\ohur)_{\bK}\otimes_{\mathcal{\fb}} \bM )\oast (\underline{\bK}_{\mathcal{\fb}}\otimes_{\mathcal{\fb}} V)$$
followed by taking the pointwise tensor product of the maps $(\ohur)_{\bK}\otimes_{\fb} \bM\to \bM$ and  $\underline{\bK}_{\mathcal{\fb}}\otimes_{\fb} V\to V$ that arise from the structures of $\bM$ and $V$ as modules over $(\ohur)_{\bK}$ and $\underline{\bK}_{\mathcal{\fb}}$, respectively.

Let $\widetilde{U} \in H_0\left(\ohur[,\bN]\right)$ be as in Notation \ref{NotationUTwiddle}. By abuse of notation, we also let $\widetilde{U}$ denote a map $$S^{\bN, 0}_{\bK} \to \left(\ohur\right)_{\bK} \oast \underline{\bK}_{\fb}$$ representing the element $$\widetilde{U} \in H_0(\ohur[,\bN]) \cong \pi_0\left( \left(\ohur\right)_{\bK} \oast  \underline{\bK}_{\fb}\right).$$ The map $\widetilde U$ and the product on $(\ohur)_{\bK}$  define a map
\begin{equation}\label{eq: twisted stab map}
      \widetilde{U}_* \colon S^{\bN, 0}_{\bK} \otimes_{\fb} (\bM\oast V ) \longrightarrow \bM\oast V.
    \end{equation}
\begin{notation}
    Given a left $(\ohur)_{\bK}$-module $\bM\in \sMod^{\fb}_{\bK}$ and $V$ an $\fin$-module, let $(\bM\oast V)/ \widetilde{U}_*$ denote the mapping cone of the map
    \eqref{eq: twisted stab map} 
    in the model category structure on $\sMod^{\fb}_{\bK}$ described in Section \ref{SubsectionCategorySetup}. 
\end{notation}

\begin{notation}
    Given a group $K$ and a simplicial $\bK$-module $Y$ with simplicial $K$-action, we use the shorthand $H_i(K; Y)$ to denote the hyperhomology of $K$ with coefficients in the chain complex obtained by applying the Dold--Kan construction to $Y$.
\end{notation}

\begin{notation}
Let $$(-)_{\fb}\colon \sMod^{\fb}_{\bK}\to \sMod^{\N}_{\bK}$$ denote the functor given by the map sending $X\in \sMod^{\fb}_{\bK}$ to the object in $\sMod^{\N}_{\bK}$ with $(X)_{\fb}(n)\colonequals (X(\bn))_{\Sigma_{\bn}}=X(\bn)/\Sigma_{n}$.
\end{notation}

The functor $(-)_{\fb}$ is strong monoidal with respect to the Day convolution structures on $\sMod^{\fb}_{\bK}$ and $\sMod^{\N}_{\bK}$. It admits a left derived functor
$(-)_{\hfb}$ given by taking homotopy orbits, i.e. $(X)_{\hfb}(n)$ is obtained by taking homotopy orbits $ (X(\bn))_{\text{h}\Sigma_{n}}$ of $X(\bn)$ under $\Sigma_{n}$.   
The derived functor
$(X)_{\hfb}$ satisfies
$$H_{i}((X)_{\hfb})_n = H_{i}(\Sigma_{n}; X(\bn)).$$

By applying $(-)_{\hfb}$ to the map  $\widetilde{U}_*$ of Formula \eqref{eq: twisted stab map}, we get a map
$$\widetilde{U}_{\hfb} \colon S^{N, 0}_{\bK}\otimes_{\N} (\bM\oast V)_{\hfb}\to (\bM\oast V)_{\hfb}.$$ 
and we have that $((\bM\oast V)/ \widetilde{U})_{\hfb}$ is the mapping cone of $\widetilde{U}_{\hfb}$.
The induced map on homology has the form
$$\widetilde{U}_{\hfb, *}(n)\colon H_{*}(\Sigma_{n}; \bM_{n}\otimes_{\bK} V_{n})\to H_{*}(\Sigma_{n+N}; \bM_{n+N}\otimes_{\bK} V_{n+N}).$$

\begin{lemma}\label{lem: vanishing for base case}
Suppose that $(G, c)$ has the non-splitting property and $\bK$ is a field of characteristic zero.  Let $N_c$ be as in \cref{MarkTheorem}. Let $N$ and $N_0$ be as in \cref{lem: stability for pi0 Hur}. Fix a partition $\lambda=(\lambda_1, \ldots, \lambda_{\ell})$ and $r \geq 0$. 
    Then
    $$H_{i}\bigg(\Big(\left(H_0(\ohur) \oast \Sigma^{r}V(\lambda)\right)/\widetilde{U}\Big)_{\hfb}\bigg)_n=0$$
    whenever $n\geq \max(r+N, N_0+N,N_c+N)$ or $i>0$.  
\end{lemma}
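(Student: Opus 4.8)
The plan is to use the characteristic-zero hypothesis to reduce the statement to a claim about a single stabilization map on symmetric-group coinvariants, and then to feed in \cref{MarksConstantNc}, \cref{lem: stability for pi0 Hur}, and \cref{prop: coinv stabilize}. First I would invoke that $\bK$ has characteristic zero, so by Maschke $H_j(\Sigma_n; W) = 0$ for $j > 0$ and every $\bK[\Sigma_n]$-module $W$. Since $H_0(\ohur) \oast \Sigma^r V(\lambda)$ is a discrete $\fb$-$\bK$-module, $(H_0(\ohur) \oast \Sigma^r V(\lambda))_{\hfb}$ is therefore weakly equivalent to its strict orbits, concentrated in homological degree $0$ with value $(H_0(\ohur)_n \otimes_\bK \Sigma^r V(\lambda)_n)_{\Sigma_n}$ at level $n$. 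Because $(-)_{\hfb}$ preserves homotopy cofiber sequences, $\big((H_0(\ohur)\oast\Sigma^r V(\lambda))/\widetilde U\big)_{\hfb}$ is, level by level, the mapping cone of a map of complexes concentrated in degree $0$. Writing $\widetilde U_*^{(n)}$ for this map at level $n$ — with source $0$ when $n < N$, and, after collapsing the Day-convolution induction under $\Sigma_n$-coinvariants, with source $(H_0(\ohur)_{n-N}\otimes \Sigma^r V(\lambda)_{n-N})_{\Sigma_{n-N}}$ and target $(H_0(\ohur)_{n}\otimes \Sigma^r V(\lambda)_{n})_{\Sigma_{n}}$ when $n \geq N$ — one gets $H_i = 0$ for $i \geq 2$ at every $n$, $H_1 = \ker(\widetilde U_*^{(n)})$, and $H_0 = \operatorname{coker}(\widetilde U_*^{(n)})$. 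This disposes of $i \geq 2$ outright.

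Next I would identify $\widetilde U_*^{(n)}$. Unwinding the duoidal structure of Formula (\ref{eq: distr law}), this map sends the class of $x \otimes v$ to the class of $(\widetilde U \cdot x)\otimes \varphi(v)$; that is, it is left multiplication by $\widetilde U$ on the $H_0(\ohur)$-factor together with the $\FI$-module structure map $\varphi$ on $\Sigma^r V(\lambda)$. For $n - N \geq N_c$, \cref{MarksConstantNc} shows the $\Sigma_m$-action on $H_0(\ohur)_m$ is trivial and $H_0(\ohur)_m \cong H_0(\hur)_m$ for $m \in \{n-N, n\}$; hence the coinvariants split as $H_0(\hur)_m \otimes (\Sigma^r V(\lambda)_m)_{\Sigma_m}$ and $\widetilde U_*^{(n)}$ becomes $(-\cdot U)\otimes(\text{coinvariant stabilization})$. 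By \cref{lem: stability for pi0 Hur}, $-\cdot U$ is an isomorphism for $n - N \geq N_0$; by \cref{prop: coinv stabilize}, the coinvariant stabilization $(\Sigma^r V(\lambda)_{n-N})_{\Sigma_{n-N}} \to (\Sigma^r V(\lambda)_n)_{\Sigma_n}$ is an isomorphism for $n - N \geq r$. So $\widetilde U_*^{(n)}$ is an isomorphism, and both $H_0$ and $H_1$ vanish, once $n \geq \max(N_c, N_0, r) + N$, which is exactly the claimed range for $i = 0$.

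The hard part will be the remaining assertion that $H_1 = \ker(\widetilde U_*^{(n)}) = 0$ for the small values $N \leq n < \max(N_c, N_0, r) + N$ (for $n < N$ there is nothing to check). My plan there is to prove that $\widetilde U_* \colon S^{\bN,0}_\bK \otimes_\fb (H_0(\ohur)\oast\Sigma^r V(\lambda)) \to H_0(\ohur)\oast\Sigma^r V(\lambda)$ is a levelwise split monomorphism of $\Sigma_n$-representations; since characteristic-zero coinvariants are exact, this yields injectivity of $\widetilde U_*^{(n)}$ for every $n$. To build the splitting I would combine (i) the split injectivity over $\bK$ of the $\FI$-stabilization maps for $V(\lambda)$, which follows from $V(\lambda) \subseteq M(\lambda)$ and \cref{Inj} together with exactness of restriction, and (ii) a compatible splitting of left multiplication by $\widetilde U$ on $H_0(\ohur)$, obtained by decomposing $H_0(\ohur)_n = \bK[\pi_0(\ohur[,\bn])]$ over $\Sigma_n$-orbits and tracking how multiplication by $\widetilde U$ permutes these orbits. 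The delicate point is making these two splittings simultaneously $\Sigma_n$-equivariant; this is where the ordered (pure-braid) structure and the particular choice of $\widetilde U$ as a lift of the Ellenberg--Venkatesh--Westerland element $U$ are used, and it is the step I expect to require the most bookkeeping. Once injectivity is in hand, assembling the cases ($i \geq 2$ and $i = 1$ for all $n$, and $i = 0$ for $n \geq \max(r+N, N_0+N, N_c+N)$) completes the proof.
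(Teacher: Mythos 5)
Your reduction matches the paper's. Since $H_0(\ohur)$ and $\Sigma^r V(\lambda)$ are discrete and $\bK$ has characteristic zero, $(-)_{\hfb}$ computes strict $\Sigma_n$-coinvariants, and the mapping cone at level $n$ is a two-term complex whose homology is the cokernel of the coinvariant-level stabilization map in degree $0$, its kernel in degree $1$, and zero in degree $\geq 2$. Your $i=0$ argument is exactly the paper's: \cref{MarksConstantNc} trivializes the $\Sigma_n$-action on $H_0(\ohur)_n$ and identifies it with $H_0(\hur[,n])$ so that the coinvariants split as a tensor product, and then \cref{lem: stability for pi0 Hur} (which the paper packages as \cref{CorUIsoOnOHur}) and \cref{prop: coinv stabilize} give the isomorphism; your bound $n\geq\max(r+N,N_0+N,N_c+N)$ matches the paper's.

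Where you diverge is the $i=1$ case for small $n$. You correctly observe that $H_1$ is the kernel of the coinvariant-level stabilization map, so vanishing for all $n$ requires an injectivity statement; the paper's proof records only the characteristic-zero finiteness argument for $i>0$, which directly yields $i\geq 2$. However, you do not carry out your proposed $\Sigma_n$-equivariant split injectivity, so as written your proposal is incomplete for $i=1$ and $n<\max(r+N,N_0+N,N_c+N)$. Two observations before you invest in that bookkeeping. First, this clause of the lemma is never actually used downstream: in its only application, the proof of \cref{lem: uniform rep stab for OHur}, the lemma is invoked in the range $n\geq Ad+r+B$ with $d\geq 0$ and $B=\max(N_0+N,N_c+N)$, which already forces $n\geq\max(r+N,N_0+N,N_c+N)$, so one could harmlessly replace ``$i>0$'' by ``$i\geq 2$'' in the statement. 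Second, if you do want the $i=1$ clause, be aware that ordinary injectivity of $\varphi$ and of $\widetilde U\cdot(-)$ on underlying $\bK$-modules does not by itself give injectivity after passing from $\Sigma_{n-N}$-coinvariants to $\Sigma_n$-coinvariants (the latter passage is a further quotient, not a restriction), so some genuinely $\Sigma_n$-equivariant argument, along the lines you sketch, really is required.
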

\begin{proof}
Let $V=\Sigma^{r}V(\lambda)$.
Let us unpack the statement. The groups $$H_{i}(((H_0(\ohur) \oast V)/\widetilde{U})_{\fb})_n $$ are simply the relative homology groups $$H_i\Bigl(\big(\Sigma_n,\Sigma_{n-N}\big);\big(V_n \otimes_{\bK} H_0(\ohur[,n]), V_{n-N} \otimes_{\bK} H_0(\ohur[,n-N])\big)\Bigr)$$ associated to the map given by multiplication by $\widetilde{U}$. Since symmetric groups are finite and we are working in characteristic zero, these relative homology groups vanish for $i>0$. 

Let $n \geq \max(r+N, N_0+N,N_c+N)$. By \cref{CorUIsoOnOHur}, multiplication by $\widetilde{U}$ induces an isomorphism $$H_0(\ohur[,n-N]) \to H_0(\ohur[,n]).$$ Moreover, by \cref{MarksConstantNc}, the symmetric group actions on $\ohur$ are trivial in a stable range. Thus, we can rewrite the relative homology groups as follows: 
\begin{align*}
H_0\Big( (\Sigma_n,\Sigma_{n-N});\big(&V_n \otimes_{\bK} H_0(\ohur[,n]), V_{n-N} \otimes_{\bK} H_0(\ohur[,n-N])\big) \Big) \cong \\
  H_0\Big(\big(\Sigma_n,\Sigma_{n-N}\big);\big(&V_n , V_{n-N} \big )  \Big) \otimes_{\bK} H_0(\ohur[,n]).
\end{align*}
The claim now follows from \cref{prop: coinv stabilize} which implies $$H_0(\Sigma_n,\Sigma_{n-N};V_n,V_{n-N}) \cong 0$$ for $n \geq r+N$.
\end{proof}

\section{Uniform multiplicity stability for \texorpdfstring{$H_i(\ohur$}{the higher homology groups of ordered Hurwitz space}}

 \cref{lem: vanishing for base case} implies $\pi_0(\ohur)$ exhibits a strong form of uniform multiplicity stability. \cref{OhurVSpi0} implies $\pi_0(\ohur)$ is constructed using $\ohur$-module cells of high slope.  The philosophy of uniform stability lets us import uniform multiplicity stability from $\pi_0(\ohur)$ to $\ohur$. We will implement this strategy in this section.

    \begin{proposition}
\label{lem: uniform rep stab for OHur}
    Suppose that $(G, c)$ has the non-splitting property and $\bK$ is a field of characteristic zero. There are constants $A$ and $B$ depending only on $(G,c)$,   
    such that, for all partitions $\lambda$ and $r \geq 0$, $$H_{i} {\bigg (}\Big({\Big (}(\ohur)_{\bK}\oast \Sigma^{r}V(\lambda){\Big )}/\widetilde{U}{\Big)}_{\hfb} {\bigg )}_n \cong 0$$ for $n \geq A i + r +B$.

\end{proposition}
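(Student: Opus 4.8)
The plan is to run the standard "cells-of-high-slope" induction of Miller--Patzt--Petersen--Randal-Williams \cite{miller2024uniformtwistedhomologicalstability}, bootstrapping from the base case \cref{lem: vanishing for base case} using the vanishing of derived $\ohur$-module indecomposables of $\pi_0(\ohur)$ from \cref{OhurVSpi0}. First I would set up the relevant homological algebra: consider the map of left $(\ohur)_{\bK}$-modules $\pi \colon (\ohur)_{\bK} \to \pi_0(\ohur)_{\bK}$ (the $\bK$-linearization of the connected-components map), with mapping cone $C\pi$. Applying $-\oast \Sigma^r V(\lambda)$, then the bar construction against $\widetilde{U}$, and then $(-)_{\hfb}$ all preserve weak equivalences and cofiber sequences (since $\bK$ has characteristic zero, so $-\oast-$ is exact and homotopy orbits are exact), so there is a long exact sequence relating
$$H_i\big(((\ohur)_{\bK}\oast \Sigma^r V(\lambda))/\widetilde{U}\big)_{\hfb,n}, \quad H_i\big((\pi_0(\ohur)_{\bK}\oast \Sigma^r V(\lambda))/\widetilde{U}\big)_{\hfb,n}, \quad H_i\big((C\pi \oast \Sigma^r V(\lambda))/\widetilde{U}\big)_{\hfb,n}.$$
The middle term vanishes for $n\geq \max(r+N, N_0+N, N_c+N)$ by \cref{lem: vanishing for base case}. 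So it suffices to prove the analogous linear-range vanishing for the $C\pi$ term, and then the result for $(\ohur)_{\bK}$ follows with constants adjusted by an additive amount.

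Next I would handle $C\pi$ by a CW-filtration / skeletal induction on $\ohur$-module cells. By \cref{OhurVSpi0}, $C\pi$ (equivalently, $\pi_0(\ohur)$ relative to $\ohur$) is built from $\ohur$-module cells whose attaching maps are detected by $H_i(Q^{\ohur}_{\mathbb{L}}(\pi_0(\ohur)))_S$, which vanishes for $|S|\geq \aleph i + \beth$ — that is, all cells have "slope" at least $1/\aleph$. Filtering $C\pi$ by cell degree, each cell of $\ohur$-module degree $d$ (built in homological degree roughly $d/\aleph$) contributes, after $\oast \Sigma^r V(\lambda)$, $/\widetilde{U}$, and $(-)_{\hfb}$, a term of the form $H_{i-(\text{degree shift})}$ of a free $\ohur$-module on a sphere $S^{T,e}_{\bK}$ tensored with $\Sigma^r V(\lambda)$. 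Such a free piece is, after applying the monoid structure, again of the form $(\ohur)_{\bK}\oast \Sigma^{r'}V(\lambda')$ (or a direct sum of shifts thereof, using that $S^{T,e}_{\bK}$ induces up from $\Sigma_{|T|}$ and that restriction/induction preserves the class of modules $\Sigma^r V(\lambda)$ up to finite direct sums — this is where one invokes the branching rules, as in \cref{ClaimFrobenius}). Hence the base case \cref{lem: vanishing for base case} applies to each cell's contribution, with a shifted range. Running the spectral sequence of the cell filtration, a cell of degree $d$ contributes only to $H_i$ with $i \gtrsim d/\aleph - 1$, so its contribution to $H_i$ in the stable range requires $d \lesssim \aleph(i+1)$, hence $n$ only needs to exceed something of the form $A i + r + B$ with $A$ absorbing both $\aleph$ and the base-case slope, and $B$ absorbing $\beth, N, N_0, N_c$ and the degree-$0,1$ corrections from \cref{lem:deg01indec}.

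Concretely the induction on $i$ goes: assume the vanishing of $H_j(((\ohur)_{\bK}\oast \Sigma^r V(\lambda))/\widetilde{U})_{\hfb,n}$ for all $j < i$, all $\lambda$, all $r$, and all $n \geq Aj + r + B$; then in the cell-filtration spectral sequence computing $H_i$ of the $C\pi$ term, every column corresponding to a cell of degree $d$ is, by the inductive hypothesis plus the base case applied to the free module on that cell, either zero in the relevant range or supported in total degree $> i$, so $H_i$ of the $C\pi$ term vanishes for $n \geq Ai + r + B$; combined with the long exact sequence above and \cref{lem: vanishing for base case}, the same holds for $(\ohur)_{\bK}$. The main obstacle I anticipate is bookkeeping the constants carefully through the cell filtration: one must verify that a single pair $(A,B)$ works uniformly in $\lambda$ and $r$, which requires that (i) the slope $\aleph$ from \cref{OhurVSpi0} is independent of $\lambda, r$ (it is — it depends only on $c$ as a quandle), and (ii) passing a cell's free module through the monoidal structure and re-expressing it via branching rules only changes $r$ and the homological degree by bounded amounts controlled by the cell degree, not by $\lambda$. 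Making precise the statement "$C\pi$ is built from $\ohur$-module cells of slope $\geq 1/\aleph$" in the homotopy-coherent sense — i.e. producing an actual cell structure, or equivalently using the bar-construction / CW-approximation machinery of \cite[Sections 8--11]{e2cellsI} — and checking the spectral sequence converges appropriately, is the technical heart; the rest is formal.
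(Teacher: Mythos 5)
Your plan matches the structure of the paper's proof: both adapt the induction of Miller--Patzt--Petersen--Randal-Williams, both exploit the slope bound from \cref{OhurVSpi0} together with \cref{lem:deg01indec} to build a relative $\ohur$-module CW approximation of $\pi_0(\ohur)$ via \cite[Theorem 11.21]{e2cellsI} with all cells of dimension $\geq 2$ and of bounded slope, and both use the spectral sequence of the skeletal filtration together with a long exact sequence against the base case \cref{lem: vanishing for base case}. However there is a gap in how you propose to process the contribution of each cell, and this is exactly the one place where the argument needs a specific idea rather than formal bookkeeping.

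You propose to control the term coming from a free cell $S^{T,e}_{\bK}\otimes_{\fb}\bR$ by decomposing the restriction of $\Sigma^{r}V(\lambda)$ down to a smaller symmetric group into irreducibles via branching rules, yielding ``a direct sum of shifts of $\Sigma^{r'}V(\lambda')$.'' This is not needed and, if pursued as stated, threatens the uniformity you need: the multiplicities appearing in $\Res^{\Sigma_n}_{\Sigma_{n-|T|}}V_{\lambda\langle n\rangle}$ are not manifestly independent of $n$ (they stabilize, but only for $n$ large relative to $\lambda$), and the partitions $\lambda'$ that appear vary, so you would have to re-argue that a single pair $(A,B)$ works for all the resulting $\lambda'$. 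The point you are missing is that no decomposition is needed at all: by the very definition of the shift functor, $$\Res^{\Sigma_n}_{\Sigma_{n-m}}\,\Sigma^{r}V(\lambda)_{\bn} \;=\; \Res^{\Sigma_{n+r}}_{\Sigma_{n-m}} V(\lambda)_{\bf n+r} \;=\; \Sigma^{r+m}V(\lambda)_{\bf n-m},$$ which is again a shifted module with the \emph{same} partition $\lambda$. This is precisely why the proposition is stated with the universal quantifier over $r$: it makes the class of test modules closed under the restrictions that arise from cells. With this identity, the contribution of a cell $S^{{\bf n_\alpha}, d_\alpha}_{\bK}\otimes_{\fb}\bR$ is identified with $H_{i-d_\alpha}\big(((\bR\oast\Sigma^{r+n_\alpha}V(\lambda))/\widetilde{U})_{\hfb}\big)_{n-n_\alpha}$, and since $d_\alpha\geq 2$ this is handled by the \emph{inductive hypothesis} for $\bR=(\ohur)_\bK$, not by the base case \cref{lem: vanishing for base case} (which concerns $\pi_0(\ohur)$, a non-free module). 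Your phrasing ``the base case applied to the free module on that cell'' conflates these two roles. Fixing both points gives the paper's argument.
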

\begin{proof}
    The proof  is the same as that of Miller--Patzt--Petersen--Randal-Williams \cite[Theorem 2.2]{miller2024uniformtwistedhomologicalstability}. 
     Let $V$ denote $V(\lambda)$. Let $A=4 \aleph + 2\beth$ with $\aleph$ and $\beth$ as in \cref{OhurVSpi0}. Let $B=\max(N_0+N, N_c+N)$ with $N,N_0$ as in \cref{lem: stability for pi0 Hur} and $N_c$ as in \cref{MarksConstantNc}.
     
     We will prove by induction on $d$ that for all $n \geq A d + r +B$,
$$H_{d}\bigg(\Big(\Big ((\ohur)_{\bK}\oast \Sigma^{r}V\Big)/\widetilde{U} \Big)_{\hfb } \bigg)_n \cong 0.$$
     The case $d<0$ is vacuous.  Fix $d$ and  suppose by induction that the conclusion of the proposition holds for degrees strictly smaller than $d$.
       
    Let $\bR\in \sMod_{\bK}^{\fb}$ denote $(\ohur)_{\bK}$ and $\bQ \in \sMod_{\bK}^{\fb}$ denote $H_0(\ohur)$.
    We will work in the setting of Galatius--Kupers--Randal-Williams \cite{e2cellsI}.
 Since we have a map $\bR\to \bQ$, we may attach an $\bR$-cell to $\bQ$ at $S=\varnothing$ and $i=0$.
    Denote this $(\varnothing,0)$-cell by $y\bR$. We can form the quotient $\bR$-module $\bQ/y\bR$. 

Since $\bR$ is a free $\bR$-module on one generator, $$H_{i}(Q_{\mathbb{L}}^{\bR}(\bR))_S \cong 0 $$ unless $|S|=0$ and $i=0$, in which case it is $\bK$. The map $$H_{i}(Q_{\mathbb{L}}^{\bR}(\bR))_S  \to H_{i}(Q_{\mathbb{L}}^{\bR}(\bQ))_S $$ is an isomorphism for $i=0$ and $|S|=0$. Using these observations, the long exact sequence in homology associated to the cofiber sequence $$ Q_{\mathbb{L}}^{\bR}(\bR) \to Q_{\mathbb{L}}^{\bR}(\bQ) \to Q_{\mathbb{L}}^{\bR}(\bQ /y\bR),$$ as well as  \cref{OhurVSpi0} and \cref{lem:deg01indec}, we conclude that, $$H_{i}(Q_{\mathbb{L}}^{\bR}(\bQ/y\bR))_S \cong 0$$ if $i=0$, if $i=1$, if $|S|=0$,  or if $ |S| \geq \aleph i +\beth $.

  Therefore, we may find a relative $\bR$-module cellular approximation $\mathbf{D}\xrightarrow{\sim} \bQ/y\bR$ such that $\mathbf{D}$ only has relative $({\bf n'} , d')$-cells with $d'\geq 2$,  and $1 \leq n' < \aleph d' +\beth$ by \cite[Theorem 11.21]{e2cellsI}.
    The skeletal filtration of $\bD$
    has associated graded
    $$\text{gr}(\bD)\simeq \bigoplus_{\text{cells }\alpha}S^{{\bf n_{\alpha}}, d_{\alpha}}_{\bK}\otimes_{\fb} \bR$$ with $d_{\alpha} \geq 2$,  and $1 \leq n_{\alpha} < \aleph d_{\alpha} +\beth$.
    This filtration induces a filtration of the object $((\bD \oast \Sigma^{r}V)/\widetilde{U})_{\hfb}$.
    This filtration yields a spectral sequence for each $n$ and $r$:    
    \begin{align*}
        E^{1}_{p, q}(n) &= \bigoplus_{\alpha \text{ with } d_{\alpha}=p} H_{p+q}((((S^{{\bf n_{\alpha}}, d_{\alpha}}_{\bK}\otimes_{\fb} \bR)\oast \Sigma^{r}V)/\widetilde{U})_{\hfb})_{n}\\
        &\Longrightarrow H_{p+q}(((\bD\oast \Sigma^{r}V)/\widetilde{U})_{\hfb})_{n}.
    \end{align*} 
There are isomorphisms
\begin{align*}
& H_{i}((((S^{ {\bf n_{\alpha}}, d_{\alpha}}_{\bK}\otimes_{\fb} \bR)\oast \Sigma^{r}V)/\widetilde{U})_{\hfb})_{n} \\ 
    & \cong H_{i-d_{\alpha}}(\Sigma_{ n-n_\alpha}, \Sigma_{n-n_\alpha -N}; \bR_{\bf n-n_\alpha}\otimes_{\bK}\Sigma^{r}V_{\bn}, \bR_{\bf n-n_\alpha -N}\otimes_{\bK}\Sigma^{r}V_{\bf n-N})\\
 & \cong H_{i-d_{\alpha}}(\Sigma_{n-n_\alpha}, \Sigma_{n-n_\alpha -N}; \bR_{\bf n-n_\alpha}\otimes_{\bK}\Sigma^{r+n_\alpha}V_{\bf n-n_\alpha}, \bR_{\bf n-n_\alpha -N}\otimes_{\bK}\Sigma^{r+n_\alpha}V_{\bf n-n_\alpha -N})\\
    & \cong H_{ i-d_{\alpha}}(((\bR\oast \Sigma^{r+{n_{\alpha}}}V)/\widetilde{U})_{\hfb})_{n-n_{\alpha}}.
\end{align*}
The inductive hypothesis implies
$$H_{i}((((S^{ {\bf n_{\alpha}}, d_{\alpha}}_{\bK}\otimes_{\fb} \bR)\oast \Sigma^{r}V)/\widetilde{U})_{\hfb})_{n} \cong 0$$
whenever $$ n - n_\alpha \geq A (i-d_\alpha) + (r + n_\alpha) +B$$
and $i -d_{\alpha} \leq d-1$. Since $d_{\alpha}\geq 2$, it suffices to assume $i \leq d+1$. 
Again, since $d_{\alpha}\geq 2$, $n_\alpha < \aleph d_\alpha +\beth$, and $A=4\aleph + 2\beth$, we conclude that 
$$H_{i}((((S^{ {\bf n_{\alpha}}, d_{\alpha}}_{\bK}\otimes_{\fb} \bR)\oast \Sigma^{r}V)/\widetilde{U})_{\hfb})_{n} \cong 0$$ whenever 
$$n \geq A(i-1)+r+B$$ and $i \leq d+1$. By considering the spectral sequence, it follows that 
$$H_{i}(((\bD\oast \Sigma^{r}V)/\widetilde{U})_{\hfb})_n\cong 0$$ whenever $n \geq A(i-1)+r+B$ and $i\leq d+1$.

We have a homotopy cofiber exact sequence of $\bR$-modules: $$\bR \to \bQ  \to \bD.$$ Consider the following portion of the associated long exact sequence:
$$H_{d+1}(((\bD\oast \Sigma^{r}V)/\widetilde{U})_{\hfb})_{n} \to H_{d}(((\bR\oast \Sigma^{r}V)/\widetilde{U})_{\hfb})_{n} \to H_{d}(((\bQ\oast \Sigma^{r}V)/\widetilde{U})_{\hfb})_{n}. $$ For $n \geq Ad+r+B$, $$H_{d+1}(((\bD\oast \Sigma^{r}V)/\widetilde{U})_{\hfb})_{n} \cong 0 $$ by the above paragraph. The term $$H_{d}(((\bQ\oast \Sigma^{r}V)/\widetilde{U})_{\hfb})_{n} \cong 0$$ whenever $d>0$ or $n \geq r+B$ by \cref{lem: vanishing for base case}. Hence $$H_{d}(((\bR\oast \Sigma^{r}V)/\widetilde{U})_{\hfb})_{n} \cong 0$$ whenever $n \geq A d+r +B$. The claim follows by induction. 
\end{proof}

We will use \cref{lem: uniform rep stab for OHur} to deduce
\cref{thm: mult stability for ohur}, that $H_i(\ohur ;\bK )$ has uniform multiplicity stability. Before we can do this, we will need to recall work of Davis--Schlank \cite{davis2023hilbertpolynomialquandlescolorings}.

\begin{theorem}[{Davis--Schlank \cite[Proposition 3.36]{davis2023hilbertpolynomialquandlescolorings}}]\label{prop: Hilbert for R-mods}
Suppose that $(G, c)$ has the non-splitting property and $\bK$ is a field of characteristic zero. 
    Let $\bM\in\Mod^{\N}_{\bK}$ be a left $H_{0}(\hur;\bK)$-module. Assume that $\bM$ is finitely generated in the sense that $\bigoplus_n \bM_n$ is finitely generated over the ring $\bigoplus_n H_{0}(\hur[,n];\bK)$. 
    Then $\dim_{\bK}\bM_{n}$ is constant for $n\gg 0$.
\end{theorem}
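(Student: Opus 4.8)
The plan is to reduce the statement to a noetherianity property for the ring $\bigoplus_n H_0(\hur[,n];\bK)$ and a rationality/eventual-polynomiality statement for Hilbert functions of finitely generated graded modules over it, both of which come from the non-splitting property. First I would recall from Ellenberg--Venkatesh--Westerland \cite{MR3488737} (or from \cref{lem: stability for pi0 Hur} above) that under the non-splitting property there is a central element $U \in H_0(\hur[,N])$ such that multiplication by $U$ is an isomorphism $H_0(\hur[,n]) \to H_0(\hur[,n+N])$ for $n \geq N_0$. This already shows the ring $R \colonequals \bigoplus_n H_0(\hur[,n];\bK)$ is, up to a finite-dimensional piece, a finitely generated module over the polynomial subring $\bK[U]$, hence $R$ is a noetherian ring and each graded piece $R_n$ is finite-dimensional.

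Next, given a finitely generated graded left $R$-module $\bM = \bigoplus_n \bM_n$, I would argue that $\bM$ is, again up to a finite-dimensional (bounded-degree) summand, finitely generated as a $\bK[U]$-module via the central action of $U$. Concretely: pick finitely many homogeneous generators of $\bM$ over $R$, say in degrees $\leq D$; since each $R_n$ is finite-dimensional, the submodule of $\bM$ generated in degrees $\leq D' $ is finite-dimensional in each degree, and for $D'$ large enough (larger than $N_0$ plus the top generator degree) the action of $U$ carries $\bM_n$ onto $\bM_{n+N}$ for all $n \geq D'$. This is the key structural input, and I expect the main obstacle to be making this surjectivity/isomorphism statement precise: one must check that centrality of $U$ lets the module-generator degrees interact correctly with the range in \cref{lem: stability for pi0 Hur}, and that $U$ acts \emph{surjectively} (not merely injectively) on $\bM_n$ for $n$ large — this surjectivity is exactly where finite generation of $\bM$ over $R$ is used, combined with the fact that $U$-multiplication on $R$ itself is eventually an isomorphism.

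Once $\bM$ is known to be eventually a finitely generated torsion-or-free $\bK[U]$-module with $\deg U = N$, the Hilbert function $n \mapsto \dim_\bK \bM_n$ is eventually a quasi-polynomial of period dividing $N$ in $n$; the extra content of the non-splitting property (via \cref{MarksConstantNc} / \cref{lem: stability for pi0 Hur} and the isomorphism $U \cdot - $) forces the quasi-polynomial to actually be \emph{constant} for $n \gg 0$, since $U$-multiplication being an eventual isomorphism $\bM_n \cong \bM_{n+N}$ already gives $\dim_\bK \bM_n = \dim_\bK \bM_{n+N}$, and a more careful bookkeeping across residues modulo $N$ — comparing $\bM_n$ with $\bM_{n+1}$ using that the single conjugacy class $c$ and the central element have the right behavior — pins down all residues to a common value. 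I would then simply cite Davis--Schlank \cite[Proposition 3.36]{davis2023hilbertpolynomialquandlescolorings} for this last combinatorial-algebraic step rather than reprove it, since the statement is exactly their result; the role of the present proof is to record that the hypotheses (non-splitting, characteristic zero, finite generation) are the ones under which their argument applies.
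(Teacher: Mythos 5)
The paper does not give a proof of this theorem at all: it is cited verbatim from Davis--Schlank \cite[Proposition 3.36]{davis2023hilbertpolynomialquandlescolorings}, and the authors treat it as a black box. So there is no ``paper's own proof'' to compare against. What I can assess is whether your sketch is a self-contained argument, and it is not.

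Your reduction is sound up to a point. Centrality of $U$, plus the fact that $U\cdot-$ is an isomorphism $H_0(\hur[,n])\to H_0(\hur[,n+N])$ for $n\geq N_0$, does give surjectivity of $U\cdot-\colon \bM_{n-N}\to\bM_n$ once $n-N-D\geq N_0$ (with $D$ bounding generator degrees): writing $\bM_n=\sum_i R_{n-d_i}m_i$ and using that left multiplication by $U$ is surjective onto $R_{n-d_i}$ and commutes past the generator, you get $U\cdot\bM_{n-N}=\bM_n$. From there you correctly conclude the ring is Noetherian, $\bM$ is a finitely generated $\bK[U]$-module, and $\dim_\bK\bM_n$ is eventually constant \emph{along each residue class modulo $N$}.

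The gap is exactly the step you wave at as ``a more careful bookkeeping across residues modulo $N$.'' Eventual constancy of the Hilbert function along each residue class modulo $N$ (quasi-periodicity) is strictly weaker than the claimed conclusion that $\dim_\bK\bM_n$ is constant; without a further argument, nothing forces the stable values on different residue classes to agree. That bridging step is the actual content of Davis--Schlank's Proposition 3.36, and your sketch ends by citing that proposition for it. So what you have is not an independent proof but rather a re-derivation of the preparatory structure theory followed by an appeal to the theorem itself for the substantive step. That is consistent with what the paper does, but you should be clear-eyed that the ``combinatorial-algebraic step'' you defer is not a minor tidying-up; it is where the non-splitting hypothesis is really used beyond the existence of $U$, and it is the part your proposal does not supply.
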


\begin{definition} \label{DefnUniformMultStability}
Let $X$ be an $\fb$-module over a field of characteristic zero such that $X_n$ is finite dimensional for all $n$. Let
$$X_n \cong \bigoplus_{\lambda} V_{\lambda \langle n\rangle}^{\oplus c_{\lambda, n}}$$ 
be the decomposition of $X_n$ into irreducible $\Sigma_n$-representations, where the multiplicity $c_{\lambda, n}$ is defined to be zero whenever $n$ is too small for $V_{\lambda \langle n\rangle}$ to be well-defined, that is, $c_{\lambda, n}=0$ for $n<|\lambda|+\lambda_1$. Then we say that $X$ has \emph{uniform multiplicity stability} if there exists some $k$ such that, for all partitions $\lambda$, the multiplicity $c_{\lambda, n}$ is independent of $n$ for all $n \geq k$. 
\end{definition}

Note in particular that, if $X$ has multiplicity stability with stable range $k$, then every irreducible representation $V_{\lambda\langle n \rangle}$ occuring with nonzero multiplicity in $X_n$ for any $n$ must satisfy $|\lambda| + \lambda_1 \leq k$. 

The following is a mild generalization of \cref{thm: mult stability for ohur}, uniform multiplicity stability for $H_i(\ohur)$.

\begin{theorem}\label{thm: uniform mult stab}
Suppose that $(G, c)$ has the non-splitting property and $\bK$ is a field of characteristic zero.  There exists constants $\alpha$ and $\beta$ such that, for all $i$, as a sequence of $\Sigma_n$-representations, $H_{i}(\ohur; \bK)_\bN$ is uniformly multiplicity stable for all $n \geq \alpha i+\beta $.

\end{theorem}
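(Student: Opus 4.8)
The strategy is to deduce uniform multiplicity stability for $H_i(\ohur)$ from the corresponding statement for $\pi_0(\ohur) = H_0(\ohur)$ by the standard ``bootstrapping via high-slope cells'' argument, now in the multiplicity-graded setting. The two key inputs are: (1) \cref{lem: uniform rep stab for OHur}, which says that the stabilization map $\widetilde U_*$ becomes a homotopy equivalence on the homotopy-orbit spectral sequence in a range that is linear in $i$ and $r$ but uniform in $\lambda$; and (2) the fact that multiplicity of $V(\lambda)_n$ in $H_i(\ohur)_{\bn}$ can be extracted as a coinvariant computation after tensoring with an appropriate shifted $V(\lambda)$. Concretely, for a fixed $\fb$-module $X$ with $X_n$ finite-dimensional, the multiplicity of $V_{\lambda\langle n\rangle}$ in $X_n$ equals $\dim \left( (X \oast V(\lambda)^{\vee})_{\bn} \right)_{\Sigma_n}$ up to a bookkeeping factor coming from $\dim V(\lambda)_n$ and contributions of other partitions — so one works instead with $(X\oast \Sigma^r V(\lambda))_{h\fb}$ and uses \cref{prop: coinv stabilize} to control which partitions can contribute.

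\textbf{Key steps, in order.} First I would fix $i$ and consider the $\fb$-module $X = H_i(\ohur;\bK)$; the aim is to show that the multiplicities $c_{\lambda,n}$ of $V(\lambda)_n$ in $X_n$ are eventually constant with a stable range $\alpha i + \beta$ not depending on $\lambda$. Second, I would invoke \cref{lem: uniform rep stab for OHur} applied to $\bM = (\ohur)_{\bK}$ with coefficients $\Sigma^r V(\lambda)$: the vanishing of $H_i\big(((\ohur)_{\bK}\oast \Sigma^r V(\lambda))/\widetilde U)_{\hfb}\big)_n$ for $n\geq Ai + r + B$ says precisely that the stabilization-by-$\widetilde U$ map
\[
H_i\big(\Sigma_n; H_i(\ohur)_{\bn}\otimes \Sigma^r V(\lambda)_{\bn}\big) \longrightarrow H_i\big(\Sigma_{n+N}; H_i(\ohur)_{\bf n+N}\otimes \Sigma^r V(\lambda)_{\bf n+N}\big)
\]
is an isomorphism in that range (after running the homotopy-orbit spectral sequence, which degenerates because $\Sigma_n$ is finite and $\bK$ has characteristic zero). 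Third, I would translate this into a statement about $X = H_i(\ohur)$ regarded as a finitely generated module over the graded ring $\bigoplus_n H_0(\hur[,n])$: one needs that $\bigoplus_n H_i(\ohur[,\bn])$ — or rather the $\Sigma$-orbit version — is finitely generated, which follows from \cref{OhurVSpi0} together with the cell structure used in the proof of \cref{lem: uniform rep stab for OHur} (the $\ohur$-module cells of $H_i(\ohur)$ are bounded in slope). Fourth, I would combine this finite generation with \cref{prop: Hilbert for R-mods} (Davis--Schlank) applied to the modules $(H_i(\ohur)\oast \Sigma^r V(\lambda))_{\fb}$: these are finitely generated $H_0(\hur)$-modules, so their dimensions stabilize, and \cref{lem: uniform rep stab for OHur} pins down the stable range as $n \geq Ai + r + B$. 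Fifth, I would run the standard descending induction on $|\lambda|$ (using \cref{prop: coinv stabilize}: only partitions $\mu$ with $|\mu|\leq r$ contribute to the coinvariants of $\Sigma^r V(\lambda)$, and the relevant multiplicities appear with $r \leq \text{(something linear in the cell degrees)}$) to extract each individual multiplicity $c_{\lambda,n}$ from the stabilized dimensions, obtaining eventual constancy with a range $\alpha i + \beta$ independent of $\lambda$. The bound coming out of this will satisfy the requirement noted after \cref{DefnUniformMultStability}, namely $|\lambda|+\lambda_1 \leq \alpha i + \beta$ for all $\lambda$ with $c_{\lambda,n}\neq 0$, precisely because the slope bound in \cref{lem: uniform rep stab for OHur} is uniform in $\lambda$.

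\textbf{Main obstacle.} The crux is the passage from the derived/homotopy-orbit vanishing statement of \cref{lem: uniform rep stab for OHur} to honest eventual constancy of the decomposition multiplicities $c_{\lambda,n}$ with a $\lambda$-independent range. The subtlety is that $H_i(\ohur)_{\bn}$ is not literally an $\FI$-module, so one cannot directly quote Church--Ellenberg--Farb finiteness; instead one must argue that $H_i(\ohur)\oast \Sigma^r V(\lambda)$, after passing to $\fb$-coinvariants, is a finitely generated module over $\bigoplus_n H_0(\hur[,n])$ with generators in a bounded range controlled by $i$ and $r$, invoke the Davis--Schlank Hilbert-polynomial result, and then carefully disentangle the multiplicity of each $V(\lambda)_n$ from the combinatorics of Pieri's rule (as in \cref{ClaimFrobenius}) so that the uniformity of the range is preserved. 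I expect the bookkeeping in this last disentanglement — ensuring the stable range genuinely does not depend on $\lambda$ rather than just on $|\lambda|$ — to be the delicate point, exactly as it is in \cite[Theorem 2.2]{miller2024uniformtwistedhomologicalstability}; the argument there should transfer essentially verbatim once the inputs above are in place.
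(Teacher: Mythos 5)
Your overall strategy — apply \cref{lem: uniform rep stab for OHur} to get isomorphisms of stabilization maps and then feed a suitable $\N$-graded module to the Davis--Schlank result \cref{prop: Hilbert for R-mods} — is the paper's strategy, but your plan carries two unnecessary complications and one small error that together would make the argument harder than it needs to be, and leaves the final step unaddressed.

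First, there is no ``bookkeeping factor coming from $\dim V(\lambda)_n$ and contributions of other partitions.'' Over a field of characteristic zero, irreducible $\Sigma_n$-representations are self-dual, so by Schur's lemma
$\dim \big( H_i(\ohur)_{\bn} \otimes_{\bK} V(\lambda)_n \big)_{\Sigma_n}$
is \emph{exactly} the multiplicity $c_{\lambda,n}$ of $V_{\lambda\langle n\rangle}$ in $H_i(\ohur)_{\bn}$, with no correction. Because $\Sigma_n$ acts freely on $\ohur[,\bn]$ and $\bK$ has characteristic zero, this coinvariant dimension equals $\dim H_i(\hur[,n]; V(\lambda)_n)$; the paper defines $\bM_n$ to be precisely this group. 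This removes any need for the Pieri-rule disentanglement and the ``descending induction on $|\lambda|$'' you propose — those steps are not needed, because for each fixed $\lambda$ the module $\bM$ already computes the multiplicities you want, and there is no entanglement across partitions. Similarly, you only need $r=0$: the parameter $r$ in \cref{lem: uniform rep stab for OHur} is there to make the induction inside that proposition close up, not because the theorem's proof invokes nonzero shifts.

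Second, the route to finite generation is more direct than you suggest. You appeal to \cref{OhurVSpi0} ``together with the cell structure,'' but in the paper that input is already packaged inside \cref{lem: uniform rep stab for OHur}, whose conclusion (with $r=0$) gives, via the long exact sequence of the pair, that multiplication by $U$ is a surjection $\bM_{n-N} \to \bM_n$ for $n \geq Ai+B$ and an isomorphism for $n \geq A(i+1)+B$. The surjection alone gives generation in degrees $\leq Ai + B$, and finiteness of each $\bM_n$ comes from the CW-finiteness of $\hur[,n]$. Third, you do not address a genuine subtlety at the end: Davis--Schlank gives constancy of $\dim \bM_n$ only for $n \gg 0$, with no effective bound. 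To obtain the claimed linear stable range $n \geq \alpha i + \beta$ uniformly in $\lambda$, you still need to combine Davis--Schlank's eventual constancy with the $N$-periodicity $\bM_n \cong \bM_{n+N}$ coming from the isomorphism range of \cref{lem: uniform rep stab for OHur}, which propagates the stable value backward to the explicit threshold. Without this last step, you only get multiplicity stability with a $\lambda$-dependent and ineffective range, which is weaker than the theorem's statement.
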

\begin{proof}
 Fix a partition $\lambda$ and fix $i$.  Let $\bM\in\Mod^{\N}_{\bK}$ be the graded $\bK$-vector space with $$\bM_n \cong H_i(\hur[,n]; V_{n}(\lambda)).$$ Using that duoidal relationship between pointwise tensor product and Day convolution, we obtain a $H_0(\hur)$-module structure on $\bM$. 
    Note that $\dim \bM_n$ is equal to the multiplicity of $V(\lambda)$ in $H_i(\ohur)_\bn$. Thus, our goal is to show $\bM_n$ stabilizes. By \cref{prop: Hilbert for R-mods}, it suffices to show $\bM$ is finitely generated as a $H_0(\hur)$-module. 

Let $A$, $B$ be as in \cref{lem: uniform rep stab for OHur}. The proposition implies that for all $n \geq A i +B$, 
$$H_{i}\bigg(\Big(\Big ((\ohur)_{\bK}\oast V(\lambda)\Big)/\widetilde{U} \Big)_{\hfb } \bigg)_{n} \cong 0.$$ As in the proof of \cref{lem: vanishing for base case}, we have that $$H_{i}\bigg(\Big(\Big ((\ohur)_{\bK}\oast V(\lambda)\Big)/\widetilde{U} \Big)_{\hfb } \bigg)_n \cong H_i(\hur[,n],\hur[,n-N] ; V(\lambda)_n ,V(\lambda)_{n-N}  ).$$ The long exact sequence of a pair now implies that multiplication by $U \in H_0(\hur)_N$ induces a surjection  
$$\bM_{n-N}  \to \bM_n$$ for $n \geq Ai+B$ and an isomorphism for $n \geq A(i+1)+B$. This implies $\bM$ is generated by $\bigoplus_{n \leq Ai+B} \bM_n$ as a $H_0(\hur)$-module. 

By Ellenberg--Venkatesh--Westerland \cite[Proposition 2.5]{MR3488737}, $\hur[,n]$ is homotopy equivalent to a CW complex with finitely many cells. Since  $\ohur[,n]$ is a finite sheeted cover of $\hur[,n]$, it is also homotopy equivalent to a CW complex with finitely many cells. Since $V(\lambda)$ is finite dimensional, we conclude each $\bM_n$ is finite dimensional. Thus, $\bigoplus_{n \leq Ai+B} \bM_n$ is finite dimensional and so $\bM$ is a finitely generated $H_0(\hur)$-module. Thus, there exists $j$ such that $\dim \bM_n = \dim \bM_j$ for all $n \geq j$.

We will verify that these isomorphisms in fact hold for all $n$ in the range where we have established periodicity of $\bM_n$.  Fix $n \geq A(i+1) +B -N$. We have proved that under this assumption,  $\bM_n \cong \bM_{n+kN}$ for all $k \geq 0$. By picking $k$ sufficiently large, we may assume $n+kN \geq j$. Thus, $\bM_n \cong \bM_j$. Hence the multiplicities stabilize once $$n \geq Ai+A+B-N.$$ Take $\alpha=A$ and $\beta = A+B-N$.
\end{proof}

\cref{thm: mult stability for ohur} and the following proposition now imply \cref{OhurFIcIntro}, which states that the dimension of the homology groups $H_i\left(\ohur[, {\bn}]\right)$ grow polynomially in $n$ for each fixed degree $i$.

\begin{proposition}
    \label{PropPoly} Let $\bK$ be a field of characteristic zero.
Let $X$ be an $\fb$-module over $\bK$ that is uniformly multiplicity stable in the sense of Definition \ref{DefnUniformMultStability}, with stable range $k$. Then there is an integer-valued polynomial $p_X$ of degree $\leq k$ such that $$\dim_{\bK} X_n =p_X(n)  \text{ for all }n\geq k.$$
\end{proposition}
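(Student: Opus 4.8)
The plan is to reduce the statement to a counting problem about the dimensions of irreducible $\Sigma_n$-representations, using the padded-partition parametrization from Definition~\ref{DefVLn} together with the hook length formula. Write the stable decomposition as $X_n \cong \bigoplus_\lambda V(\lambda)_n^{\oplus c_\lambda}$ for $n \geq k$, where by the hypothesis of uniform multiplicity stability the multiplicities $c_\lambda$ are independent of $n$ in this range, and by the remark following Definition~\ref{DefnUniformMultStability} the sum is finite, ranging only over partitions $\lambda$ with $|\lambda| + \lambda_1 \leq k$. Then
$$\dim_{\bK} X_n = \sum_{\lambda \,:\, |\lambda|+\lambda_1 \leq k} c_\lambda \dim V_{\lambda\langle n\rangle} \qquad \text{for } n \geq k,$$
so it suffices to show each function $n \mapsto \dim V_{\lambda\langle n\rangle}$ agrees with an integer-valued polynomial in $n$ of degree $\leq k$ for $n$ in the relevant range, and then assemble these.

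First I would recall the hook length formula: for a partition $\mu$ of $m$, $\dim V_\mu = m! / \prod_{c \in \mu} h(c)$ where $h(c)$ runs over hook lengths. Apply this with $\mu = \lambda\langle n\rangle = (n - |\lambda|, \lambda_1, \dots, \lambda_\ell)$, which is a partition of $n$. The cells in the rows $1, \dots, \ell$ (the ``$\lambda$ part'') have hook lengths that depend on $n$ only in the first $\lambda_1$ columns, and there the dependence is affine in $n$; the cells in the top row have hook lengths $1, 2, \dots$ for the last $n - |\lambda| - \lambda_1$ of them and hook lengths affine in $n$ for the first $\lambda_1$. Carrying out the cancellation of $n!$ against the product of the ``long'' top-row hooks, one is left with an expression
$$\dim V_{\lambda\langle n\rangle} = \frac{\prod_{i} (n - a_i)}{\prod_j (n - b_j) \cdot C_\lambda}$$
for suitable constants; a cleaner route is to invoke the classical \emph{determinantal / Frobenius formula} expressing $\dim V_{\lambda\langle n\rangle}$ as a polynomial in $n$ of degree exactly $|\lambda|$ (the number of boxes below the first row), valid for all $n \geq |\lambda| + \lambda_1$. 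This is standard — see for instance the discussion of polynomiality of $\dim V(\lambda)_n$ in Church--Ellenberg--Farb \cite{CEF} — so I would cite it rather than re-derive it. Thus $n \mapsto \dim V_{\lambda\langle n\rangle}$ is a polynomial of degree $|\lambda| \leq |\lambda| + \lambda_1 \leq k$, and it takes integer values on the integers $n \geq |\lambda| + \lambda_1$, hence is integer-valued in the sense of the statement.

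Finally, $p_X(n) := \sum_{\lambda : |\lambda| + \lambda_1 \leq k} c_\lambda \dim V_{\lambda\langle n\rangle}$, interpreted via the polynomial formula, is a $\bK$-linear combination of integer-valued polynomials each of degree $\leq k$; since $k \geq |\lambda| + \lambda_1$ forces $n \geq k$ to be large enough that $\lambda\langle n\rangle$ is defined for every $\lambda$ in the index set, the formula $\dim_{\bK} X_n = p_X(n)$ holds for all $n \geq k$. The integer-valuedness of the sum follows because $\dim_{\bK} X_n$ is literally a dimension for $n \geq k$, so $p_X$ takes the nonnegative integer value $\dim_{\bK} X_n$ at every integer $n \geq k$; a polynomial of degree $\leq k$ taking integer values at $k+1$ consecutive integers is integer-valued, so $p_X$ lies in the ring of integer-valued polynomials. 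I do not anticipate a serious obstacle here; the only point requiring care is making sure the polynomial formula for $\dim V_{\lambda\langle n\rangle}$ is quoted with the correct range of validity ($n \geq |\lambda| + \lambda_1$) and degree bound ($|\lambda|$, hence $\leq k$), and that the finiteness of the index set — which is exactly the ``uniformity'' in Definition~\ref{DefnUniformMultStability} — is invoked to make $p_X$ an honest polynomial rather than an infinite sum.
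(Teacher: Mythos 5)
Your proposal is correct and follows essentially the same route as the paper: both reduce to the finite set of partitions $\lambda$ with $|\lambda|+\lambda_1\leq k$ (via the remark after Definition~\ref{DefnUniformMultStability}) and then appeal to polynomiality of $n \mapsto \dim V_{\lambda\langle n\rangle}$ in degree $|\lambda|$. The only difference is the citation for that last fact---you invoke the hook length / determinantal formula and CEF, whereas the paper quotes MacDonald's character formula explicitly---and your observation that a degree-$\leq k$ polynomial taking integer values at $k+1$ consecutive integers is integer-valued is a clean way to close the argument.
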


\begin{proof} 
By our definition of uniform multiplicity stability, any partition $\lambda=(\lambda_1, \dots, \lambda_{\ell})$ such that $V_{\lambda\langle n\rangle}$ occurs in $X_n$ for any $n$ must satisfy $\lambda + |\lambda_1| \leq k$. 
Thus it suffices to show that, for any partition $\lambda$, the dimension of $V_{\lambda\langle n\rangle}$  agrees, for all $n$ at least $|\lambda|+\lambda_1$, with a polynomial in $n$ of degree at most $|\lambda|$. But these dimensions are given by the following explicit formula, which we obtain by evaluating MacDonald's formula  \cite[I.7.14 Formula (4)]{MacdonaldSymmetric} for the character of $V_{\lambda\langle n\rangle}$  at the identity element of $\Sigma_n$. 

Fix a partition $\lambda$. Then

\begin{equation} \label{DimensionIrrep} \dim_{\bK}\left(V_{\lambda\langle n\rangle}\right)  = \sum_{\substack{ \text{Partitions } \rho, \sigma \\ |\rho| + |\sigma| = |\lambda| }} \frac{(-1)^{\ell(\sigma)} \; \chi^{\lambda}_{(\rho \cup \sigma)} }{z_{\sigma}} \;   \;  \binom{n}{m_1(\rho)} \qquad \text{for $n \geq |\lambda|+\lambda_1$}, 
\end{equation} 
where
\begin{itemize}
    \item $\ell(\sigma)$ is the number of parts of the partition $\sigma$,
    \item $\rho \cup \sigma$ is the partition of the integer $|\rho|+|\sigma|$ whose parts are the union of the parts of $\rho$ and the parts of $\sigma$, 
    \item $\chi^{\lambda}_{(\rho \cup \sigma)}$ is the character of the $\Sigma_{|\lambda|}$-representation $V_{\lambda}$ evaluated on elements of cycle type $\rho \cup \sigma$, 
    \item $z_{\sigma}$ is the integer defined so that $\frac{|\sigma|!}{z_{\sigma}}$ is the number of permutations in $\Sigma_{|\sigma|}$ with cycle type $\sigma$, 
    \item $m_1(\rho)$ is the number of parts of the partition $\rho$ of size 1, 
    \item  $\binom{n}{m_1(\rho)}$ is the binomial coefficient. 
\end{itemize}
The binomial coefficient $\binom{n}{m_1(\rho)}$ is polynomial in $n$ of degree $m_1(\rho)$ and its coefficient in Formula (\ref{DimensionIrrep}) is independent of $n$.  Thus for $n \geq |\lambda|+\lambda_1$, the dimensions $\dim_{\bK}V(\lambda)_n$ agree with a polynomial in $n$ of degree at most $|\lambda|$. (In fact, the polynomial has degree exactly $|\lambda|$, since the term $\dim_{\bK}(V_{\lambda}) \binom{n}{|\lambda|}$   corresponding to the indices $\sigma=\varnothing$ and $\rho=(1, 1, \dots, 1)$ is nonvanishing and is the only term of degree $|\lambda|$.)   The result follows. 
\end{proof}

Observe that \cref{thm: uniform mult stab} and \cref{PropPoly} imply 
\cref{HurPolyGrowth}, polynomial growth for the Betti numbers of ordered Hurwitz space.

\section{A Hurwitz analogue of \texorpdfstring{$\FI$}{FI}} \label{sec: uni rep stab}

We now define a category $\FIc$ which is a generalization of the category $\FI$ to the context of Hurwitz spaces.

\begin{definition}
    The objects of $\FIc$ are finite sets. Let $\Hom_{\FIc}(S , T)$ be the set of pairs $(f,x)$ with $f$ an injection $f \colon S \to T$ and $x \in \pi_0\left(\ohur[, T \setminus \im(f)]\right)$. The composition $\Hom_{\FIc}(S , T)\times \Hom_{\FIc}(T , V)\to \Hom_{\FIc}(S , V)$ is given by the map
    \begin{align*}
        \Hom_{\FIc}(S , T)\times \Hom_{\FIc}(T , V)&\to \Hom_{\FIc}(S , V)\\
        \Big( (f,x), (g,y) \Big)&\mapsto \Big(g \circ f, \left(\left(g|_{T\setminus \im(f)}\right)_*(x) \right) \cdot y    \Big).
    \end{align*}
\end{definition}
Recall that an $\FIc$-module over a ring $\bK$ is a covariant functor from $\FIc$ to the category of left $\bK$-modules. Note that left $H_0(\ohur)$-modules are the same thing as $\FIc$-modules. 
\begin{remark} Observe that given $(f,x) \in \Hom_{\FIc}(\bm, \bn)$, we may view $x$ as the orbit of a tuple in $c^{n-m}$ modulo the action of the $\PBr_{n-m}$.  
Whenever these pure braid group actions are trivial (e.g., when $G$ is abelian), this category $\FIc$ coincides with the category $\FI_c$, where the morphisms $\Hom_{\FI_c}(\bm, \bn)$ are injective maps $f \colon \bm \to \bn$ plus the data of a choice of coloring of the complement of the image of $f$ by elements of $c$ (see Ramos \cite{RamosFId}). 
    
\end{remark}

\begin{definition}
   Let $\bM$ be an $\FIc$-module. We say that $\bM$ is \emph{generated in degrees $\leq d$} if for all finite sets $S$ with $|S| > d$ the map 
 \begin{equation} \label{FinGenFI(c)}
\bigoplus_{\substack{ \text{inclusions of proper} \\ \text{subsets } \iota \colon A \hookrightarrow S \text{ and } \\ (\iota, x)\in \Hom_{\FIc}(A,S)}} \bM_A \xrightarrow{(\iota,x)_*} \bM_S 
 \end{equation} 
   is surjective. 
\end{definition}

We remark that an $\FIc$-module $\bM$ is generated in degrees $\leq d$ if equivalently $$H_0\left(Q_{\mathbb{L}}^{H_0\left(\ohur\right)}(\bM) \right)_S \cong 0  \qquad \text{ for $|S| >d$}.$$

Our goal is to prove the following Theorem which is a mild generalization of  \cref{OhurFIcIntro}.

\begin{theorem} \label{OhurFIc}  Suppose that $(G, c)$ has the non-splitting property and $\bK$ is a field of characteristic zero. There exists constants $a,b$ such that $H_i(\ohur)$ is generated as an $\FIc$-module degrees $ \leq ai+b$.
  \end{theorem}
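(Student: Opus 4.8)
The plan is to reduce the statement to the vanishing of the derived $\ohur$-module indecomposables of $H_i(\ohur)$ in a linear range, and then to bootstrap from the already-established vanishing for $\pi_0(\ohur) = H_0(\ohur)$ using the cell attachment/skeletal filtration machinery of Galatius--Kupers--Randal-Williams \cite{e2cellsI}, exactly as in the argument structuring \cref{lem: uniform rep stab for OHur}. The key translation is the remark immediately preceding the theorem: an $\FIc$-module $\bM$ is generated in degrees $\leq d$ if and only if $H_0\left(Q^{H_0(\ohur)}_{\mathbb{L}}(\bM)\right)_S \cong 0$ for $|S| > d$. So it suffices to produce constants $a, b$ such that $H_0\left(Q^{H_0(\ohur)}_{\mathbb{L}}(H_i(\ohur))\right)_S \cong 0$ for $|S| > ai + b$.

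First I would set up the relevant module structure. The $\fb$-space $\ohur$ is a monoid object in $\topo^{\fb}$, so $(\ohur)_{\bK}$ is a monoid in $\sMod^{\fb}_{\bK}$ (equivalently $\ch^{\fb}_{\bK}$), and $H_0(\ohur)$ is its ring of components; moreover $H_i(\ohur)$ is an $H_0(\ohur)$-module for every $i$ via the action of $\pi_0$ on higher homology. Thinking of $(\ohur)_{\bK}$ as an $E_1$-algebra with $H_0$-augmentation-like structure, I would consider the bar filtration / skeletal filtration obtained by resolving $(\ohur)_{\bK}$ by free $\bR$-module cells where $\bR = (\ohur)_{\bK}$, and apply the homology functor. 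Concretely, I would use \cref{OhurVSpi0}: there exist $\aleph, \beth$ with $H_i\left(Q^{\ohur}_{\mathbb{L}}(\pi_0(\ohur))\right)_S \cong 0$ for $|S| \geq \aleph i + \beth$, which says $\pi_0(\ohur)$ is built from $\ohur$-module cells of slope $\leq \aleph$. Together with \cref{lem:deg01indec} (giving the precise low-degree behavior) and the comparison of $\bR = (\ohur)_{\bK}$ with $\bQ = \pi_0(\ohur)$, this lets me produce a cellular approximation $\bD \xrightarrow{\sim} \bQ/y\bR$ with relative $(\mathbf{n}', d')$-cells only for $d' \geq 2$ and $1 \leq n' < \aleph d' + \beth$, just as in the proof of \cref{lem: uniform rep stab for OHur} via \cite[Theorem 11.21]{e2cellsI}.

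The main step is then to run the spectral sequence of the skeletal filtration of $\bD$ (or directly of a cellular $\bR$-module resolution of $(\ohur)_{\bK}$), apply $H_*$ and look at its output on derived indecomposables. Since $(\ohur)_{\bK}$ is homotopy equivalent to a CW $\fb$-space with finitely generated homology in each bidegree (by \cite[Proposition 2.5]{MR3488737} and the fact that $\ohur[,\bn] \to \hur[,n]$ is a finite cover), one gets that $H_i(\ohur)$ is obtained from $H_0(\ohur)$-module cells in homological degrees controlled linearly in $i$; tracking the bounds through the spectral sequence --- using $d_\alpha \geq 2$ and $n_\alpha < \aleph d_\alpha + \beth$ exactly as in the displayed inequalities of \cref{lem: uniform rep stab for OHur} --- yields that $H_i(\ohur)$ is generated in $\FIc$-degrees bounded by an affine function $ai + b$ of $i$, with $a, b$ depending only on $\aleph, \beth$ and the low-degree constants. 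I would take $a$ to be a small multiple of $\aleph$ (analogous to $A = 4\aleph + 2\beth$) and $b$ to absorb $\beth$ and the finitely many generators coming from \cref{lem:deg01indec}.

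\textbf{Expected main obstacle.} The subtle point is the bookkeeping connecting ``$(\ohur)_{\bK}$ is built from $\ohur$-module cells of bounded slope'' to ``$H_i(\ohur)$ is finitely generated as an $\FIc$-module in a linear range'': one needs that passing from the $E_1$-algebra $(\ohur)_{\bK}$ to its homology $\FIc$-modules $H_i(\ohur)$ is compatible with the cell structure, i.e., that there is a universal coefficients / Künneth-type spectral sequence whose $E_1$-page involves $\Tor^{H_0(\ohur)}$ and whose cells have slope bounded by $\aleph$ in the appropriate sense. Over a field of characteristic zero this Künneth issue is mild, but one must be careful that the $\FIc$-module generation degree --- rather than some naive homological degree --- is what is being bounded; this is where invoking the precise statement $H_0\left(Q^{H_0(\ohur)}_{\mathbb{L}}(-)\right)_S = 0$ for $|S| > d$ as the definition of ``generated in degrees $\leq d$'', and then chasing the long exact sequence associated to $\bR \to \bQ \to \bD$ applied to $Q^{\bR}_{\mathbb{L}}(-)$ together with the slope bounds on $\bD$, does all the real work. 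The rest is the routine inequality manipulation already carried out in \cref{lem: uniform rep stab for OHur}.
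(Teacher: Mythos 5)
Your plan to reduce $\FIc$-generation of $H_i(\ohur)$ to the vanishing $H_0\bigl(Q^{H_0(\ohur)}_{\mathbb{L}}(H_i(\ohur))\bigr)_S = 0$ for $|S| > ai+b$, and then to extract this vanishing directly from the $\ohur$-module cell structure of $\pi_0(\ohur)$ via the skeletal filtration, differs from the paper's route, and the step you flag as an "expected main obstacle" is in fact a genuine gap. The issue is the direction of the implication: \cref{OhurVSpi0} gives you slope-bounded $\ohur$-module cells of $\bQ = \pi_0(\ohur)$ over $\bR = (\ohur)_{\bK}$, i.e. it controls $Q^{\bR}_{\mathbb{L}}(\bQ)$, but what you need is control of $Q^{H_0(\bR)}_{\mathbb{L}}(H_i(\bR))$ --- derived indecomposables of the higher homology $H_i(\ohur)$ over the ring $H_0(\ohur)$, a quantity that does not appear anywhere in the cofiber sequence $\bR \to \bQ \to \bD$ you propose to run through $Q^{\bR}_{\mathbb{L}}(-)$. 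Passing from "cells over the topological monoid $\bR$" to "generation degree of $H_i(\bR)$ over the graded ring $H_0(\bR)$" is precisely the kind of Koszul duality / universal coefficients statement that would require a serious additional spectral-sequence argument; it is not a formal consequence of the $\bD$-filtration even over a field of characteristic zero.

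The paper avoids this difficulty by proceeding in two stages. First it establishes the uniform \emph{twisted} stability statement of \cref{lem: uniform rep stab for OHur}: multiplication by $\widetilde{U}$ eventually surjects on $H_0\bigl(\Sigma_n; H_i(\ohur)_{\bn} \otimes V(\lambda)_n\bigr)$ for all partitions $\lambda$, in a linear range. This is what the cell structure of \cref{OhurVSpi0} and the skeletal filtration spectral sequence actually deliver directly, because the cells contribute homotopy-coinvariant terms that are easy to track. Then, in a separate elementary representation-theoretic step, the paper shows that any $\FIc$-module $\bM$ satisfying this twisted surjectivity is automatically generated in the corresponding degree range: if the cokernel of the $\FIc$-generation map were nonzero at some level $n$, it would contain an irreducible $V_{\mu\langle n\rangle}$, and then $\widetilde{U}$-multiplication could not surject onto the $\mu$-twisted coinvariants, contradicting the hypothesis. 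Your proposal omits this representation-theoretic translation entirely and tries to compress both stages into a single derived-indecomposables computation, which is where the argument breaks. To repair it, you would either have to carry out the Künneth/Koszul bookkeeping you allude to in full detail, or adopt the paper's detour through twisted stability plus the coinvariant contradiction argument.
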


\cref{OhurFIc} follows from \cref{lem: uniform rep stab for OHur} and the following more general result.

\begin{theorem}   Suppose that $(G, c)$ has the non-splitting property and $\bK$ is a field of characteristic zero. Let $\bM$ be an $\FIc$-module over $\bK$ and assume that there exists some $k \geq 0$ such that multiplication by $\widetilde U $ induces a surjection $$H_0(\Sigma_n ; \bM_\bn \otimes_{\bK}  V(\lambda)_n ) \to H_0(\Sigma_{n+N} ; \bM_{\bf n+N} \otimes_{\bK}  V(\lambda)_{n+N} ) $$ for all partitions $\lambda$ provided $n \geq k$. Then $\bM$ is generated as an $\FIc$-module in degrees $\leq k+N$. 
  \end{theorem}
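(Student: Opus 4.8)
The plan is to deduce this formally, by rephrasing generation in terms of a sub-$\FIc$-module and then bootstrapping the hypothesis using two elementary facts about $\Sigma_n$-representations in characteristic zero. Let $\bM' \subseteq \bM$ be the sub-$\FIc$-module generated by $\bigoplus_{|S| \le k+N} \bM_S$, and set $\bM'' = \bM / \bM'$. Since the identity morphism of $S$ lies in $\Hom_{\FIc}(S,S)$, the summand $\bM_S$ maps onto $\bM'_S$ for every $|S|\le k+N$, so $\bM''_S = 0$ whenever $|S|\le k+N$. Conversely, if $\bM'' = 0$ then $\bM = \bM'$, and every element of $\bM_S$ with $|S| > k+N$ is then a sum of images of elements lying in subsets of size $\le k+N < |S|$, hence in proper subsets; so $\bM$ is generated in degrees $\le k+N$ in the sense of the definition given earlier. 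It therefore suffices to prove $\bM'' = 0$.

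Next I would record the auxiliary functors. For a partition $\lambda$ and an $\FIc$-module $X$ — equivalently, an $H_0(\ohur)$-module — let $X^{(\lambda)}$ be the $\N$-graded $\bK$-module $(X \oast V(\lambda))_{\fb}$, so that $X^{(\lambda)}_n = H_0(\Sigma_n;\, X_\bn \otimes_\bK V(\lambda)_n)$. Because $\bK$ has characteristic zero, $X \mapsto X^{(\lambda)}$ is exact, being the composite of $-\oast V(\lambda)$ and $(-)_{\fb}$, each of which is exact (tensoring over the field $\bK$ is exact, and coinvariants of a finite group in characteristic zero are exact); moreover the map $\widetilde U_*$ of \eqref{eq: twisted stab map}, pushed through the exact functor $(-)_{\fb}$, implements ``multiplication by $\widetilde U$'' as a natural map $X^{(\lambda)}_n \to X^{(\lambda)}_{n+N}$. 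Applying $X\mapsto X^{(\lambda)}$ to the quotient map $\bM \twoheadrightarrow \bM''$ yields surjections $\bM^{(\lambda)}_n \twoheadrightarrow {\bM''}^{(\lambda)}_n$ compatible with these $\widetilde U$-maps; since for $n\ge k$ the composite $\bM^{(\lambda)}_n \xrightarrow{\widetilde U} \bM^{(\lambda)}_{n+N}\twoheadrightarrow {\bM''}^{(\lambda)}_{n+N}$ is onto and also factors as $\bM^{(\lambda)}_n \twoheadrightarrow {\bM''}^{(\lambda)}_n \xrightarrow{\widetilde U} {\bM''}^{(\lambda)}_{n+N}$, it follows that multiplication by $\widetilde U$ surjects ${\bM''}^{(\lambda)}_n$ onto ${\bM''}^{(\lambda)}_{n+N}$ for all $\lambda$ and all $n\ge k$.

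Finally I would bootstrap. By the first paragraph ${\bM''}^{(\lambda)}_n = 0$ for $n\le k+N$, and for $n > k+N$ we have $n-N\ge k$, so the previous step gives a surjection ${\bM''}^{(\lambda)}_{n-N}\twoheadrightarrow {\bM''}^{(\lambda)}_n$; a strong induction on $n$, with base case $n\le k+N$, then gives ${\bM''}^{(\lambda)}_n = 0$ for all $n$. As $\lambda$ ranges over all partitions the padded partitions $\lambda\langle n\rangle$ exhaust the partitions of $n$, and over a field of characteristic zero the multiplicity of the irreducible $V_{\lambda\langle n\rangle}$ in a $\Sigma_n$-representation $W$ equals $\dim_\bK H_0(\Sigma_n;\, W\otimes_\bK V(\lambda)_n)$, using that irreducible $\Sigma_n$-representations are self-dual. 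Hence the vanishing of ${\bM''}^{(\lambda)}_n$ for every $\lambda$ forces $\bM''_\bn = 0$ for every $n$, so $\bM'' = 0$, as required. I expect the only point demanding genuine care to be the verification above that $X\mapsto X^{(\lambda)}$ is exact and carries the $\widetilde U$-action naturally, so that the hypothesis descends to the quotient $\bM''$; this is routine given the duoidal formalism of \eqref{eq: distr law} already set up in the paper, and I do not anticipate a real obstacle.
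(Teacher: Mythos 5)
Your proof is correct, and it takes a genuinely different route from the paper's.

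The paper argues directly by contradiction at a fixed degree $n \geq k+N$: it splits $\bM_\bn$ as an internal direct sum ${\bf V}_\bn \oplus {\bf C}_\bn$ (using semisimplicity of $\bK[\Sigma_n]$), where ${\bf V}_\bn$ is the image of the map in (\ref{GoalStableSurjection}). It then observes that the image of multiplication by $\widetilde{U}$ lands in ${\bf V}_\bn$, picks an irreducible constituent $V_{\mu\langle n\rangle}$ of the putatively nonzero ${\bf C}_\bn$, and shows that the hypothesized surjection onto $(\bM_\bn \otimes V(\mu)_n)_{\Sigma_n}$ would have to miss the nonzero summand $({\bf C}_\bn \otimes V(\mu)_n)_{\Sigma_n}$. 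Your argument is instead a quotient-module (Nakayama-style) argument: you form $\bM'' = \bM/\bM'$, descend the surjectivity hypothesis to $\bM''$ by exactness of $X \mapsto (X \oast V(\lambda))_{\fb}$, induct on $n$ to get vanishing of all $V(\lambda)$-isotypic coinvariants of $\bM''$, and conclude $\bM'' = 0$. Both arguments bottom out in the same representation-theoretic fact — that $\dim_{\bK} (W \otimes V(\lambda)_n)_{\Sigma_n}$ computes the multiplicity of $V_{\lambda\langle n\rangle}$ in $W$, so vanishing for all $\lambda$ forces $W=0$ — but the scaffolding is different. Your formulation is more functorial and makes the role of exactness explicit, which would likely transfer more cleanly to other categories in the place of $\FIc$; the paper's argument is more hands-on and stays entirely inside a single graded piece $\bM_\bn$. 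One small stylistic remark: when you descend the hypothesis to $\bM''$, the factorization you write out is precisely the statement that the surjection $\bM \twoheadrightarrow \bM''$ is a map of $H_0(\ohur)$-modules commuting with the $\widetilde U$-structure map; it may be worth saying that sentence explicitly rather than exhibiting the two factorizations, since that is what makes the diagram commute.
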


  \begin{proof}

After reindexing, our assumption is that, for the $\FIc$-module $\bM$ and integer $k$, the induced map $$H_0(\Sigma_{n-N}; \bM_{\bf n-N} \otimes_{\bK}  V(\lambda)_{n-N} ) \longrightarrow H_0(\Sigma_{n} ; \bM_\bN \otimes_{\bK}  V(\lambda)_{n} ) $$ is an isomorphism for all $n \geq k+N$. The goal is to show that, under this assumption, the map
 \begin{equation} \label{GoalStableSurjection}
\bigoplus_{\substack{ \text{inclusions of proper} \\ \text{subsets } \iota \colon A \hookrightarrow \bn \text{ and } \\ (\iota, x)\in \Hom_{\FIc}(A,\bn)}} \bM_A \xrightarrow{(\iota,x)_*} \bM_\bn
 \end{equation} 
 is surjective for all $n \geq k+N$. We view the map (\ref{GoalStableSurjection}) as an equivariant map of $\Sigma_n$-representations, where the action of $\Sigma_n$ on the left-hand side arises from its action on the indexing set. Since $\bK$ is a field of characteristic zero, there is an internal direct sum decomposition of $\bK[\Sigma_n]$-modules $\bM_\bn={\bf C}_\bn \oplus {\bf V}_\bn$, where ${\bf V}_\bn$ is the image of (\ref{GoalStableSurjection}) and ${\bf C}_\bn$ is isomorphic to the cokernel of (\ref{GoalStableSurjection}). Significantly, by construction of $\widetilde{U}$, the $\bK[\Sigma_n]$-span of the image of the map $\bM_{\bf n-N} \to \bM_\bn$ induced by multiplication by $\widetilde{U}$ must be contained in ${\bf V}_\bn$.  
 
 Fix $n \geq k+N$. Suppose for the sake of contradiction that ${\bf C}_\bn \not\cong 0$. Then ${\bf C}_\bn$ must contain the irreducible $\bK[\Sigma_n]$-representation $V_{\mu\langle n \rangle}$ for some partition $\mu$. But then the induced map
\begin{align*}
\bM_{\bf n-N} \otimes_{\bK[\Sigma_{\bf n-N}]} V(\mu)_{n-N} & \xrightarrow{\widetilde{U}_* \otimes \varphi_{n-N, n}}  \bM_\bn \otimes_{\bK[\Sigma_{\bf n-N}]} V(\mu)_n \\
& \twoheadrightarrow \bM_\bn \otimes_{\bK[\Sigma_{\bn}]} V(\mu)_n \cong \big({\bf C}_{\bn}  \otimes_{\bK[\Sigma_{\bn}]} V(\mu)_n\big) \oplus \big({\bf V}_{\bn}  \otimes_{\bK[\Sigma_{\bn}]} V(\mu)_n\big)
\end{align*} is not surjective:  its image is contained in the summand $({\bf V}_{\bn}  \otimes_{\bK[\Sigma_{\bn}]} V(\mu)_n)$, whereas the complement $({\bf C}_{\bn}  \otimes_{\bK[\Sigma_{\bn}]} V(\mu)_n)$ is nonzero by choice of $\mu$. This contradicts our hypothesis and completes the proof. 
 \end{proof}

\bibliographystyle{halpha}
\bibliography{bibliography}
\end{document}